\newtheorem{theorem}{Theorem}[section]
\newtheorem{lemma}[theorem]{Lemma}
\newtheorem{corollary}[theorem]{Corollary}
\newtheorem{proposition}[theorem]{Proposition}
\newtheorem{definition}[theorem]{Definition}
\def\P{\mathbb{P}}
\def\Z{\mathbb{Z}}
\def\R{\mathbb{R}}
\def\E{\mathbb{E}}
\def\G{\Gamma}
\newcommand{\ce}{\mathcal{E}}
\newcommand{\cg}{\mathcal{G}}
\newcommand{\e}{\varepsilon}
\begin{document}
%\title{Scaling Exponents in the Area Trap Model}
%\title[Area Trap Model]{Local fluctuation exponents in last passage percolation under curvature constraint}

\title[Area Trapping Polymers]{The competition of roughness and curvature in area-constrained polymer models}

\author{Riddhipratim Basu}
\address{Riddhipratim Basu, Department of Mathematics, Stanford University, Stanford, CA, USA}
\email{rbasu@stanford.edu}
\author{Shirshendu Ganguly}
\address{Shirshendu Ganguly, Department of Statistics, UC Berkeley, Berkeley, CA, USA}
\email{sganguly@berkeley.edu}
%\thanks{SG's research is supported by a Miller Research Fellowship at UC Berkeley.}
\author{Alan Hammond}
\address{Alan Hammond, Departments of Mathematics and Statistics, UC Berkeley, Berkeley, CA, USA}
\email{alanmh@berkeley.edu}
\date{}
\maketitle
\vspace{-.1in}
\begin{abstract}
The competition between local Brownian roughness and global parabolic curvature experienced in many random interface models reflects an important aspect of the KPZ universality class. It may be summarised by an exponent triple $(1/2,1/3,2/3)$ representing local interface fluctuation, local roughness (or inward deviation) and convex hull facet length. The three effects arise, for example, in droplets in planar Ising models \cite{AH1,AH2,AH3,Alexander}. In this article, we offer a new perspective on this phenomenon. We consider  directed last passage percolation model in the plane, a paradigmatic example in the KPZ universality class, and constrain   the maximizing path under the additional requirement of enclosing an atypically large area. The interface suffers a constraint of parabolic curvature as before, but now its local structure is the KPZ fixed point polymer's rather than Brownian.  The local interface fluctuation exponent is thus two-thirds rather than one-half.  We prove that the facet lengths of the constrained path's convex hull are governed by an exponent of $3/4$, and inward deviation by an exponent of $1/2$. That is, the exponent triple is now $(2/3,1/2,3/4)$ in place of  $(1/2,1/3,2/3)$. This phenomenon appears to be shared among various  isoperimetrically extremal circuits in local randomness. Indeed, we formulate a conjecture to this effect concerning such circuits in supercritical percolation, whose Wulff-like first-order behaviour was recently established by Biskup, Louidor, Procaccia and Rosenthal in \cite{BLPR}.
\end{abstract}

\begin{figure}[h] 
\centering
\begin{tabular}{cc}
\includegraphics[scale=.15]{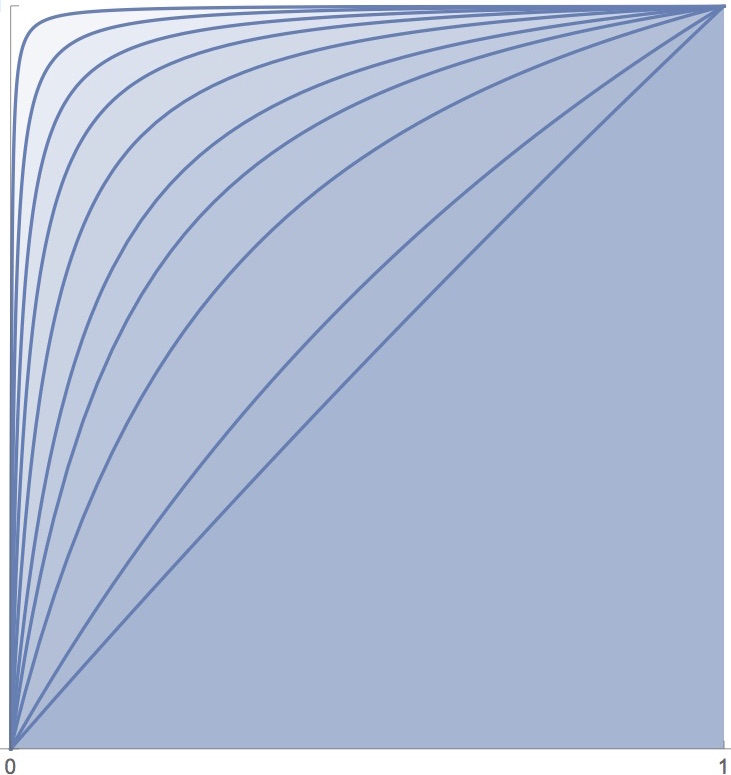} &\quad\quad\quad\quad\quad\quad\includegraphics[width=0.25\textwidth]{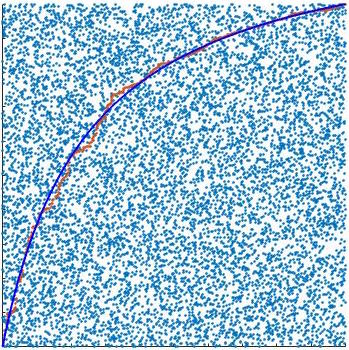} \\
(a) & \quad\quad\quad\quad\quad\quad(b)
\end{tabular}
\caption{ (a) Limiting curves for the constrained geodesics for various trapped area values. (b) A typical realization for the area trapping polymer model.} 
\label{fig0}
\end{figure}
\tableofcontents
\section{Introduction and main results}

The geometric properties of random interfaces are a vast arena of study in rigorous statistical mechanics.
Two important classes of interface models are {\em phase separation} models that idealize the boundary between a droplet of one substance suspended in another, and {\em last passage percolation} models, where a directed path in independent local randomness maximizes a random weight determined by the environment.

One of the best known mathematical examples of phase separation is the two dimensional supercritical Ising model in a large box with negative boundary condition. Conditioned to have an atypically high number of positive spins in the box, the vertices in the plus phase tend to form a droplet surrounded by a sea of negative spins. The random phase boundary of such a droplet has been the object of intense study.  
Wulff proposed that the profile of such constrained circuits would macroscopically resemble
a dilation of an isoperimetrically optimal curve. This was established rigorously in \cite{DKS} and \cite{IS}.
Via the FK representation of the Ising model, 
 one observes a similar situation in the setting of subcritical two dimensional percolation, where the analogous object is the boundary of the cluster containing the origin after this cluster is conditioned to be atypically large.  The Wulff shape captures the macroscopic profile of the circuit in such models, but what of fluctuations? Several definitions may be considered that seek to capture fluctuation behaviour on the part of the circuit, including  the deviation in the Hausdorff metric of the convex hull of the circuit from an appropriately scaled Wulff shape.
Alternative definitions, more local in nature, serve better to capture the transition in circuit geometry from local Brownian randomness to a smoother profile dictated by the constraints of global curvature.
In fact, a pair of definitions is natural, one to capture the longitudinal distance at which the transition takes place, and the second to treat the orthogonal inward deviation of the interface at this transition scale. 
To specify the characteristic longitudinal distance, we may note that the convex hull of the circuit is a polygonal path that is composed of planar line segments or {\em facets}; we may treat the typical or maximum facet length as a barometer of the transition from the shorter scale of local randomness to the longer scale of curvature. Latitudinally, we may note that any point in the circuit has a {\em local roughness}, given by its distance from the convex hull boundary. The typical or maximum local roughness along the circuit is a latitudinal counterpart to facet length. 
 Alexander \cite{Alexander}, and Hammond \cite{AH1,AH2,AH3} analysed such conditioned circuit models and determined that when the area contained in the circuit is of order $n^2$, so that the circuit has diameter of order $n$, facet length and local roughness scale as $n^{2/3}$ and $n^{1/3}$. A similar situation is witnessed when a parabola $x \to t^{-1}x^2$ is subtracted from a two-sided Brownian motion $B:\R \to \R$. When $t > 0$ is large, facets of the motion's convex hull have length $\Theta(t^{2/3})$ and inward deviation $\Theta(t^{1/3})$.
This phenomenon is expected to be universal when the local structure of the interface is Brownian and other examples include 
a Brownian bridge pinned at $-T$ and $T$ and conditioned to remain above the semi-circle of radius $T$ centred at the origin which was analysed by Ferrari and Spohn \cite{FS}. 

The second class of random geometric paths we have mentioned are last passage percolation models. These models form part of a huge, Kardar-Parisi-Zhang, class of statistical mechanical models in which a path through randomness is selected to be extremal for a natural weight determined by that randomness. The maximizing paths are often called {\em polymers}. 
The fluctuation behaviour of a length $n$ polymer with given endpoints may be gauged either in terms of the scale of deviation of its weight from the mean value, or by the scale of deviation of say the polymer's midpoint from the planar line segment that interpolates the polymer's endpoints. The two deviations are given by scales $n^{1/3}$ and $n^{2/3}$. 
These were first proved for planar Poissonian
directed last passage percolation, in the seminal work of Baik, Deift and Johansson \cite{BDJ99}. Since then, this and other integrable models in the same KPZ universality class have been extensively analysed and detailed information about this model, both geometric and algebraic, has been obtained \cite{J00, LM01, LMS02}. 

It is of much interest to study models that combine phase separation and path minimization (or maximization) in random environment. Consider an environment with local independent randomness, and the circuit through that randomness whose weight determined by that randomness is extremal among those circuits that trap a given area. Studying the random geometry of such a circuit is a problem of extremal isoperimetry. Taking the randomness to be supercritical percolation, Itai Benjamini conjectured the first-order, Wulff-like behaviour of the boundary of the set in the supercritical percolation cluster attaining the so called anchored expansion constant. This was proved by~\cite{BLPR} by showing that the curve in the limit solves a natural isoperimetric variational problem; this has recently been extended to higher dimensions by Gold \cite{G16}. Given the role of facet length and local roughness in capturing the local random to global curvature transition in circuit geometry, the natural next problem is to understand the scaling exponents of such objects, which is the pursuit we undertake in this paper.  
Our choice of model preserves the qualitative features of the problem of interface fluctuation in an isoperimetrically  extremal droplet in supercritical percolation and  at the same time ensures that key  algebraic aspects of KPZ theory can be harnessed to yield sharp fluctuation estimates.

The particular setting we consider in this paper is planar Poissonian 
directed last passage percolation. 
To impose the required properties, we study this model under a quadratic curvature constraint: that is, we force the best path to move away from the straight line and have a quadratic curvature on the average. This is done naturally by considering the longest upright path in a Poissonian environment joining $(0,0)$ and $(n,n)$ which has the additional \textit{area trap} property that the area under the curve is at least $(\frac{1}{2}+\alpha)n^2$ for some  $\alpha\in (0,\frac{1}{2})$. Note that from the discussion above it follows that the unconstrained longest path stays close to the diagonal and hence encloses area $(\frac{1}{2}+o(1))n^2$. 
Postponing precise statements until later (see Section \ref{scale-exp}), our main results quantify the competition between the global parabolic nature  and local behaviour guided by KPZ relations of the contour.
They show that the maximum facet length of the contour's least concave majorant scales as  $n^{3/4+o(1)}$, while the inward deviation from the concave majorant (local roughness) 
scales as $n^{1/2+o(1)}$.

We also establish a law of large numbers for the length of the optimal path. The proof  proceeds by setting up a variational problem, as is natural in such contexts (see \cite{DZ2}, \cite{BLPR}). Perhaps  surprisingly, however, the problem turns out to have a very explicit solution. The geometric information about the limit shape of the constrained polymer is also used as input in some of the arguments about fluctuations. 
The question of fluctuation in the context of  isoperimetrically extremal circuits in supercritical percolation can be formulated as a first passage percolation analogue of our setting of last passage percolation. Although the first passage percolation model is not exactly solvable, it, too, is believed to be in the KPZ universality class, and the geodesics there are believed to have the same $n^{2/3}$ scaling of transversal fluctuation as in our case 
(see \cite{FPPsurvey} and the references therein). 
Thus our results suggest that a certain universality is being witnessed and, according to this belief, we formulate a conjecture concerning percolation. 
We elaborate on this and a number of other interesting questions in Section \ref{oq}.

Finally, we discuss briefly the key inputs used in our paper and how it contrasts with the other examples of fluctuation results already mentioned.
The results in \cite{AH1,AH2,AH3} crucially used the refined understanding and geometric estimates for percolation clusters while a study of area trapping planar Brownian loop \cite{HP} used well known estimates for Brownian motion.  Exact expressions involving Brownian motion conditioned to stay over a parabola was also the key ingredient in the proofs in \cite{FS}. 
However, in our setting, even though the unconstrained model has integrable properties, the lack of general geometric understanding as one deviates slightly from integrable models causes a big challenge for us.  
Nonetheless, using certain known facts about the unconstrained model and their robust variants established recently in \cite{BSS14} as blackbox estimates, we rigorously establish local roughness exponents for our model which we henceforth call the \emph{Area Trapping Polymer} model (see next section for precise definitions). 
Thus, this work is an example  where one can use inputs from the integrable literature to make geometric conclusions about settings which are beyond the exactly solvable world (see also \cite{BSS14}).
We believe a general program of developing such geometric arguments would lead to more robust proofs which could work for settings that are non-integrable but are variants of solvable models. 

\subsection{Model Definitions}\label{def1012}
We now recall the planar Poissonian directed last passage percolation model. 

Let $\Pi$ be a homogeneous rate one Poisson Point Process (PPP) on the plane. A partial order on $\R^2$ is given by $(x_1,y_1)\preceq (x_2,y_2)$ if $x_1\leq x_2$ and $y_1\leq y_2$. 
For $u\preceq v$, a directed path $\gamma$ from $u$ to $v$ is a piecewise linear path that joins points $u=\gamma_{0}\preceq \gamma_1 \preceq \cdots \preceq \gamma_{k}= v$ where each $\gamma_i$  for $i\in [k-1]$ (throughout the article we will adopt the standard notation $[n]=\{1,2,\ldots,n\}$)
is a point of $\Pi$. 
Define the length of $\gamma$, denoted $|\gamma|$, to be the number of $\Pi$-points on $\gamma$. 
\begin{definition}\label{geodesic1} 
 Define the last passage time from $u$ to $v$, denoted by $L(u,v)$, to be the maximum  of $|\gamma|$ as $\gamma$ varies over all directed paths from $u$ to $v$. There may be several maximizing paths between $u$ and $v$, and throughout the paper we will refer to the top most path (it is easy to see that the top most path is well defined here) among those, as the \emph{geodesic} between $u$ and $v$ and denote it by  $\gamma(u,v)$. 
 \end{definition}

We will often call $\gamma(u,v)$ as the polymer between $u$ and $v$ and $|\gamma(u,v)|$ as the polymer length/weight respectively.
Next we introduce a constraint in this classical model. 

\subsubsection{Area Trapped by a path}\label{def10}
Consider a path $\gamma$ between the origin $(0,0)$ and a point $(x,y)$ in the positive quadrant. The area trapped by the path $\gamma$, denoted by $A(\gamma)$, is defined to be the area of the closed polygon determined by the $x$-axis, the vertical line segment joining $(x,0)$ to $(x,y)$ together with the line segments of the path $\gamma$. Let $\gamma_n$ denote the geodesic  between $(0,0)$ and $(n,n)$. Well-known facts about Poissonian LPP readily imply that $A(\gamma_{n})= (\frac{1}{2}+o(1))n^2$ asymptotically almost surely. We constrain the model and consider maximizing paths subject to trapping a much larger area. To this end fix $\alpha\in (0,\frac{1}{2})$, and let 
\begin{equation}
\label{e:cop}
L_{\alpha}(n):=\max\left\{|\gamma|: \gamma~\text{path from}~(0,0)~\text{to}~(n,n)~\text{and}~ A(\gamma)\geq \big(\tfrac{1}{2}+\alpha\big)n^2\right\}.
\end{equation}
It is easily seen that, among the paths that attain this maximum, there is almost surely exactly one that traps the least area. This path will be called $\Gamma_{\alpha,n}$. We will write   $\Gamma_{n}$ provided that the context clarifies the value of $\alpha$ in question. We shall call $\Gamma_n$ the constrained (or $\alpha$-constrained) geodesic.
In analogy with the phase separation example in percolation mentioned the introduction, it might seem more natural to consider the family of  down right paths joining $(n,0)$ and $(0,n)$ since all of these curves enclose the origin. However because of the obvious underlying symmetry, and to take advantage of standard notational conventions, throughout the sequel  we will consider the contour joining the origin to the point $(n,n)$.

Our main objects of interest are two quantities that measure local regularity of the constrained geodesic $\Gamma_n$. The following definitions are illustrated by Figure \ref{def}.

\begin{definition}
\label{d:mlrmfl}
Let ${\rm conv}(\Gamma_{\alpha, n})$ denote the convex hull of the polygon determined by the path $\Gamma_{\alpha, n}$ and the coordinate axes. Let $\Gamma^{*}_{\alpha, n}$ denote the closure of the polygonal part of the boundary of ${\rm conv}(\Gamma_n)$ between $(0,0)$ and $(n,n)$ above the $x$-axis. Thus $\Gamma^{*}_{\alpha, n}$ is the least concave majorant of $\Gamma_{\alpha,n}$ and is an union of finitely many line segments. These  segments will be called {\rm facets}.
Define {\rm{maximum facet length}} of $\Gamma_n$, denoted $\mathrm{MFL}(\Gamma_{\alpha, n})$ to be the maximum Euclidean length of the facets. For $x\in \Gamma_{\alpha, n}$, let ${\rm d}(x,\Gamma^{*}_{\alpha, n})$ denote the distance from $x$ to $\Gamma^*_{\alpha, n}$. This is a natural notion of  the \emph{local roughness at $x$}. 
%[\textcolor{red}{A:} isn't it weird to call this Hausdorff distance? ] 
Define the {\rm{maximum local roughness}} of $\Gamma_n$, 
denoted $\mathrm{MLR}(\Gamma_n)$ by
$$\mathrm{MLR}(\Gamma_{\alpha, n}):= \sup\{x\in \Gamma_{\alpha, n}: {\rm d}(x,\Gamma^{*}_{\alpha, n})\}.$$
\end{definition} 

In the following subsections we present our main results. We first state our results regarding the fluctuation exponents of $\Gamma_{\alpha, n}$.
\subsection{Scaling exponents for geodesic geometry}\label{scale-exp}
The sense in which we capture the exponents is stronger and easier to state for the lower bounds, and so we begin with them. 

\begin{theorem}
\label{t:mfllb}
Fix $\alpha \in (0,\frac{1}{2})$ and $\varepsilon>0$. Then   there exists $c=c(\alpha,\e)>0$ such that for all large enough $n,$
$$\P(\mathrm{MFL}(\Gamma_n) \geq n^{3/4-\varepsilon})\ge 1-e^{-n^c}.$$ 
\end{theorem}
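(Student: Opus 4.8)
The plan is to locate, with overwhelming probability, a single long facet, exploiting the scale at which local roughness and global curvature balance. Over a longitudinal stretch of length $\ell$ the constrained geodesic inherits the KPZ transversal fluctuation $\ell^{2/3}$, whereas the parabolic curvature of its limiting profile (radius of curvature of order $n$) pulls it off any chord by of order $\ell^{2}/n$; these match at $\ell\asymp n^{3/4}$, which is the scale on which $\Gamma^{*}_{\alpha,n}$ should develop facets, the $\varepsilon$ of slack absorbing the $n^{o(1)}$ imprecision of the available inputs, and the stretched‑exponential probability coming from order $n^{1/4}$ essentially independent chances to produce one. Concretely, I would first invoke the law of large numbers established earlier: $n^{-1}\Gamma_{\alpha,n}\to\phi=\phi_\alpha$ with a stretched‑exponential rate, where $\phi$ is concave — hence its own least concave majorant — and on each compact $I\subset(0,1)$ has curvature in some $[\kappa_1,\kappa_2]\subset(0,\infty)$; fix such an $I$. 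Together with the (robust, hence off‑diagonal) transversal‑fluctuation estimates of \cite{BSS14} this yields, each with probability $\ge 1-e^{-n^{c_1}}$, a crude bound $\sup_x|\Gamma_{\alpha,n}(x)-n\phi(x/n)|\le n^{2/3+\delta}$ and a finer increment bound $|(\Gamma_{\alpha,n}(x)-n\phi(x/n))-(\Gamma_{\alpha,n}(x')-n\phi(x'/n))|\le n^{\delta}|x-x'|^{2/3}$ for $x/n,x'/n\in I$, with $\delta>0$ small and fixed at the end.

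Next I would partition the interior of $I\cdot n$ into $K\asymp n^{1/4-\delta'}$ disjoint windows $\hat W_1,\dots,\hat W_K$ of longitudinal width $\hat w\asymp n^{3/4+\delta'}$ ($\delta'>\tfrac34\delta$), inside slightly larger, still disjoint boxes $\tilde W_i$. On the high‑probability event that $\Gamma_{\alpha,n}$ enters and exits each $\hat W_i$ within transversal distance $n^{2/3+\delta}$ of $n\phi$, condition on these entrance/exit points; by the localization and coalescence estimates of \cite{BSS14}, off an $e^{-n^{c_2}}$ event the restriction of $\Gamma_{\alpha,n}$ to $\hat W_i$ agrees with the \emph{unconstrained} geodesic between those points — any two such paths trap essentially the same area over the short window, so the global area constraint does not bite locally — and hence is measurable in the Poisson process on $\tilde W_i$. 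Thus, conditionally on the endpoints, the local pictures in the $\hat W_i$ are independent.

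The heart of the matter is to show that a single $\hat W_i$ contains a facet of length $\ge n^{3/4-\varepsilon}$ with probability bounded below — by a constant $c_0>0$, or at worst by a negative power $n^{-\beta}$ with $\beta<\tfrac14-\delta'$, which suffices. De‑tilting by the tangent line to $n\phi$ at the centre $x_0n$ of $\hat W_i$, the polymer over $\hat W_i$ becomes the graph of $g(x)=-\tfrac1{2n}\phi''(x_0)(x-x_0n)^2+F(x)$ up to lower order, with $F$ the centred unconstrained geodesic weight profile, of increment size $|x-x'|^{2/3}$. On $\hat W_i$ the parabolic sag, of order $\kappa\hat w^{2}/n$, dominates $\hat w^{2/3}$, so the concave majorant of $g$ hugs the parabola to within $O(\hat w^{2/3})$; subdividing $\hat W_i$ into balanced sub‑windows of width $\asymp n^{3/4}$, the majorant must break on that scale, and what remains to be proved is that on some sub‑stretch of length $\ge n^{3/4-\varepsilon}$ the de‑tilted geodesic stays on one side of (a mild push below) its own chord and dips a fixed fraction of the fluctuation scale below it. The $\varepsilon$‑slack lets one work just below the balanced scale, where curvature is negligible against $F$ and this is a pure statement about the unconstrained profile; I would prove it by a size‑biasing over the facet through a uniformly chosen point of the sub‑window — reducing to a one‑point lower estimate on the reach of the concave majorant — combined with FKG monotonicity to decouple the ``stays below its chord'' event from the rest of the environment, using the one‑point moderate‑deviation and transversal estimates of \cite{BSS14} as input. \textbf{This step is the main obstacle}: upper bounds on facet length are soft (the geodesic cannot sustain a sub‑chord excursion on scales $\gg n^{3/4}$), but the lower bound must show such an excursion actually occurs, which is exactly where the limit‑shape curvature has to be genuinely combined with the KPZ fluctuation inputs rather than used as a black box.

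Finally, a facet of the local concave majorant of $\Gamma_{\alpha,n}|_{\hat W_i}$ has a supporting line $L$ with slope within $O(\hat w^{-1/3})$ of $\phi'$ on $\hat W_i/n$ and height within $O(\hat w^{2/3})$ of $n\phi$; outside $\hat W_i$ strict concavity separates $L$ from $n\phi$ by at least $\kappa_1 d^{2}/(2n)$ at longitudinal distance $d$, which beats $n^{\delta}d^{2/3}+O(\hat w^{2/3})$ for all $d\ge\hat w/2$ (by $\delta'>\tfrac34\delta$) and beats $n^{2/3+\delta}$ for $x/n\notin I$. So on the a priori good events $L$ dominates all of $\Gamma_{\alpha,n}$, its two extreme contact points with $\Gamma_{\alpha,n}$ are $\ge n^{3/4-\varepsilon}$ apart, and the segment between them is a genuine facet of $\Gamma^{*}_{\alpha,n}$, so $\mathrm{MFL}(\Gamma_{\alpha,n})\ge n^{3/4-\varepsilon}$. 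Since the per‑window facet events are conditionally independent, $\P(\mathrm{MFL}(\Gamma_{\alpha,n})<n^{3/4-\varepsilon})\le e^{-n^{c_1}}+e^{-n^{c_2}}+(1-c_0)^{K}$ (or with $(1-n^{-\beta})^{K}$ in place of the last term), and taking $\delta,\delta'$ small and $c<\min(c_1,c_2,\tfrac14-\delta'-\beta)$ gives the claimed bound $\ge 1-e^{-n^{c}}$.
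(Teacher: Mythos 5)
Your overall architecture (find many quasi-independent windows at scale $n^{3/4}$, show each produces a long facet with non-negligible probability, multiply failure probabilities) is genuinely different from the paper's, which argues by \emph{contradiction}: assuming $\mathrm{MFL}(\Gamma_n)\le n^{3/4-\e}$, a pigeonhole over the total turning angle $\le\pi/2$ of the concave majorant produces a ``regular sequence'' of consecutive corners spanning $\Theta(n^{3/4-\e/2})$ over which the majorant turns by only $O(n^{-1/4-\e/2})$; this confines the corresponding stretch of $\Gamma$ to a parallelogram of height $n^{1/2-\e/2}$, far below the transversal fluctuation scale $(n^{3/4-\e/2})^{2/3}$, so by Theorem~\ref{t:constrained} that stretch is uncompetitive, and replacing it by a one-sided (Baik--Rains) geodesic lying above the majorant gains both length and area, contradicting extremality of $\Gamma$. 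That route needs only estimates for the \emph{unconstrained} model and never has to describe the conditional law of the constrained path.

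Your version has two genuine gaps. First, the claim that, conditionally on its entrance and exit points of a window $\hat W_i$, the restriction of $\Gamma_{\alpha,n}$ agrees with the unconstrained geodesic (``the area constraint does not bite locally''), and that the local pictures are then independent across windows, is asserted rather than proved. The constrained geodesic is a global optimizer with no spatial Markov property; the paper establishes no such local characterization and its surgery argument is designed precisely to avoid needing one. Making this rigorous would require controlling how the global area constraint redistributes under conditioning, which is comparable in difficulty to the open problem (2) of Section~\ref{oq} on the excess trapped area. Second, the step you yourself flag as the main obstacle --- a lower bound, uniform at scale $n^{3/4-\e}$, on the probability that the (de-tilted) profile stays on one side of its own chord over a stretch of that length --- is the entire analytic content of the theorem at the local scale, and the proposed combination of size-biasing with FKG is not carried out; the event that a geodesic stays below a chord is not monotone in the Poisson environment in any form supplied by the cited inputs, so FKG is not available. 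As it stands the argument reduces the theorem to an unproved positive-probability facet-creation estimate plus an unproved localization/independence statement, so it does not constitute a proof.
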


As we have mentioned, the scaling exponent for transversal fluctuation of point-to-point geodesic in unconstrained Poissonian last passage percolation is known to be $2/3$ 
%[\textcolor{red}{A:} if this fact has indeed been mentioned before, why cite or further explain it now?]. 
This fact, put together with the above theorem yields the following lower bound on maximum local roughness.

\begin{theorem}
\label{t:mlrlb}
Fix $\alpha \in (0,\frac{1}{2})$ and $\varepsilon>0$. Then   there exists $c=c(\alpha, \e)>0$ such that for all large enough $n,$
$$\P(\mathrm{MLR}(\Gamma_n) \geq n^{1/2-\varepsilon})\ge 1-e^{-n^c}.$$ 
\end{theorem}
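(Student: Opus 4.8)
The plan is to feed the long facet produced by Theorem~\ref{t:mfllb} into the $n^{2/3}$ transversal fluctuation behaviour of unconstrained (near-)geodesics. By Theorem~\ref{t:mfllb}, off an event of probability $e^{-n^{c}}$ the least concave majorant $\Gamma^{*}_{\alpha,n}$ has a facet $F$ of Euclidean length $\ell\ge n^{3/4-\varepsilon}$; fix (measurably) its endpoints $u\preceq v$ and let $\sigma$ be the portion of $\Gamma_{\alpha,n}$ between $u$ and $v$. Since $F$ lies on the least concave majorant, $\sigma$ lies weakly below the segment $F$, and we may further take $F$ to lie in the bulk of the explicit limit profile established earlier (long facets do not occur near $(0,0)$ or $(n,n)$), so that $F$ has slope bounded away from $0$ and $\infty$ and all $\ell^{2/3}$-scaling estimates below hold with constants uniform in the location of $F$. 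It then suffices to exhibit a point $x\in\sigma$ lying over the middle portion of $F$ whose perpendicular distance below the line through $F$ is at least $n^{1/2-\varepsilon}$: for such an $x$ the nearest point of $\Gamma^{*}_{\alpha,n}$ lies on $F$ itself (by concavity the rest of $\Gamma^{*}_{\alpha,n}$ is below that line and longitudinally far from $x$), so ${\rm d}(x,\Gamma^{*}_{\alpha,n})$ equals that perpendicular distance.

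The first step is to show that $\sigma$ is a \emph{near-geodesic}, namely $|\sigma|\ge L(u,v)-\ell^{1/3+\kappa}$ with high probability, where $\kappa>0$ is a small exponent chosen below in terms of $\varepsilon$. Let $\rho$ be the heaviest directed path from $u$ to $v$ that stays weakly above the chord $[u,v]$. Lying above the chord, $\rho$ traps at least as much area between $u$ and $v$ as $\sigma$, so replacing $\sigma$ by $\rho$ inside $\Gamma_{\alpha,n}$ yields a directed path from $(0,0)$ to $(n,n)$ which still traps area at least $(\tfrac12+\alpha)n^{2}$ and has length $L_{\alpha}(n)-|\sigma|+|\rho|$; by maximality of $L_{\alpha}(n)$ this forces $|\sigma|\ge|\rho|$. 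A half-space weight comparison, together with the standard upper- and lower-tail moderate-deviation bounds for last passage times (all used as black boxes via \cite{BSS14}), gives $|\rho|\ge L(u,v)-\ell^{1/3+\kappa}$ off an event of probability $e^{-\ell^{c'}}$, and since trivially $|\sigma|\le L(u,v)$ the claim follows.

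The second step is that a near-geodesic from $u$ to $v$ lying weakly below its own chord must dip below that chord. If instead $\sigma$ stayed inside the one-sided tube of width $w=\ell^{2/3-\kappa'}$ lying immediately below the line through $F$, for some $\kappa'>\kappa$, then, since confining a directed path of length $\ell$ to a tube of width $w$ costs $\Theta(\ell/w)=\Theta(\ell^{1/3+\kappa'})$ in weight off an event of probability $e^{-\ell^{c''}}$ (a second moderate-deviation black box, pairing the upper tail of the confined maximum with the lower tail of $L$), we would obtain $|\sigma|\le L(u,v)-\Theta(\ell^{1/3+\kappa'})<L(u,v)-\ell^{1/3+\kappa}$, contradicting Step~1. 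Applying this to the sub-path of $\sigma$ between its quarter- and three-quarter $x$-coordinates, itself a near-geodesic lying below its own (still sub-$F$) chord, places the dip at a point $x$ over the middle portion of $F$, so ${\rm d}(x,\Gamma^{*}_{\alpha,n})\ge\ell^{2/3-\kappa'}\ge n^{(3/4-\varepsilon)(2/3-\kappa')}\ge n^{1/2-\varepsilon}$ for $n$ large, once $\kappa<\kappa'$ are chosen small enough in terms of $\varepsilon$. A union bound over the polynomially many candidate endpoint pairs with $|v-u|\ge n^{3/4-\varepsilon}$ turns the stretched-exponential estimates of Steps~1--2 into a single $e^{-n^{c}}$ failure probability, which together with Theorem~\ref{t:mfllb} completes the argument.

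The main obstacle is securing the two black-box inputs in exactly the robust, uniform and stretched-exponentially strong form used above: a one-sided (below-the-chord) version of the half-space weight comparison of Step~1, and the matching upper-tail estimate for the heaviest directed path confined to a thin one-sided tube in Step~2, each valid simultaneously over all scales $\ell\ge n^{3/4-\varepsilon}$ and all endpoint pairs a long facet can produce. These are not quite the statements one finds in the integrable literature and require adapting the estimates of \cite{BSS14}. By contrast, the reduction to Theorem~\ref{t:mfllb}, the choice $\kappa<\kappa'$ and the attendant exponent arithmetic, the localization of the dip to the bulk of $F$, and the use of the explicit limit shape to control facet slopes are all routine.
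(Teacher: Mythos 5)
Your proposal is correct and follows essentially the same route as the paper's proof: start from the long, bulk facet guaranteed by Theorem~\ref{t:mfllb} (with slope control from the limit-shape results), use the extremality of $\Gamma_{\alpha,n}$ together with the above-the-chord estimate (Theorem~\ref{baikrains}/Corollary~\ref{uniform2}) and the thin-tube confinement penalty (Theorem~\ref{t:constrained} and its uniform versions) to conclude that the sub-path under the facet cannot stay within $n^{1/2-\varepsilon}$ of it. The paper phrases this as a single swap-and-contradict step rather than your ``near-geodesic, then dip'' two-step, and it likewise relies on the coarse-grained uniform corollaries to handle all candidate facet endpoints, so the two arguments are the same in substance.
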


Regarding the matching upper bound, 
we prove that, with high probability, there exists a dense set of $\alpha \in (0,2^{-1})$ for which the maximum length of the facets away from the boundary is bounded above by $n^{3/4+o(1)}$. To make this precise, fix $\delta \in (0,\pi/4)$, and consider a facet in $\Gamma_{\alpha, n}$ with endpoints $A$ and $B$
recorded in clockwise order. Setting $O = (n,0)$,  let $\theta_{A}$ denote the acute angle that $OA$ makes with the $y$-axis and $\theta_{B}$, the acute angle  that $OB$ makes with the $x$-axis; see Figure~\ref{def}. 
%[\textcolor{red}{A:} $A$, $B$ and the $\theta$ values do not appear in this figure.]

\begin{definition}
\label{interior1}
The facet $AB$ is called $\delta$-interior if $\min (\theta_{A}, \theta_{B})\geq \delta$.
\end{definition} 

Note that the union of the $\delta$-interior facets forms a polygonal path.
(The union could be empty, but we will infer later from Theorem~\ref{lln2}
that this event has an exponentially small probability.)
\footnote{Note that at this point, it is not a priori clear if the set of $\delta-$interior facets is non-empty. However, this will be a consequence of Theorem \ref{lln2}, stated later, which implies for any $\delta$, the length of all the facets 
%[\textcolor{red}{A:} presumably a typo here; possibly omit footnote and integrate with bracketed sentence.] 
will be less than $O(\delta n)$ with exponentially small failure probability.} Let $A_0$ and $B_0$ denote the extremities of this union path, and let $\Gamma_{\delta, \alpha, n}$ denote the subpath of  $\Gamma_{\alpha,n}$ between $A_0$ and $B_0$. Define the maximum $\delta$-interior facet length of $\Gamma_{\alpha, n}$, denoted by $\mathrm{MFL}(\Gamma_{\delta, \alpha,n})$, to be the maximum length of the $\delta$-interior facets.  Define the $\delta$-interior maximum local roughness, denoted by $\mathrm{MLR}(\Gamma_{\delta, \alpha, n})$ by altering  Definition \ref{d:mlrmfl} so that now the supremum is taken over all $x \in \Gamma_{\delta, \alpha, n}$. 
\begin{definition}\label{good10}
Fix $\varepsilon>0$. We say $\alpha\in (0,\frac{1}{2})$ is  $(n,\varepsilon,\delta)$-good if $\mathrm{MFL}(\Gamma_{\delta, \alpha, n}) \leq n^{3/4+\varepsilon}$. 
\end{definition}
Here is our upper bound concerning the maximum length of facets.

\begin{theorem}
\label{t:mflub}
Fix $\varepsilon>0$, $\delta\in (0,\pi/4)$ and an interval $[\alpha_1, \alpha_2] \subset (0,\frac{1}{2})$. Let $I_{n, \alpha_1,\alpha_2}$ denote the set of  $(n,\varepsilon, \delta)$-good $\alpha \in [\alpha_1, \alpha_2]$. Then there exists $c=c(\varepsilon,\delta,\alpha_1,\alpha_2)>0$ such that the  probability that  $I_{n, \alpha_1,\alpha_2} \not= \emptyset$ is at least $1-e^{-n^c}$ for all large enough $n.$
\end{theorem}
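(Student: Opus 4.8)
The plan is to establish an upper bound on the \emph{total} longitudinal length over which long $\delta$-interior facets can appear, summed (or integrated) over $\alpha$ ranging in $[\alpha_1,\alpha_2]$, and then to argue by a pigeonhole/averaging argument that at least one value of $\alpha$ in the interval must be good. The starting point is the monotonicity of the constrained geodesics in $\alpha$: as $\alpha$ increases, the area constraint tightens, and the least concave majorant $\Gamma^*_{\alpha,n}$ moves outward in a controlled, nested fashion. Via Theorem~\ref{lln2} (the law of large numbers for $L_\alpha(n)$ and the associated limit shape) I would first pin down the deterministic profile $\phi_\alpha$ that $\Gamma^*_{\alpha,n}/n$ converges to, and note that $\phi_\alpha$ has strictly positive curvature bounded away from $0$ and $\infty$ on the $\delta$-interior region $[\delta,\pi/2-\delta]$ of angles, uniformly for $\alpha\in[\alpha_1,\alpha_2]$; this curvature lower bound is the geometric input that makes long facets costly.

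First I would set up the dyadic scheme: fix a scale $\ell = n^{3/4+\varepsilon}$, and for a given $\alpha$ say a point of $\Gamma^*_{\alpha,n}$ is \emph{bad} if it lies on a $\delta$-interior facet of length $\ge \ell$. The key estimate to prove is that, for any fixed $\alpha$, the probability that a given $\delta$-interior facet of length between $\ell$ and $2\ell$ occurs near a prescribed location is at most $e^{-n^{c'}}$ for some $c'>0$ --- but this cannot hold for \emph{every} $\alpha$ simultaneously (indeed for some $\alpha$ a facet of this length might genuinely be forced by the geometry), so instead I would prove the weaker but sufficient statement that the expected Lebesgue measure of $\{\alpha\in[\alpha_1,\alpha_2] : \Gamma_{\alpha,n}\text{ has a }\delta\text{-interior facet of length}\ge \ell\}$ is at most $e^{-n^{c'}}$. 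The mechanism: a facet of length $\ell$ at angle $\theta\in[\delta,\pi/2-\delta]$ means the geodesic, which would normally follow the curved profile $\phi_\alpha$, instead takes a straight chord; because of the curvature lower bound this chord cuts off an area of order $\ell^3/n$ and, more importantly, the straight path is \emph{longer} (by KPZ one-point upper-tail / lower-tail estimates from \cite{BSS14}, applied on the box swept out by the chord) than the best curved alternative, with a deficit whose probability of being non-negative is exponentially small in a power of $\ell^3/n = n^{1/4+3\varepsilon}$-ish length scale --- wait, more carefully, in the transversal-fluctuation scale, a facet of length $\ell$ has curvature-induced inward room of order $\ell^3/n^2 \cdot n = \ell^3/n$... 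I would track that the relevant large-deviation rate is a positive power of $n$, giving the $e^{-n^c}$ bound. Summing this facet-event probability over a polynomial number of candidate locations and candidate dyadic lengths, and over a fine net of $\alpha$-values (using the monotonicity to control what happens between net points), yields the claim.

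Given the measure bound, the conclusion is immediate: if the expected measure of bad $\alpha$'s in $[\alpha_1,\alpha_2]$ is at most $e^{-n^{c'}}$, then by Markov's inequality the probability that \emph{every} $\alpha$ in the interval is bad --- which would require the bad set to have full measure $\alpha_2-\alpha_1$ --- is at most $e^{-n^{c'}}/(\alpha_2-\alpha_1)\le e^{-n^c}$ for slightly smaller $c$ and large $n$. Hence with probability at least $1-e^{-n^c}$ the good set $I_{n,\alpha_1,\alpha_2}$ is nonempty, which is exactly the statement. The monotonicity step that lets us pass from a countable net of $\alpha$'s to all $\alpha$ in the interval needs a small argument: between consecutive net points $\alpha_j<\alpha_{j+1}$ the geodesics are sandwiched, so a long $\delta$-interior facet for some intermediate $\alpha$ forces a comparably long near-facet (a region of small curvature of $\Gamma^*$) for one of the endpoints $\alpha_j,\alpha_{j+1}$; choosing the net spacing to be a suitable negative power of $n$ makes this loss negligible.

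The main obstacle I anticipate is the core large-deviation estimate: showing that the straight-chord path is genuinely longer than the curved alternative with overwhelming probability, uniformly over locations and over $\alpha$. This is where the lack of integrability away from the unconstrained model bites. The strategy is to avoid any exact computation and instead use only the robust one-point moderate-deviation bounds of \cite{BSS14} for the unconstrained passage time $L(u,v)$ on sub-boxes, combined with a chaining/union bound over a polynomial collection of sub-boxes tiling the region near a putative facet, to show that the curved profile can accumulate a length surplus of order $\ell \cdot (\ell^3/n)^{1/2}/\ell = (\ell^3/n)^{1/2}$ --- a positive power of $n$ when $\ell = n^{3/4+\varepsilon}$ --- over the chord, so that the chord cannot be part of a maximizer. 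Making the area-bookkeeping and the comparison of maximizers over the constrained ensemble precise, while keeping all error probabilities exponentially small, is the technical heart of the argument.
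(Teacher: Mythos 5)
There is a genuine gap at the heart of your argument. Your plan reduces the theorem, via Fubini and Markov, to the estimate that $\int_{\alpha_1}^{\alpha_2}\P(\alpha\hbox{ has a }\delta\hbox{-interior facet of length}\ge n^{3/4+\e})\,{\rm d}\alpha\le e^{-n^{c'}}$, which forces you to prove the single-$\alpha$ bound $\P(\mathrm{MFL}(\Gamma_{\delta,\alpha,n})\ge n^{3/4+\e})\le e^{-n^{c'}}$ for (essentially) every fixed $\alpha$, with no mechanism offered for excluding an exceptional set. That single-$\alpha$ statement is precisely what the theorem, as stated, avoids claiming, and the paper lists strengthening the result to fixed $\alpha$ as an open problem. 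The proposed mechanism for the key estimate also does not work as described: a long facet of $\Gamma^*_{\alpha,n}$ does not by itself create a length deficit. The curved alternative that bulges \emph{above} the chord gains area but \emph{loses} weight relative to the unconstrained point-to-point geodesic (by the usual parabolic loss for off-characteristic transversal displacement, cf.\ Theorem \ref{t:constrained} and Corollary \ref{uniform2}), so replacing a facet by an arc gives a path that is still feasible for the area constraint but may well be shorter --- no contradiction with optimality of $\Gamma_{\alpha,n}$ follows for a fixed $\alpha$. The comparison is genuinely one-sided: you gain $\approx\ell^{5/3}$ in area at the cost of up to $\approx\ell^{1/3+o(1)}$ in length, and for a single $\alpha$ this proves nothing. (Your appeal to monotone nesting of the geodesics $\Gamma_{\alpha,n}$ in $\alpha$ is also unsubstantiated; only the trivial monotonicity of $L_\alpha(n)$ is available.)

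The paper's route is different and exploits exactly this one-sided trade-off globally rather than per $\alpha$. Assume for contradiction that \emph{no} $\alpha\in[\alpha_1,\alpha_2]$ is good. Starting from $\beta_0=\alpha_1$, perform the ``landgrab'' on the longest $\delta$-interior facet $(x_i,y_i)$ of $\Gamma_{\beta_i,n}$ (which has length $\ge n^{3/4+\e}$ by badness): this produces a feasible path for $\beta_{i+1}=\beta_i+cn^{-2}|x_i-y_i|^{5/3}$ while losing only $|x_i-y_i|^{1/3+\e/500}$ in length, so $L_{\beta_i}(n)-L_{\beta_{i+1}}(n)\le |x_i-y_i|^{1/3+\e/500}$. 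Iterating until $\beta_{i_0}\ge\alpha_2$ and summing gives $L_{\alpha_1}(n)-L_{\alpha_2}(n)\le O(n^{1-6\e/5})$ (Proposition \ref{p:landgrab}). But Theorem \ref{t:lln} together with the concentration estimate of Theorem \ref{concentration1} forces $L_{\alpha_1}(n)-L_{\alpha_2}(n)=\Theta(n)$ with probability $1-e^{-n^c}$, a contradiction. The probabilistic inputs are only the uniform versions of the moderate-deviation and one-sided-geodesic estimates (Corollaries \ref{uniform1}, \ref{uniform2}) and the uniform-in-$\alpha$ gradient control of Theorem \ref{flat1}, all holding simultaneously over the relevant pairs of points, so no union bound over a net of $\alpha$'s is needed. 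If you want to salvage an averaging flavour, it appears in the paper only a posteriori (Section \ref{areafluc}) as a heuristic for lower-bounding the \emph{density} of good $\alpha$, not as the proof of nonemptiness.
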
 

Our final principal result concerning exponents asserts that, for $\alpha\in I_{n,\alpha_1,\alpha_2}$, it is highly likely that  $\mathrm{MLR}(\Gamma_{\delta, \alpha, n})\geq n^{1/2+\varepsilon}$. Thus we obtain,  with high probability,  an $n^{1/2+\varepsilon}$ upper bound for interior maximum local roughness for a dense, though possibly $n$-dependent and random, set of~$\alpha$. 

\begin{theorem}
\label{t:mlrub}
In the setting of Theorem \ref{t:mflub}, there exists $c>0$, such that the event that there exists $\alpha\in I_{n, \alpha_1,\alpha_2}$ such that 
$\mathrm{MLR}(\Gamma_{\delta, \alpha, n})> n^{1/2+\varepsilon},$ occurs with probability at most  $e^{-n^c} $ for all large enough $n$.
\end{theorem}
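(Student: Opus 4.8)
The plan is to show that, with probability at least $1-e^{-n^{c}}$, simultaneously for every $\alpha\in I_{n,\alpha_1,\alpha_2}$ and every $\delta$-interior facet of $\Gamma_{\alpha,n}$, the polymer is hidden beneath that facet to depth at most $n^{1/2+\varepsilon}$. For a good $\alpha$ the facet already has length at most $n^{3/4+\varepsilon}$ (Definition~\ref{good10}), so the content of the theorem is precisely the passage from the facet-length exponent $3/4$ to the roughness exponent $1/2$ via the transversal-fluctuation relation ``depth $\asymp$ (length)$^{2/3}$''.

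I would work on the intersection of four high-probability events. (i) The law of large numbers for $\Gamma_{\alpha,n}$ (Theorem~\ref{lln2}), holding uniformly for $\alpha\in[\alpha_1,\alpha_2]$, placing the $\delta$-interior part of $\Gamma_{\alpha,n}$ within $o(n)$ of an explicit limit curve lying in the bulk with direction everywhere bounded away from the coordinate axes. (ii) A shape theorem with error, and (iii) a transversal-fluctuation estimate for the unconstrained last passage value $L(A,B)$ and geodesic $\gamma(A,B)$, valid for all pairs $A\preceq B$ in a lattice net of mesh $O(1)$ with $B-A$ having both coordinates comparable, with failure probability $e^{-n^{c}}$. (iv) A curvature-loss estimate: for such $A,B$ at scale $\ell=|B-A|$ and any net point $P$ at transverse distance $s$ from the segment $AB$, $L(A,P)+L(P,B)\le L(A,B)-c\,s^{2}/\ell+\ell^{1/3+o(1)}$. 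All of (ii)--(iv) are robust, exponential-tail consequences of the integrable inputs recalled in \cite{BSS14}. Since a $\delta$-interior facet is a chord of the concave majorant with direction bounded away from the axes, any point of the polymer beneath it at depth $d$ has both coordinates comparable to those of the facet's endpoints, so $d\le C\ell$; on the complement of the event we wish to exclude we then have $n^{1/2+\varepsilon}<d\le C\ell\le Cn^{3/4+\varepsilon}$, so $\ell$ is polynomially large. Approximating facet endpoints by net points and union-bounding over the $O(n^{4})$ relevant pairs, it suffices to rule out, for each fixed such pair $A,B$, that the subpath $\gamma_{AB}$ of the constrained geodesic between $A$ and $B$ descends to depth $d>n^{1/2+\varepsilon}$ below the line $AB$.

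\emph{Core step: deepest point interior to the facet.} Let $P\in\gamma_{AB}$ realise the depth $d$, and first suppose $P$ is not within $\ell/10$ horizontally of $A$ or $B$. Then the region between $\gamma_{AB}$ and the segment $AB$ contains a triangle of base $\asymp\ell$ and height $\asymp d$, so the portion of $\Gamma_{\alpha,n}$ between $A$ and $B$ traps at least $c\,d\ell\ge c\,n^{1/2+\varepsilon}\ell$ less area than the trapezoid below $AB$; by event (iii) the unconstrained geodesic $\gamma(A,B)$ stays within $\ell^{2/3+o(1)}$ of $AB$, hence traps within $\ell^{5/3+o(1)}$ of that trapezoid. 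Since $n^{1/2+\varepsilon}\ell\gg\ell^{5/3+o(1)}$ as $\ell\le n^{3/4+\varepsilon}$, splicing $\gamma(A,B)$ into $\Gamma_{\alpha,n}$ in place of $\gamma_{AB}$ strictly increases the trapped area, in particular leaving the area constraint satisfied. As $|\gamma(A,B)|=L(A,B)\ge|\gamma_{AB}|$ with equality only if $\gamma_{AB}$ is itself an unconstrained geodesic, and a strict inequality would contradict the length-maximality in the definition of $\Gamma_{\alpha,n}$, we get $|\gamma_{AB}|=L(A,B)$; then event (iii) forces the maximal transverse deviation of $\gamma_{AB}$ to be at most $\ell^{2/3+o(1)}\le n^{1/2+2\varepsilon/3+o(1)}<n^{1/2+\varepsilon}$ for $n$ large, contradicting $d>n^{1/2+\varepsilon}$. (Event (iv) gives the same contradiction quantitatively, since $|\gamma_{AB}|\le L(A,P)+L(P,B)\le L(A,B)-c\,d^{2}/\ell+\ell^{1/3+o(1)}$ forces $d\le\ell^{2/3+o(1)}$.)

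\emph{Remaining case and the main obstacle.} The difficulty is when the deepest point $P$ lies near a facet endpoint, say within $\ell/10$ horizontally of the vertex $A$, for then the triangle above degenerates and the area comparison is no longer automatic. A natural remedy is to pass to the longer chord $A'B$, where $A'A$ is the preceding $\delta$-interior facet: since all $\delta$-interior facet lengths are $\le n^{3/4+\varepsilon}$ and the limit curve has curvature $\asymp 1/n$, the vertex $A$ lies at most $n^{1/2+2\varepsilon}$ above $A'B$, so $P$ still has depth $\Theta(d)$ below $A'B$ and becomes more central there when $|A'A|$ is comparable to $|AB|$; iterating, one bundles a run of consecutive $\delta$-interior facets until the deepest point is interior to the bundled chord, and applies the core step to that chord. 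Making this bundling robust when consecutive facet lengths are wildly unbalanced -- so that the bundled chord may be far longer than $n^{3/4+\varepsilon}$, at which point the concave majorant's own sagitta over the bundle is no longer negligible against $d$ -- is the main obstacle, and I expect it to require subtracting the limit parabola and carrying out the comparison in the local KPZ-fixed-point picture rather than against a single chord. This is exactly where the macroscopic parabolic curvature and the microscopic polymer geometry interact most delicately. Once all facets (for all $\alpha\in I_{n,\alpha_1,\alpha_2}$ at once, via the net) are covered, the union bound over the $O(n^{4})$ pairs of net points completes the proof.
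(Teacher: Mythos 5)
Your overall frame --- restrict to good $\alpha$ so that $\delta$-interior facets have length $\ell\le n^{3/4+\varepsilon}$, note that roughness $d>n^{1/2+\varepsilon}$ is then an off-scale transversal fluctuation for the chord, and contradict length-maximality by a local replacement --- is the paper's. But your replacement step has a genuine gap. You splice in the \emph{unconstrained} geodesic $\gamma(A,B)$ and must then verify the area constraint; for that you claim the region between $\gamma_{AB}$ and the chord contains a triangle of base $\asymp\ell$ and height $\asymp d$. Monotonicity only yields a triangle of base \emph{and} height $\asymp d$, i.e.\ an area deficit of order $d^{2}$: the excursion down to $P$ may be localized, with $\gamma_{AB}$ hugging the chord elsewhere. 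Meanwhile $\gamma(A,B)$ may run $\ell^{2/3+o(1)}$ \emph{below} the chord along its whole length, costing area $\ell^{5/3+o(1)}$. Since $d^{2}\approx n^{1+2\varepsilon}$ while $\ell^{5/3}$ can be as large as $n^{5/4+O(\varepsilon)}$, the splice can strictly decrease the trapped area and the contradiction evaporates. (A smaller slip in the same step: $|\gamma_{AB}|=L(A,B)$ makes $\gamma_{AB}$ \emph{a} maximizer, not the topmost geodesic to which your event (iii) applies; your event (iv) does repair that part.) The second symptom of the same problem is your ``remaining case'': when $P$ is near a facet endpoint even the $d^{2}$ triangle degenerates, and you leave the proposed facet-bundling repair explicitly unresolved.

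The paper avoids both issues with one device you never invoke: the \emph{one-sided} geodesic. Since $u,v$ are corners of the concave majorant, the whole subpath of $\Gamma_{\alpha,n}$ between them lies weakly below the chord $uv$; replacing it by $\gamma^{\boxslash}(u,v)$, which lies weakly above the chord, therefore trivially traps at least as much area --- no triangle estimate, and no case analysis on where the deep point sits. Corollary \ref{uniform2} (the uniform Baik--Rains bound) gives $L^{\boxslash}(u,v)\ge\E L(u,v)-S(u,v)$, while Corollary \ref{uniform0} gives $|\gamma(u,v)|<\E L(u,v)-S(u,v)$ because its transversal fluctuation exceeds $n^{1/2+\varepsilon}\gg\ell^{2/3+\varepsilon'}$; together these contradict extremality, exactly as in the proof of Theorem \ref{t:mlrlb}. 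To repair your argument, replace $\gamma(A,B)$ by $\gamma^{\boxslash}(A,B)$ throughout the core step and delete the bundling discussion, which then becomes unnecessary.
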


\begin{figure}[h] 
\centering
\begin{tabular}{cc}
\includegraphics[width=0.38\textwidth]{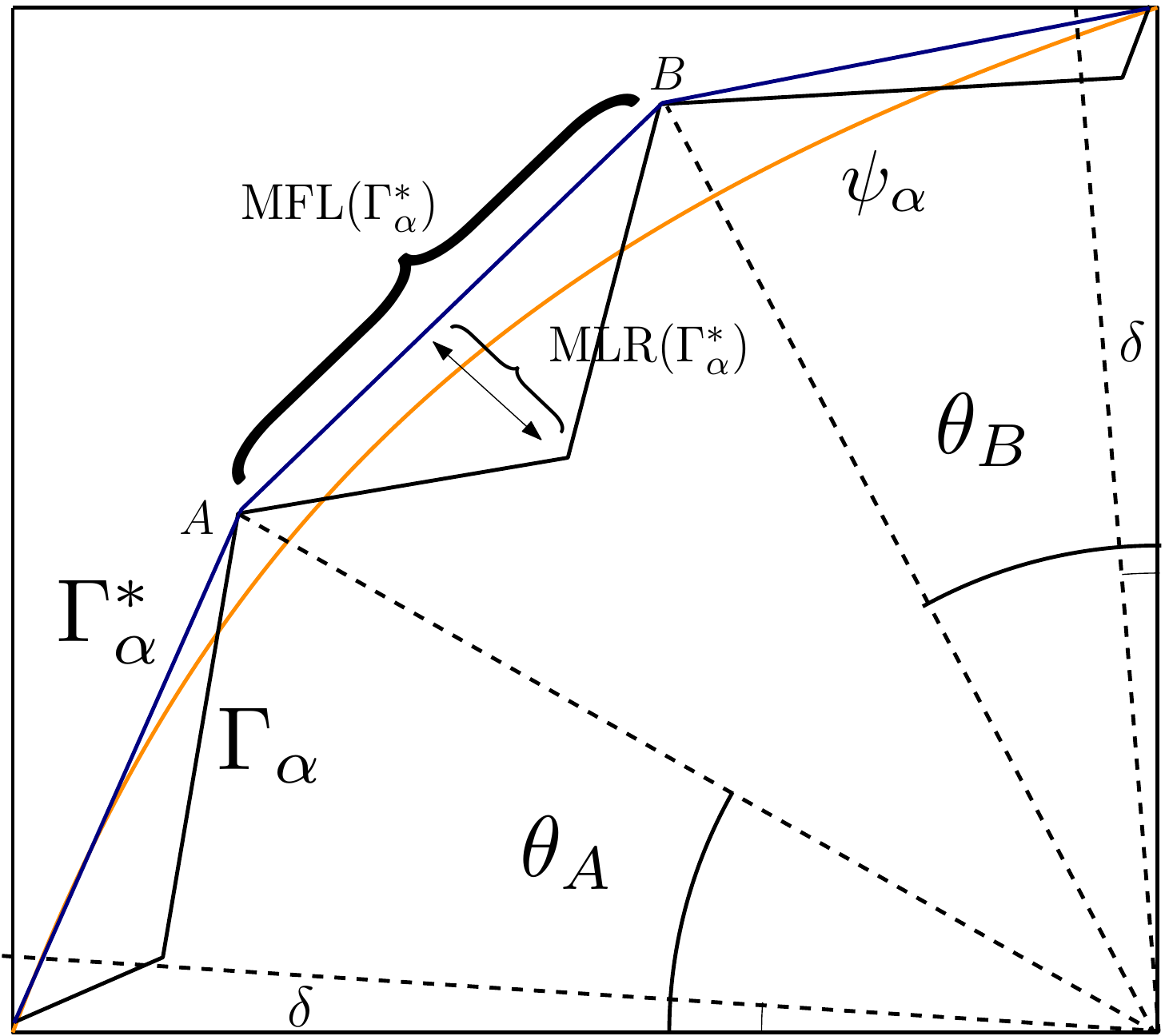} &\quad\includegraphics[width=0.4\textwidth]{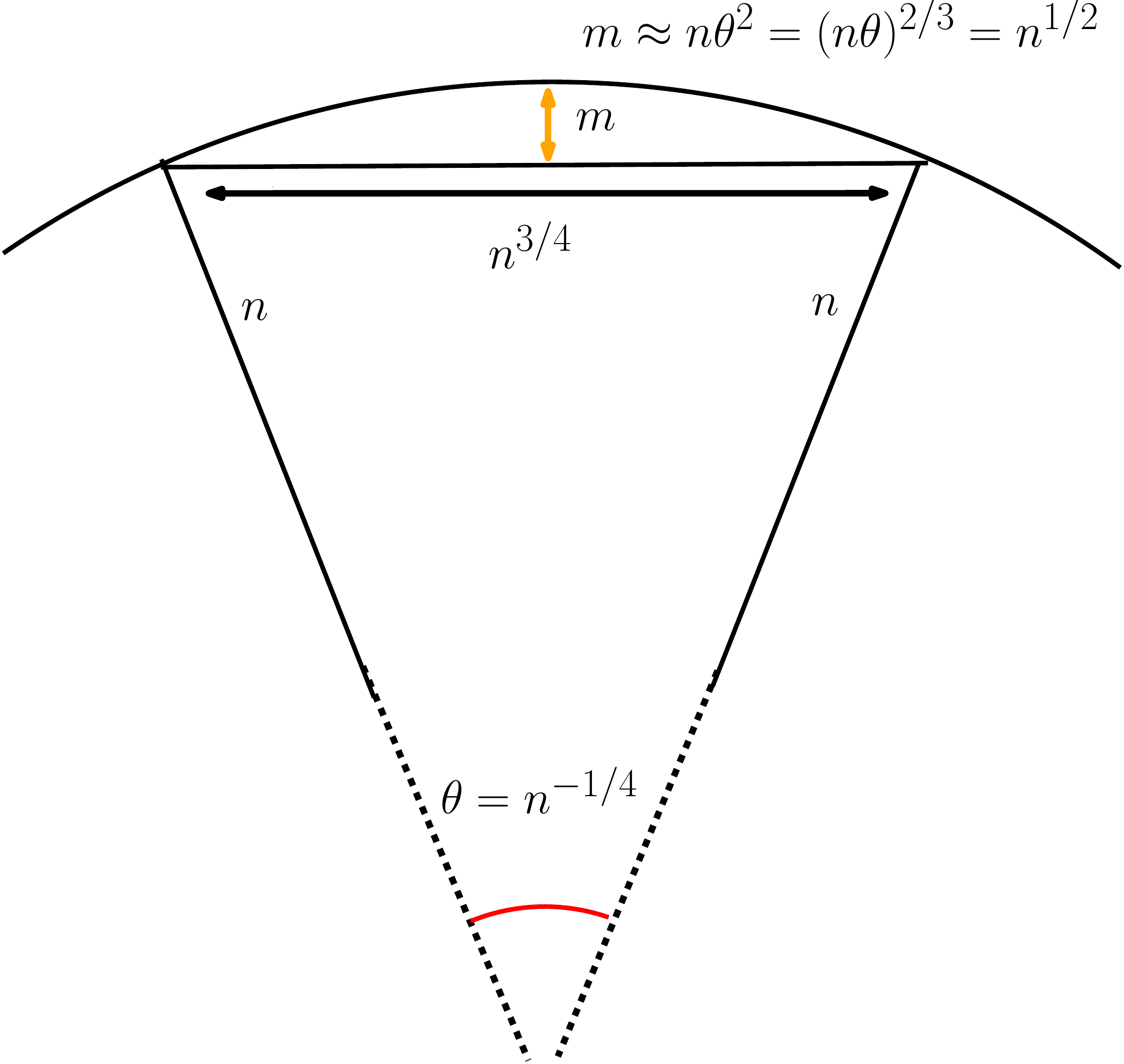} \\
(a) & (b)
\end{tabular}
\caption{ (a) The three different curves correspond to $\psi_{\alpha,n},\Gamma_{\alpha,n}, \Gamma^*_{\alpha,n}$.  The  dashed lines indicate the $\delta-$ interior and hence the facets inside the sector bounded by them, are the $\delta-$interior facets.
(b)The scale at which the competition from two sources are equal to each other.}
\label{def}
\end{figure}

We now state our results about law of large numbers  for $L_{\alpha}(n)$ and the constrained geodesic. 
%[\textcolor{red}{A:} wording suggests these results are not seen as among the main results. Is this intended?] 
Although the route taken here of setting up an appropriate variational problem is by now classical \cite{DZ2}, we point out that it was not at all obvious that the solution can be explicitly described. Moreover some of the consequences of the results in the following section are used as geometric inputs for the fluctuation results stated before.  
\subsection{Law of Large Numbers}
For the unconstrained model, a straightforward subadditivity argument yields that $\frac{\E L_{n}}{n}$ converges to a limit. The evaluation of the limiting constant is classical: \cite{VerKer77, LogShep77} showed that the limit equals $2$ by using Young tableaux combinatorics and the RSK correspondence (see also \cite{AD95}). However, for the constrained model, the subadditive structure is lost and it is not clear a priori that a law of large numbers for $L_{\alpha}(n)$ exists. Our first result here is to establish the law of large numbers for the area trapping polymer model; we are also able to evaluate the limiting constant implicitly as a function of $\alpha$. 

Given $\alpha \in (0,1/2)$, let $c_{\alpha}$ be given implicitly by the following equation:
\begin{equation}\label{implicit1}
\frac{1+c_{\alpha}}{c_{\alpha}}\left(1-\frac{\log(1+c_{\alpha})}{c_{\alpha}}\right)=\frac{1}{2}+\alpha.
\end{equation}
One can see that the function $f(c)=\frac{1+c}{c}[1-\frac{\log(1+c)}{c}]$ is strictly increasing \footnote{$f'(c)=\frac{(2+c)\log(1+c)-2c}{c^3}$ and $\frac{d}{dc}[(2+c)\log(1+c)-2c]=\log(1+c)-\frac{c}{1+c} >0$.} in $c$, and converges to $1/2$ and $1$ at $0$ and $\infty$ respectively. 
Let ${\mathrm{w}}_{\alpha}= \sqrt{1+c_{\alpha}}\frac{\log(1+c_{\alpha})}{c_{\alpha}}$.
\begin{theorem} 
\label{t:lln} 
For any $\alpha\in (0,1/2)$ 
$$\E L_{\alpha}(n)=2{\mathrm{w}}_{\alpha} n+o(n)$$
as $n\to \infty$.
\end{theorem}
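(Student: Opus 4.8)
Rescaling space by $n$, an up–right path from $(0,0)$ to $(n,n)$ is encoded by a nondecreasing $\psi\colon[0,1]\to[0,1]$ with $\psi(0)=0$, $\psi(1)=1$. The Logan–Shepp–Vershik–Kerov asymptotics $L\big((0,0),(an,bn)\big)=2\sqrt{ab}\,n+o(n)$ (with known exponential concentration) suggest that the rescaled weight collectible along $\psi$ is $2\int_0^1\sqrt{\psi'(x)}\,dx$, while the rescaled trapped area is $\int_0^1\psi(x)\,dx$. The candidate limit is therefore
\begin{equation*}
\mathrm{w}_\alpha \;=\; \sup\Big\{\textstyle\int_0^1\sqrt{\psi'(x)}\,dx \;:\; \psi \text{ nondecreasing},\ \psi(0)=0,\ \psi(1)=1,\ \int_0^1\psi \ge \tfrac12+\alpha\Big\}.
\end{equation*}
This functional is concave and the feasible set convex; the unconstrained optimum $\psi'\equiv 1$ traps area $1/2<\tfrac12+\alpha$, so the area constraint is active with a multiplier $\lambda>0$. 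The Euler–Lagrange equation for $\int(\sqrt{\psi'}+\lambda\psi)$ yields $\psi'(x)=\big(4(\lambda x+C)^2\big)^{-1}$; imposing $\psi(1)=1$ forces $C=(2\sqrt{1+c})^{-1}$ and $\lambda=cC$ for a parameter $c>0$, and then a direct integration gives $\int_0^1\psi=\tfrac{1+c}{c}\big(1-\tfrac{\log(1+c)}{c}\big)$, which equals $\tfrac12+\alpha$ exactly when $c=c_\alpha$ solves \eqref{implicit1}, while $\int_0^1\sqrt{\psi'}=\sqrt{1+c_\alpha}\,\tfrac{\log(1+c_\alpha)}{c_\alpha}=\mathrm{w}_\alpha$. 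By concavity this critical point is the global maximizer, so $\mathrm{w}_\alpha$ is the value above. I also record, as elementary consequences of the explicit formula and of $f$ being a smooth increasing bijection of $(0,\infty)$ onto $(\tfrac12,1)$, that $\alpha\mapsto\mathrm{w}_\alpha$ is continuous (and strictly decreasing), and that for the explicit smooth maximizer $\psi_{\alpha}$ its $k$-piece linear interpolant has area and $\int\sqrt{(\cdot)'}$ converging to those of $\psi_\alpha$ as $k\to\infty$.

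\textbf{Step 2: upper bound.} Fix $k$ and the uniform mesh $x_j=j/k$. For any admissible path $\gamma$, let $p_j$ be the last vertex of $\gamma$ with first coordinate $\le x_jn$; by monotonicity the portion of $\gamma$ between $p_{j-1}$ and $p_j$ lies in the axis-parallel rectangle they span, so $|\gamma|\le\sum_{j=1}^k L(p_{j-1},p_j)$. Rounding the $p_j$ to an integer grid and taking a union bound over the at most $4^k$ monotone choices of $(p_1,\dots,p_{k-1})$ (and their $O(n^{2})$ grid positions each), using the exponential LPP upper-tail estimate $\P\big(L(u,v)\ge 2\sqrt{(v-u)_1(v-u)_2}+t\sqrt n\,\big)\le e^{-cn}$ from the integrable literature and its robust variants in \cite{BSS14}, together with the crude bound $L(u,v)\le \#(\Pi\cap[u,v])$ to absorb blocks with tiny rectangles, one gets with probability $\ge 1-e^{-n^{c}}$ that every admissible $\gamma$ obeys
\begin{equation*}
|\gamma|\;\le\;2n\sum_{j=1}^k\sqrt{(x_j-x_{j-1})\,\Delta y_j}\;+\;k\,n^{2/3+o(1)},
\end{equation*}
where $\Delta y_j$ is the $n$-rescaled second-coordinate increment of $p_{j-1}\to p_j$. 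The polygon through $p_0,\dots,p_k$ traps an area differing from $A(\gamma)$ by at most $n^2/k$, hence is feasible at area level $\tfrac12+\alpha-1/k$; being a competitor in the continuous problem, its weight functional is $\le\mathrm{w}_{\alpha-1/k}$. Thus $L_\alpha(n)\le 2n\,\mathrm{w}_{\alpha-1/k}+kn^{2/3+o(1)}$; letting $n\to\infty$, then $k\to\infty$, and using continuity gives $\limsup_n L_\alpha(n)/n\le 2\mathrm{w}_\alpha$ off a stretched-exponentially small event.

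\textbf{Step 3: lower bound and passage to the mean.} Fix $\e>0$ and take the explicit maximizer $\psi_{\alpha+\e}$ at area level $\tfrac12+\alpha+\e$; mesh it at $q_j=(x_jn,\psi_{\alpha+\e}(x_j)n)$ and, in each rectangle $[q_{j-1},q_j]$, concatenate the LPP geodesics between opposite corners to build an up–right path $\gamma^\star$ from $(0,0)$ to $(n,n)$. The LPP lower-tail estimate (failing with probability $e^{-cn}$ or better) gives $|\gamma^\star|\ge 2n\sum_j\sqrt{(x_j-x_{j-1})\,\Delta y_j}-kn^{2/3+o(1)}$, which after refining the mesh is $\ge 2n\,\mathrm{w}_{\alpha+\e}-o(n)$; the transversal-fluctuation bound $O\big((\Delta n)^{2/3}\big)$ per block makes $A(\gamma^\star)$ differ from $(\tfrac12+\alpha+\e)n^2$ by only $O(n^{5/3})$, so $\gamma^\star$ is admissible for all large $n$ and $L_\alpha(n)\ge|\gamma^\star|\ge 2n\,\mathrm{w}_{\alpha+\e}-o(n)$. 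Letting $\e\downarrow 0$ and using continuity yields $\liminf_n L_\alpha(n)/n\ge 2\mathrm{w}_\alpha$ off a stretched-exponentially small event, so $L_\alpha(n)/n\to 2\mathrm{w}_\alpha$ in probability with error $e^{-n^c}$. Finally $0\le L_\alpha(n)\le L\big((0,0),(n,n)\big)\le \#(\Pi\cap[0,n]^2)$, and the last quantity divided by $n$ is uniformly integrable; this upgrades convergence in probability to $\E L_\alpha(n)=2\mathrm{w}_\alpha n+o(n)$.

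\textbf{Main obstacle.} The delicate step is the uniform upper bound: the coarse-graining must be arranged so that the union bound over the $\approx 4^k$ block structures stays compatible with the sharp one-point LPP upper-tail exponent, that the accumulated block-wise fluctuations and small-block corrections are genuinely $o(n)$, and that passing from $\gamma$ to its polygonal surrogate costs only $O(1/k)$ in the trapped area. On the deterministic side, the single fortunate point — which must be verified carefully rather than merely asserted — is that this concave variational problem admits the closed form recorded in \eqref{implicit1}; the lower bound and the passage from probability to expectation are comparatively routine.
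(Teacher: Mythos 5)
Your proposal is correct and its probabilistic skeleton is the one the paper uses: the same variational value $\mathrm{w}_\alpha$, an upper bound by coarse-graining an arbitrary admissible path into $k$ blocks, bounding its weight by a sum of unconstrained block passage times controlled via a union bound and large/moderate-deviation estimates, and comparing the polygonal surrogate (whose trapped area differs from $A(\gamma)$ by $O(n^2/k)$, the paper's Lemma \ref{l:approx3}) against the continuous problem at level $\alpha-O(1/k)$; a lower bound by discretizing the explicit maximizer at area level $\alpha+\e$ and concatenating geodesics; and passage to the mean via the trivial Poisson bound on the total point count. The one genuinely different component is the variational problem itself: the paper (Section \ref{s:var}) proves existence by compactness and a Legendre-transform semicontinuity argument, rules out singular parts by a local surgery (Lemma \ref{l:ac}), proves uniqueness by strict concavity, and identifies $1/\sqrt{\dot\psi}$ as linear via orthogonality to a two-codimensional space of perturbations; you instead write down the Euler--Lagrange equation for the Lagrangian $\int(\sqrt{\psi'}+\lambda\psi)$ and invoke concavity. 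For Theorem \ref{t:lln} only the \emph{value} of the supremum over (piecewise linear, hence absolutely continuous) competitors is needed, so your duality shortcut is legitimate and shorter, provided you actually carry out the step you only assert: that stationarity plus concavity makes $\psi_\alpha$ a global maximizer of the Lagrangian over the whole convex class, whence $\int\sqrt{\psi'}\le\int(\sqrt{\psi'}+\lambda\psi)-\lambda(\tfrac12+\alpha)\le\int\sqrt{\dot\psi_\alpha}$ for every feasible a.c.\ $\psi$. (The paper needs its heavier machinery for Theorem \ref{lln2}, not for this statement.) Two small repairs: the family $\#(\Pi\cap[0,n]^2)/n$ is \emph{not} uniformly integrable (its mean is $n$); replace that phrase by the paper's truncation at $5n^2$ using Poisson tails, or by Cauchy--Schwarz against $\P(E^c)\le e^{-n^c}$. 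And the stated one-point tail $\P(L(u,v)\ge 2\sqrt{t}+t'\sqrt n)\le e^{-cn}$ is at an odd scale; what the union bound over $O(n^{2k})$ grid configurations actually requires is the linear-scale large-deviation bound (the paper's Lemma \ref{uni100}, from \cite{sepLDP}) or the $e^{-cs^{3/2}}$ moderate-deviation bound of Theorem \ref{t:moddev}, either of which suffices.
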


Notice that ${\mathrm{w}}_{\alpha}\to 1$ as $\alpha\to 0$; hence the above theorem  
%[\textcolor{red}{A:} you mean the theorem?] 
is consistent with the result in the classical unconstrained case. In the unconstrained model, one also has a law of large numbers for the geodesic, i.e., it is known that the geodesic is concentrated around the straight line joining $(0,0)$ and $(n,n)$. More precisely, under the rescaling that takes the $n\times n$ square to a unit square, the geodesic converges almost surely in Hausdorff distance to the diagonal of the unit square \cite{DZ2} (as mentioned before, the precise order of the transversal fluctuations are known to be $n^{2/3}$ as a consequence of integrability). Even though we do not have any exactly solvable structure in the constrained model, we can establish a similar law of large numbers here too asserting that the constrained geodesic concentrates around a deterministic curve. Moreover, we can identify the limiting curve in a fairly explicit manner. 

Let $\psi_{\alpha}:[0,1]\to [0,1]$ be defined as 
$\psi_{\alpha}(x)=\frac{(1+c_{\alpha})x}{1+c_{\alpha}x}$ where $c_{\alpha}$ is as above. Also let $\psi_{\alpha,n}: [0,n]\to [0,n]$ be the $n$ blow up of $\psi_{\alpha}$ i.e. $\psi_{\alpha,n}(x)=n \psi_{\alpha}(x/n)$. We denote by ${\rm dist}(\cdot,\cdot)$ the Hausdorff distance. 
%as before [\textcolor{red}{A:} see my earlier comment]. 
The following theorem is our law of large numbers for the constrained geodesic.

\begin{theorem}\label{lln2} For any $\alpha \in (0,1/2)$ and $\Delta>0$, there exists $c=c(\alpha,\Delta)$ such that, for all large enough $n$, it is  with probability at least $1-e^{-cn}$ that $${\rm dist}(\Gamma_{\alpha,n},\psi_{\alpha,n})\le \Delta n.$$
\end{theorem}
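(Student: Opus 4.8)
The plan is to run a compactness-plus-concentration argument in the spirit of \cite{DZ2}, built on top of the explicit variational analysis underlying Theorem~\ref{t:lln}. Rescale the $n\times n$ box to the unit square and recall from (the proof of) Theorem~\ref{t:lln} that $\mathrm{w}_\alpha=\sup\ell(\psi)$, where the supremum runs over nondecreasing curves $\psi\colon[0,1]\to[0,1]$ from $(0,0)$ to $(1,1)$ (vertical and horizontal segments allowed) subject to the area constraint $\int_0^1\psi\ge\tfrac12+\alpha$, with $\ell(\psi)=\int_0^1\sqrt{\psi'_{\mathrm{ac}}}$ where only the absolutely continuous part of $\psi'$ contributes, and that $\psi_\alpha$ is the unique maximizer. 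The key analytic ingredient I will extract is a \emph{stability} estimate for this problem: for each $\Delta>0$ there is $\eta=\eta(\alpha,\Delta)>0$ such that every admissible $\psi$ with $\int_0^1\psi\ge\tfrac12+\alpha-\eta$ and $\mathrm{dist}(\psi,\psi_\alpha)\ge\Delta/2$ obeys $\ell(\psi)\le\mathrm{w}_\alpha-\eta$. This follows by a routine compactness argument: the relevant family of curves is compact in the Hausdorff metric, $\psi\mapsto\int_0^1\psi$ is Hausdorff-continuous on it, and $\ell$ is upper semicontinuous (being an infimum of the Hausdorff-continuous linear functionals $\psi\mapsto\tfrac12\int_0^1 a\,d\psi+\tfrac12\int_0^1 a^{-1}$ over continuous $a>0$) and strictly concave in $\psi'_{\mathrm{ac}}$, so a sequence of putative counterexamples would converge to an admissible maximizer distinct from $\psi_\alpha$. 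Much of this is already present in the proof of Theorem~\ref{t:lln}.

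Next I record two probabilistic inputs, each holding with probability $1-e^{-cn}$. First, a concentrated lower bound $L_\alpha(n)\ge 2\mathrm{w}_\alpha n-o(n)$: for small $\alpha'>\alpha$ follow a polygonal approximation of $\psi_{\alpha',n}$ whose trapped area still exceeds $(\tfrac12+\alpha)n^2$ after discretisation, cut it into $O(1)$ cells, apply the lower-tail large-deviation bound for Poissonian last passage in each cell together with superadditivity to concatenate, and let $\alpha'\downarrow\alpha$ using continuity of $\alpha'\mapsto\mathrm{w}_{\alpha'}$; this also re-derives the lower half of Theorem~\ref{t:lln}. Second, a uniform upper concentration on a fixed mesh: having fixed $\Delta$ (hence $\eta$), take $k=k(\eta,\Delta)$ large, partition $[0,n]^2$ into cells of side $n/k$, and use the upper-tail large-deviation bound for last passage in boxes of side $O(n/k)$, applied along a polynomially fine net of endpoint pairs in each cell, together with a union bound over the $O(1)$ cells, to obtain that simultaneously for all $u\preceq v$ in a common cell one has $L(u,v)\le 2\sqrt{\mathrm{area}(u,v)}+\tfrac{\eta}{100}\,n/k$, where $\mathrm{area}(u,v)$ is the area of the box spanned by $u,v$. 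Since $k$ is $n$-independent, the $e^{-cn}$ rate survives all the union bounds.

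Now I coarse-grain the geodesic and close the argument. The monotone path $\Gamma_{\alpha,n}$ visits a monotone sequence of $r\le 2k$ cells; writing $(nw_j,nh_j)$ for the displacement of its portion in the $j$-th visited cell, let $B$ be the unit-square polygonal curve with consecutive segments $(w_1,h_1),\dots,(w_r,h_r)$. Then $B$ and $\Gamma_{\alpha,n}/n$ pass within $\sqrt2/k$ of one another in Hausdorff distance (the joints of $B$ lie on $\Gamma_{\alpha,n}/n$, and conversely within a cell), their trapped areas differ by $O(n^2/k)$, and — crucially using the per-cell bound above rather than the crude corner-to-corner estimate — $|\Gamma_{\alpha,n}|\le\sum_j L(\mathrm{entry}_j,\mathrm{exit}_j)\le\sum_j 2\sqrt{n^2w_jh_j}+\tfrac{\eta}{10}n=2n\sum_j\sqrt{w_jh_j}+\tfrac{\eta}{10}n=2n\,\ell(B)+\tfrac{\eta}{10}n$, the partial sums reassembling exactly into the length functional of $B$ with no loss of constants. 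On the intersection of the two high-probability events, suppose $\mathrm{dist}(\Gamma_{\alpha,n},\psi_{\alpha,n})>\Delta n$. By the triangle inequality for Hausdorff distance, $\mathrm{dist}(B,\psi_\alpha)>\Delta-\sqrt2/k\ge\Delta/2$, while $\int_0^1 B\ge\tfrac12+\alpha-O(1/k)\ge\tfrac12+\alpha-\eta$ for $k$ large; the stability estimate then forces $\ell(B)\le\mathrm{w}_\alpha-\eta$, whence $|\Gamma_{\alpha,n}|\le 2n(\mathrm{w}_\alpha-\eta)+\tfrac{\eta}{10}n<2\mathrm{w}_\alpha n-o(n)\le L_\alpha(n)=|\Gamma_{\alpha,n}|$, a contradiction. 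Hence $\mathrm{dist}(\Gamma_{\alpha,n},\psi_{\alpha,n})\le\Delta n$ off an event of probability $e^{-cn}$.

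The main obstacle, I expect, lies not in any single estimate but in the two ``bridging'' steps. One is the quantitative stability of the variational problem, and in particular the upper semicontinuity of $\ell$ in the presence of singular (vertical) parts of curves, which has to be formulated so as to interface cleanly with the mesh; the other is the coarse-graining bookkeeping, where one must bound $|\Gamma_{\alpha,n}|$ cell by cell tightly enough that the bound assembles into precisely $2n\,\ell(B)+o(n)$ — the naive corner-to-corner bound would lose a factor of two and destroy the argument — while simultaneously transferring the area constraint from $\Gamma_{\alpha,n}$ to its coarse-graining $B$. The last-passage large-deviation estimates used as inputs, by contrast, are classical, indeed among the black-box facts the paper already imports, and the insistence that the mesh parameter $k$ be chosen independently of $n$ is exactly what keeps every union bound compatible with the $e^{-cn}$ failure probability.
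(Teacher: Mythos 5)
Your proposal is correct, and its probabilistic skeleton coincides with the paper's: both coarse-grain the constrained geodesic onto an $n$-independent mesh (your polygonal curve $B$ is essentially the paper's $\gamma_{\delta,\eta}$), both use speed-$n$ large-deviation bounds so that union bounds over the fixed mesh preserve the $e^{-cn}$ rate, and both derive a contradiction between the length of the coarse-grained geodesic, the functional $J$, and the lower bound $L_\alpha(n)\ge 2(\mathrm{w}_\alpha-\e)n$. Where you genuinely diverge is in the deterministic heart of the argument. The paper makes the instability of non-optimal curves \emph{quantitative}: it forms the average $h=\tfrac12(\gamma_{\delta,\eta}+\psi_\alpha)$, shows via strict concavity of $x\mapsto\sqrt{x}$ and a pointwise Taylor bound that $J(h)\ge \tfrac12\bigl(J(\gamma_{\delta,\eta})+J(\psi_\alpha)\bigr)+\Delta^2/C^2$ whenever $\sup_x|\gamma(x)-\psi_\alpha(x)|\ge\Delta$, and then contradicts the continuity of $\alpha\mapsto\mathrm{w}_\alpha$. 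You instead prove a \emph{qualitative} stability lemma by compactness: a sequence of near-maximizers far from $\psi_\alpha$ would converge weakly to a second maximizer, contradicting Lemma \ref{l:uni}; this requires the upper semicontinuity of $J$ (the same Legendre-transform representation the paper already uses in the proof of Lemma \ref{l:exist}) plus the observation that weak convergence of monotone functions to the continuous limit $\psi_\alpha$ upgrades to uniform, hence Hausdorff, convergence. The trade-off: the paper's route gives an explicit $\Delta^2$ gap (and is reused essentially verbatim to get the $\alpha$-uniform Theorem \ref{flat1}, where having an explicit modulus helps), while yours is shorter and avoids the Taylor-expansion bookkeeping on the set $S$ at the cost of a non-effective $\eta(\alpha,\Delta)$, which you would need to re-examine (e.g.\ by a further compactness argument in $\alpha$) if you wanted the uniform-in-$\alpha$ statement.
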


In particular, this theorem states that to first order, the constrained geodesics behave like a given smooth curve. This result will be useful to us while studying the scaling exponents for local roughness of the constrained geodesic, in particular, when we see to  rule out long and flat facets (see Theorem \ref{flat0}).

\subsection{Open questions and future directions}\label{oq}

Below we list  below several interesting questions for future research:
\begin{enumerate}
\item The upper bound Theorem \ref{t:mflub} is weaker than the lower bound  Theorem \ref{t:mfllb}. Strengthening the former is a natural open problem. 
\item By definition, $A(\Gamma_{\alpha,n})\ge (\frac{1}{2}+\alpha)n^2$. The typical order of $A(\Gamma_{\alpha,n})- (\frac{1}{2}+\alpha)n^2$ remains unknown. 

\item For $\alpha \in (0,1/2)$,
what is the typical deviation of $\Gamma_{\alpha,n}$ from the curve $\psi_{\alpha,n}$? What is the order of fluctuations of $L_{\alpha}(n)$?
% which is known to be $O(n^{1/3})$ for $\alpha=0$. 
%\item  In fact a large deviation principle for the upper tail fluctuations for $L_{\alpha}(n)$ can be established using arguments similar to the ones presented in \cite{DZ1}. [\textcolor{red}{A:} you might be asked to elaborate on this by a referee.] However establishing the same for the  lower tail seems more challenging and remains an open question.
\item In a phase separation problem, \cite{AH1,AH2,AH3} determines the polylogarithmic corrections to both elements of the $(2/3,1/3)$ (facet length,local roughness) exponent pair. The powers of the logarithm are $(1/3,2/3)$. Finding such corrections for the \emph{Area Trapping Polymer} model would refine  the identification of the exponent pair $(3/4,1/2)$ made in this article and suggested by the first point.
The first two points will be discussed further in 
Section~\ref{areafluc}.
\end{enumerate}
\noindent
\textbf{Supercritical Percolation}: We end this section with a conjecture regarding fluctuation exponents in the context of supercritical percolation on the nearest neighbor graph on $\Z^2$. As already mentioned before, \cite{BLPR} settles a conjecture of Benjamini regarding the limit shape of  isoperimetrically extremal  sets. Formally, for supercritical percolation, with the origin $\mathbf{0}$ conditioned to be in the infinite cluster $\mathcal{C}_{\infty}({\bf{0}})$, the authors in \cite{BLPR} consider  the `anchored isoperimetric profile', i.e. for any $r>0$ they look at the  set $\mathcal{B}_{r}$ which solves the following isoperimetric problem:
$$\inf \left\{\frac{|\partial B|}{|B|}: {\bf{0}}\in B, B\subset \mathcal{C}_{\infty}({\bf{0}}) \text{ is connected}, |B|\le r \right\}$$ where $\partial B$ denotes the edge boundary of $B,$  restricted to $\mathcal{C}_{\infty}(\bf{0})$ (for more details see \cite{BLPR}).
The main result in \cite{BLPR} shows the convergence of the set $\mathcal{B}_r$  as $r\to \infty$ after suitable renormalization, to a deterministic Wulff crystal, in the Hausdorff sense.  To go beyond first order behaviour one has to understand local  geometric properties of the boundary of  $\mathcal{B}_r$. 
The  extremal circuit broadly has to satisfy the:
\begin{enumerate}
\item Volume condition
\item Extremal isoperimetry condition
\end{enumerate}
The heuristic now is that the former is a global constraint while the latter is only local. 
Namely, each local part of the boundary does not feel the volume  constraint $|\mathcal{B}_r|\le r,$ and thereby just tries to move through the `open' edges while trying to minimize the number of open edges that it cuts across, since these are precisely the edges that contribute to $|\partial \mathcal{B}_r|$.
 This brings us within the realm of first passage percolation  predicted to be in the KPZ universality class as well(see \cite{FPPsurvey} for more on first passage percolation).
 Thus Theorems, \ref{t:mfllb},  \ref{t:mlrlb},  \ref{t:mflub}  and \ref{t:mlrub}  can be thought of as rigorous   counterparts to the above discussion in the integrable last passage percolation setting and combined with the above discussion allows us to leads us to conjecture fluctuation exponents for the above model. Below we formulate a precise statement in a slightly simpler  setting: 
 
 Recall the standard definition of the dual graph of  the nearest neighbor lattice on $\Z^2$. 
Given a supercritical bond percolation environment on the edge set of $\Z^2$ with density $p>p_c(\Z^2)=1/2$, for every positive integer $n,$ consider the set $\mathcal{D}_{n^2},$ of dual  circuits (simple loop consisting of dual edges) enclosing a connected subset  of $\Z^2$ of size  $n^2$ and containing the origin such that the number of primal open edges in the percolation environment that the circuit cuts through is minimized. Note that such a circuit is not necessarily unique due to the discrete nature of the problem. 
Thus the above model is an exact analogue of the model considered in this paper, in the context of  first passage percolation in a bond percolation environment.
We now state precisely our conjecture:  

\noindent
\textbf{Conjecture}:
Consider  bond percolation on the nearest neighbor lattice on $\Z^2$ with any supercritical parameter value 
  $p>p_c(\Z^2)$.  The random variables 
$$\max_{D\in \mathcal{D}_{n^2}}\left|\frac{\log(\mathrm{MFL}(D))}{\log n}-3/4\right| \, \, \text{ and } \, \,  \max_{D\in \mathcal{D}_{n^2}}\left|\frac{\log(\mathrm{MLR}({D}))}{\log n}-1/2\right|$$ converge to zero  in probability   as $n$ grows to infinity
where for any $D\in \mathcal{D}_{n^2}$, the maximum facet length $\mathrm{MFL}(D)$ and maximum local roughness $\mathrm{MLR}(D)$ are defined in the same way as in this paper by considering the convex hull of the points in $D$.

We end with the remark that  the problem considered in \cite{BLPR}
corresponds to a similar first passage percolation problem where the environment has bounded dependence range and hence should have the same fluctuation behaviour as the simpler model
just described. 
 
\subsection{Organization of the rest of the article}
 In Section \ref{s:aux2} we collect some preliminary probabilistic results: some for the constrained model and a few from the unconstrained model. The results for the unconstrained model follow from the sharp moderate deviation estimates in \cite{LM01, LMS02} and the consequences established in \cite{BSS14}. In Section~\ref{s:var}, we set up and solve the variational problem for the law of large numbers and the following Section \ref{s:lln} is devoted to the proofs of Theorem \ref{t:lln} and Theorem \ref{lln2}: the law of large numbers for the length of constrained geodesic and the path itself.
 We next turn to the proofs of the results on the scaling exponents. In Section \ref{s:lb} we provide proofs of Theorems \ref{t:mfllb} and Theorem \ref{t:mlrlb}. The proofs of Theorem \ref{t:mflub} and Theorem \ref{t:mlrub} are completed in Section \ref{s:ub}. Proofs of some of the auxiliary results stated and used throughout the article are postponed to Section \ref{es}.

\subsection*{Acknowledgments}
%\textcolor{red}{update this}
The authors thank Marek Biskup, Craig Evans and Ofer Zeitouni for useful discussions. S.G.'s research is supported by a Miller Research Fellowship at UC Berkeley. A.H. is supported by NSF grant DMS-1512908. 

\section{Important probability estimates}
%\textcolor{red}{check this title}
\label{s:aux2}
In this section we gather the  probabilistic inputs needed for our proof. First we shall record a few useful facts about the length and geometry of the constrained geodesics that will be used repeatedly later. The bulk of this section will then recall results about the unconstrained model. These results are all consequences of the exactly solvable nature of the Poissonian LPP model and can be derived starting with the basic integrable ingredients of the exactly solvable model obtained by Lowe, Merkl and Rolles \cite{LM01,LMS02}. Some of these consequences were established and used recently in \cite{BSS14} and we quote the relevant results here.

We start with some basic facts about the constrained model.

\subsection{Some Basic Results for the Area Trapping Polymer Model}
We first start with concentration for $L_{\alpha}(n)$. In absence of integrability we use the standard Poinc\'are inequality techniques and obtain a concentration at $n^{1/2+o(1)}$ scale. 

\begin{theorem}\label{concentration1} Fix any $\alpha \in (0,1/2)$. Then there exists a constant $C>0$ such that for all $t>0$ $$\mathbb{P}(|L_{\alpha}(n)-\E(L_{\alpha}(n))| \ge t) \le Ce^{-t^{2}/Cn \log^{2}(n)}.$$
\end{theorem}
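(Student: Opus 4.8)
\emph{Strategy.} I would derive Theorem~\ref{concentration1} from a concentration inequality for \emph{self-bounding} functionals of the Poisson process. Write $L_n:=L\big((0,0),(n,n)\big)$ for the unconstrained passage time, and call a path \emph{admissible} if it traps area at least $\big(\tfrac12+\alpha\big)n^2$. As every directed path from $(0,0)$ to $(n,n)$ lies in $[0,n]^2$, only $\Pi\cap[0,n]^2$ matters, and we regard $L_\alpha(n)=L_\alpha(\Pi)$ as a functional of a Poisson process of total intensity $n^2$. The first step is the bounded-difference estimate: for every $x\in[0,n]^2$,
\[
0\ \le\ L_\alpha(\Pi\cup\{x\})-L_\alpha(\Pi)\ \le\ 1 .
\]
The lower bound is trivial. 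For the upper bound, let $\gamma=(v_0\preceq v_1\preceq\cdots\preceq v_k)$ be the longest admissible path for $\Pi\cup\{x\}$; if $x\notin\gamma$ it is already admissible for $\Pi$ and the difference is $0$. If $x=v_j$ is an interior vertex, delete it and reconnect $v_{j-1}$ to $v_{j+1}$ by an ``up-then-right'' detour --- the path that rises vertically from $v_{j-1}$ and then runs horizontally to $v_{j+1}$, realised through points of $\Pi$ closely hugging this corner. The resulting path $\tilde\gamma$ uses only $\Pi$-points, has $|\tilde\gamma|\ge|\gamma|-1$, and --- the crucial point --- traps \emph{at least} as much area as $\gamma$: within the vertical strip spanned by $v_{j-1}$ and $v_{j+1}$ the up-then-right corner is the pointwise highest monotone path joining these points, hence encloses at least the area of the removed arc $v_{j-1}\to x\to v_{j+1}$; outside the strip $\tilde\gamma$ and $\gamma$ coincide. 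So $\tilde\gamma$ is admissible for $\Pi$, giving $L_\alpha(\Pi)\ge|\tilde\gamma|\ge|\gamma|-1$. The same inequality with $\Pi\setminus\{x\}$ in place of $\Pi$ yields $0\le L_\alpha(\Pi)-L_\alpha(\Pi\setminus\{x\})\le1$ for $x\in\Pi$. I expect this geometric lemma --- that a localised reroute never violates the area constraint --- to be the main obstacle: the detour must be a legal $\Pi$-path and must not lose area against the idealised corner, which needs a short case analysis on the size of the triangle swept by $v_{j-1}\to x\to v_{j+1}$ (a large triangle forces many $\Pi$-points into the surrounding rectangle, so the detour is realised directly; a small one leaves ample spare area above $\gamma$, recovered by lifting some other segment of $\gamma$ through nearby $\Pi$-points).

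\emph{Self-boundedness.} Call $x\in\Pi$ \emph{essential} if $L_\alpha(\Pi\setminus\{x\})<L_\alpha(\Pi)$. If $x$ does not lie on the constrained geodesic $\Gamma_{\alpha,n}$, then $\Gamma_{\alpha,n}$ is still an admissible path in $\Pi\setminus\{x\}$ of length $L_\alpha(\Pi)$, so $x$ is not essential; hence every essential point lies on $\Gamma_{\alpha,n}$, which carries exactly $|\Gamma_{\alpha,n}|=L_\alpha(\Pi)$ points of $\Pi$. Together with the bounded-difference estimate this gives
\[
L_\alpha(\Pi)-L_\alpha(\Pi\setminus\{x\})\in\{0,1\}\ \ (x\in\Pi),\qquad \sum_{x\in\Pi}\bigl(L_\alpha(\Pi)-L_\alpha(\Pi\setminus\{x\})\bigr)=\#\{\text{essential points}\}\ \le\ L_\alpha(\Pi),
\]
i.e.\ $L_\alpha$ is a self-bounding Poisson functional.

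\emph{Conclusion.} A Boucheron--Lugosi--Massart-type concentration bound for self-bounding Poisson functionals --- the Poisson counterpart, obtainable from the modified logarithmic Sobolev inequality on Poisson space (or, after conditioning on the total point count $N\sim\mathrm{Poisson}(n^2)$, from the classical i.i.d.\ version) --- gives, with $V:=\mathbb{E}L_\alpha(n)$,
\[
\mathbb{P}\bigl(L_\alpha(n)\ge V+t\bigr)\le e^{-t^2/(2V+\frac23 t)},\qquad \mathbb{P}\bigl(L_\alpha(n)\le V-t\bigr)\le e^{-t^2/(2V)} .
\]
A first-moment bound controls $V$: since the expected number of length-$m$ increasing chains of $\Pi$ in $[0,n]^2$ equals $n^{2m}/(m!)^2$, one has $V\le\mathbb{E}L_n=\sum_{m\ge0}\mathbb{P}(L_n\ge m)\le\sum_{m\ge0}\min\bigl(1,n^{2m}/(m!)^2\bigr)=O(n)$. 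Hence $\mathbb{P}\bigl(|L_\alpha(n)-\mathbb{E}L_\alpha(n)|\ge t\bigr)\le 2e^{-t^2/(Cn)}$ for $0\le t\le Cn$, which comfortably implies the stated estimate in this range --- in particular at the scale $t=n^{1/2+o(1)}$ used in the applications, where the $\log^2 n$ factor is not actually needed --- while for $t\gtrsim n$ one combines $L_\alpha(n)\le L_n$ with the super-exponential upper tail of $L_n$. (If one prefers, the stated $\log^2 n$ form follows more directly from a Doob-martingale estimate revealing $\Pi$ over a partition of $[0,n]^2$ into cells of area $\Theta(\log n)$, the per-cell increments then being $O(\log n)$ by the bounded-difference estimate.)
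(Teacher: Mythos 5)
Your route (per-point Lipschitz bound plus a self-bounding/Boucheron--Lugosi--Massart inequality on Poisson space) is genuinely different from the paper's, which instead reveals $\Pi$ over the $2n$ anti-diagonal strips $\{i\le x+y<i+1\}$, truncates to paths using at most $k=\Theta(\log n/\log\log n)$ points per strip so that the Doob-martingale increments are bounded by $k$, and applies Azuma--Hoeffding. But your argument has a genuine gap at exactly the step you flag as ``the main obstacle,'' and the case analysis you sketch does not close it. The bound $L_\alpha(\Pi\cup\{x\})-L_\alpha(\Pi)\le 1$ is false as a deterministic statement, and every inequality you invoke (the modified log-Sobolev/self-bounding bound on Poisson space, or Azuma after conditioning on the total count) needs the add-one cost to be bounded \emph{pointwise}, not merely with high probability. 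In this model a directed path is by definition a concatenation of straight segments between consecutive $\Pi$-points, so when the open rectangle spanned by $v_{j-1}$ and $v_{j+1}$ contains no other $\Pi$-points, the only legal local reconnection after deleting $x=v_j$ is the chord $v_{j-1}v_{j+1}$ --- the area-\emph{minimal} monotone path --- not the idealised ``up-then-right'' corner. If $x$ lies well above that chord, deletion costs area of order the triangle $v_{j-1}xv_{j+1}$, which can be $\Theta(n^2)$; the resulting path may then violate $A(\gamma)\ge(\tfrac12+\alpha)n^2$, and the longest admissible path in $\Pi\setminus\{x\}$ can be shorter by far more than $1$ (e.g.\ if all of $\Pi$ except one high point $z$ hugs the diagonal, the unique long admissible path must use $z$, and the deficit is $\Theta(n)$; such configurations have probability $e^{-\Theta(n^2)}$, but that is irrelevant to a pointwise hypothesis). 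Your proposed repairs --- that a large triangle ``forces'' many $\Pi$-points into the surrounding rectangle, or that lost area can be ``recovered by lifting some other segment through nearby $\Pi$-points'' --- are probabilistic, not deterministic, and the second may also cost more than one point of length. The self-bounding identity inherits the same defect, since $\sum_{x\in\Pi}\bigl(L_\alpha(\Pi)-L_\alpha(\Pi\setminus\{x\})\bigr)\le L_\alpha(\Pi)$ is deduced only from the (false) claim that each summand is at most $1$.

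To salvage the approach you would first have to replace $L_\alpha$ by a truncated functional that is genuinely $1$-Lipschitz in the points and agrees with $L_\alpha$ outside an event of stretched-exponentially small probability, together with an honest argument that the local reroute preserves admissibility; this is in effect the role played by the truncation $\Gamma_k$ and the strip geometry in the paper's proof, where the removed points of the path within a single $\ell_1$-width-one strip are forced to be close together. The peripheral ingredients of your write-up are fine: the chain-counting bound $\E L_n=O(n)$ and the Poisson upper tail for $t\gtrsim n$ are correct, and would give a bound even stronger than the stated one if the Lipschitz property held. The parenthetical alternative with cells of area $\Theta(\log n)$ would not, however: Azuma over $n^2/\log n$ cells with increments $O(\log n)$ (which are in any case not deterministic) only yields $e^{-t^2/(Cn^2\log n)}$, much weaker than $e^{-t^{2}/(Cn \log^{2}n)}$; one needs a decomposition, like the paper's diagonal strips, in which any single path meets only $O(n)$ blocks.
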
 
The proof is standard and is postponed until the end of the paper in Section \ref{es}. 

Our next result  rules out flat facets.
This is a consequence of Theorem \ref{lln2}, which asserts that it is extremely unlikely that any interior facets (which are those in the bulk) have  very shallow or steep gradient.
\begin{theorem}\label{flat0} For any small enough $\delta>0$ there exists $\gamma>0$ such that,   with probability $1-e^{-cn}$, all $\delta-$interior facets make an angle with the $x-$axis which lies in the interval $(\omega,\pi/2-\omega)$.
\end{theorem}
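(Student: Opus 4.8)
The strategy is to deduce the statement entirely from the law of large numbers for the constrained geodesic, Theorem~\ref{lln2}, together with elementary geometry of the explicit limit curve $\psi_{\alpha,n}$. Fix $\alpha\in(0,1/2)$ and a small $\delta$ (say $\delta<\pi/4$), abbreviate $h:=\psi_{\alpha,n}$, and note first that, since $\psi_{\alpha}(x)=\tfrac{(1+c_{\alpha})x}{1+c_{\alpha}x}$ has $\psi_{\alpha}'(x)=\tfrac{1+c_{\alpha}}{(1+c_{\alpha}x)^{2}}$, the curve $\psi_{\alpha}$ is smooth, increasing and strictly concave on $[0,1]$ with slope confined to the compact interval $[1/L,L]\subset(0,\infty)$, where $L:=1+c_{\alpha}$; hence $h$ is concave, $L$-Lipschitz, has slope in $[1/L,L]$, and is pinned with $h(0)=0$, $h(n)=n$. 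In one sentence: since the polymer is trapped near the concave curve $h$ of bounded slope, its least concave majorant should not be able to carry a bulk facet much steeper than $L$ or much shallower than $1/L$, and the goal is to make this quantitative.

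\textbf{Step 1: forcing the majorant to follow $h$ in the bulk.} I would fix a small $\Delta>0$ (to be chosen in terms of $(\alpha,\delta)$ only at the end) and work on the event $\mathcal{E}$ of Theorem~\ref{lln2} that ${\rm dist}(\Gamma_{\alpha,n},h)\le\Delta n$, so that $\P(\mathcal{E}^{c})\le e^{-cn}$. Let $g\colon[0,n]\to[0,n]$ be the height function of $\Gamma^{*}_{\alpha,n}$, which is concave with $g(0)=0$ and $g(n)=n$. On $\mathcal{E}$ every point of $\Gamma_{\alpha,n}$ lies within vertical distance $(L+1)\Delta n$ of the graph of $h$, hence below the concave function $h+(L+1)\Delta n$, so taking concave majorants gives $g\le h+(L+1)\Delta n$ on $[0,n]$. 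For the matching lower bound away from the corners, concavity of $g$ together with this upper bound and $h(x)\le Lx$ first show that $g$ is Lipschitz on $[\tfrac\eta4 n,(1-\tfrac\eta4)n]$ with a constant depending only on $(\alpha,\delta)$ (here $\eta=\eta(\alpha,\delta)>0$ is the constant produced in Step~2); then, since every point of $h$ over that interval lies within $\Delta n$ of $\Gamma_{\alpha,n}\subseteq\{y\le g\}$, tracking such a point horizontally with this Lipschitz bound yields $g\ge h-C\Delta n$ on $[\tfrac\eta2 n,(1-\tfrac\eta2)n]$, $C=C(\alpha,\delta)$ (keeping $\Delta$ small enough that $\Delta n\le\tfrac\eta4 n$).

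\textbf{Step 2: locating the $\delta$-interior facets, and bounding their slopes.} On $\mathcal{E}$ each facet endpoint at abscissa $x$ has ordinate in $[h(x)-\Delta n,h(x)+\Delta n]$; substituting this into the $\delta$-interiority inequalities $\tfrac{n-x_A}{y_A}\ge\tan\delta$ and $\tfrac{y_B}{n-x_B}\ge\tan\delta$ (the angles from $O=(n,0)$) and using $h(x)\le Lx$ and $h(x)\ge h(n/2)>0$ for $x\ge n/2$, a short case analysis produces $\eta=\eta(\alpha,\delta)>0$ such that every $\delta$-interior facet has both endpoints with abscissa in $[\eta n,(1-\eta)n]$. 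Now take such a facet $AB$, with endpoints at abscissae $x_B\le x_A$ in $[\eta n,(1-\eta)n]$ and slope $s$. By concavity of $g$, $s$ is at most the mean slope of $g$ over the interval $[\tfrac\eta2 n,x_B]$ that abuts the facet on the left, so Step~1 gives
\[ s \;\le\; \frac{g(x_B)-g\big(\tfrac\eta2 n\big)}{x_B-\tfrac\eta2 n} \;\le\; \frac{\big(h(x_B)-h\big(\tfrac\eta2 n\big)\big)+2C\Delta n}{x_B-\tfrac\eta2 n} \;\le\; L+\frac{4C\Delta}{\eta}, \]
using $h(x_B)-h(\tfrac\eta2 n)\le L(x_B-\tfrac\eta2 n)$ and $x_B-\tfrac\eta2 n\ge\tfrac\eta2 n$. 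Symmetrically, comparing $s$ with the mean slope of $g$ over $[x_A,(1-\tfrac\eta2)n]$ and using $h'\ge1/L$ gives $s\ge\tfrac1L-\tfrac{4C\Delta}{\eta}$. Choosing $\Delta=\Delta(\alpha,\delta)$ so small that $\tfrac{4C\Delta}{\eta}\le\tfrac1{2L}$, we obtain $s\in[\tfrac1{2L},2L]$, whence $AB$ makes an angle with the $x$-axis in $[\arctan\tfrac1{2L},\arctan 2L]\subset(\omega,\tfrac\pi2-\omega)$ with $\omega:=\min\{\arctan\tfrac1{2L},\tfrac\pi2-\arctan 2L\}>0$. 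As this holds simultaneously for all $\delta$-interior facets on $\mathcal{E}$, the theorem follows with this $\omega$ and $c=c(\alpha,\delta)>0$.

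\textbf{The main obstacle.} The only genuinely delicate point is structural: comparing a facet's slope to a chord of $h$ over the facet's own horizontal extent is useless for \emph{short} facets, for which the $O(\Delta n)$ trapping error swamps the facet length. This is why I pass, via concavity of $g$, to mean slopes over windows of fixed positive length that merely abut the facet, reducing everything to the deterministic two-sided bound $|g-h|\le C\Delta n$ in the bulk. Establishing that bulk bound is then the main bookkeeping task, its one slightly subtle ingredient being the a priori Lipschitz control on the concave majorant away from the corners $(0,0)$ and $(n,n)$ — near which $g$ can genuinely be steep, which is exactly why the $\delta$-interior restriction is imposed.
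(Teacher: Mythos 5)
Your proof is correct and follows essentially the same route as the paper's: both arguments combine the law of large numbers (Theorem~\ref{lln2}), the concavity of $\Gamma^*_{\alpha,n}$, and the explicit form of $\psi_{\alpha,n}$ (strictly concave with slope in a compact subinterval of $(0,\infty)$), with $\delta$-interiority serving only to keep the facet endpoints macroscopically away from the corners. The sole difference is cosmetic: the paper sandwiches the facet slope between the slopes of the chords joining its endpoints to $(0,0)$ and $(n,n)$, whereas you first establish the two-sided vertical bound $|g-\psi_{\alpha,n}|=O(\Delta n)$ in the bulk and then compare against mean slopes over macroscopic windows abutting the facet.
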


 \begin{proof} 
We start by recalling a  simple fact: 
%[\textcolor{red}{A:} is this why you mention this now?]
 for two facets of $\Gamma^*_{\alpha,n}$ with starting points $u_1,u_3$ and ending points $u_2, u_4$ respectively where  $u_1 \preceq u_2 \preceq u_3 \preceq u_4,$ by convexity of $\Gamma^*_{\alpha,n}$, the angle made with the $x-$axis by the  facet $(u_1,u_2)$ is larger than the angle made by $(u_3,u_4)$. 

% The starting point and ending point of $\Gamma^*_{\alpha,n}$ are $(0,0)$ and $(n,n)$.  

%\textcolor{red}{(the facet notation is confusing)}
Consider any $\delta-$interior facet $(u_1,u_2),$ and let $u_{1}=(x,y)$.
Also let $L_1,L_2$ be the straight lines joining the origin to $u_1$ and $(n,n)$ to $u_2$ respectively. 
%[\textcolor{red}{A:} periods ending sentences should be outside dollar symbols in tex, I think.] 
Let $v_1$ and $v_2$ be the points of intersection of $L_1$ and $L_2$ with $\psi_{\alpha,n}$. Theorem \ref{lln2} implies that, for any $\e > 0$, with  probability at least $1-e^{-cn},$  the bound  
$$\max (|u_1-v_1|,|u_2-v_2|)\le \e n$$
holds for all $\delta$-interior facets, simultaneously. 
%\textcolor{red}{We are using here that it is an interior facet: say that clearly?}
Now, by the convexity of $\Gamma^{*}_{\alpha, n}$, the gradient of the facet $(u_1,u_2)$ is between the gradient of the lines joining $(0,0)$ to $u_1$ and $(n,n)$ to $u_2$.  Choosing $\e$ to be much smaller than $\delta,$ we see that the gradient of the facet $(u_1,u_2)$ plus an error of $O(\e)$ lies between the gradients of the lines joining  the origin and $v_1$ and $(n,n)$ and $v_2$ respectively. 
% [\textcolor{red}{A:} what does `up to an error' mean?]. 
Thus we are done by choosing $\e$ to be small enough and using the strict convexity of $\psi_{\alpha,n}$; (see Figure \ref{def}(a) for illustration). 
%[\textcolor{red}{A:} for what? for this specific point?] 
\end{proof}

We will later need Theorem \ref{flat1}, a strengthening 
%[\textcolor{red}{A:} right?] 
of the above result, which is uniform 
%[\textcolor{red}{A:} right?] 
in $\alpha$.  
%The proof is provided later. \textcolor{red}{point to where, is this proof actually written??}
We next state useful results that concern the unconstrained, exactly solvable, model. All of these are corollaries of the following moderate deviation estimates.

\subsection{Moderate deviation estimates}
Recall the polymer length 
%[\textcolor{red}{A:} could use phrase such as `the polymer length'] 
$L(\cdot,\cdot)$ from Definition \ref{geodesic1}.
Let $u=(u_1,u_2)\preceq v=(v_1,v_2)$ be such that $|u_1-v_1||u_2-v_2|=t$; i.e., $t$ is the area of the rectangle whose opposite corners are $u$ and $v$. Notice that,  by scale invariance of the Poisson point process, the distribution of $L(u,v)$ is a function merely of $t$.

\begin{theorem}[\cite{LM01,LMS02}]
\label{t:moddev}
Fix $\kappa>1$. Let $u$, $v$ as above be points such that the straight line joining $u$ and $v$ has gradient $m$, where 
$m\in ( \kappa^{-1} , \kappa)$. There exist positive constants $s_0$, $t_0$, $C$ and $c$ depending only on $\kappa$ such that, for all $t>t_0$ and $s>s_0$,
$$\P[|L(u,v)-2\sqrt{t}| \geq st^{1/6}]\leq Ce^{-cs^{3/2}}.$$
\end{theorem}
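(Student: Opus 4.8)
The plan is to prove the estimate by reducing, through two exact distributional identities, to a sharp moderate-deviation statement for the length of a longest increasing subsequence under the Poissonized Plancherel measure, and then to treat the two tails by the (integrable) combinatorial and determinantal machinery underlying the Baik--Deift--Johansson theorem.

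\emph{Step 1: reduction to a square and to increasing subsequences.} Write $a=|u_1-v_1|$ and $b=|u_2-v_2|$, so $ab=t$ and $m=b/a$. The coordinatewise-increasing affine map $(x,y)\mapsto(x\sqrt{b/a},\,y\sqrt{a/b})$ has unit Jacobian, hence carries the rate-one Poisson process to a rate-one Poisson process, preserves the order $\preceq$, and sends the rectangle with opposite corners $u,v$ to an axis-aligned square of area $t$. It therefore gives a measure-preserving bijection of directed paths that preserves $|\gamma|$, so $L(u,v)$ has the same law as $L(\square_t):=L\big((0,0),(\sqrt t,\sqrt t)\big)$; in this axis-aligned formulation the hypothesis $m\in(\kappa^{-1},\kappa)$ is thus irrelevant to the law (it only keeps the reduction map from degenerating, and the constants need not depend on $\kappa$ at all). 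By the standard identification of Poissonian directed last passage percolation with longest increasing subsequences, followed by the Robinson--Schensted--Knuth correspondence, $L(\square_t)\overset{d}{=}\lambda_1$, the length of the first row of a random Young diagram $\lambda$ drawn from the Poissonized Plancherel measure $\mathrm{PP}_t(\lambda)=e^{-t}\,t^{|\lambda|}\big(f^\lambda/|\lambda|!\big)^2$, where $f^\lambda$ is the number of standard Young tableaux of shape $\lambda$. It then suffices to bound $\P(\lambda_1\ge 2\sqrt t+st^{1/6})$ and $\P(\lambda_1\le 2\sqrt t-st^{1/6})$ under $\mathrm{PP}_t$, uniformly in $t>t_0$, $s>s_0$.

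\emph{Step 2: the upper tail.} I would first observe that the naive first-moment bound is \emph{not} sharp enough here: the expected number of increasing $k$-chains of the Poisson process in $\square_t$ equals $t^k/(k!)^2$, which by Stirling is $\sim\frac1{2\pi k}\big(e\sqrt t/k\big)^{2k}$ and so fails to decay for any $k<e\sqrt t$ --- in particular throughout the relevant window $k=2\sqrt t\,(1+o(1))$, since $e/2>1$. (It does already give the correct order $L(\square_t)=\Theta(\sqrt t)$ with stretched-exponential tails, a useful a priori input.) To get the sharp location and rate I would instead estimate $\P(\lambda_1\ge k)=\sum_{\lambda:\,\lambda_1\ge k}e^{-t}t^{|\lambda|}\big(f^\lambda/|\lambda|!\big)^2$ directly via the hook-length formula $f^\lambda/|\lambda|!=\prod_{c\in\lambda}h(c)^{-1}$: bound the hook product $\prod_{c\in\lambda}h(c)$ below by the product of the hooks in the first row and first column (and, if necessary, the first few rows), sum over the shape of $\lambda$ away from its first row, and recognize the result as a convergent series whose dominant term yields $Ce^{-cs^{3/2}}$. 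This is in essence the argument of L\"owe--Merkl \cite{LM01}; alternatively one can read the upper tail off the right tail $1-F_2(x)\sim e^{-\frac43 x^{3/2}}$ of the Tracy--Widom law once the determinantal asymptotics of Step 3 are in hand.

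\emph{Step 3: the lower tail, and the main obstacle.} Here exact solvability seems unavoidable. I would use the Gessel / Borodin--Okounkov identity $\P\big(L(\square_t)\le k\big)=e^{-t}\det\big[I_{i-j}(2\sqrt t)\big]_{i,j=0}^{k-1}$, a Toeplitz determinant with Bessel-function symbol --- equivalently a gap probability for the determinantal point process with the discrete Bessel kernel --- and carry out the Riemann--Hilbert steepest-descent analysis of the orthogonal polynomials on the unit circle with weight $e^{\sqrt t(z+z^{-1})}$, following Baik--Deift--Johansson \cite{BDJ99}. This gives convergence of $\P\big(L(\square_t)\le 2\sqrt t+xt^{1/6}\big)$ to $F_2(x)$ together with error bounds and tail decay that are \emph{uniform} for $x$ in a window of width of order $t^{1/3}$ about the origin; combined with the left-tail bound $F_2(-x)\le e^{-cx^3}$ (hence $\le e^{-cx^{3/2}}$ for $x\ge1$) and a crude comparison --- grid subdivision plus superadditivity along monotone staircases of subsquares, or monotonicity of $L(\square_t)$ in $t$ --- to dispose of $x$ outside that window, one obtains the claimed lower-tail bound. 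The delicate point, and the step I expect to be hardest, is making the Riemann--Hilbert asymptotics effective and uniform deep into the tails, and controlling with explicit constants the discrepancy between the discrete Bessel kernel and its Airy-kernel scaling limit; the expedient alternative is simply to quote L\"owe--Merkl--Rolles \cite{LMS02}, whose proof of the lower tail carries out precisely such an analysis.

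\emph{Step 4: assembly.} Combining Steps 2 and 3 and transporting the bounds back through the lossless identities of Step 1 completes the proof, with $s_0,t_0,C,c$ absolute (in particular independent of $\kappa$). It is worth stressing that the soft, model-robust tools --- first moments and subadditivity along grid staircases --- recover only the order of $L(\square_t)$ and exponential concentration, neither the constant $2$ nor the $\tfrac32$-power in the exponent, which is exactly why the integrable inputs of Steps 2--3 are indispensable.
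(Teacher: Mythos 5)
This theorem is quoted in the paper from \cite{LM01,LMS02} without proof, and your outline is a faithful and essentially correct account of how those references (together with \cite{BDJ99}) establish it: the unit-Jacobian shear reducing to the square of area $t$ (which is exactly the paper's remark that the law of $L(u,v)$ depends only on $t$, so that the constants are indeed $\kappa$-independent), RSK/Plancherel and hook-length estimates for the upper tail, and the Gessel--Toeplitz determinant with Riemann--Hilbert asymptotics for the lower tail. Since you correctly flag that the uniform, effective tail asymptotics are the genuinely hard integrable input and that the expedient is to cite \cite{LMS02}, your proposal matches the paper's treatment, which is precisely to take this estimate as a black box from that literature.
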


Observe that the above theorem implies that $|\E L(u,v) - 2\sqrt{t}|=O(t^{1/6})$; and hence similar tail bounds are true for the quantity $|L(u,v)-\E L(u,v)|$. When we make use of Theorem \ref{t:moddev}, it will often be in order to obtain tail bounds for the quantities $|L(u,v)-\E L(u,v)|$. Also the results of \cite{BDJ99} establish that $t^{-1/6} (L(u,v)-2\sqrt{t})$ converges weakly to the  GUE Tracy-Widom distribution (which is defined for example in \cite{BDJ99}). This law has negative mean, leading to the next bound.  

\begin{equation}
\label{e:mean}
\E L(u,v)\leq 2\sqrt{t}-Ct^{1/6}
\end{equation}
for some $C>0$.

\subsection{Transversal fluctuations of point-to-point geodesics}
Let $u=(u_1,u_2)\preceq v=(v_1,v_2)$. Let $y=p_{u,v}(x)$ denote the equation of the line segment joining $u$ and $v$. %[\textcolor{red}{A:} $v_1 = u_1$?] 
For any path $\gamma$ from $u$ to~$v$, define the maximum transversal fluctuation of the path $\gamma$ by 
$${\rm TF}(\gamma):= \sup_{x\in [u_1,v_1]} \{\sup |p_{u,v}(x)-y|: (x,y)\in \gamma\}.$$
For points $u=(u_1,u_2)$ and $v=(v_1,v_2)$, we  call $|v_1-u_1|$ the $x$-coordinate distance between $u$ and $v$ and similarly define the $y$-coordinate distance. 

Transversal fluctuations for paths between $(0,0)$ and $(n,n)$ were shown to be $n^{2/3+o(1)}$ with high probability in \cite{J00}. The following more precise estimate was established in \cite{BSS14}.

\begin{theorem}[Tails for Transversal Fluctuations, \cite{BSS14}]
\label{t:tftail}
Fix $\kappa>1$. Let $u\preceq v$ be points such that the $x$-coordinate distance between $u$ and $v$ is $r$ and the gradient of the line joining $u$ and~$v$ is $m$, where $m\in (\kappa^{-1}, \kappa)$. Then, for the geodesic $\Gamma$ from $u$ to $v$, 
%[\textcolor{red}{A:} as earlier, I don't like this imprecision], we have
$$\P[{\rm TF}(\Gamma) > sr^{2/3}]\leq Ce^{-cs}$$
for $\kappa$-dependent constants  $C,c$, and $r,s$ sufficiently large. 
\end{theorem}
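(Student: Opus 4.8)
The plan is to deduce the tail bound on ${\rm TF}(\Gamma)$ from the moderate deviation estimate of Theorem~\ref{t:moddev}, exploiting the parabolic curvature of the limit shape $t\mapsto 2\sqrt t$. Translate so that $u$ is the origin, and fix a point $p$ at transversal distance $h:=s r^{2/3}$ from the segment $[u,v]$; write $t,t_1,t_2$ for the areas of the axis-parallel rectangles spanned by $(u,v)$, $(u,p)$, $(p,v)$. A second-order expansion of the square root gives $2\sqrt{t_1}+2\sqrt{t_2}\le 2\sqrt t-c_0 h^2/r$ for a $\kappa$-dependent $c_0>0$. Combining this with the bound $|\E L(a,b)-2\sqrt{\mathrm{area}}|=O(\mathrm{area}^{1/6})$, which follows from Theorem~\ref{t:moddev}, and using $t,t_1,t_2\asymp r^2$, we obtain, for $s$ larger than a $\kappa$-dependent constant,
$$\E L(u,v)-\E L(u,p)-\E L(p,v)\ \ge\ \tfrac12 c_0 h^2/r\ =\ \tfrac12 c_0 s^2 r^{1/3}.$$
We may also assume $s\le c_1 r^{1/3}$, since otherwise $\{{\rm TF}(\Gamma)>s r^{2/3}\}$ is empty ($\Gamma$ remains within an $O(r)$-neighbourhood of $[u,v]$); with this restriction the lines $up$ and $pv$ have gradient in a compact subinterval of $(0,\infty)$ depending only on $\kappa$, so Theorem~\ref{t:moddev} applies to each.

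Next comes the one-point estimate. If the geodesic $\Gamma$ passes through $p$, then $L(u,v)=L(u,p)+L(p,v)-1$, whence
$$\bigl(L(u,p)-\E L(u,p)\bigr)+\bigl(L(p,v)-\E L(p,v)\bigr)+\bigl(\E L(u,v)-L(u,v)\bigr)\ \ge\ \tfrac12 c_0 s^2 r^{1/3}-1,$$
so that one of these three centred quantities has, in the relevant direction, size at least $c_2 s^2 r^{1/3}$. By Theorem~\ref{t:moddev}, applied with $t^{1/6}\asymp r^{1/3}$, each such event has probability at most $C\exp(-c(s^2)^{3/2})=C\exp(-cs^3)$; hence $\P[p\in\Gamma]\le C\exp(-cs^3)$, uniformly over $p$ at transversal distance $\ge s r^{2/3}$ in the middle portion of the strip.

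It remains to pass from a single point to the whole geodesic, following the now-standard multiscale scheme. Since $y-p_{u,v}(x)$ is affine along each segment of $\Gamma$, the value ${\rm TF}(\Gamma)$ is attained at a vertex of $\Gamma$; and since the restriction of $\Gamma$ to any $x$-subinterval is itself a geodesic, one may decompose dyadically: writing $G_j$ for the maximal transversal distance of $\Gamma$ over the level-$j$ dyadic $x$-values, one has ${\rm TF}(\Gamma)=\sup_j G_j=\sum_{j\ge1}(G_j-G_{j-1})$, and the event $\{{\rm TF}(\Gamma)>sr^{2/3}\}$ forces some increment $G_j-G_{j-1}$ to exceed $\rho_j s r^{2/3}$ for a fixed summable sequence $\rho_j=2^{-j/4}$ (up to normalisation). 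Each such increment is controlled by the one-point estimate above, applied on the $2^{j-1}$ subintervals of length $r/2^{j-1}$ with parameter $\asymp\rho_j s\,2^{2j/3}$, which grows in $j$; the only additional ingredient is a version of that estimate uniform over the random endpoints of these subpaths, obtained either by a polynomial-cost discretisation of endpoint positions (absorbed by the geometrically improving tail) or directly from the solvable-model inputs behind Theorem~\ref{t:moddev}. Summing the bounds $\sum_j 2^j\exp(-c(\rho_j s\,2^{2j/3})^3)$, the series is dominated by its $j=1$ term, and we conclude $\P[{\rm TF}(\Gamma)>s r^{2/3}]\le C\exp(-cs^3)\le C\exp(-cs)$.

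The two points requiring genuine care are: (i) that the curvature gain $\asymp s^2 r^{1/3}$ in the first step strictly dominates the $O(r^{1/3})$ error coming from the $t^{1/6}$-scale discrepancy between $\E L$ and $2\sqrt{\cdot}$ --- this is precisely the origin of the ``$s$ sufficiently large'' hypothesis; and (ii) the bookkeeping in the multiscale step, in particular the one-point estimate made uniform over the random endpoints of sub-geodesics. The latter is the main obstacle, being the place where the exactly solvable structure (and the robust variants in \cite{BSS14}) is really used; the underlying mechanism, however, is just the first-order geometry --- the strict concavity of $t\mapsto 2\sqrt t$.
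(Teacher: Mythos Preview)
The paper does not give its own proof of this theorem: it is quoted as a black-box input from \cite{BSS14}, so there is nothing in the paper to compare your proposal against directly. That said, your outline is essentially the standard argument and is the one carried out in \cite{BSS14}: a one-point estimate coming from the strict concavity of $t\mapsto 2\sqrt t$ (so that a detour of transversal size $sr^{2/3}$ forces a length deficit of order $s^2 r^{1/3}$, which by Theorem~\ref{t:moddev} has probability $\exp(-cs^3)$), followed by a dyadic chaining to upgrade this to control of the full path.

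Two points deserve more than the passing mention you give them. First, in the chaining, the sub-geodesic at level $j$ runs between random endpoints whose transversal displacement may already be of order $sr^{2/3}$; once $r/2^j$ drops below this scale, the gradient of the line through those endpoints can leave any fixed interval $(\kappa'^{-1},\kappa')$, and Theorem~\ref{t:moddev} no longer applies. One has to stop the chaining at $2^{j_0}\asymp r^{1/3}/s$ and argue separately (by a trivial diameter bound) that the remaining scales contribute at most $O(sr^{2/3})$. Second, the ``polynomial-cost discretisation'' you invoke for the uniform endpoint control does not quite work as stated: a naive union bound over $O(r^{2/3})$ vertical positions at each dyadic abscissa costs a factor polynomial in $r$, which is not absorbed by $e^{-cs^3}$ when $s$ is merely a large constant. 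The actual mechanism in \cite{BSS14} is closer to what the present paper records as Theorem~\ref{t:par1}: a supremum/infimum bound over all endpoints in an on-scale parallelogram, obtained from the integrable inputs rather than from discretisation. You flag this as the main obstacle, which is correct; but the first of your two proposed fixes is not sufficient on its own.
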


The proof of Theorem \ref{t:tftail} 
%[\textcolor{red}{A:} you use a macro t:tftail to refer to an outside source. Is this correct?] 
from \cite{BSS14} in fact shows something more. Any path that has  transversal fluctuations much bigger than $r^{2/3}$ %[\textcolor{red}{A:} than what?] 
is not only unlikely to be a geodesic; it is typically much shorter than a geodesic. In particular, the proof of Theorem 11.1 in \cite{BSS14} implies the next result (which  can also be derived more straightforwardly).

\begin{theorem}[Paths with off-scale ${\rm TF}$ are uncompetitive]
\label{t:tftail1}
Fix $\kappa>1$ and $\e>0$. Let $u\preceq v$ be points such that the $x$-coordinate distance between $u$ and $v$ is $r$ and the gradient of the straight line joining $u$ and $v$ is $m$ where $m\in ( \kappa^{-1}, \kappa)$. 
%[\textcolor{red}{A:} if these points are so ordered, how can the gradient of this line be negative? Was a switch in depiction made creating a mistake?] 
Let $\ce=\ce_{u,v}$ denote the event that there exists a path $\gamma$ from $u$ to $v$ with ${\rm TF}(\gamma) \geq r^{2/3+\e}$ and $|\gamma|> \E[L(u,v)]-r^{1/3+\e/10}$ (we suppress the dependence of $\e$ in $\ce$ for brevity). Then there exists $c=c(\e)>0$ such that
$$\P(\ce) \leq e^{-r^{c}}$$ 
for $r$ sufficiently large. 
\end{theorem}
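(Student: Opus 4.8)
The plan is to upgrade the transversal-fluctuation tail bound of Theorem \ref{t:tftail} into a statement about the \emph{weight} of off-scale paths, by a chaining/union-bound argument over the scale at which a path first strays far from the line segment $\overline{uv}$. First I would reduce to the case $u=(0,0)$, $v=(r,mr)$ by translation invariance, and I would fix a dyadic-type decomposition: for $j \ge 0$ let $\ell_j$ be the scale $r^{2/3+\e} 2^j$, and consider the event that the maximizing path among those with ${\rm TF}(\gamma) \in [\ell_j, \ell_{j+1})$ has weight at least $\E[L(u,v)] - r^{1/3+\e/10}$. Summing a suitable bound over $j$ (noting $\ell_j$ cannot exceed $O(r)$, so only $O(\log r)$ values of $j$ matter) will give the conclusion, provided each term is at most $e^{-r^{c'}}$ for some $c' > 0$.

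The core estimate is this: if a path $\gamma$ from $u$ to $v$ attains transversal fluctuation roughly $\ell$ at some point $w$, then $|\gamma| \le L(u,w) + L(w,v)$, and both $w$ and the geometry force a loss. Here I would partition the segment $[0,r]$ of the $x$-axis into blocks of length $\approx \ell^{3/2} r^{-?}$... more precisely, I would cover the possible locations $w = (x, p_{u,v}(x) \pm \ell')$ with $\ell' \asymp \ell$ by a net of $O(r/\ell^{3/2} \cdot \text{poly})$ points and apply the one-point moderate deviation estimate Theorem \ref{t:moddev} to each of $L(u,w)$ and $L(w,v)$. The key arithmetic fact driving everything: for $w$ at transversal distance $\ell$ from $\overline{uv}$ at horizontal coordinate $x = \beta r$ (with $\beta$ bounded away from $0,1$), the rectangles from $u$ to $w$ and from $w$ to $v$ have areas $t_1, t_2$ with $\sqrt{t_1} + \sqrt{t_2} \le \sqrt{t} - c\,\ell^2/r$ by the strict concavity of $s \mapsto \sqrt{s}$ along the perturbation — this is the deterministic "curvature cost" lemma. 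Since $\ell \ge r^{2/3+\e}$, this cost is $\ell^2/r \ge r^{1/3 + 2\e}$, which comfortably beats the allowed weight deficit $r^{1/3 + \e/10}$ with room to spare. Then the event that \emph{some} path threads through a $w$ at scale $\ell$ while still having weight $\ge \E[L(u,v)] - r^{1/3+\e/10}$ requires $L(u,w) + L(w,v) \ge 2\sqrt{t} - c\,\ell^2/r + (\text{const})$, i.e. one of the two passage times exceeds its mean by at least $\asymp \ell^2/r \gg t_i^{1/6}$; Theorem \ref{t:moddev} bounds this by $C e^{-c (\ell^2/(r t_i^{1/6}))^{3/2}}$, and after the union bound over the $O(r^2)$-size net and the $O(\log r)$ scales $j$ the total is $e^{-r^c}$ for suitable $c = c(\e) > 0$.

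A technical wrinkle I would need to handle: near the endpoints $u$ and $v$ the gradient of $\overline{uw}$ degenerates, so the hypotheses of Theorem \ref{t:moddev} (gradient bounded in $(\kappa^{-1},\kappa)$) fail; but a path cannot have transversal fluctuation as large as $r^{2/3+\e}$ within horizontal distance $o(r)$ of an endpoint without the corresponding rectangle being extremely degenerate, and such a path is crushed by an even cruder bound on $L$ over a very thin rectangle (e.g. $L$ over an area-$s$ rectangle of bounded aspect ratio... but here the rectangle is thin, so one uses that $L \le 2\sqrt{t} + o(\sqrt t)$ still holds, or simply absorbs the endpoint regions into the claim that TF is achieved at $x$ bounded away from $\{0,r\}$ — which one gets for free since a geodesic-competitive path is close to $\overline{uv}$ there by a short separate argument). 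The main obstacle, I expect, is making the net argument quantitatively tight enough: one must choose the mesh of the net fine enough that every relevant $w$ is well-approximated, yet coarse enough that $(\text{net size}) \times e^{-c(\ell^2/r)^{3/2} r^{-1/4}}$ still decays like $e^{-r^c}$ — this is comfortable because $\ell^2/r \ge r^{1/3+2\e}$ gives exponent $\ge r^{1/2 + 3\e}$ after the $3/2$ power, dwarfing any polynomial net size, but it must be checked that the bookkeeping of the $t^{1/6}$ normalization in the moderate-deviation bound does not erode this; a careful choice of intermediate exponents (the paper's use of $\e/10$ in the weight deficit is exactly the slack that makes this work) closes the gap.
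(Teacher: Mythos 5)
Your proposal is correct, and it supplies exactly the argument the paper declines to write out: the paper gives no proof of Theorem \ref{t:tftail1} beyond asserting that ``the proof of Theorem 11.1 in \cite{BSS14} implies'' it, adding parenthetically that the result ``can also be derived more straightforwardly.'' Your sketch is that more straightforward derivation. The skeleton --- (i) the maximal vertical deviation of a piecewise-linear increasing path from $\overline{uv}$ is attained at a corner $w$ of the path, so $|\gamma|\le L(u,w)+L(w,v)+O(1)$; (ii) the deterministic concavity (``curvature cost'') bound $\sqrt{t_1}+\sqrt{t_2}\le\sqrt{t}-c\ell^2/r$ for $w$ at transversal distance $\ell$; (iii) the upper tail of Theorem \ref{t:moddev} applied to each piece; (iv) a union bound over a polynomial-size net, with the replacement of the true Poisson corner by a net point costing only the $O(r^{\e/100})$ points of a unit box, exactly as in Corollary \ref{uniform1} --- is sound, and the numerology works with room to spare since $\ell^2/r\ge r^{1/3+2\e}$ dominates both the permitted deficit $r^{1/3+\e/10}$ and the fluctuation scale $t_i^{1/6}=O(r^{1/3})$. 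Compared with importing the machinery of \cite{BSS14}, your route is more elementary and self-contained; what it gives up is only that it re-proves a special case of a statement \cite{BSS14} already establishes in the course of proving Theorem \ref{t:tftail}.

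Two small corrections. First, the exponent you get per net point is $e^{-cs^{3/2}}$ with $s\asymp(\ell^2/r)/t_i^{1/6}\ge cr^{2\e}$, hence $e^{-cr^{3\e}}$, not $e^{-cr^{1/2+3\e}}$; you raised $\ell^2/r$ to the $3/2$ before dividing by the $t^{1/6}$ normalization. This does not threaten the conclusion --- $O(r^2)\cdot e^{-cr^{3\e}}\le e^{-r^{c'}}$ --- but the slack is $r^{3\e}$, not $r^{1/2}$. Second, your ``technical wrinkle'' about degenerate gradients near the endpoints dissolves: as the paper notes just before Theorem \ref{t:moddev}, the law of the point-to-point passage time $L(u,w)$ depends only on the area $t_1$ of the rectangle, so the one-point upper-tail bound holds uniformly over aspect ratios (with the trivial case $t_1\le t_0$ handled by a Poisson tail); moreover the concavity of $(x,y)\mapsto\sqrt{xy}$ shows the curvature cost only increases as $w$ approaches an endpoint (it becomes of order $\ell$ rather than $\ell^2/r$), so no separate endpoint argument is needed. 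Finally, the dyadic decomposition over scales $\ell_j$ is harmless but unnecessary: since the per-point bound is monotone in the deviation distance, a single union bound over all net points at distance at least $r^{2/3+\e}$ from $\overline{uv}$ suffices.
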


The stretched exponential nature of these estimates allow us to prove  uniform properties of the noise space which will be convenient for some of the arguments later. This point is a theme in this article: points to which we seek apply the above results may be special, so that estimates that are uniform over all points will be needed. 
For the next result, recall that our noise space is nothing other than a rate one Poisson point process in the box  $[0,n]^2$. 
For $u,v\in [0,n]^2$, let $\mathds{L}(u,v)$ denote  the line joining $u$ and $v$. Now, for any $u\preceq v$, the pair $(u,v)$ is called $\kappa$-steep if the gradient 
of $\mathds{L}(u,v)$ lies in the interval $[\kappa^{-1},\kappa]$. Lastly,
for $\tau>0$,  
\begin{equation}\label{notation540}
S(u,v)= S_{\tau}(u,v):=\left\{ \begin{array}{cc}
|u-v|^{1/3+\e/10} & \text{ if } |u-v|>n^{\tau},\\  
n^{2\tau/3} & \text{ otherwise }.
\end{array}\right. 
\end{equation}

\begin{corollary}
\label{uniform1}
Fix  $\e>0$, $\kappa>1$, and $\tau$ sufficiently small. Then there exists $c=c(\e,\tau)$ such that
$$\mathbb{P}\left(\bigcup_{u,v: (u,v) \text{is } \kappa-\text{steep}}
|L(u,v)-\E(L(u,v))|\ge S(u,v)\right) \le e^{-n^c}.$$
\end{corollary}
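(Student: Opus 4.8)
\emph{Proof plan.} Since $L(u,v)$ depends (up to an irrelevant additive $O(1)$ coming from whether $u$ or $v$ happens to lie in $\Pi$) only on $\Pi\cap R(u,v)$, where $R(u,v)=[u_1,v_1]\times[u_2,v_2]$, and is monotone in that point set, the plan is to reduce the uncountable union over $\kappa$-steep pairs $(u,v)$ to a union over a polynomially large family of pairs of grid points, and then to invoke Theorem~\ref{t:moddev}---in the form, noted in the remark following its statement, that $\P\big(|L(u,v)-\E L(u,v)|\ge s\,t^{1/6}\big)\le Ce^{-cs^{3/2}}$ for $s>s_0$ and $t>t_0$---for each grid pair. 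The stretched-exponential decay is exactly what makes this survive the union bound.

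Fix a mesh $h=n^{-100}$, set $\mathcal G=(h\Z)^2\cap[0,n]^2$ (so $|\mathcal G|\le n^{O(1)}$), and for $u\preceq v$ pick grid points $u^-\preceq u\preceq u^+$ and $v^-\preceq v\preceq v^+$ within $\sqrt2\,h$ of $u$, resp.\ of $v$. Then $R(u^+,v^-)\subseteq R(u,v)\subseteq R(u^-,v^+)$, so $L(u^+,v^-)\le L(u,v)\le L(u^-,v^+)$, and the two outer values differ by at most the $\Pi$-content of the annulus $R(u^-,v^+)\setminus R(u^+,v^-)$, a union of four strips of total area $\le 8hn$. Since $\P\big(\mathrm{Poisson}(8hn)\ge n^{\tau/4}\big)\le e^{-\Omega(n^{\tau/4}\log n)}$, a union bound over the $n^{O(1)}$ choices of grid corners shows that, off an event of probability $e^{-n^{c}}$, all these annuli carry fewer than $n^{\tau/4}$ points; on that event $|L(u,v)-L(u^\pm,v^\mp)|\le n^{\tau/4}+O(1)$ for every $u\preceq v$ and the associated grid approximants, while $\big|\E L(u,v)-\E L(u^\pm,v^\mp)\big|\le 8hn\le 1$ always.

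Next, fix a threshold $\rho=n^\tau-Ch$ with $C$ a suitable constant, and consider ordered grid pairs $(g,g')$, $g\preceq g'$, whose line $\mathds L(g,g')$ has gradient in $[(2\kappa)^{-1},2\kappa]$; write $r=|g-g'|$, and $t\asymp r^2$ for the area of $R(g,g')$. If $r>\rho$, Theorem~\ref{t:moddev} with $s=\tfrac14 r^{\e/10}>s_0$ gives $\P\big(|L(g,g')-\E L(g,g')|\ge\tfrac14 r^{1/3+\e/10}\big)\le Ce^{-cr^{3\e/20}}\le Ce^{-cn^{\tau\e/10}}$. If $C_0\le r\le\rho$ with the constant $C_0$ chosen so $C_0^2>t_0$, then $s=n^{2\tau/3}/t^{1/6}\ge n^{2\tau/3}/\rho^{1/3}\ge n^{\tau/3}>s_0$, and Theorem~\ref{t:moddev} gives $\P\big(|L(g,g')-\E L(g,g')|\ge\tfrac14 n^{2\tau/3}\big)\le Ce^{-cn^{\tau/2}}$. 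If $r<C_0$, then $L(g,g')$ and $\E L(g,g')$ are each bounded by the $\Pi$-content of a rectangle of area $<C_0^2$, and a further union bound shows that with probability $1-e^{-n^c}$ no axis-parallel rectangle of area $\le 2C_0^2$ in $[0,n]^2$ carries $n^{2\tau/3}$ points. Taking a union bound over the $n^{O(1)}$ grid pairs, all these bounds hold simultaneously off an event of probability $e^{-n^c}$, the polynomial factor being absorbed by the stretched exponentials.

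Assembling on the intersection of these good events: take any $\kappa$-steep $(u,v)$ and set $r=|u-v|$. If $r<C_0$, then $R(u,v)$ sits inside a grid rectangle of area $\le 2C_0^2$, so $L(u,v)<n^{2\tau/3}$ and $\E L(u,v)\le|R(u,v)|<n^{2\tau/3}$; since $r<n^\tau$ one has $S(u,v)=n^{2\tau/3}$, and we are done. If $r\ge C_0$, then $(u^-,v^+)$ and $(u^+,v^-)$ are $2\kappa$-steep grid pairs (gradients perturbed by $O(h/r)$), whence
\[
L(u,v)\le L(u^-,v^+)<\E L(u^-,v^+)+\tfrac14\beta(u^-,v^+)\le \E L(u,v)+1+\tfrac14\beta(u^-,v^+),
\]
and symmetrically with $(u^+,v^-)$ bounding $L(u,v)$ from below, where $\beta(g,g')$ is $|g-g'|^{1/3+\e/10}$ if $|g-g'|>\rho$ and $n^{2\tau/3}$ otherwise. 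It remains to check $\tfrac14\beta(u^\mp,v^\pm)+1<S(u,v)$, and here is the \textbf{main obstacle}: $S(\cdot)$ jumps \emph{downward}, from $n^{2\tau/3}$ to roughly $n^{\tau/3+\tau\e/10}$, as $|u-v|$ crosses $n^\tau$ from below, so a priori the monotone grid approximants of an $(u,v)$ lying just above $n^\tau$ could land where only the weaker bound $n^{2\tau/3}$ is available. The offset threshold $\rho=n^\tau-Ch$ fixes this: when $r>n^\tau$, $\kappa$-steepness forces $|u^\mp-v^\pm|\ge r-O(h)>\rho$ for \emph{both} approximants, so $\beta(u^\mp,v^\pm)=|u^\mp-v^\pm|^{1/3+\e/10}\le(r+O(h))^{1/3+\e/10}\le 2\,S(u,v)$; and when $r\le n^\tau$, $S(u,v)=n^{2\tau/3}$ dominates both $\tfrac14 n^{2\tau/3}$ and $\tfrac14(n^\tau+O(h))^{1/3+\e/10}\le\tfrac12 n^{2\tau/3}$ (using $\e\le 10/3$, which holds once $\e$ is small). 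Since $S(u,v)\to\infty$ this yields $|L(u,v)-\E L(u,v)|<S(u,v)$ for every $\kappa$-steep $(u,v)$ on the good event, proving the corollary. Beyond this threshold bookkeeping, the only thing to get right is quantitative: $h$ must be a sufficiently large negative power of $n$ that the $\Pi$-content of the discretization annuli is, with stretched-exponential probability, far below the smallest relevant scale of $S$ (which is of order $n^{\tau/3+\tau\e/10}$), and $h=n^{-100}$ comfortably suffices.
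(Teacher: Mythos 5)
Your argument is correct and follows essentially the same route as the paper's: discretize $[0,n]^2$, apply Theorem \ref{t:moddev} with a union bound over grid pairs, and transfer to arbitrary $\kappa$-steep pairs via monotone approximation by nearby grid points. The paper's sketch uses a unit mesh together with a crude per-box point-count bound, whereas you take a much finer mesh and explicitly handle the downward jump of $S(\cdot,\cdot)$ at $|u-v|=n^{\tau}$ and the regime $|u-v|=O(1)$ --- details the paper elides but which your write-up correctly resolves.
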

\begin{proof}
This follows by using Theorem \ref{t:moddev} and a standard coarse graining argument. Since we are not attempting to prove optimal bounds, we simply partition the box into squares of area one. Thus the corner points are the lattice points, i.e., elements of the set $\{0,1,\ldots,n\}\times\{0,1,\ldots,n\}$. The proof now follows by first proving the desired statement for lattice points $u,v$. This just follows by Theorem \ref{t:moddev} and a union bound over all pairs of lattice points $u,v$ (of which there are $n^2$). The proof is then completed by  showing that, since any box of area one is unlikely (with stretched exponentially small failure probability) to contain more than $n^{\e/100}$ points, for any $u$ and $v$, one can approximate $L(u,v)$ by $L(u_*,v_*)$, where $u_*,v_*$ are the nearest lattice points.
\end{proof}

We now specify a modification of the event $\ce_{u,v}$ from Theorem \ref{t:tftail1}.  Let $\ce^{*}_{u,v}$ denote the event $\ce_{u,v}$ where the permitted error term $r^{1/3+\e/10}$ is instead taken to be the  quantity~$S(u,v)$ defined a few moments ago. The next corollary is proved along the same lines as Theorem~\ref{t:tftail1}, in combination with the arguments in the proof of Corollary \ref{uniform1},
%[\textcolor{red}{A:} right?] 
and we omit the proof. 
\begin{corollary}
\label{uniform0}
Fix  $\e>0$, $\kappa>1$, and $\tau$ small enough. Then there exists $c=c(\e,\tau)$ such that
$$\mathbb{P}\left(\bigcup_{u,v:|(u,v) \text{is } \kappa-\text{steep}}\ce^*_{u,v}\right) \le e^{-n^c}.$$
\end{corollary}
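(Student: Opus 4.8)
The plan is to run the proof of Theorem~\ref{t:tftail1} with the admissible length slack $r^{1/3+\e/10}$ replaced throughout by $S(u,v)$, and then to make the resulting single-pair estimate uniform in $(u,v)$ by the coarse-graining argument already used for Corollary~\ref{uniform1}. The union over $\kappa$-steep pairs splits according to whether $|u-v|>n^{\tau}$ or not. The substantive case is $|u-v|>n^{\tau}$; the remaining pairs span only a rectangle of diameter $O(n^{\tau})$ and are disposed of exactly as in the corresponding regime of the proof of Corollary~\ref{uniform1}, using that, off an event of probability at most $e^{-n^{c}}$, no unit square of $[0,n]^{2}$ contains more than $n^{\e/100}$ Poisson points.

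For the main case, note first that if $(u,v)$ is $\kappa$-steep then its $x$-coordinate distance $r$ obeys $r\le|u-v|\le\sqrt{1+\kappa^{2}}\,r$, so that when $|u-v|>n^{\tau}$ the slack $S(u,v)=|u-v|^{1/3+\e/10}$ is comparable, up to a $\kappa$-dependent constant, to $r^{1/3+\e/10}$. Replacing $r^{1/3+\e/10}$ by $S(u,v)$ in the event $\ce_{u,v}$ of Theorem~\ref{t:tftail1} is therefore a cosmetic change: the argument proving that theorem (equivalently, Theorem~11.1 in~\cite{BSS14}) controls a competing path $\gamma$ of near-maximal length and oversized transversal fluctuation by splitting it at its point $w$ of extreme deviation, bounding $|\gamma|\le L(u,w)+L(w,v)$, and comparing with $\E L(u,v)$; the gain is the strictly concave detour deficit $\sqrt{t}-\sqrt{t_{1}}-\sqrt{t_{2}}$, of order at least $r^{1/3+2\e}$, where $t$, $t_{1}$, $t_{2}$ are the areas of the full rectangle and the two sub-rectangles, and this dwarfs both $S(u,v)$ and the size-$O(r^{1/3})$ fluctuations of the two pieces. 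The latter are controlled by Theorem~\ref{t:moddev} over a net of candidate vertices $w$, with an elementary a priori bound $L=O(\sqrt{\text{area}})$ used for those $w$ for which a sub-rectangle is too thin to be $\kappa$-steep. This yields a constant $c_{1}=c_{1}(\e,\kappa)>0$ with $\P(\ce^{*}_{u,v})\le e^{-|u-v|^{c_{1}}}$ for every $\kappa$-steep pair with $|u-v|>n^{\tau}$.

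To make this uniform, partition $[0,n]^{2}$ into unit squares with lattice corners; given a $\kappa$-steep pair $(u,v)$ with $|u-v|>n^{\tau}$, pick lattice points $u_{*}\preceq u$ and $v\preceq v_{*}$ within distance $\sqrt{2}$, so that $(u_{*},v_{*})$ is $2\kappa$-steep and $|u_{*}-v_{*}|>n^{\tau}/2$. On the no-overcrowding event above, every directed path from $u$ to $v$ extends to one from $u_{*}$ to $v_{*}$ whose length and transversal fluctuation change by $O(n^{\e/100})$ and $O(1)$ respectively, while $|\E L(u,v)-\E L(u_{*},v_{*})|=O(n^{\e/100})$; thus $\ce^{*}_{u,v}$ forces an event of the same type at $(u_{*},v_{*})$ with transversal-fluctuation threshold halved and slack enlarged by $O(n^{\e/100})$, still covered by the previous paragraph after shrinking $\e$ and enlarging $\kappa$ by constant factors. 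Since there are at most $n^{4}$ lattice pairs, a union bound gives
$$\P\left(\bigcup_{\substack{(u,v)\ \kappa\text{-steep}\\ |u-v|>n^{\tau}}}\ce^{*}_{u,v}\right)\le n^{4}e^{-(n^{\tau}/2)^{c_{1}}}+e^{-n^{c}}\le e^{-n^{c'}}$$
for a suitable $c'=c'(\e,\tau,\kappa)>0$, the stretched-exponential factor absorbing the polynomial count; together with the treatment of the short pairs this is the asserted bound.

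The step I expect to be the main obstacle is keeping the argument of Theorem~\ref{t:tftail1} uniform: one must verify that the detour-deficit gain retains its $r^{1/3+2\e}$ strength even when the extreme vertex $w$ lies near an endpoint of $\gamma$ — where one of the sub-rectangles degenerates and Theorem~\ref{t:moddev} is inapplicable, so the elementary a priori length bound must be substituted — and that the net over $w$ together with the discretisation of $(u,v)$ costs only a polynomial factor, which the stretched-exponential tail absorbs. The short pairs $|u-v|\le n^{\tau}$ present no new probabilistic difficulty and are dealt with exactly as in the corresponding part of the proof of Corollary~\ref{uniform1}.
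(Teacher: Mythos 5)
Your proposal is correct and follows exactly the route the paper indicates: the paper omits this proof, stating only that it "is proved along the same lines as Theorem~\ref{t:tftail1}, in combination with the arguments in the proof of Corollary~\ref{uniform1}," which is precisely your combination of the fixed-pair detour-deficit estimate (with slack $S(u,v)$ in place of $r^{1/3+\e/10}$, comparable for $\kappa$-steep pairs) and the unit-square coarse-graining with a polynomial union bound absorbed by the stretched-exponential tails. Your added care about the degenerate sub-rectangle case and the short pairs $|u-v|\le n^{\tau}$ supplies details the paper leaves implicit, and nothing in your argument conflicts with the paper's stated strategy.
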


\subsection{Paths between points in an on scale parallelogram}
Next we control the deviation of lengths between pairs of points within certain parallelograms. We first need some definitions. Let  $U(r,m,h)$ denote a parallelogram with the  properties that:
\begin{enumerate}
\item[(a)] One pair of parallel sides are vertical.
\item[(b)] The other pair of parallel sides have gradient $m$.
\item[(c)] The (horizontal) distance between the vertical sides is $r$.
\item[(d)] The height of the vertical sides is $h$.
\end{enumerate}
Note that this defines a unique parallelogram up to translation. This will be enough for our purposes because the underlying noise field is translation invariant. 

\begin{theorem}[\cite{BSS14}]
\label{t:par1}
Let $m\in (\kappa^{-1},\kappa)$ and $h=W r^{2/3}$ for some $W>0$. Then there exist $(\kappa,W)$-dependent constants $C,c>0$,  such that, for all sufficiently large $r$ and $s$, 
$$ \P\left( \sup_{u\in A, v\in B} (L(u,v)- \E L(u,v)) \geq  sr^{1/3} \right) \leq Ce^{-cs},$$
and
$$ \P\left( \inf_{u\in A, v\in B} (L(u,v)- \E L(u,v)) \leq  -sr^{1/3} \right) \leq Ce^{-cs},$$
where $A$ and $B$ respectively denote the right third and the left third of $U(r,m,h)$.
\end{theorem}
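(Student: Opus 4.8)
The plan is to control the centred passage time $Z(u,v):=L(u,v)-\E L(u,v)$ simultaneously over the two two‑parameter families $u\in B$, $v\in A$ (the left and right thirds of $U(r,m,h)$), by reducing to the pointwise moderate‑deviation bound of Theorem~\ref{t:moddev}, using monotonicity of $L$ to interpolate off a net and transversal‑fluctuation bounds to cut down the number of effective degrees of freedom. The two displayed inequalities are symmetric — an upper‑deviation bound for a supremum and a lower‑deviation bound for an infimum — so I describe the first; the second is identical with the inequalities reversed. Note first that, since $m\in(\kappa^{-1},\kappa)$ and $h=Wr^{2/3}=o(r)$, every admissible pair has $u\preceq v$, the line $\mathds{L}(u,v)$ has gradient in a fixed interval $(\kappa'^{-1},\kappa')$ depending only on $\kappa,W$ once $r$ is large, and the rectangle area $t(u,v)$ lies in $[c_0 r^2,\kappa' r^2]$; hence Theorem~\ref{t:moddev} (with the remark after it controlling $\E L$) gives the one‑point bound $\P\!\big(|Z(u,v)|\ge\lambda r^{1/3}\big)\le Ce^{-c\lambda^{3/2}}$.

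Next I would discretise. Put a rectangular net $\mathcal{N}$ of mesh $r^{1/3}$ on slight enlargements of $A$ and $B$. For $u\in B,\,v\in A$ pick net points with $a\preceq u$, $v\preceq b$ and $|a-u|+|b-v|=O(r^{1/3})$; monotonicity of last passage percolation ($a\preceq u\preceq v\preceq b\Rightarrow L(u,v)\le L(a,b)$, up to an additive $O(1)$) gives $L(u,v)\le L(a,b)$, while $\E L=2\sqrt{t}+O(t^{1/6})$ together with $t(a,b),t(u,v)\asymp r^{2}$ gives the deterministic Lipschitz estimate $|\E L(a,b)-\E L(u,v)|=O(r^{1/3})$. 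Hence $Z(u,v)\le Z(a,b)+O(r^{1/3})$, and on the event that $|Z(a,b)|\le(s/2)r^{1/3}$ for every pair of net points one gets $\sup_{u\in B,v\in A}Z(u,v)\le sr^{1/3}$; the infimum bound is obtained the same way, bracketing by $u\preceq a\preceq b\preceq v$ instead.

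The crux is then to pass from ``good on the net'' to the asserted probability: a naive union bound over the $\asymp r^{2}$ pairs of net points yields only $Cr^{2}e^{-cs^{3/2}}$, too weak for $s$ of constant order, so one must exploit the strong correlations among nearby pairs. One route is a chaining argument, which in addition to the one‑point tail requires a two‑point near‑continuity estimate $\P\!\big(|Z(u,v)-Z(u',v')|\ge\lambda\big)\le C\exp\!\big(-c(\lambda/\varrho)^{\beta}\big)$, with $\varrho=\varrho(u,v;u',v')$ a suitable metric on pairs shrinking to $0$ as $(u',v')\to(u,v)$ (encoding the Brownian‑type regularity of the passage‑time profile below scale $r^{2/3}$); summing chaining increments over dyadic scales then gives $\sup_{A\times B}Z$ a tail of the same order as one point, the exponent degrading from $3/2$ because of the metric entropy of the region. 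A more hands‑on alternative is to localise the competing geodesics: split $U(r,m,h)$ by its vertical midline $\ell$, note that every geodesic from $u\in B$ to $v\in A$ meets $\ell$ inside a vertical window $W$ of half‑height $\Lambda r^{2/3}$ about the point where $\mathds{L}(u,v)$ crosses $\ell$, off an event of probability $\le Ce^{-c\Lambda}$ by Theorem~\ref{t:tftail} applied uniformly in the endpoints; on that event, concatenation and the super‑additivity $\E L(u,v)\ge\sup_{w\in\ell}\big(\E L(u,w)+\E L(w,v)\big)$ give
$$
Z(u,v)\;\le\;\sup_{w\in W}\big(L(u,w)-\E L(u,w)\big)\;+\;\sup_{w\in W}\big(L(w,v)-\E L(w,v)\big),
$$
reducing the problem to two ``region $\times$ short segment'' suprema across half the longitudinal distance; iterating the split $O(\log r)$ times reaches configurations of diameter $O(1)$, on which the one‑point bound suffices, the accumulated errors telescoping to $O(r^{1/3})$ and the failure probabilities (taking $\Lambda=c's$ at the top scale and letting it grow mildly down the recursion) summing to $Ce^{-cs}$. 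In either route it is this uniformisation step that constitutes the main obstacle, and it is also what accounts for the stated tail being $e^{-cs}$ rather than the sharper $e^{-cs^{3/2}}$ of the one‑point estimate — directly, in the localisation approach, through the transversal‑fluctuation input; through the metric entropy of the region, in the chaining approach.
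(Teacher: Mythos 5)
First, a point of comparison: the paper does not prove Theorem~\ref{t:par1} at all --- it is imported verbatim from \cite{BSS14} as a black-box input, so there is no in-paper argument to measure your proposal against. Judged on its own terms, your outline correctly identifies the skeleton: reduce to the one-point bound of Theorem~\ref{t:moddev}, discretise at mesh $r^{1/3}$ using monotonicity of $L$ and the Lipschitz behaviour of $(u,v)\mapsto 2\sqrt{t(u,v)}$ at this scale, and then recognise that the union bound over the $\asymp r^2$ net pairs gives only $Cr^2e^{-cs^{3/2}}$, which is vacuous for $s=O(1)$. Your second route --- recursive localisation of geodesics at the midline using superadditivity of $\E L$ and Theorem~\ref{t:tftail} --- is indeed essentially the strategy used in \cite{BSS14}.

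The difficulty is that the uniformisation step, which is the entire mathematical content of the theorem, remains a sketch in both of your routes, and neither closes as written. The chaining route rests on a two-point tail estimate $\P\big(|Z(u,v)-Z(u',v')|\ge\lambda\big)\le C\exp\big(-c(\lambda/\varrho)^{\beta}\big)$ that is not a consequence of anything available in the paper; it is itself a nontrivial local-regularity statement about the passage-time profile, of comparable depth to the theorem being proved, so invoking it is circular in spirit. The localisation route has concrete unverified bookkeeping: (i) Theorem~\ref{t:tftail} is stated for a fixed endpoint pair, so applying it ``uniformly in the endpoints'' already requires a net argument combined with ordering of geodesics; (ii) more seriously, to achieve failure probability $e^{-cs}$ at the top scale you must take the window half-height $\Lambda r^{2/3}$ with $\Lambda\asymp s$, so after one split the sub-problems are suprema over regions of vertical extent $\asymp s\,(r/2)^{2/3}$ --- the aspect parameter $W$ in the induction hypothesis is inflated by a factor of order $s\,2^{2/3}$ at each level, the constants $C,c$ depend on $W$, and nothing in your sketch shows the recursion reproduces its hypothesis or that the constants remain bounded after $O(\log r)$ iterations; (iii) the claim that the errors ``telescope to $O(r^{1/3})$'' and the failure probabilities ``sum to $Ce^{-cs}$'' with $\Lambda$ ``growing mildly down the recursion'' is precisely the delicate part and is asserted rather than verified. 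So the approach is the right one, but the proposal stops at the point where the real work begins.
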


The next theorem states that, even if the paths are restricted to stay inside the parallelogram, the fluctuation remains on scale. 
Let $L(u,v;U)$ denote the length of the longest path from $u$ to $v$ that does not exit $U$. 

\begin{theorem}[\cite{BSS14}]
\label{t:par2}
Under the assumptions of Theorem \ref{t:par1}, there exist $(\kappa,W)$-dependent constants $C,c>0$  such that, for all sufficiently large $r$ and $s$, 
$$ \P\left( \inf_{u\in A, v\in B} (L(u,v;U)- \E L(u,v)) \leq  -sr^{1/3} \right) \leq Ce^{-cs^{1/2}}$$
where $A$ and $B$ respectively denote the right third and the left third of $U(r,m,h)$.
\end{theorem}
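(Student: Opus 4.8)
The plan is to establish the complementary lower bound on $L(u,v;U)$ by exhibiting, with the stated probability, an explicit path from $u$ to $v$ that stays inside $U$ and has length at least $\E L(u,v)-sr^{1/3}$, uniformly over $u\in A$ and $v\in B$. The construction is by concatenation at a carefully chosen intermediate scale. Set $K=\lceil s^{3/4}\rceil$ and cut $U$ by $K-1$ equally spaced vertical lines into sub-parallelograms $U_1,\dots,U_K$, each of width $r/K$ and height $h=Wr^{2/3}=(WK^{2/3})(r/K)^{2/3}$; thus each $U_i$ is ``fat on scale'', its on-scale aspect parameter being $WK^{2/3}=Ws^{1/2}\to\infty$. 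For fixed $u,v$ let $w_i$ be the point at which the segment $[u,v]$ meets the $i$-th dividing line, with $w_0=u$ and $w_K=v$; by convexity of $U$ every $w_i$ lies in $U$. The candidate path is the concatenation of the geodesics $\gamma(w_{i-1},w_i)$, which is a path from $u$ to $v$ lying in $U$ on the event that each $\gamma(w_{i-1},w_i)$ remains inside $U_i$.

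For the length I would first record the deterministic estimate $\sum_{i=1}^K \E L(w_{i-1},w_i)=\E L(u,v)+O(s^{1/2}r^{1/3})$: writing $t$ for the area of the rectangle with corners $u,v$, the $i$-th sub-rectangle has area $t/K^2$, so $\sum_i 2\sqrt{t/K^2}=2\sqrt t$, while the remark following Theorem~\ref{t:moddev} bounds each mean to within $O((r/K)^{1/3})$ and $\E L(u,v)$ to within $O(r^{1/3})$ (the gradient of $[u,v]$, hence of every $[w_{i-1},w_i]$, equals $m+O(Wr^{-1/3})$, so all relevant gradients are bounded away from $0$ and $\infty$ once $r$ is large). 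Next, applying Theorem~\ref{t:par1} inside each slice $U_i$, with $w_{i-1},w_i$ ranging over its left and right thirds and with width $r/K$ and height $h$ on scale, gives uniformly over the waypoints $L(w_{i-1},w_i)\ge \E L(w_{i-1},w_i)-\lambda (r/K)^{1/3}$ outside an event of probability at most $Ce^{-c\lambda}$, where $C,c$ now also depend on the aspect parameter $Ws^{1/2}$ — but only mildly enough (polynomially) to be absorbed. Choosing $\lambda=\tfrac12 s^{1/2}$ makes the total random loss $K\lambda(r/K)^{1/3}=\tfrac12 s^{1/2}K^{2/3}r^{1/3}=\tfrac12 sr^{1/3}$, so on the intersection of the $K$ favourable events the concatenated path has length at least $\E L(u,v)-O(s^{1/2}r^{1/3})-\tfrac12 sr^{1/3}\ge \E L(u,v)-sr^{1/3}$ for $s$ large; a union bound over the $K=\lceil s^{3/4}\rceil$ slices leaves failure probability $\le s^{O(1)}e^{-cs^{1/2}/2}\le C'e^{-c's^{1/2}}$. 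Crucially, the uniformity over $u\in A$, $v\in B$ is inherited from the infimum over thirds already present in Theorem~\ref{t:par1}, so no union bound over point pairs $(u,v)$ is performed — which is essential, since such a bound would introduce a spurious $r$-dependence incompatible with the $r$-free estimate being proved.

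It remains to guarantee that each $\gamma(w_{i-1},w_i)$ stays inside $U_i$, and this is the step I expect to be the main obstacle. Because $U_i$ has height $h=(WK^{2/3})(r/K)^{2/3}$, Theorem~\ref{t:tftail} confines $\gamma(w_{i-1},w_i)$ to the tube of radius $\tfrac12 h$ about its chord $[w_{i-1},w_i]$ outside an event of probability at most $Ce^{-cWK^{2/3}}=Ce^{-cWs^{1/2}}$, and summing over the $K$ slices keeps this of order $e^{-cs^{1/2}}$. The genuine difficulty is that the chord $[w_{i-1},w_i]\subset[u,v]$ can run close to $\partial U$, in which case a tube of radius $\tfrac12 h$ about it need not lie inside $U$. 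When $u$ and $v$ sit in the bulk of $A$ and $B$ — which is what the ``thirds'' in Theorem~\ref{t:par1} are designed to ensure — the segment $[u,v]$ is bounded away from $\partial U$ by a fixed fraction of $h$ and the tube fits; for the extreme configurations with $u$ or $v$ pushed towards $\partial U$ one instead routes the path through waypoints on the centre line of $U$ and controls the two end segments separately by a reflection/monotonicity argument, which shows that a geodesic forced to remain on one side of the line through its endpoints is still within $O((r/K)^{1/3})$ of its unconstrained length — a half-space-type length estimate rather than a bulk parallelogram one, and the part of the argument demanding the most care. Combining the length accounting with this confinement yields $L(u,v;U)\ge \E L(u,v)-sr^{1/3}$ outside an event of probability at most $Ce^{-cs^{1/2}}$, uniformly in $u\in A$ and $v\in B$, which is the claim.
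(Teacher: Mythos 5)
The paper gives no proof of Theorem \ref{t:par2}: it is imported verbatim from \cite{BSS14} as a black-box input, so there is nothing internal to compare your argument against, and your proposal must stand on its own. Its skeleton is the standard one and is essentially sound in outline: cutting $U$ into $K=\lceil s^{3/4}\rceil$ slices, paying $\lambda(r/K)^{1/3}$ per slice with $\lambda\asymp s^{1/2}$, and noting that the per-slice length-deficit probability $e^{-c\lambda}$ then matches the per-slice confinement probability $e^{-cWK^{2/3}}$ correctly explains why the tail degrades from $e^{-cs}$ in Theorem \ref{t:par1} to $e^{-cs^{1/2}}$ here; and using the infimum over thirds in Theorem \ref{t:par1} to make the length estimate uniform over the $(u,v)$-dependent waypoints, rather than a union bound over $A\times B$ (which would indeed introduce a fatal $r$-dependence), is the right idea.

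There remain two genuine gaps. The main one is the confinement step, which you flag yourself but do not close. Theorem \ref{t:tftail} is a fixed-pair statement, whereas the geodesics $\gamma(w_{i-1},w_i)$ vary continuously with $(u,v)$, so even the tube estimate needs to be made uniform; worse, when $u$ or $v$ lies on $\partial U$ the chord touches the boundary and no tube of radius comparable to $h$ fits inside $U$. Your proposed repair rests on the claim that a geodesic constrained to one side of the line through its endpoints loses only $O((r/K)^{1/3})$ in length, attributed to a ``reflection/monotonicity argument''. No such soft argument is available in last passage percolation: that one-sided estimate is precisely Theorem \ref{baikrains} (Baik--Rains), which this paper \emph{derives from} Theorem \ref{t:par2}, so as written your argument is circular within the paper's logical structure; one would have to import the Baik--Rains integrable input independently, and even then make it uniform over $u\in A$, $v\in B$. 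The second, softer, gap is the application of Theorem \ref{t:par1} inside each slice with aspect parameter $WK^{2/3}=Ws^{1/2}$: the theorem's constants are only stated to be $(\kappa,W)$-dependent, so it cannot be invoked with $W$ growing in $s$ without tracking how $C$ and $c$ degrade; your ``only polynomially'' assertion is how such estimates typically behave, but it is not something the quoted statement gives you, and if $c$ decayed with $W$ the whole bookkeeping would collapse.
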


\subsection{Paths constrained in a thin parallelogram}
The next set of results shows that, if a path is constrained to have much smaller than typical transversal fluctuation, then it must have much smaller length than a typical geodesic's.
\begin{theorem}[Paths with small ${\rm TF}$ are uncompetitive]
\label{t:constrained}
Fix $\e>0$. Consider the parallelogram $U=U(r,m,h)$, where $m\in (\kappa^{-1},\kappa)$ and $h=r^{2/3-\e}$. Let $u_0$ and $v_0$ denote the midpoints of the vertical sides $A_0$ and $B_0$ of $U$. Then, for some $c=c(\e)>0$,
$$\P\left(L(u_0,v_0; U) \geq \E L(u_0,v_0)- r^{1/3+\e/3}\right) \leq e^{-r^{c}}$$
for $r$ sufficiently large.
\end{theorem}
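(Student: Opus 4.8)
The plan is to split the parallelogram $U = U(r,m,h)$ into many short sub-parallelograms along the longitudinal direction and to argue that on each piece the restricted geodesic loses a little length, with these losses accumulating to something far exceeding $r^{1/3+\e/3}$. Concretely, fix a large integer $K = K(\e)$ and cut $U$ into $K$ congruent sub-parallelograms $U_1, \dots, U_K$ by dividing the horizontal extent into intervals of length $r/K$; each $U_i$ is a copy of $U(r/K, m, h)$ with the same height $h = r^{2/3-\e}$. Relative to the natural transversal scale of a length-$r/K$ geodesic, namely $(r/K)^{2/3}$, the available height is $h = r^{2/3-\e} = (r/K)^{2/3} \cdot K^{2/3} r^{-\e}$, which for $r$ large (given $K$ fixed) is much smaller than $(r/K)^{2/3}$. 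So each $U_i$ is itself a ``thin parallelogram'' in the sense that the confining width is of order $(r/K)^{2/3 - \e'}$ for a slightly smaller exponent $\e'$; this is exactly the regime in which a path confined to $U_i$ between the midpoints of its vertical sides is uncompetitive.

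The first real step is therefore a single-scale estimate: for a thin parallelogram $U(\rho, m, \rho^{2/3-\e'})$, the confined last passage time between the midpoints of the two vertical sides is typically at least $\rho^{1/3+\e'/3}$ below its unconfined mean, with stretched-exponential failure probability $e^{-\rho^{c'}}$. Rather than reprove this, I would like to invoke precisely the hypothesis of the theorem itself applied at scale $\rho = r/K$ — but of course that is circular, so instead the correct route is to prove the single-scale statement directly using the anti-concentration / lower-deviation inputs available: namely, a path confined to width $w \ll \rho^{2/3}$ across horizontal distance $\rho$ can be compared, via the parallelogram estimate Theorem~\ref{t:par2} (restricted-path fluctuations are on scale) together with the barrier argument that forces any confined path to behave like the confined LPP value, to a sum of independent increments whose fluctuations are suppressed. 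The cleanest self-contained argument is a \emph{dyadic/recursive} one: show that confinement to width $w$ across distance $\rho$ costs at least a constant multiple of $\rho^{1/3}(\rho^{2/3}/w)^{\beta}$ for some $\beta>0$, by iterating the observation that a path confined to a box of aspect ratio far from the KPZ-critical $1:r^{2/3}$ shape is forced to be straighter than optimal on every sub-box. The moderate deviation estimate Theorem~\ref{t:moddev}, the on-scale upper/lower bounds of Theorem~\ref{t:par1}, and the restricted version Theorem~\ref{t:par2} together furnish the building blocks: on a single sub-box the unconfined optimum typically wanders by $\Theta(\rho^{2/3})$ and picks up the full $+\Theta(\rho^{1/3})$ of favorable fluctuation, whereas any path trapped in width $w$ cannot access that wandering and, by a union bound over the $\sim \rho^{2/3}/w$ transversal ``slabs'' it could occupy, has value at most $\E L - c\,\rho^{1/3}$ except with probability $e^{-\rho^{c}}$.

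Granting the single-scale estimate, the second step is concatenation. Any path $\gamma$ from $u_0$ to $v_0$ staying in $U$ restricts, on each $U_i$, to a path between its entry and exit points on the shared vertical walls; superadditivity of last passage times along the cut gives
\[
L(u_0, v_0; U) \;\le\; \sum_{i=1}^{K} \sup_{p \in A_i,\, q \in B_i} L(p, q; U_i),
\]
where $A_i, B_i$ are the left and right walls of $U_i$. By the single-scale estimate applied uniformly over entry/exit points — here I would use a version with suprema over the walls, obtained exactly as in Theorems~\ref{t:par1} and~\ref{t:par2} where the bounds are already stated with $\sup_{u \in A, v \in B}$ — each summand is at most $\E L(p,q) - c(r/K)^{1/3 + \e'/3}$ with failure probability $e^{-(r/K)^{c}}$. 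Summing the means, $\sum_i \E L(p_i, q_i)$ differs from $\E L(u_0, v_0) = 2\sqrt{rm} + O(r^{1/6})$ by at most an $O(K \cdot (r/K)^{1/6}) = O(K^{5/6} r^{1/6})$ correction coming from the subadditivity defect of the square-root (the means are concave in the rectangle area, so gluing $K$ pieces overshoots by a controlled amount), and this is $o(r^{1/3+\e/3})$ for $K$ fixed. Meanwhile the accumulated deficit is $c K \cdot (r/K)^{1/3+\e'/3} = c K^{2/3 - \e'/3} r^{1/3 + \e'/3}$, which, choosing $\e'$ close enough to $\e$ and $K$ large, dominates both the subadditivity correction and the target $r^{1/3+\e/3}$. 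A union bound over the $K$ pieces keeps the total failure probability at $K e^{-(r/K)^c} \le e^{-r^{c''}}$ for a smaller $c'' > 0$ and all large $r$.

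The main obstacle is the single-scale thin-parallelogram estimate at the heart of Step~1 — essentially proving the theorem's conclusion at aspect exponent $2/3 - \e'$ rather than $2/3 - \e$, but at the smaller length $\rho = r/K$. If one is willing to treat that single-scale confined-path lower-deviation bound as a known consequence of the \cite{BSS14} technology (it is of the same character as Theorems~\ref{t:par1}--\ref{t:par2} and Theorem~\ref{t:tftail1}, just with the roles of ``too wide'' and ``too narrow'' interchanged), then the concatenation argument above is routine; the delicate points there are only bookkeeping — tracking how $\e'$, $K$, and the various small constants $c, c', c''$ depend on one another, and verifying that the concavity defect of $t \mapsto 2\sqrt t$ across the $K$ pieces is genuinely lower order. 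So I expect the real content to be the direct recursive proof that narrow confinement is costly on a single scale, and the rest to be a careful but standard coarse-graining, analogous to the proof of Corollary~\ref{uniform1}.
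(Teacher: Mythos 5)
There is a genuine gap, and it sits at both ends of your argument. First, the ``single-scale thin-parallelogram estimate'' on which everything rests is precisely the statement of Theorem~\ref{t:constrained} itself (at scale $\rho=r/K$ and exponent $\e'$); you acknowledge the circularity, but the substitutes you offer do not close it. The ``union bound over the $\sim\rho^{2/3}/w$ transversal slabs'' controls the confined passage time from \emph{above} by roughly the unconfined mean plus on-scale fluctuations --- it cannot produce the required \emph{deficit} below the mean --- and the ``dyadic/recursive'' straightening argument is not carried out. Second, even granting that estimate, the concatenation with \emph{fixed} $K=K(\e)$ cannot reach the target: writing $h=r^{2/3-\e}=(r/K)^{2/3-\e'}$ forces $\e'<\e$ strictly for fixed $K$ and large $r$, so the accumulated deficit $cK^{2/3-\e'/3}r^{1/3+\e'/3}$ is of strictly smaller polynomial order than $r^{1/3+\e/3}$; no fixed constant $K$ compensates a power of $r$.

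The paper's sketch avoids both problems by inverting your bookkeeping. It takes $K$ \emph{growing} polynomially, $K\gg r^{\e/2}$ (with $K\le r^{3\e/2}$ so that Theorem~\ref{t:par1} still applies to each piece, whose walls then have height $h\le W(r/K)^{2/3}$). The deficit does not come from any thin-parallelogram input on the pieces: by Theorem~\ref{t:par1}, the best confined crossing of the $i$-th piece is within $o((r/K)^{1/3})$ of $L(u_i,u_{i+1})$, and the point is the negative correction \eqref{e:mean}, $\E L(u_i,u_{i+1})\le 2r/K-C(r/K)^{1/3}$. Since the midpoints $u_i$ are collinear, the leading terms $2r/K$ sum exactly to $2r$, and the corrections aggregate to $-CK^{2/3}r^{1/3}\ll -r^{1/3+\e/3}$, while the fluctuation of the i.i.d.\ sum $\sum_i L(u_i,u_{i+1})$ is only $O(K^{1/6}r^{1/3})$ and $\E L(u_0,v_0)=2r-O(r^{1/3})$. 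In other words, the quantity you label the ``subadditivity correction'' and estimate as a lower-order error (with the wrong normalization: for horizontal separation $\rho$ the rectangle area is $\asymp\rho^2$, so the correction per piece is $\Theta((r/K)^{1/3})$, not $\Theta((r/K)^{1/6})$) is in fact the sole source of the deficit; once $K$ is allowed to grow with $r$, it dominates and no separate single-scale lower-deviation input is needed.
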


The proof of Theorem \ref{t:constrained} follows a strategy, by now well known, that has been used to show Gaussian fluctuation of paths constrained to stay in a thin rectangle in \cite{CD13} in the context of first passage percolation. In the context of LPP, the Gaussian fluctuation has recently been shown in \cite{DPM17}, and tail bounds are proved in a more general context in the preprint \cite{BH17}. We now  provide a sketch of Theorem~\ref{t:constrained}'s proof. 

\begin{proof}[Sketch of Proof]
Without loss of generality let us assume $m=1$. Also, we can replace the parallelogram $U$ by the rectangle $U',$ one of whose pairs of sides is parallel to the line segment $u_0v_0$, with the other pair having midpoints $u_0$ and $v_0$ 
%respectively [\textcolor{red}{A:} respective to what?] 
and  length $r^{2/3-\e}$. Divide the rectangle $U'$ into $K$ equal parts (each of width $\frac{r}{K}$) by parallel line segments perpendicular to $u_0v_0$. Let $L_{i}$ denote the left side of the $i$-th rectangle and let $u_{i}$ be its midpoint. Now let $\gamma_{i}$ be the best 
%[\textcolor{red}{A:} what?] 
path (i.e. with maximal value of $|\gamma|$) between $L_{i}$ and $L_{i+1}$ that stays within $U'$. It follows from Theorem \ref{t:par1} that it is extremely likely that $|\gamma_{i}|-L(u_i,u_{i+1})\ll (K^{-1}r)^{1/3}$. It follows that up to an error of order much smaller than $K^{2/3}r^{1/3}$ we can approximate $L(u_0,v_0; U')$ by $\sum_{i} L(u_i,u_{i+1})$. Now observe that $L(u_i,u_{i+1})$ are independent and identically distributed with mean $2r/K-C(K^{-1}r)^{1/3}$ (here we use~\eqref{e:mean}) and standard deviation of the order of $ (K^{-1}r)^{1/3}$. By standard concentration inequalities, one then shows that $\sum_{i} L(u_i,u_{i+1})$ concentrates around $2r-K^{2/3}r^{1/3}$ at scale $K^{1/6}r^{1/3}$. By choosing $K\gg r^{\e/2}$ properly one gets the result. 
\end{proof}

%\textcolor{red}{not complete}
In the same vein, we have the following corollary.
\begin{corollary}
\label{uniform} 
Fix  $\e>0$, $\kappa>1$, and $\tau$ small enough. Then there exists $c=c(\e,\tau)$ such that, with probability at least $1-e^{-n^c}$,
$$
\displaystyle{\sup_{u,v:|u-v|\ge n^{\tau}, (u,v) \text{is } \kappa-\text{steep}} L(u,v,U)-\E(L(u,v)) \le  -|u-v|^{\frac{1}{3}+\e}}.
$$
\end{corollary}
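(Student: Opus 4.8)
The plan is to derive Corollary~\ref{uniform} from Theorem~\ref{t:constrained} in exactly the way Corollary~\ref{uniform1} was derived from Theorem~\ref{t:moddev}: prove the estimate first for a discrete net of ``corner'' points, apply a union bound, and then handle general $u,v$ by approximation using the fact that unit boxes are very unlikely to contain more than $n^{\e/100}$ Poisson points. First I would fix a scale-dependent mesh: since $\tau$ is small, it suffices to restrict attention to pairs $u_*,v_*$ lying on the integer lattice $\{0,1,\dots,n\}^2$, because rounding $u$ and $v$ to their nearest lattice points changes $L(u,v,U)$ by at most the number of Poisson points in a bounded neighbourhood of $u$ and $v$, which is $O(n^{\e/100})$ on the complement of a stretched-exponentially-rare event, and this error is negligible compared to $|u-v|^{1/3+\e}$ whenever $|u-v|\ge n^\tau$ and $\e$ has been chosen suitably relative to $\tau$. (One also shrinks the constraining region $U$ slightly when passing from $u,v$ to $u_*,v_*$, at a cost of a bounded number of additional boxes; since the bound we are proving is an upper bound on $L(u,v,U)$, using a slightly larger parallelogram for the net points is harmless.)

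Next, for a fixed pair of lattice points $u_*\preceq v_*$ with $|u_*-v_*|=r\ge n^\tau$ and $(u_*,v_*)$ $\kappa$-steep, I would invoke Theorem~\ref{t:constrained} with the parallelogram $U=U(r',m,h)$ where $r'$ is the $x$-coordinate distance between $u_*$ and $v_*$, $m$ is the gradient of $\mathds{L}(u_*,v_*)$, and $h=(r')^{2/3-\e'}$ for a parameter $\e'$ chosen so that $\e'/3<\e$; note $h$ dominates the transversal fluctuation scale one is permitting in the definition of $L(u,v,U)$ as it appears in the corollary (here one must be a little careful that the ``thin parallelogram'' appearing in $L(u,v,U)$ in the statement of Corollary~\ref{uniform} is the one implicitly fixed earlier with width on the order of a sub-$2/3$ power of $|u-v|$; I would make this compatibility explicit). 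Theorem~\ref{t:constrained} then gives
$$\P\bigl(L(u_*,v_*;U)\ge \E L(u_*,v_*)-(r')^{1/3+\e'/3}\bigr)\le e^{-(r')^{c'}}$$
for some $c'=c'(\e')>0$. Since $r'\ge n^\tau$, the failure probability is at most $e^{-n^{\tau c'}}$, and a union bound over the at most $n^4$ pairs of lattice points leaves a bound of $n^4 e^{-n^{\tau c'}}\le e^{-n^{c}}$ for a slightly smaller $c>0$ and all large $n$. On the complement of this event and of the rare event that some unit box is overfull, every pair $u,v$ with $|u-v|\ge n^\tau$ and $(u,v)$ $\kappa$-steep satisfies
$$L(u,v,U)-\E L(u,v)\le L(u_*,v_*;U)-\E L(u_*,v_*) + O(n^{\e/100}) \le -(r')^{1/3+\e'/3}+O(n^{\e/100})\le -|u-v|^{1/3+\e},$$
where the last inequality uses $|u-v|\ge n^\tau$, the comparability of $r'$ and $|u-v|$ (guaranteed by $\kappa$-steepness), and the freedom to have taken $\e$ slightly smaller than $\e'/3$ and $\tau$ small enough that $n^{\e/100}\ll |u-v|^{1/3+\e'/3}-|u-v|^{1/3+\e}$; I would also use $|\E L(u,v)-\E L(u_*,v_*)|=O(|r-r'|^{1/2}+\cdots)=O(1)$ from Theorem~\ref{t:moddev}, which is likewise negligible.

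The main obstacle, as in Corollary~\ref{uniform1}, is not any single estimate but the bookkeeping of the approximation step: one must confirm that replacing $(u,v)$ by nearby lattice points, and replacing the restricting region accordingly, perturbs both $L(u,v,U)$ and $\E L(u,v)$ by quantities that are genuinely $o(|u-v|^{1/3+\e})$ uniformly over the relevant range of $|u-v|\in [n^\tau,\sqrt 2\,n]$, and in particular that the relevant powers of $n$ line up ($\e$ small relative to $\tau$, and $\e'/3$ strictly larger than $\e$). Because the authors remark that the proof is ``along the same lines'' and omit it, I would present the argument at the level of detail above, emphasising the choice $h=(r')^{2/3-\e'}$ with $\e'/3>\e$, the lattice union bound, and the overfull-box exclusion, and otherwise refer back to the proof of Corollary~\ref{uniform1}.
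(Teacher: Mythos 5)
Your proposal matches the paper's intended argument exactly: the authors omit the proof, saying only that Corollary \ref{uniform} follows from the corresponding fixed-point result (Theorem \ref{t:constrained}) by the same coarse-graining, lattice union bound and overfull-box approximation used to deduce Corollary \ref{uniform1} from Theorem \ref{t:moddev}, and that is precisely the argument you supply. The only blemish is the early typo ``$\e'/3<\e$'', which contradicts the correct requirement $\e'/3>\e$ that you state and use in the remainder of the argument.
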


\subsection{Estimates for One-Sided Geodesics}
We end this section by considering geodesics in a different constrained model. Baik and Rains \cite{br01,br} considered increasing %[\textcolor{red}{A:} upright?] 
paths from $(0,0)$ to $(n,n)$ that lie above the diagonal line joining the points. Recall that $L(u,v)$ denotes the length of the longest increasing path between points $u \preceq v$. Let $L^{\boxslash}(u,v)$ denote the length of the  longest path between $u$ and $v$ restricted to lie 
%strictly [\textcolor{red}{A:} endpoints?] 
above the line joining $u$ and $v$ and, similarly to Definition \ref{geodesic1}, let $\gamma^{\boxslash}(u,v)$ denote the corresponding uppermost one-sided geodesic between $u$ and $v$. Moreover, let $L^{\boxslash}(n)$ denote $L^{\boxslash}(u,v)$ in the special case that $u=(0,0)$ and $v=(n,n)$. Baik and Rains \cite{br01,br} proved that $\E L^{\boxslash}(n)=2n+o(n)$ and that the fluctuations are again of order $n^{1/3}$ (although in this case the scaling limit is different; it is the GSE Tracy Widom distribution instead of the GUE Tracy Widom distribution). We shall need the corresponding moderate deviation estimates, consequences of  Theorem \ref{t:par2}. 

\begin{theorem}
\label{baikrains} 
Let $u,v$ be as in the hypothesis of Theorem \ref{t:moddev}. There exist positive constants $s_0, t_0, C$ and $c$ such that, for all $t>t_0$ and $s>s_0$,
$$\P[|L^{\boxslash}(u,v)-2\sqrt{t}| \geq st^{1/6}]\leq Ce^{-cs^{1/2}}.$$
\end{theorem}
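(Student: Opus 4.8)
The plan is to treat the two tails of $L^{\boxslash}(u,v)$ separately: the upper tail is immediate from Theorem~\ref{t:moddev}, while the lower tail is extracted from Theorem~\ref{t:par2} by confining paths to an on-scale parallelogram that sits above the line $\mathds{L}(u,v)$.

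For the upper tail, every one-sided path from $u$ to $v$ is in particular a directed path from $u$ to $v$, so $L^{\boxslash}(u,v)\le L(u,v)$ pointwise, and Theorem~\ref{t:moddev} gives $\P\big[L^{\boxslash}(u,v)-2\sqrt t\ge st^{1/6}\big]\le \P\big[L(u,v)-2\sqrt t\ge st^{1/6}\big]\le Ce^{-cs^{3/2}}$, which for $s\ge s_0$ is at most $Ce^{-cs^{1/2}}$. This also records $\E L^{\boxslash}(u,v)\le\E L(u,v)\le 2\sqrt t$; together with the lower tail below it follows that $|\E L^{\boxslash}(u,v)-2\sqrt t|=O(t^{1/6})$, a discrepancy that is harmless since it is absorbed into a bounded enlargement of $s$.

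For the lower tail, set $r=v_1-u_1$; since $t=|u_1-v_1|\,|u_2-v_2|=mr^2$ with $m\in(\kappa^{-1},\kappa)$, we have $r\asymp\sqrt t$ and $r^{1/3}\asymp t^{1/6}$ with $\kappa$-dependent implied constants. Let $U=U(r,m,h)$ with $h=r^{2/3}$ (so $W=1$ in Theorem~\ref{t:par2}), translated so that its lower gradient-$m$ side is exactly the segment from $u$ to $v$; by translation invariance of the Poisson process this affects no probabilities. Then $u$ and $v$ are the lower corners of $U$, all of $U$ lies weakly above $\mathds{L}(u,v)$, and every directed path from $u$ to $v$ contained in $U$ is admissible for $L^{\boxslash}$ (should one insist on the strict convention, an infinitesimal tilt of $U$ handles it). Hence $L^{\boxslash}(u,v)\ge L(u,v;U)$. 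Since $u$ and $v$ lie in opposite extreme thirds of $U$, Theorem~\ref{t:par2} yields $\P\big[L(u,v;U)-\E L(u,v)\le -sr^{1/3}\big]\le Ce^{-cs^{1/2}}$ for all large $r,s$. On the complementary event, using $\E L(u,v)=2\sqrt t-O(t^{1/6})$ (a consequence of Theorem~\ref{t:moddev}) and $r^{1/3}\asymp t^{1/6}$, we obtain $L^{\boxslash}(u,v)\ge 2\sqrt t-C(s+1)t^{1/6}$. Rescaling $s$ by the $\kappa$-dependent constant $C$ and enlarging $s_0,t_0$ gives $\P\big[L^{\boxslash}(u,v)-2\sqrt t\le -st^{1/6}\big]\le Ce^{-cs^{1/2}}$, and combining the two tails completes the proof.

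This is essentially a clean reduction, and I do not anticipate a real obstacle; the care needed is purely in the bookkeeping. One must choose the parallelogram so that it simultaneously lies above $\mathds{L}(u,v)$ and places $u,v$ where Theorem~\ref{t:par2} applies (if one prefers $u,v$ strictly inside the extreme thirds rather than at corners, fatten $U$ slightly to the left of $u$ and to the right of $v$, which keeps $U$ above $\mathds{L}(u,v)$ and changes $r$ only by a constant factor), and one must check that all constants remain uniform over $m\in(\kappa^{-1},\kappa)$, which is automatic since $r$, $t$ and the relevant scales differ only by $\kappa$-controlled factors. The weaker exponent $s^{1/2}$, as opposed to the $s^{3/2}$ available for the unconstrained model, is simply inherited from Theorem~\ref{t:par2}.
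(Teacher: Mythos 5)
Your proposal is correct and follows essentially the same route as the paper: the upper tail via $L^{\boxslash}(u,v)\le L(u,v)$ and Theorem~\ref{t:moddev}, and the lower tail by placing the parallelogram of Theorem~\ref{t:par2} with the segment $uv$ as its bottom side and invoking $\E L(u,v)=2\sqrt t-\Theta(t^{1/6})$. Your additional bookkeeping (the position of $u,v$ relative to the extreme thirds, and uniformity over $m\in(\kappa^{-1},\kappa)$) merely fills in details the paper leaves implicit.
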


\begin{proof}
The upper tail result follows from Theorem \ref{t:moddev} due to $L^{\boxslash}(u,v)\leq L(u,v)$. For the lower tail, we use Theorem \ref{t:par2} with the straight line joining $u$ and $v$ being the bottom side of the parallelogram and use the fact that $\E L(u,v)=2\sqrt{t}- \Theta(t^{1/6})$.
\end{proof}

We need a uniform version of this result. Recall $S(u,v)=S_{\tau}(u,v)$ from \eqref{notation540}.
\begin{corollary}
\label{uniform2}
Fix  $\e>0$, $\kappa>1$, and $\tau > 0$ small enough. Then there exists $c=c(\e,\tau)$ such that, with probability at least $1-e^{-n^c}$,
$$
\displaystyle{\sup_{u,v:(u,v) \text{is } \eta-\text{steep}} L^{\boxslash}(u,v)-\E(L(u,v)) \ge  -S(u,v)} .
$$
\end{corollary}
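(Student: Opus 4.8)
The plan is to derive this uniform bound from the single-pair lower-tail estimate of Theorem~\ref{baikrains} via the coarse-graining scheme already used for Corollaries~\ref{uniform1} and~\ref{uniform0}. First I would discretize the noise space: partition $[0,n]^2$ into unit squares, so that it suffices --- modulo the endpoint adjustment discussed in the last paragraph --- to control $L^{\boxslash}(p,q)$ for all $\kappa$-steep pairs of \emph{lattice} points $p\preceq q$. Fix such a pair and let $t=t(p,q)$ be the area of the axis-parallel rectangle spanned by $p$ and $q$; since the gradient of $\mathds{L}(p,q)$ lies in $[\kappa^{-1},\kappa]$, $t\asymp|p-q|^{2}$ and $t^{1/6}\asymp|p-q|^{1/3}$. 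Decomposing $L^{\boxslash}(p,q)-\E L(p,q)=\bigl(L^{\boxslash}(p,q)-2\sqrt t\,\bigr)+\bigl(2\sqrt t-\E L(p,q)\bigr)$ and using \eqref{e:mean} together with the Tracy--Widom limit to see that $2\sqrt t-\E L(p,q)=\Theta(t^{1/6})>0$, the lower-tail half of Theorem~\ref{baikrains} gives $\P\bigl(L^{\boxslash}(p,q)-\E L(p,q)<-s\,t^{1/6}\bigr)\le Ce^{-cs^{1/2}}$ for $s>s_{0}$.

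Next I would choose $s$ so that $s\,t^{1/6}$ matches $S(p,q)$ from \eqref{notation540}, splitting on the size of $|p-q|$. If $|p-q|>n^{\tau}$, take $s$ a small constant multiple of $|p-q|^{\e/10}$, so that $s\,t^{1/6}\le S(p,q)=|p-q|^{1/3+\e/10}$ and the failure probability is at most $Ce^{-c'|p-q|^{\e/20}}\le Ce^{-c'n^{\tau\e/20}}$. If $n^{\tau/2}\le|p-q|\le n^{\tau}$, take $s$ a small multiple of $n^{2\tau/3}/|p-q|^{1/3}\ge n^{\tau/3}$, so that $s\,t^{1/6}\le n^{2\tau/3}=S(p,q)$ and the failure probability is at most $Ce^{-c'n^{\tau/6}}$. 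Finally, the regime $|u-v|<n^{\tau/2}$ needs no randomness, even for arbitrary (non-lattice) $\kappa$-steep $u\preceq v$: extending endpoints only increases last passage times, so $\E L(u,v)\le 2n^{\tau/2}+o(n^{\tau/2})\le n^{2\tau/3}=S(u,v)$ for large $n$, while $L^{\boxslash}(u,v)\ge0$. In every case the per-pair failure probability is at most $e^{-n^{c_{0}}}$ for some $c_{0}=c_{0}(\e,\tau)>0$, and a union bound over the at most $(n+1)^{4}$ lattice pairs yields the assertion for lattice endpoints with probability at least $1-e^{-n^{c}}$. (The stretched-exponential rate $e^{-cs^{1/2}}$ in Theorem~\ref{baikrains}, weaker than the $e^{-cs}$ available in the unrestricted setting, is immaterial here because $s$ is a genuine power of $n$.)

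The step I expect to require the most care --- and the only one genuinely new relative to Corollary~\ref{uniform0} --- is passing from lattice endpoints to arbitrary $\kappa$-steep $u\preceq v$, since moving the endpoints also moves the one-sided constraint line $\mathds{L}(u,v)$, so a long path above $\mathds{L}(p,q)$ need not project to a long path above $\mathds{L}(u,v)$. I would handle this by choosing the surrogate lattice points to respect the constraint. Given $u\preceq v$ not within $O_{\kappa}(1)$ of each other (closer pairs fall in the deterministic regime above), pick lattice points $p,q$ with $p\succeq u$, $q\preceq v$, $|p-u|,|v-q|=O_{\kappa}(1)$, and with both $p$ and $q$ lying \emph{above} $\mathds{L}(u,v)$ --- arranged by shifting $u$ vertically upward by a $\kappa$-dependent constant before rounding, and $v$ horizontally to the left similarly. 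Because the closed half-plane above $\mathds{L}(u,v)$ is convex, the segment $[p,q]$, hence the portion of $\mathds{L}(p,q)$ over $[p_{1},q_{1}]$, lies above $\mathds{L}(u,v)$; and a direct check shows the two short connecting segments $[u,p]$ and $[q,v]$ stay above $\mathds{L}(u,v)$ as well (the first is steeper than the line, the second shallower). Concatenating $[u,p]$, the one-sided geodesic $\gamma^{\boxslash}(p,q)$, and $[q,v]$ thus produces an admissible path, so $L^{\boxslash}(u,v)\ge L^{\boxslash}(p,q)-N$, where $N$ counts Poisson points in the $O_{\kappa}(1)$-neighbourhoods of the connecting segments; by the Poisson-tail and union-bound argument from Corollary~\ref{uniform1} one may assume $N\le n^{\e/100}$ uniformly, which is $\ll S(u,v)$. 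Since moreover $|p-q|=|u-v|\pm O_{\kappa}(1)$, one has $S(p,q)\le 2S(u,v)$ and $|\E L(u,v)-\E L(p,q)|=O(|u-v|^{1/3})\ll S(u,v)$ in the regime $|u-v|>n^{\tau}$ (from $\E L(t)=2\sqrt t+O(t^{1/6})$); all these losses are absorbed by first proving the lattice estimate with $S(p,q)$ weakened to $\tfrac1{10}S(p,q)$. This is precisely the scheme of Corollaries~\ref{uniform1} and~\ref{uniform0} applied to one-sided times, with the convexity observation as the one additional ingredient.
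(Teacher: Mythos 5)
Your proposal is correct and is exactly the argument the paper has in mind: the paper omits this proof, stating only that Corollary~\ref{uniform2} follows from Theorem~\ref{baikrains} by the same coarse-graining and union-bound scheme used to deduce Corollary~\ref{uniform1} from Theorem~\ref{t:moddev}. You in fact supply more detail than the paper does, in particular the observation that perturbing the endpoints also perturbs the constraint line $\mathds{L}(u,v)$ and your convexity-based choice of surrogate lattice points lying above that line, which is the one genuinely one-sided ingredient the paper's one-line remark glosses over.
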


Proofs of Corollaries \ref{uniform0} \ref{uniform}, \ref{uniform2} follow from the corresponding theorems for fixed points, in the same way as  Corollary \ref{uniform1} follows from Theorem \ref{t:moddev}.  We omit the details. 

\section{A Variational Approach to the Constrained Geodesic}
\label{s:var}
%\textcolor{red}{polish the outline, add more credit to DZ}
We now move towards proving Theorem \ref{t:lln} and Theorem \ref{lln2}.
Deuschel and Zeitouni \cite{DZ1,DZ2} studied in detail the hydrodynamic limit of the unconstrained geodesic for inhomogeneous point processes. We follow their strategy broadly and study the limit of the constrained curve by means of appropriate variational problems. 
We first  explain the idea. Fix $\alpha\in (0,\frac{1}{2})$ for the rest of this section. First let us assume that a limiting continuous curve $\phi$ of the constrained geodesics exists after scaling. This curve $\phi:[0,1] \to [0,1]$ will be  continuous, non-decreasing and surjective.  By the area constraint, $\int_0^1\phi(s)ds \ge \tfrac{1}{2}+\alpha$. Now heuristically, since in practice there is always a little bit of area excess,  one can approximate $\phi$ by a piecewise affine function at a scale local enough that each piece is exempt from the area constraint; and we can then use the law of large numbers for the unconstrained geodesic (see Theorem \ref{t:moddev}) at every local scale and  sum over them. This argument suggests that the approximating path will have length $2J({\phi})n +o(n)$ where  
$$J({\phi})=\int_{0}^1\sqrt {\dot{\phi}(s)}ds.$$
It thus seems that we should maximize $J({\phi})$ among all curves $\phi$ satisfying the area constraint, and the maximum will be the constant appearing in the required law of large numbers. We now proceed to make this precise. 
Let $\mathcal{B}$ be the collection of all right-continuous non-decreasing functions from $[0,1]$ to $[0,1]$. 
Thus $\mathcal{B}$ is in bijection with the set of all sub-probability measures on $[0,1]$. Now, for any $\phi\in \mathcal{B},$ by the Lebesgue decomposition theorem we can write   
 \begin{equation}\label{decomposition23}
 \phi=\phi_{ac}+\phi_{s}
 \end{equation}
  in a unique way as a sum of a pair of sub-probability measures, with $\phi_{ac}$ being the absolutely continuous part and $\phi_s$ the  singular part (with respect to Lebesgue measure). 
Note that this implies $\phi_{ac}$ has a derivative 
$\dot \phi_{ac}$ almost everywhere \footnote{Throughout the article, for any function $f$ on $[0,1],$ which is differentiable almost everywhere, we will denote its derivative by $\dot f$. Recall, that this does not necessarily imply $\int_{0}^x \dot f(s)ds=f(x)$ for all $x$.}  
such that, for any $0\le x\le 1$, we have $$\int_{0}^x \dot \phi_{ac}(s)ds =\phi_{ac}(x)$$ while $\phi_s$ is almost surely flat and hence has derivative $0$ almost surely.  Thus $\dot \phi=\dot \phi_{ac}$ almost surely.
Also define 
\begin{equation}\label{variational1}
J(\phi)= \int_{0}^1\sqrt {\dot{\phi}(s)}{\rm d}s .
\end{equation} 
Let 
\begin{equation}\label{restr304}
\mathcal{B}_{\alpha}=\left\{\phi\in \mathcal{B}: \int_0^1\phi(s)ds \ge \tfrac{1}{2}+\alpha\right\}.
\end{equation}
We shall often omit the subscript $\alpha$. Finally let 
\begin{equation}\label{functional}
J_{\alpha}=\sup_{\phi \in \mathcal{B}_{\alpha}} J(\phi).
\end{equation}

The first step is to show existence and uniqueness of the solution. 

\begin{proposition}
\label{opt1} 
There exists a unique element $\psi=\psi_{\alpha} \in \mathcal{B}$ that attains the supremum  in \eqref{functional}.
\end{proposition}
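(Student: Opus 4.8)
The plan is to solve the variational problem \eqref{functional} by a direct, "calculus of variations" argument, exploiting the concavity of $J$ and the convexity of the constraint set. First, I would record the structural features of any maximizer. Since $t \mapsto \sqrt{t}$ is strictly concave, $J$ is a strictly concave functional of the measure $\dot\phi_{ac}$, while $\mathcal{B}_\alpha$ is a convex set; this gives uniqueness for free once existence of a maximizer with a nontrivial absolutely continuous part is established. For existence I would run a standard compactness argument: $\mathcal{B}$ is (via the bijection with sub-probability measures on $[0,1]$) sequentially compact in the topology of weak convergence, and along a maximizing sequence $\phi^{(k)}$ one extracts a weak limit $\psi$. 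The map $\phi \mapsto \int_0^1 \phi(s)\,ds$ is continuous under weak convergence of the associated measures (it is, up to the $x \mapsto 1-x$ weighting, just integration of a bounded continuous function against the measure), so $\psi \in \mathcal{B}_\alpha$. The only subtlety is upper semicontinuity of $J$: since $\sqrt{\cdot}$ is concave, $J$ is upper semicontinuous with respect to weak convergence (the singular part of the limit can only "waste" mass, never increase $J$), so $J(\psi) \ge \limsup_k J(\phi^{(k)}) = J_\alpha$, giving a maximizer.

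Next I would argue that the maximizer is absolutely continuous with a derivative that is strictly positive and bounded, and that the area constraint is tight. If $\psi$ had a singular part $\psi_s$ with mass $\mu>0$, one could redistribute that mass as extra absolutely continuous density — spread uniformly, say — strictly increasing $J$ by strict concavity while preserving membership in $\mathcal{B}_\alpha$ (moving mass of the cumulative distribution function leftward only increases $\int_0^1 \phi$, moving it rightward by a small amount keeps the area above $\tfrac12+\alpha$ if we have slack; a short case analysis handles this), contradicting optimality. Similarly, if $\int_0^1 \psi > \tfrac12+\alpha$ strictly, there is room to flatten $\psi$ slightly toward the diagonal, and since the diagonal $\phi(x)=x$ is the unconstrained maximizer of $J$ (by Jensen, $J(\phi) \le \sqrt{\int_0^1 \dot\phi} \le 1$ with equality iff $\dot\phi \equiv 1$), any such move strictly increases $J$ — so the constraint is active: $\int_0^1 \psi(s)\,ds = \tfrac12+\alpha$.

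With absolute continuity and the active constraint in hand, I would set up the Euler–Lagrange condition. Writing $g = \dot\psi$, we are maximizing $\int_0^1 \sqrt{g(s)}\,ds$ over nonnegative $g$ with $\int_0^1 g = \psi(1) \le 1$ (in fact one checks $\psi(1)=1$: the maximizer is surjective, since sending mass to the right endpoint is wasteful) and the linear constraint $\int_0^1 (1-s) g(s)\,ds = \int_0^1 \psi(s)\,ds - 0 = \tfrac12+\alpha$ after an integration by parts. Introducing a Lagrange multiplier $\lambda$ for the area constraint (and, if needed, $\nu$ for $\int_0^1 g = 1$), the pointwise first-order condition reads $\tfrac{1}{2\sqrt{g(s)}} = \nu + \lambda(1-s)$, i.e. $g(s) = \bigl(2\nu + 2\lambda(1-s)\bigr)^{-2}$, which has exactly the form $\dot\psi_\alpha(s) = \tfrac{(1+c_\alpha)}{(1+c_\alpha s)^2} \cdot (\text{const})$ — matching the stated $\psi_\alpha(x) = \frac{(1+c_\alpha)x}{1+c_\alpha x}$, whose derivative is $\frac{1+c_\alpha}{(1+c_\alpha x)^2}$. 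The multipliers are then pinned down by the two constraints $\psi_\alpha(1)=1$ (automatic) and $\int_0^1 \psi_\alpha = \tfrac12+\alpha$, the latter being precisely the defining relation \eqref{implicit1} for $c_\alpha$; monotonicity of $f(c)$ recorded in the text guarantees a unique $c_\alpha$. Finally, strict concavity of $J$ upgrades this stationary point to the unique global maximizer.

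The main obstacle I anticipate is the handling of the singular part of $\phi$ and the lower semicontinuity/variational-perturbation bookkeeping: one must argue cleanly that perturbations removing singular mass or loosening a strict area inequality stay inside $\mathcal{B}_\alpha$ and genuinely increase $J$, rather than merely not decreasing it. Everything else — compactness, the Euler–Lagrange computation, and the identification of $c_\alpha$ via \eqref{implicit1} — is routine once that structural reduction to "$\psi$ is a.c., strictly increasing, surjective, with the constraint active" is in place.
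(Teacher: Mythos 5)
Your overall architecture --- weak compactness plus upper semicontinuity of $J$ for existence, strict concavity of $t\mapsto\sqrt t$ for uniqueness of the density, and a perturbation argument to rule out a singular part and a mass defect --- is the same as the paper's (Lemmas \ref{l:exist}, \ref{l:uni} and \ref{l:ac}); the Euler--Lagrange identification of $\psi_\alpha$ that you append is not needed for Proposition \ref{opt1} and is carried out separately in Proposition \ref{solution1}. Two of your deferrals are harmless: upper semicontinuity of $J$ under weak convergence is exactly the point the paper also outsources (to the Legendre-transform representation $-J=\Lambda^*$ borrowed from \cite{DZ2}), and your remark that strict concavity only forces $\dot\psi_1=\dot\psi_2$ a.e., so that absolute continuity with total mass one is what upgrades this to $\psi_1=\psi_2$, correctly routes the uniqueness argument through the singular-part lemma.

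The genuine gap is in how you remove the singular part. Spreading the singular mass uniformly does strictly increase $J$, but it need not preserve the constraint: writing the trapped area as $\int_0^1(1-x)\,d\phi(x)$, a singular mass $k$ sitting near $x=0$ contributes almost $k$ to the area, whereas the same mass spread uniformly contributes only $k/2$; since (as you yourself argue) the constraint is active at the optimum, there is no slack to absorb this loss, and your ``short case analysis'' does not supply the missing mechanism. The paper's Lemma \ref{l:ac} needs a two-stage construction precisely for this reason: first the singular mass is concentrated as an atom at $0$ and converted to a density on a short interval $[0,r]$, gaining order $\sqrt{kr}$ in $J$ at an area cost of only $O(kr)$; then the lost area is recovered by transferring density $r/k$ from a set $I_2$ to an earlier set $I_1$, both of Lebesgue measure $k$ and both contained in a region where $\dot\psi$ is bounded above and below, which regains area at least $rk$ while costing only $O(r)$ in $J$. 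Since $\sqrt{kr}\gg r$ for $r$ small, the net change in $J$ is strictly positive and the constraint is restored. This compensating transfer is the idea your proposal is missing; without it the proposed perturbation either violates the area constraint or cannot be shown to strictly increase $J$.
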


Note that the function $\psi=\psi_{\alpha}$ in Proposition \ref{opt1} will be the same in the statement of Theorem~\ref{lln2}. The existence and uniqueness parts of Proposition \ref{opt1} have separate proofs. We first state the existence result.

\begin{lemma}
\label{l:exist}
There exists $\psi\in \mathcal{B}$ that achieves the supremum  in \eqref{functional}.
\end{lemma}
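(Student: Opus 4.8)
The plan is to prove Lemma~\ref{l:exist} by the direct method of the calculus of variations: extract a maximizing sequence, pass to a subsequential limit in a suitable (weak-$*$) topology on $\mathcal{B}$, and check that the limit lies in $\mathcal{B}_\alpha$ and that $J$ does not lose mass in the limit. Concretely, let $(\phi_k)_{k\ge1}\subset \mathcal{B}_\alpha$ satisfy $J(\phi_k)\to J_\alpha$. Identifying each $\phi_k$ with the sub-probability measure $\mu_k$ on $[0,1]$ whose distribution function it is (so $\phi_k(x)=\mu_k([0,x])$), the set of all sub-probability measures on the compact interval $[0,1]$ is itself sequentially compact in the topology of vague (equivalently here weak-$*$ against $C[0,1]$, after padding with mass at a point outside, or simply vague) convergence, by Helly's selection theorem. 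So pass to a subsequence along which $\mu_k \to \mu$ vaguely; write $\psi$ for the right-continuous non-decreasing function associated to $\mu$, so $\psi\in\mathcal{B}$.

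The two things that then need checking are (i) $\psi\in\mathcal{B}_\alpha$, i.e.\ $\int_0^1 \psi(s)\,ds \ge \tfrac12+\alpha$, and (ii) $J(\psi)\ge \limsup_k J(\phi_k) = J_\alpha$, which combined with $J(\psi)\le J_\alpha$ (since $\psi\in\mathcal{B}_\alpha$) gives equality and hence that the supremum is attained. For (i): by Fubini, $\int_0^1\phi_k(s)\,ds = \int_0^1 \mu_k([0,s])\,ds = \int_{[0,1]} (1-t)\,d\mu_k(t)$, and $t\mapsto 1-t$ is continuous and bounded on $[0,1]$, so vague convergence gives $\int_0^1\phi_k(s)\,ds \to \int_{[0,1]}(1-t)\,d\mu(t) = \int_0^1\psi(s)\,ds$; since each term is $\ge \tfrac12+\alpha$, so is the limit. (One should be slightly careful that no mass escapes to an endpoint in a way that changes the integral, but since $[0,1]$ is compact and $1-t$ is continuous at both endpoints, there is no such loss.)

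The heart of the matter, and the step I expect to be the main obstacle, is (ii): the functional $J(\phi)=\int_0^1\sqrt{\dot\phi(s)}\,ds$ depends only on the \emph{absolutely continuous part} of $\phi$, and it is concave in $\dot\phi$, so it is \emph{upper} semicontinuous — not lower semicontinuous — under weak convergence. That is exactly the favourable direction for a maximization problem: one wants $J(\psi)\ge\limsup_k J(\phi_k)$. The clean way to see this is to note that for any finite partition $0=x_0<x_1<\cdots<x_N=1$, Jensen's inequality gives
\[
J(\phi) \;=\; \sum_{i=1}^N \int_{x_{i-1}}^{x_i}\sqrt{\dot\phi(s)}\,ds \;\le\; \sum_{i=1}^N (x_i-x_{i-1})\sqrt{\frac{\phi(x_i^-)-\phi(x_{i-1})}{x_i-x_{i-1}}} \;=\; \sum_{i=1}^N \sqrt{(x_i-x_{i-1})\big(\phi(x_i^-)-\phi(x_{i-1})\big)},
\]
and moreover $J(\phi)$ is the \emph{infimum} over all such partitions of the right-hand side (letting the mesh go to zero along a sequence of refinements recovers $\int_0^1\sqrt{\dot\phi}$ for the a.c.\ part, while any singular part only decreases the Riemann-type sums and contributes nothing to $J$). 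Thus $J$ is a decreasing limit — hence an infimum — of functionals each of which is continuous under vague convergence at all but countably many partition points (the atoms of $\mu$), so $J$ is upper semicontinuous: $J(\psi)\ge \limsup_k J(\phi_k)=J_\alpha$. Combining, $J(\psi)=J_\alpha$ and $\psi\in\mathcal{B}_\alpha$, proving the lemma. The remaining care is purely measure-theoretic bookkeeping: choosing partition points outside the (countable) atom set of $\mu$ so that $\phi_k(x_i^-)\to\psi(x_i)$, and justifying the exchange of $\inf_{\text{partitions}}$ with $\limsup_k$ via the standard fact that an infimum of u.s.c.\ (here continuous) functions is u.s.c.
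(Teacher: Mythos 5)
Your argument is correct and follows the same skeleton as the paper's proof (maximizing sequence, weak-$*$ compactness of sub-probability measures on $[0,1]$, passage to a subsequential limit, verification of the area constraint by testing against a continuous function -- the paper instead uses a.e.\ convergence at continuity points plus bounded convergence, which is equivalent), but you handle the crux, the upper semicontinuity of $J$, by a genuinely different route. The paper writes $-J$ as a Legendre transform, $-J(\phi)=\Lambda^*(\phi)=\sup_{f\in C[0,1]}\bigl[\int f\,d\phi-\Lambda(f)\bigr]$, quoting \cite[Lemma 5]{DZ2}; lower semicontinuity of $-J$ is then immediate since a supremum of weakly continuous affine functionals is lower semicontinuous. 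You instead represent $J$ as an \emph{infimum} of partition sums $\sum_i\sqrt{(x_i-x_{i-1})\,\mu([x_{i-1},x_i))}$ (a Hellinger-integral type identity), each of which is continuous along the subsequence once the partition points avoid the countably many atoms of the limit measure. The two representations are dual faces of the same convexity, but yours is self-contained and elementary, at the price of the measure-theoretic bookkeeping you acknowledge; the paper's is shorter but leans on an external lemma.

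One imprecision is worth flagging in your justification of the identity $J(\phi)=\inf_P\sum_i\sqrt{|I_i|\,\mu(I_i)}$. The inequality $J(\phi)\le(\text{partition sum})$ is indeed Jensen, but your claim that ``letting the mesh go to zero along a sequence of refinements'' recovers $J(\phi)$ is not right as stated when $\phi$ has a nontrivial singular part: for a uniform partition into $N$ intervals, Cauchy--Schwarz only gives $\sum_i\sqrt{|I_i|\,\mu_s(I_i)}\le\sqrt{\mu_s([0,1])}$, and for a singular measure of full support (e.g.\ a Cantor-type measure) this contribution does \emph{not} vanish as $N\to\infty$. To make the infimum reach $J(\phi)$ you must choose partitions \emph{adapted} to the singular part: cover the Lebesgue-null set carrying $\phi_s$ by an open set of measure less than $\delta$, take finitely many of its component intervals capturing all but $\delta$ of the singular mass as partition cells, and apply Cauchy--Schwarz separately to those cells (total length $<\delta$) and to the complementary cells (total singular mass $<\delta$), yielding a singular contribution of order $\sqrt{\delta}$; the absolutely continuous part is then handled by mesh refinement of the remaining cells as you describe. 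With this correction your proof is complete.
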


The proof of Lemma \ref{l:exist} is technical: it uses a compactness argument on the space of probability measures following a similar argument from \cite{DZ2}. We postpone the proof until Section \ref{es}. 

In the next part we show uniqueness. 

\begin{lemma}
\label{l:uni}
Suppose $\psi_1$ and $\psi_2$ are in $\mathcal{B}_{\alpha}$ and satisfy  $J(\psi_i)= J_{\alpha}$ for $i=1,2$. Then $\psi_1=\psi_2$. 
\end{lemma}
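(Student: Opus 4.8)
The plan is to prove uniqueness via strict concavity of the functional $J$ together with the convexity of the constraint set $\mathcal{B}_\alpha$. The first observation is that $\mathcal{B}_\alpha$ is a convex subset of $\mathcal{B}$: if $\psi_1,\psi_2 \in \mathcal{B}_\alpha$ and $\lambda \in [0,1]$, then $\psi_\lambda := \lambda\psi_1 + (1-\lambda)\psi_2$ is again right-continuous, non-decreasing, $[0,1]$-valued, and $\int_0^1 \psi_\lambda(s)\,ds = \lambda\int_0^1\psi_1 + (1-\lambda)\int_0^1\psi_2 \geq \tfrac12 + \alpha$, so $\psi_\lambda \in \mathcal{B}_\alpha$. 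Next, I would record the behaviour of the Lebesgue decomposition under convex combinations: since the decomposition into absolutely continuous and singular parts is linear, $(\psi_\lambda)_{ac} = \lambda(\psi_1)_{ac} + (1-\lambda)(\psi_2)_{ac}$, and hence $\dot\psi_\lambda = \lambda\dot\psi_1 + (1-\lambda)\dot\psi_2$ almost everywhere. (The singular parts only help the area constraint and contribute nothing to $J$, so they play no role beyond this.)

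The heart of the argument is the strict concavity of $t \mapsto \sqrt{t}$ on $[0,\infty)$: for $a,b \geq 0$ and $\lambda \in (0,1)$, $\sqrt{\lambda a + (1-\lambda)b} \geq \lambda\sqrt a + (1-\lambda)\sqrt b$, with equality if and only if $a = b$. Applying this pointwise with $a = \dot\psi_1(s)$, $b = \dot\psi_2(s)$ and integrating over $s \in [0,1]$ gives
\begin{equation}\label{e:concJ}
J(\psi_\lambda) \;\geq\; \lambda J(\psi_1) + (1-\lambda)J(\psi_2),
\end{equation}
so $J$ is concave on $\mathcal{B}_\alpha$. Now suppose $J(\psi_1) = J(\psi_2) = J_\alpha$. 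Since $\psi_\lambda \in \mathcal{B}_\alpha$, we have $J(\psi_\lambda) \leq J_\alpha$ by the definition of $J_\alpha$ in \eqref{functional}; combined with \eqref{e:concJ} with $\lambda = 1/2$, this forces $J(\psi_{1/2}) = J_\alpha$ and hence equality in the integrated concavity inequality. Because the integrand $\sqrt{\tfrac12\dot\psi_1(s) + \tfrac12\dot\psi_2(s)} - \tfrac12\sqrt{\dot\psi_1(s)} - \tfrac12\sqrt{\dot\psi_2(s)}$ is non-negative and integrates to zero, it vanishes for a.e.\ $s$, and by the strict concavity of $\sqrt{\cdot}$ this means $\dot\psi_1(s) = \dot\psi_2(s)$ for a.e.\ $s \in [0,1]$.

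It remains to upgrade equality of the a.e.\ derivatives to equality of the functions $\psi_1,\psi_2$ themselves, and this is where the main subtlety lies — in principle $\psi_1$ and $\psi_2$ could differ by singular parts even when their densities agree. Here I would use the fact, established in Lemma~\ref{l:exist} (or derivable from the Euler–Lagrange analysis of the optimizer that the paper carries out elsewhere — indeed the paper exhibits the explicit optimizer $\psi_\alpha(x) = (1+c_\alpha)x/(1+c_\alpha x)$, which is absolutely continuous with no singular part), that any maximizer must in fact be absolutely continuous: if $\psi$ had a nontrivial singular part $\psi_s$, then replacing $\psi$ by $\psi_{ac}/\psi_{ac}(1)$ suitably reallocates mass to strictly increase $J$ while keeping (or increasing) the area, contradicting optimality — more carefully, one shows that spreading out the singular mass as additional absolutely continuous density, or rescaling, strictly raises $\int_0^1\sqrt{\dot\phi}$ unless $\psi_s \equiv 0$, using strict concavity of $\sqrt{\cdot}$ once more. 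Granting that $\psi_1 = (\psi_1)_{ac}$ and $\psi_2 = (\psi_2)_{ac}$, we get $\psi_i(x) = \int_0^x \dot\psi_i(s)\,ds$ for all $x$, and since $\dot\psi_1 = \dot\psi_2$ a.e., we conclude $\psi_1(x) = \psi_2(x)$ for every $x \in [0,1]$, as required. The only step requiring genuine care is this last absolute-continuity reduction; everything before it is a routine application of strict concavity of the square root against the convexity of the area constraint.
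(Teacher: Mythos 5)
Your argument is correct and follows essentially the same route as the paper: Jensen applied to the midpoint $\frac{1}{2}(\psi_1+\psi_2)$ together with strict concavity of the square root forces $\dot\psi_1=\dot\psi_2$ almost everywhere, and the upgrade to $\psi_1=\psi_2$ rests on the separate fact that maximizers are absolutely continuous probability measures, which the paper isolates as Lemma~\ref{l:ac} (not Lemma~\ref{l:exist}, as you cite). Your sketch of that absolute-continuity reduction is looser than the paper's actual proof of Lemma~\ref{l:ac} (a rescaling such as $\psi_{ac}/\psi_{ac}(1)$ can violate the area constraint; the paper instead makes a local modification near the atom and then a compensating mass transfer elsewhere), but since you are entitled to quote that lemma as an input, exactly as the paper does, the proof stands.
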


The proof of Lemma \ref{l:uni} is rather straightforward once we establish the following technical lemma that rules out the possibility that any of the optimizing functions have a nontrivial singular part. 

\begin{lemma}
\label{l:ac}
 Let $\psi \in \mathcal{B}_{\alpha}$ be such that $J(\psi)= J_{\alpha}$. Then $\psi$ corresponds to a probability measure which is absolutely continuous with respect to Lebesgue measure.
\end{lemma}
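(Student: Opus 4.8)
The plan is to argue by contradiction. Suppose $\psi = \psi_{ac} + \psi_{s}$ is the Lebesgue decomposition \eqref{decomposition23} of an optimizer, so that $J(\psi) = J_\alpha$, and suppose its singular part has positive mass $m := \psi_s(1) > 0$. Two preliminary normalizations are useful. First, by Cauchy--Schwarz $J(\phi) = \int_0^1 \sqrt{\dot\phi_{ac}} \le \big(\int_0^1 \dot\phi_{ac}\big)^{1/2} \le 1$ for every $\phi \in \mathcal{B}$, while a power-law competitor $\phi(x) = x^\beta$ with $\beta = \tfrac{1-2\alpha}{1+2\alpha} \in (0,1)$ satisfies $\int_0^1\phi = \tfrac12 + \alpha$ and has $J(\phi) = \tfrac{2\sqrt\beta}{\beta+1} > 0$; hence $J_\alpha \in (0,1]$. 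Second, an optimizer has total mass $\psi(1) = 1$: if $\psi(1) = M < 1$, adding the deficit $1-M$ as uniform absolutely continuous mass increases $\dot\psi_{ac}$ everywhere, hence strictly increases $J$, and also increases the area $\int_0^1\psi$, so the new curve remains in $\mathcal{B}_\alpha$, contradicting optimality. The guiding heuristic is then clear: the singular part contributes nothing to $J(\psi) = \int_0^1\sqrt{\dot\psi}$ (its density vanishes a.e.) yet consumes a full unit of mass; reallocating it as absolutely continuous mass strictly increases $J$, and the only cost is borne by the area functional $\phi \mapsto \int_0^1\phi = \int_{[0,1]}(1-t)\,d\phi(t)$, a cost we can make negligible.

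Next I would carry out two cost-free rearrangement reductions. Let $g := \dot\psi_{ac}$ and let $g^\ast$ be its non-increasing rearrangement; replacing $g$ by $g^\ast$ leaves $\int_0^1\sqrt{g}$ (hence $J$) and the total absolutely continuous mass unchanged, while the Hardy--Littlewood inequality $\int_0^x g^\ast \ge \int_0^x g$ for all $x$ shows the area $\int_0^1\psi$ weakly increases, so the new curve is still in $\mathcal{B}_\alpha$, still of total mass $1$, and still optimal. Similarly, transporting all of $\psi_s$ to a single atom $m\delta_0$ at the origin leaves $J$ and the mass unchanged and weakly increases the area. So without loss of generality $g$ is non-increasing and $\psi_s = m\delta_0$. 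Since $J_\alpha > 0$ forces $g \not\equiv 0$, and $g$ is non-increasing, there exist $x_0 \in (0,1)$ and $\rho_0 > 0$ with $g \ge \rho_0$ on $[0,x_0]$.

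The final step is a two-part perturbation supported in $[0,x_0]$; fix a small $\eta \in (0, x_0/8)$. \emph{(i) Spread the atom.} Replace $m\delta_0$ by the absolutely continuous measure of density $m/\eta$ on $[0,\eta]$. A routine Cauchy--Schwarz estimate for $\int_0^\eta(\sqrt{g + m/\eta} - \sqrt{g})$, using $\int_0^\eta g \le 1$, shows this increases $J$ by at least $c(m)\sqrt{\eta}$; it decreases the area by exactly $m\eta/2$. \emph{(ii) Compensate the area.} Transport a small amount $\mu = \Theta(\eta)$ of absolutely continuous mass leftward inside $[0,x_0]$, say from a fixed subinterval near $x_0/4$ to one near $x_0/8$, chosen so that the area gain $\mu\cdot\Theta(1)$ exactly restores the lost $m\eta/2$. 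Since $g \ge \rho_0$ throughout $[0,x_0]$, the decrease of $\int_0^1\sqrt{g}$ produced by this transport is at most $\tfrac{\mu}{2\sqrt{\rho_0}} = O(\eta)$ (and, because $g$ is non-increasing, the transport moves mass into a region of higher density, so the sign works in our favour). Combining (i) and (ii), the net change of $J$ is at least $c(m)\sqrt{\eta} - O(\eta) > 0$ once $\eta$ is small, while the total mass stays equal to $1$ and the area is restored to at least $\tfrac12 + \alpha$. This contradicts $J(\psi) = J_\alpha$, so $m = 0$; thus $\psi$ is absolutely continuous, and by the normalization of the first paragraph it is a probability measure, proving Lemma~\ref{l:ac}.

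The main obstacle is precisely the case where the area constraint $\int_0^1\psi \ge \tfrac12 + \alpha$ is \emph{binding} and the singular mass is an atom at the origin --- the most area-efficient location --- since then spreading the atom is useless unless the lost area is recovered, and recovery costs some $J$. What rescues the argument is the scaling mismatch: spreading a mass-$m$ atom buys $\Theta(\sqrt{\eta})$ of $J$ for only $\Theta(\eta)$ of area, and --- crucially after the rearrangement reduction makes $g$ non-increasing and hence bounded below near $0$ --- that area can be bought back at a $J$-cost of only $\Theta(\eta) \ll \Theta(\sqrt{\eta})$. (A shorter alternative is available if one already knows that the explicit curve $\psi_\alpha$ of Theorem~\ref{lln2} lies in $\mathcal{B}_\alpha$ and attains $J_\alpha$: since $\mathcal{B}_\alpha$ is convex and $J$ is strictly concave modulo singular parts, the midpoint $\tfrac12(\psi + \psi_\alpha)$ can be optimal only if $\psi_{ac} = \psi_\alpha$, whence $\psi$ would have total mass $\psi_\alpha(1) + m = 1 + m > 1$, impossible in $\mathcal{B}$, so $m = 0$.)
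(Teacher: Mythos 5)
Your proof is correct and follows essentially the same route as the paper's: reduce the singular part to an atom at the origin, spread it over $[0,\eta]$ to gain $\Theta(\sqrt{\eta})$ in $J$ at an area cost of $\Theta(\eta)$, then restore the area by a leftward transport of $\Theta(\eta)$ of absolutely continuous mass whose $J$-cost is only $O(\eta)$. The only substantive difference is that you secure the region where this transport is cheap via a monotone rearrangement of $\dot\psi_{ac}$, whereas the paper invokes the independently established uniqueness of $\dot\psi$ to find a positive-measure set where $c_1\le\dot\psi\le c_2$; note also that your parenthetical claiming the transport's sign ``works in your favour'' is backwards (moving mass toward higher density \emph{decreases} $\int\sqrt{g}$ by concavity of the square root), but this is harmless since the quantitative bound $\mu/(2\sqrt{\rho_0})=O(\eta)$ you state just before it is correct and is all the argument needs.
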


Here is the basic idea of the proof of this lemma. From the results of \cite{DZ2} it follows that without the area constraint the optimizing curve is the diagonal line, i.e., the preferred gradient for the graph of the function is one. If the singular part is non-trivial,  then the graph of the function $\psi$ may be expected to have a flat piece. Because the gradient of the graph has to be one on average due to boundary conditions, it follows that there must be a piece of the graph having gradient away from one. We shall show that one can modify the flat and the steep parts of the curve locally by pieces with more moderate gradient in a way that increases not only the value of the functional $J$; and  thereby optimality will be contradicted. Also, clearly in the case that $\psi$ does not have total mass one, one can add to it another function to make it (or, more precisely of course, to bring it  into correspondence with)  a probability measure, while also increasing the value of $J(\psi),$ and thus contradict maximality in this case also. Thus $\psi$ is also a probability measure. The details of the proof are postponed to Section \ref{es}.

We can now prove the uniqueness result using Lemma \ref{l:ac}. 

\begin{proof}[Proof of Lemma \ref{l:uni}]
We use the fact that the square-root  function is strictly concave. Given any $\psi_1$ and $\psi_2$ as in the statement of the lemma, we consider $\psi=\frac{\psi_1+\psi_2}{2}$.  Clearly, $\psi$ satisfies the area constraint and $\dot{(\frac{\psi_1+\psi_2}{2})}=\frac{\dot {\psi_1}+\dot{\psi_2}}{2}$. Thus using Jensen's inequality $J(\psi)$ is necessarily larger than $J(\psi_1)=J(\psi_2)$ unless $\dot \psi_1=\dot \psi _2$ almost surely. Since by the previous result the absolutely continuous parts contain mass one, $\psi_1(0)=\psi_2(0)=0$. Hence for all $0\le x\le1 $ 
$$\psi_1(x)=\int_0^{x}\dot \psi_1(s)ds=\int_0^{x}\dot \psi_2(s)ds=\psi_2(x).$$ Thus we are done. 
\end{proof}

It still remains to identify the optimizer $\psi$ in \eqref{functional}; in particular we need to show this is the same $\psi$ as defined in Theorem \ref{lln2}. To this end, we shall now record some properties of the unique optimizer $\psi$ that will be useful later. We show that the derivative is almost surely positive and decreasing. The proofs are easy and will be postponed until Section \ref{es}. %[\textcolor{red}{A:} it is becoming unclear what the role of this later section is. Proofs are being sent there for being both too easy and too hard.]
\begin{lemma}\label{decreasing1} 
Let $\psi=\psi_{\alpha}$ denote the unique optimizer in Proposition \ref{opt1}. Then
\begin{enumerate}
\item[(i)] The derivative $\dot \psi$ is almost surely decreasing i.e. there is a set of full measure on which it is decreasing. 
\item[(ii)] $\dot \psi$ is almost surely positive.
\end{enumerate}
\end{lemma}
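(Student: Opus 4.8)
The plan is to derive (i) from a rearrangement argument together with the uniqueness of the optimizer (Lemma~\ref{l:uni}), and then to derive (ii) from a one–parameter perturbation of $\psi$ that exploits (i) and the fact that $\sqrt{\cdot}$ has infinite slope at $0$. Throughout I use Lemma~\ref{l:ac}: the optimizer is absolutely continuous with total mass one, so $\psi(0)=0$, $\psi(1)=1$, $\psi(x)=\int_0^x\dot\psi$, and $\dot\psi\ge 0$ a.e.

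For part (i): write $g=\dot\psi$, let $g^{*}$ be the non-increasing rearrangement of $g$ on $[0,1]$, and set $\psi^{*}(x)=\int_0^x g^{*}$. Then $\psi^{*}$ is absolutely continuous and non-decreasing with $\psi^{*}(0)=0$ and $\psi^{*}(1)=\int_0^1 g^{*}=\int_0^1 g=1$, so $\psi^{*}\in\mathcal B$. Since $g$ and $g^{*}$ are equimeasurable, so are $\sqrt g$ and $\sqrt{g^{*}}$, hence $J(\psi^{*})=\int_0^1\sqrt{g^{*}}=\int_0^1\sqrt g=J(\psi)=J_{\alpha}$. By Fubini, $\int_0^1\psi^{*}=\int_0^1 g^{*}(s)(1-s)\,ds$, and because $s\mapsto 1-s$ is non-increasing the Hardy--Littlewood rearrangement inequality gives $\int_0^1 g^{*}(s)(1-s)\,ds\ge\int_0^1 g(s)(1-s)\,ds=\int_0^1\psi\ge\tfrac12+\alpha$, so $\psi^{*}\in\mathcal B_{\alpha}$. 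Thus $\psi^{*}$ also attains the supremum in \eqref{functional}, and Lemma~\ref{l:uni} forces $\psi^{*}=\psi$, i.e.\ $g=g^{*}$ a.e.; since $g^{*}$ is non-increasing, (i) follows.

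For part (ii): suppose for contradiction that $\dot\psi=0$ on a set of positive measure. Working with the non-increasing representative supplied by (i) and using $\int_0^1\dot\psi=1>0$, this representative is positive on an interval $[0,a)$ with $a\in(0,1)$ and vanishes on $[a,1]$, so $\psi\equiv 1$ on $[a,1]$. Fix small $\delta>0$ with $\delta<a$ and $a+\delta<1$, a large $M_1>0$ and a small $M_2>0$, and put $k=\tfrac{M_1}{\delta}\mathbf{1}_{[0,\delta]}+\tfrac{M_2}{\delta}\mathbf{1}_{[a,a+\delta]}$. Since $1-s$ is close to $1>\tfrac12+\alpha$ for $s\in[0,\delta]$ (this is where $\alpha<\tfrac12$ is used), we may choose $\delta$ small and $M_2/M_1$ small so that
\[
\int_0^1 k(s)(1-s)\,ds\;\ge\;\bigl(\tfrac12+\alpha\bigr)\int_0^1 k(s)\,ds .
\]
For $c>0$ set $\bar g_c=(g+ck)\big/\bigl(1+c\int_0^1 k\bigr)$ and $\bar\psi_c(x)=\int_0^x\bar g_c$. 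Then $\bar g_c\ge 0$ and $\int_0^1\bar g_c=1$, so $\bar\psi_c$ is a non-decreasing absolutely continuous function from $[0,1]$ onto $[0,1]$, hence $\bar\psi_c\in\mathcal B$; moreover $\int_0^1\bar\psi_c=\int_0^1\bar g_c(s)(1-s)\,ds\ge\tfrac12+\alpha$ by the displayed inequality together with $\int_0^1\psi\ge\tfrac12+\alpha$, so $\bar\psi_c\in\mathcal B_{\alpha}$. Finally, since $g\equiv 0$ on $[a,1]$,
\[
\int_0^1\sqrt{g+ck}\;\ge\;\int_0^a\sqrt g\;+\;\int_a^{a+\delta}\sqrt{cM_2/\delta}\,ds\;=\;J(\psi)+\sqrt{cM_2\delta},
\]
whence, using $\bigl(1+c\int_0^1 k\bigr)^{-1/2}\ge 1-c\int_0^1 k$ for $c$ small,
\[
J(\bar\psi_c)\;\ge\;\frac{J(\psi)+\sqrt{cM_2\delta}}{\sqrt{\,1+c\int_0^1 k\,}}\;\ge\;J(\psi)+\sqrt{cM_2\delta}-Cc
\]
for a constant $C$ and all small $c$; as $\sqrt c\gg c$, this exceeds $J_{\alpha}$, contradicting the optimality of $\psi$. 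Hence $\dot\psi>0$ a.e.

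I expect the only genuine obstacle to be the construction of the competitor in part (ii). The intuitively natural perturbations that give the graph of $\psi$ a positive slope near its right endpoint all \emph{decrease} the trapped area $\int_0^1\psi$ — a flat top is area-efficient, contributing $\psi\equiv 1$ at zero cost to $J$ — so membership in $\mathcal B_{\alpha}$ would be lost; the fix is to compensate by loading the perturbation $k$ near $s=0$, where the weight $1-s$ is close to $1>\tfrac12+\alpha$. Part (i), by contrast, is routine once the decreasing-rearrangement machinery is recalled, and the estimates in (ii) are elementary.
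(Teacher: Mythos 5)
Your proof is correct. Part (i) is essentially identical to the paper's argument: monotone rearrangement of $\dot\psi$, the Hardy--Littlewood inequality against the weight $1-s$ to preserve the area constraint, invariance of $J$ under rearrangement, and then uniqueness (Lemma \ref{l:uni}) to force $\dot\psi$ to coincide with its non-increasing rearrangement. Part (ii), however, takes a genuinely different route. The paper argues by symmetry: it reflects the graph of $\psi$ through the anti-diagonal, setting $\psi_1(x)=1-\psi^{-1}(1-x)$, checks that this map preserves both the trapped area (Fubini) and the functional $J$ (change of variables), and concludes by uniqueness that $\psi_1=\psi$; a flat piece of $\psi$ on $[a,1]$ would then produce an atom of mass $1-a$ at $0$ in $\psi_1=\psi$, contradicting the absolute continuity guaranteed by Lemma \ref{l:ac} unless $a=1$. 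You instead run a direct perturbation: you add derivative mass $cM_2/\delta$ on the flat interval $[a,a+\delta]$, where the infinite slope of $\sqrt{\cdot}$ at the origin yields a gain of order $\sqrt{c}$ in $J$ against a normalization cost of order $c$, and you restore the area constraint by simultaneously loading mass near $s=0$ where the weight $1-s$ exceeds $\tfrac12+\alpha$; the balancing inequality $\int k(s)(1-s)\,ds\ge(\tfrac12+\alpha)\int k$ is exactly what makes the renormalized competitor admissible, and your estimates check out. The paper's reflection argument is shorter but leans on uniqueness a second time and on a change-of-variables identity for the inverse of a possibly non-smooth, non-strictly-increasing function; your argument is more elementary and self-contained, exhibits an explicit admissible competitor with strictly larger $J$, and makes transparent where the hypothesis $\alpha<\tfrac12$ enters. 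Both are valid proofs.
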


\subsection{Identifying the curve $\psi_{\alpha}$}
In this subsection we determine the curve $\psi_{\alpha}$ and show that it is the same as the curve in Theorem \ref{lln2}. We first start with the following proposition.

\begin{proposition}
\label{solution1} 
Given $\alpha \in (0, 1/2)$ there exists $c=c_\alpha>0,$ such that
$\psi_\alpha(x)=\frac{(1+c)x}{1+cx}$.
\end{proposition}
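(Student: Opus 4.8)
The plan is to recast the optimization \eqref{functional} as a one-dimensional variational problem for the density $g:=\dot\psi$ of the optimizer $\psi=\psi_\alpha$ and then solve it by an explicit Euler--Lagrange computation. By Lemma~\ref{l:ac} the optimizer is absolutely continuous and corresponds to a probability measure, so $\psi(0)=0$, $\psi(1)=1$ and $\psi(x)=\int_0^x g(s)\,{\rm d}s$; by Lemma~\ref{decreasing1} the density $g$ is positive and decreasing almost everywhere, hence bounded away from $0$ and from $\infty$ on every $[a,b]\subset(0,1)$. Using Fubini, $\int_0^1\psi(s)\,{\rm d}s=\int_0^1(1-s)g(s)\,{\rm d}s=1-\int_0^1 s\,g(s)\,{\rm d}s$, so membership $\psi\in\mathcal{B}_\alpha$ is equivalent to $\int_0^1 s\,g(s)\,{\rm d}s\le\tfrac12-\alpha$. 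I would first note that this moment constraint is active at the optimizer: since the square root is concave, $J$ is concave on $\mathcal{B}$, and if $\int_0^1 s\,g(s)\,{\rm d}s<\tfrac12-\alpha$ then moving along the segment $t\mapsto(1-t)\psi+t\psi^{\ast}$, with $\psi^{\ast}(x)=x$ the diagonal (for which $J(\psi^{\ast})=1$), stays feasible for small $t>0$ and strictly increases $J$, because $J\bigl((1-t)\psi+t\psi^{\ast}\bigr)>(1-t)J(\psi)+t>J(\psi)$ using $J(\psi)<1$ and $\psi\ne\psi^{\ast}$; this contradicts optimality. Hence $\int_0^1 s\,g(s)\,{\rm d}s=\tfrac12-\alpha$.

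Next I would run a localized first variation to determine the shape of $g$. Fix $0<a<b<1$ and a bounded $\eta$ supported in $[a,b]$ with $\int_a^b\eta=0$ and $\int_a^b s\,\eta(s)\,{\rm d}s=0$. For $|\e|$ small, $g+\e\eta$ is non-negative (since $g$ is bounded below on $[a,b]$) and has the same total mass one and the same first moment, so its primitive lies in $\mathcal{B}_\alpha$; hence $\e\mapsto J$ of it has an interior maximum at $\e=0$, which yields, by dominated convergence, $\int_a^b \eta(s)\big/\bigl(2\sqrt{g(s)}\bigr)\,{\rm d}s=0$ for every such $\eta$. Therefore $s\mapsto 1/\bigl(2\sqrt{g(s)}\bigr)$ is affine on $[a,b]$; letting $a\downarrow0$, $b\uparrow1$ and matching on overlaps, there are constants $p,q$ with $1/\bigl(2\sqrt{g(s)}\bigr)=ps+q$ on $(0,1)$, i.e. $g(s)=\tfrac{1}{4(ps+q)^2}$. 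The requirements that $g$ be decreasing, integrable (mass one) and positive then force $p$ and $q$ to have the same nonzero sign, so that $c:=p/q>0$.

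Finally I would integrate and match the data. Integration gives $\psi(x)=\int_0^x\tfrac{{\rm d}s}{4(ps+q)^2}=\tfrac{x}{4q(px+q)}=\tfrac{x}{4q^2(cx+1)}$; imposing $\psi(1)=1$ yields $4q^2(1+c)=1$, so $\psi(x)=\tfrac{(1+c)x}{1+cx}$, which is the asserted form. The active area constraint $\int_0^1\psi=\tfrac12+\alpha$ then becomes, after the elementary computation $\int_0^1\tfrac{(1+c)x}{1+cx}\,{\rm d}x=\tfrac{1+c}{c}\bigl(1-\tfrac{\log(1+c)}{c}\bigr)$, exactly the defining equation~\eqref{implicit1}; since that function is strictly increasing from $\tfrac12$ to $1$ (as recorded just after~\eqref{implicit1}), it has a unique positive root $c=c_\alpha$, which, combined with the uniqueness in Lemma~\ref{l:uni}, identifies the optimizer as $\psi_\alpha(x)=\tfrac{(1+c_\alpha)x}{1+c_\alpha x}$. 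In particular this is the curve appearing in Theorem~\ref{lln2}.

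The substantive difficulty lies entirely in legitimizing the variational manipulations rather than in the algebra: one must ensure the perturbed functions are genuinely admissible, which is why the first variation is confined to a compact subinterval $[a,b]\subset(0,1)$ where Lemma~\ref{decreasing1} supplies two-sided control on $g$, and why the activeness of the constraint is established through concavity of $J$ and comparison with the diagonal rather than by naive differentiation (where integrability of $g^{-1/2}$ near the endpoints is not yet available). Everything downstream --- solving $g(s)=\tfrac1{4(ps+q)^2}$, fixing the constants from $\psi(1)=1$, and reading off~\eqref{implicit1} --- is a short explicit calculation.
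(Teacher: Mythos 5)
Your proof is correct and follows essentially the same route as the paper: a first variation localized to $[a,b]\subset(0,1)$ using perturbations with vanishing zeroth and first moments (so that mass and trapped area are preserved), with Lemma \ref{decreasing1} supplying the two-sided bounds on $\dot\psi$ needed to keep the perturbed density admissible, leading to $1/\sqrt{\dot\psi}$ being affine and hence to $\dot\psi(x)=d/(1+cx)^2$. The only divergence is in pinning down $c_\alpha$: you show the area constraint is active via concavity of $J$ and interpolation with the diagonal, whereas the paper (in Proposition \ref{p:calpha}) argues by monotonicity of the explicit value $\sqrt{1+c}\,\log(1+c)/c$ in $c$; both are valid and this identification is anyway logically separate from the statement of Proposition \ref{solution1}.
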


\begin{proof}The proof proceeds by showing that, given any $0<a<b<1$, $$\frac{1}{\sqrt{\dot \psi(x)}}=c_1x+c_2 \text{ a.s. on } [a,b].$$  
for some constants $c_1$ and $c_2$. Fix $0<a<b<1$ and let $h_*$ be a polynomial such that 
\begin{align}
\label{int20}
\int_a^b h_*(x){\rm{d}}x  &=0\\
\label{int21}
\int_a^b x h_*(x){\rm{d}}x &=0
\end{align}
Define $h_*(x)$ on $[0,1]$ by defining it to be $0$ outside $[a,b]$. 
Let $g=\dot \psi+\e h_*$. Now since $\dot \psi$ is monotonically decreasing and positive almost surely (see Lemma \ref{decreasing1}), $\dot \psi$ is bounded above and below by constants on $[a,b]$ and hence for all $\e$ (with sufficiently small absolute
%[\textcolor{red}{A:} why absolute? $\e$ is presumably positive]
value depending on $[a,b]$, we will need $\e$ to take both positive and negative values in the argument later), we conclude  $g$ is strictly positive. Moreover by \eqref{int20}, 
$\int_0^1 g(x){\rm{d}}x =1$. 
We now verify the area constraint. It is easy to see by Fubini's theorem (see \eqref{fubini} in the proof of Lemma \ref{decreasing1}) that $\int_0^1 (1-x)\dot \psi(x)\ge 1/2+ \alpha$. Thus $$\int_0^1 (1-x)g(x){\rm{d}}x  =\int_0^1 (1-x)\dot \psi(x)+\e \int_0^1 (1-x)h_*(x){\rm{d}}x \ge \frac{1}{2}+\alpha$$
by \eqref{int20} and \eqref{int21}. Now, by Taylor expansion, for all small enough $\e$ (depending on $a$, $b$, $h$ and $\psi$),
\begin{equation}\label{variational20}
\int_0^1 \left[\sqrt{g(x)}-\sqrt{\dot \psi(x)} \right]{\rm{d}}x  = \int_{a}^b \left(\sqrt{\dot \psi(x)}+\e \frac{h_*(x)}{2\sqrt{\dot \psi(x)}}-  \e^2 O\biggl(\frac{h_*(x)^2}{{\dot \psi(x)}^{3/2}}\biggr)\right){\rm{d}}x  , 
\end{equation}
%[\textcolor{red}{A:} I prefer to denote the integration $d$ by $\, {\rm d}$]
where the constants in the $O$- notation just depend on $[a,b],\psi$ (since $\dot \psi$ is in $[c_3,c_4]$ almost surely for some $c_3<c_4$ on $[a,b]$ by the preceding discussion). Since $J(\psi)= J_{\alpha}$ (and $\e$ can be both positive and negative %[\textcolor{red}{A:} I see. Maybe say this as soon as $\e$ appears]) 
it follows that, 
\begin{equation}\label{ortho}
\int_{a}^b\frac{h_*(x)}{\sqrt{\dot \psi(x)}}{\rm{d}}x =0 .
\end{equation}
Since $\frac{1}{\sqrt{\dot \psi}}$ is bounded above and below on $[a,b]$, one can find a linear function $L(x):=a_1x +b_1$ such that 
\begin{align*}
\int_{a}^{b} \left[\frac{1}{\sqrt{\dot \psi(x)}}-L(x)\right]{\rm{d}}x =0 \,\,\text{and }\,\,  \int_{a}^b \left[\frac{1}{\sqrt{\dot \psi(x)}}-L(x)\right]x{\rm{d}}x =0. 
\end{align*}
Indeed, such $L$ can be found by solving the two linear equations in $a_1$ and $b_1$ given by the above two equations. To see that, for any $a<b$, the equations are non-degenerate, first observe that one can by a change of variable assume that $a=0$ and $b=1$. Then the matrix corresponding to the linear equations is nothing but $\left[\begin{array}{cc}
1/2 &1\\
1/3 &1/2
\end{array}
\right]$  and hence admits a unique solution. Together with \eqref{ortho} this now implies that 
\begin{align*}\int_{a}^b \left[\frac{1}{\sqrt{\dot \psi(x)}}-L(x)\right]P(x){\rm{d}}x =0
\end{align*}
for all polynomials $P(x)$. For this, observe that by Gram-Schmidt orthogonalization any polynomial $P$ can be decomposed as  $P(x)=L'(x)+h_*(x)$ where $L'$ is a linear function and $h_*$ is a polynomial satisfying \eqref{int20} and \eqref{int21}. Since polynomials are dense in $\mathbb{L}_2[a,b]$ by the Stone-Weierstrass theorem, this implies 
$$\frac{1}{\sqrt{\dot \psi(x)}}-L(x)=0~\text{a.e. on}~[a,b].$$

Since  $\frac{1}{\sqrt{\dot \psi}}$ is a linear function on all intervals $[a,b]$, it follows  that it has to be the same linear function on all such intervals and hence $\frac{1}{\sqrt{\dot \psi(x)}}=L(x)$ for some linear function $L(x)$ almost everywhere on the entire interval $[0,1]$.
Thus $\dot \psi(x)=\frac{d}{(1+cx)^2}$ for some constants $c,d$. Integrating and using that $\psi(0)=0$ and $\psi(1)=1$ (see Lemma \ref{l:ac}) we are done. 
\end{proof}

The next proposition completes the identification of $\psi_{\alpha}$ by showing that the constant $c_{\alpha}$ is given by \eqref{implicit1}.

\begin{proposition}
\label{p:calpha}
The constant $c_{\alpha}$ is given implicitly by the following equation:
$$\frac{1+c_{\alpha}}{c_{\alpha}}\left[1-\frac{\log(1+c_{\alpha})}{c_{\alpha}}\right]=\frac{1}{2}+\alpha$$
where $c_{\alpha}$ is the constant in Theorem \ref{solution1} corresponding to $\alpha$.
\end{proposition}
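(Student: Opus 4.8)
The plan is to first show that the area constraint is saturated at the optimizer, i.e.\ that $\int_0^1 \psi_\alpha(s)\,ds = \tfrac{1}{2}+\alpha$, and then to evaluate this integral using the explicit formula supplied by Proposition~\ref{solution1}.

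For the saturation step I would argue by contradiction. Suppose $a:=\int_0^1\psi_\alpha(s)\,ds > \tfrac{1}{2}+\alpha$. By Lemma~\ref{l:ac} the optimizer $\psi_\alpha$ is absolutely continuous and corresponds to a probability measure, so $\psi_\alpha(0)=0$, $\psi_\alpha(1)=1$ and $\int_0^1\dot\psi_\alpha(s)\,ds=1$. In particular $\dot\psi_\alpha$ is not almost everywhere equal to $1$ (otherwise $\psi_\alpha$ would be the identity, of area $\tfrac{1}{2}$), so by strict Jensen applied to the concave square-root function, $J(\psi_\alpha)=\int_0^1\sqrt{\dot\psi_\alpha(s)}\,ds < \sqrt{\int_0^1\dot\psi_\alpha(s)\,ds}=1$. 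Let $\mathrm{id}$ denote the identity map on $[0,1]$, whose derivative is identically $1$, and for $t\in[0,1]$ set $\psi_t:=(1-t)\psi_\alpha+t\,\mathrm{id}$. A convex combination of right-continuous non-decreasing maps $[0,1]\to[0,1]$ is again such a map, so $\psi_t\in\mathcal{B}$; moreover $\int_0^1\psi_t(s)\,ds=(1-t)a+\tfrac{t}{2}$, which is still $\ge\tfrac{1}{2}+\alpha$ for all sufficiently small $t>0$, so $\psi_t\in\mathcal{B}_\alpha$ for such $t$. Since $\dot\psi_t=(1-t)\dot\psi_\alpha+t$, concavity of the square root gives $\sqrt{\dot\psi_t(x)}\ge(1-t)\sqrt{\dot\psi_\alpha(x)}+t$ pointwise, with strict inequality on the positive-measure set $\{\dot\psi_\alpha\neq 1\}$; integrating, $J(\psi_t)>(1-t)J(\psi_\alpha)+t$. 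Because $J(\psi_\alpha)<1$, we have $(1-t)J(\psi_\alpha)+t>J(\psi_\alpha)$ for every $t\in(0,1)$, hence $J(\psi_t)>J(\psi_\alpha)=J_\alpha$ for small $t>0$, contradicting~\eqref{functional}. Therefore $\int_0^1\psi_\alpha(s)\,ds=\tfrac{1}{2}+\alpha$.

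It then remains to compute this integral. Substituting $\psi_\alpha(x)=\frac{(1+c)x}{1+cx}$ with $c>0$ the constant furnished by Proposition~\ref{solution1}, and using $\frac{x}{1+cx}=\frac{1}{c}\bigl(1-\frac{1}{1+cx}\bigr)$, this evaluates to
\begin{equation*}
\int_0^1\psi_\alpha(x)\,dx=(1+c)\int_0^1\frac{x}{1+cx}\,dx=(1+c)\left(\frac{1}{c}-\frac{\log(1+c)}{c^2}\right)=\frac{1+c}{c}\left(1-\frac{\log(1+c)}{c}\right).
\end{equation*}
Equating this with $\tfrac{1}{2}+\alpha$ shows that $c$ satisfies~\eqref{implicit1}, which is the assertion of the proposition. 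For consistency with the curve in Theorem~\ref{lln2}, one then notes, as recorded just after~\eqref{implicit1}, that $c\mapsto\frac{1+c}{c}\bigl(1-\frac{\log(1+c)}{c}\bigr)$ is strictly increasing on $(0,\infty)$ with limits $\tfrac{1}{2}$ and $1$ at $0$ and $\infty$, so for each $\alpha\in(0,\tfrac{1}{2})$ the relation~\eqref{implicit1} has a unique positive solution, namely this $c$.

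The only step that is not an entirely routine computation is the saturation of the area constraint. I expect the main, rather modest, subtlety there to be making sure Lemma~\ref{l:ac} genuinely delivers $\psi_\alpha(0)=0$ and $\int_0^1\dot\psi_\alpha=1$, since these are exactly what make both strict inequalities (for $J(\psi_\alpha)$ and for $J(\psi_t)$) strict, and to verify that the interpolants $\psi_t$ remain admissible for small positive $t$.
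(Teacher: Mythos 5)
Your proof is correct, and the final computation of $\int_0^1\frac{(1+c)x}{1+cx}\,dx$ is exactly the one in the paper. Where you diverge is in how you force the area constraint to be saturated. The paper writes $\int_0^1\psi_\alpha=\tfrac12+\beta$ with $\beta\ge\alpha$ and, supposing $\beta>\alpha$, stays inside the explicit one-parameter family: it takes the smaller constant $c_*<c_\alpha$ solving $\frac{1+c}{c}\bigl(1-\frac{\log(1+c)}{c}\bigr)=\tfrac12+\alpha$, notes that $\psi^*(x)=\frac{(1+c_*)x}{1+c_*x}$ is still admissible, and uses the explicit formula $J(\psi^*)=\sqrt{1+c_*}\,\frac{\log(1+c_*)}{c_*}$ together with its monotone decrease in $c_*$ to beat $\psi_\alpha$. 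You instead perturb toward the identity, $\psi_t=(1-t)\psi_\alpha+t\,\mathrm{id}$, and get the strict gain in $J$ from strict concavity of the square root plus the Jensen bound $J(\psi_\alpha)<1$; the explicit form of $\psi_\alpha$ plays no role in this step. Your route is the more robust one: it shows that \emph{any} optimizer (absolutely continuous, with total mass one, and not equal to the identity) must saturate the area constraint, independently of Proposition \ref{solution1}, whereas the paper's argument is shorter once the explicit family is available but leans on two ad hoc monotonicity computations. The only points you flagged as needing care --- that Lemma \ref{l:ac} supplies $\psi_\alpha(0)=0$ and $\int_0^1\dot\psi_\alpha=1$, and that $\psi_t\in\mathcal{B}_\alpha$ for small $t>0$ --- are indeed exactly what is needed, and both hold as you argue.
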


%\textcolor{red}{this proof probably can be made more concise.}

\begin{proof}
Clearly we have
\begin{align*}
\int_{0}^1 \psi(x){\rm{d}}x =\int_{0}^1 \frac{(1+c)x}{1+cx}=\frac{1+c}{c}\left[1-\frac{\log(1+c)}{c}\right]=\frac{1}{2}+\beta
\end{align*}
for some $\beta=\beta(\alpha)\geq \alpha$. We shall show that $\beta=\alpha$ which will establish the proposition. Suppose not. As we have pointed out,  the left-hand side of this equation is increasing in $c$. Let $c_{*}$ be the unique solution to the equation 
\begin{align*}
\int_{0}^1 \psi(x){\rm{d}}x =\int_{0}^1 \frac{(1+c)x}{1+cx}=\frac{1+c}{c}\left[1-\frac{\log(1+c)}{c}\right]=\frac{1}{2}+\alpha.
\end{align*}
It is easy to check that $\beta> \alpha$ implies $c_{*}<c_{\alpha}$. Define $\psi^{*}$ by 
$\psi^*(x)=\frac{(1+c_*)x}{1+c_*x}$.
Clearly $\psi_{*}\in \mathcal{B}_{\alpha}$. A straightforward computation shows that 
\begin{align}
\label{weight}
J(\psi^*)=\int_{0}^1 \sqrt{\dot \psi(x)}{\rm{d}}x = \sqrt{(1+c_*)}\int_{0}^1\frac{1}{1+c_*x} = \sqrt{1+c_*}\frac{\log(1+c_*)}{c_*}.
\end{align}
The right-hand side here is readily seen to be decreasing in $c_*$ 
%[\textcolor{red}{A:} that's what you mean, right?] 
and hence $J(\psi^*)>J(\psi)$. This contradicts the optimality of $\psi$, and  completes the proof. 
\end{proof}
Observe that, $J_{\alpha}=\sqrt{1+c_{\alpha}}\frac{\log(1+c_{\alpha})}{c_{\alpha}}$.
This expression is the same as the constant $\mathrm{w}_{\alpha}$ in Theorem \ref{t:lln} which is consistent with our heuristic explanation at the beginning of this section. We prove this theorem next. 

\section{Law of large numbers}
\label{s:lln}
With the preparation from the previous section we now turn to the proof of Theorem \ref{t:lln}. Fix $\alpha\in (0,\frac{1}{2})$ as before. The idea of the proof is as follows. We first show that if a path follows approximately the blow up $\psi_{\alpha, n}$ of the deterministic curve $\psi$ (and also satisfies the area constraint) then it is overwhelmingly likely that the path has length at least $2(\mathrm{w}_{\alpha}-\varepsilon)n$ for $\varepsilon$ arbitrarily small. We further show that any path satisfying the area constraint is extremely likely to have length less than $2(\mathrm{w}_{\alpha}+\varepsilon)n$. We begin with the following trivial but useful lemma that is an immediate consequence of continuity and monotonicity of ${\rm{w}}_{\alpha}$.

%We now prove Theorem \ref{lln2}, which says for the finite model  with high probability any optimizing path must be close to the curve $\psi(x)$ in the Hausdorff distance. 
%
%We begin by proving a few useful lemmas. 
%We write down some obvious consequences of Theorem \ref{solution1}.

\begin{lemma} Given any $\alpha \in (0,\frac{1}{2}),$ for all small enough $\e>0,$ there exist $\delta_1,\delta_2 > 0$ such that $|{\rm{w}}_{\alpha}-{\rm{w}}_{\alpha_1}| < \e$ if $|\alpha-\alpha_1|\le \delta_1 $ and $|{\rm{w}}_{\alpha}-{\rm{w}}_{\alpha_1}| > \e$ if $|\alpha-\alpha_1| \ge \delta_2$.
\end{lemma}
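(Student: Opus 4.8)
The plan is to derive the lemma from two structural properties of the map $\alpha \mapsto \mathrm{w}_{\alpha}$ on $(0,1/2)$: it is continuous, and it is strictly decreasing. First I would record these. Recall from the discussion preceding Theorem~\ref{t:lln} that $f(c) = \frac{1+c}{c}\bigl(1 - \frac{\log(1+c)}{c}\bigr)$ is a strictly increasing continuous bijection of $(0,\infty)$ onto $(1/2,1)$; hence $\alpha \mapsto c_{\alpha} = f^{-1}(\tfrac12+\alpha)$ is a strictly increasing continuous bijection of $(0,1/2)$ onto $(0,\infty)$. Since $\mathrm{w}_{\alpha} = g(c_{\alpha})$ with $g(c) = \sqrt{1+c}\,\tfrac{\log(1+c)}{c}$, and $g$ is continuous and strictly decreasing on $(0,\infty)$ (as observed in the proof of Proposition~\ref{p:calpha}, cf.\ \eqref{weight}), the composition $\alpha \mapsto \mathrm{w}_{\alpha}$ is continuous and strictly decreasing. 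Moreover $g(c)\to 1$ as $c\to 0$ and $g(c)\to 0$ as $c\to\infty$, so $\mathrm{w}_{\alpha}\to 1$ as $\alpha\downarrow 0$ and $\mathrm{w}_{\alpha}\to 0$ as $\alpha\uparrow 1/2$; in particular $\mathrm{w}_{\alpha}\in(0,1)$ for every $\alpha\in(0,1/2)$.

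The existence of $\delta_1$ is then immediate: it is precisely the continuity of $\mathrm{w}$ at $\alpha$ (here, as throughout, $\alpha_1$ is tacitly restricted to the domain $(0,1/2)$ of $\mathrm{w}$).

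For $\delta_2$, I would introduce $h(\alpha_1) := |\mathrm{w}_{\alpha}-\mathrm{w}_{\alpha_1}|$ for $\alpha_1\in(0,1/2)$. Because $\mathrm{w}$ is strictly decreasing, $h$ is continuous, strictly decreasing on $(0,\alpha]$ and strictly increasing on $[\alpha,1/2)$, with $h(\alpha)=0$ and, by the endpoint limits above, $h(\alpha_1)\to 1-\mathrm{w}_{\alpha}$ as $\alpha_1\downarrow 0$ and $h(\alpha_1)\to \mathrm{w}_{\alpha}$ as $\alpha_1\uparrow 1/2$. Fix $\e < \min\{\mathrm{w}_{\alpha},\,1-\mathrm{w}_{\alpha}\}$; this is the meaning of ``small enough $\e$''. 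Then, by the intermediate value theorem together with the strict monotonicity on each side of $\alpha$, there are unique $a\in(0,\alpha)$ and $b\in(\alpha,1/2)$ with $h(a)=h(b)=\e$, and $h(\alpha_1)>\e$ for every $\alpha_1\in(0,a)\cup(b,1/2)$. Taking any $\delta_2 > \max\{\alpha-a,\,b-\alpha\}$, one checks that $|\alpha-\alpha_1|\ge\delta_2$ forces $\alpha_1<a$ or $\alpha_1>b$, whence $|\mathrm{w}_{\alpha}-\mathrm{w}_{\alpha_1}| = h(\alpha_1) > \e$, as claimed.

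There is no substantial obstacle here; the only points needing slight care are pinning down what ``small enough $\e$'' must mean, which the endpoint limits $\mathrm{w}_{\alpha}\to 1,\,0$ resolve, and handling the absolute value, which the ``V-shape'' of $h$ (forced by strict monotonicity of $\mathrm{w}$) takes care of.
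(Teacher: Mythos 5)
Your proof is correct and follows exactly the route the paper intends: the paper gives no written proof, asserting only that the lemma is ``an immediate consequence of continuity and monotonicity of $\mathrm{w}_{\alpha}$'', and your argument simply makes that precise by composing the strictly increasing bijection $\alpha\mapsto c_{\alpha}$ with the strictly decreasing $g(c)=\sqrt{1+c}\,\tfrac{\log(1+c)}{c}$ and then exploiting the resulting V-shape of $\alpha_1\mapsto|\mathrm{w}_{\alpha}-\mathrm{w}_{\alpha_1}|$. The only detail worth spelling out, which you correctly identify, is that ``small enough $\e$'' means $\e<\min\{\mathrm{w}_{\alpha},1-\mathrm{w}_{\alpha}\}$ so that the level $\e$ is actually attained on both sides of $\alpha$.
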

%\begin{proof}Follows by continuity of ${\rm{w}}_{\alpha}.$
%\end{proof}
Next we need a couple of preparatory lemmas about appropriate discretisations. Let $I_{\delta}=\{0,\delta,2\delta,\ldots,1\}$ be the discretisation of the unit interval; (we will take $\delta$ to be the reciprocal of a positive integer to avoid rounding errors). Consider any non-decreasing function $L: [0,1] \to [0,1]$ that corresponds to an absolutely continuous measure on $[0,1]$: by identifying $L$ with its graph, we shall interpret $L$ as an increasing path on the unit square directed from $(0,0)$ to $(1,1)$. 
%\textcolor{red}{check about the endpoint}. 
%[\textcolor{red}{A}: the meaning is probably clear but there is some imprecision here in the use of `directed'.] 
Also let $L_{\delta}$ be  the piecewise affine function that agrees on $I_\delta$ with $L$.
%Moreover we also force $L_{\delta}=0,L_{\delta}(1)=1$ even though $L$ need not have that property.  
%\begin{lemma}
%\label{approx2} 
%For all small enough $\delta,$ any $L,L_\delta$ as above, 
%\end{lemma}
%The proof is easy  and is postponed to Section \ref{es}.

\begin{lemma} 
\label{l:approx3}
We have $\left | \int_{0}^1[L(x)-L_{\delta}(x)]{\rm{d}}x \right | \le 2\delta$.
\end{lemma}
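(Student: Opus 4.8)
The plan is to estimate the error on each subinterval $[k\delta,(k+1)\delta]$ separately and then sum. Fix such an interval and write $I_k=[k\delta,(k+1)\delta]$. On $I_k$ the function $L_\delta$ is the affine interpolant of $L$ at the two endpoints of $I_k$, so $L_\delta(k\delta)=L(k\delta)$ and $L_\delta((k+1)\delta)=L((k+1)\delta)$. Since $L$ is non-decreasing, for every $x\in I_k$ both $L(x)$ and $L_\delta(x)$ lie in the interval $[L(k\delta),L((k+1)\delta)]$: for $L(x)$ this is monotonicity of $L$, and for $L_\delta(x)$ it is because an affine function lies between its endpoint values. Hence
\begin{equation}\label{e:ptwise}
|L(x)-L_\delta(x)|\le L((k+1)\delta)-L(k\delta)\qquad\text{for all }x\in I_k.
\end{equation}

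Integrating \eqref{e:ptwise} over $I_k$, which has length $\delta$, gives
\begin{equation}\label{e:perinterval}
\left|\int_{I_k}\bigl(L(x)-L_\delta(x)\bigr){\rm d}x\right|\le \int_{I_k}|L(x)-L_\delta(x)|{\rm d}x\le \delta\bigl(L((k+1)\delta)-L(k\delta)\bigr).
\end{equation}
Now sum \eqref{e:perinterval} over $k=0,1,\dots,\delta^{-1}-1$. The left-hand side is bounded by $\sum_k\left|\int_{I_k}(L-L_\delta)\right|$, while the right-hand side telescopes:
\begin{equation}\label{e:telescope}
\sum_{k}\delta\bigl(L((k+1)\delta)-L(k\delta)\bigr)=\delta\bigl(L(1)-L(0)\bigr)\le\delta,
\end{equation}
using $L(1)\le 1$ and $L(0)\ge 0$ (the path runs from $(0,0)$ to $(1,1)$, or more generally $L$ maps into $[0,1]$). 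This already yields $\bigl|\int_0^1(L-L_\delta)\bigr|\le\delta$; the factor $2$ in the statement is simply slack (one may instead use the cruder bound $|L(x)-L_\delta(x)|\le 1$ and the fact that the two functions agree on a mesh, or absorb the rounding at the last subinterval, and still land within $2\delta$).

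There is no real obstacle here — the only point requiring a moment's care is the pointwise bound \eqref{e:ptwise}, i.e. that both $L$ and its affine interpolant are trapped between consecutive mesh values, which is exactly where monotonicity of $L$ is used; absolute continuity of $L$ plays no role in this particular estimate (it is only needed so that $L$ genuinely is a path, and for later lemmas). Everything else is a telescoping sum.
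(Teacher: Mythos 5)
Your proof is correct, and it takes a genuinely different and more elementary route than the paper's. The paper first converts the area integral via Fubini's theorem (or integration by parts), writing $\int_0^1 f = \int_0^1 \dot f(x)(1-x)\,{\rm d}x$ for absolutely continuous $f$, and then compares $\int_0^1 x\dot L(x)\,{\rm d}x$ with $\int_0^1 x\dot L_{\delta}(x)\,{\rm d}x$ using the fact that $\dot L_{\delta}$ on each cell $I_\delta^{(i)}$ equals the average of $\dot L$ there; the error is controlled by $|x-x_i|\le\delta$ where $x_i$ is the cell midpoint. You instead work with the functions themselves: on each cell both $L$ and its affine interpolant are trapped in $[L(k\delta),L((k+1)\delta)]$, so the per-cell $L^1$ error is at most $\delta$ times the increment of $L$, and the sum telescopes to $\delta(L(1)-L(0))\le\delta$. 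Your argument is shorter, gives the sharper constant $\delta$ in place of $2\delta$, and uses only monotonicity of $L$ — absolute continuity, which the paper's proof needs in order to pass between $L$ and $\dot L$, plays no role. What the paper's derivative-based formulation buys is consistency with the surrounding machinery: the functional $J$ and the subsequent two-directional discretisation estimate \eqref{area14} are all phrased in terms of $\dot L$ and $\dot L_{\delta,\eta}$, so the same Fubini identity and averaging observation get reused there. As a self-contained proof of the stated lemma, yours is complete and valid.
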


\begin{proof}First notice that by integration by parts or by Fubini's theorem, for any absolutely continuous function $f$ on $[0,1],$  $\int_0^1 f(x){\rm{d}}x =\int_0^1 \dot f(x) (1-x){\rm{d}}x $. The lemma now follows from the observation that $\left |\int_{0}^1 x \dot L(x){\rm{d}}x-\int_{0}^1 x \dot L_{\delta}(x){\rm{d}}x\right|\le   2\delta$.
%\footnote
{Let $x_{i}$ be the midpoint of the interval $I_\delta^{(i)}=[i\delta,(i+1)\delta]$. 
Notice that by definition $\dot L_{\delta}$ is constant on $I_\delta^{(i)}$ and is equal to $\frac{1}{\delta}\int_{I_\delta^{(i)}}\dot L(x){\rm{d}}x $. 
Thus 
$$
\sum_{i=0}^{\frac{1}{\delta}-1}\int_{I_{\delta}^{(i)}} x_i \dot L(x) {\rm{d}}x=\sum_{i=0}^{\frac{1}{\delta}-1}x_i \int_{I_{\delta}^{(i)}}  \dot L(x){\rm{d}}x=\sum_{i=0}^{\frac{1}{\delta}-1}x_i \int_{I_{\delta}^{(i)}}  \dot L^{\delta}(x){\rm{d}}x 
=\sum_{i=0}^{\frac{1}{\delta}-1}\int_{I_{\delta}^{(i)}} x \dot L^{\delta}(x)=\int_0^1 x \dot L^{\delta}(x)
$$
The proof now follows by observing $|\int_{0}^1 x \dot L(x){\rm{d}}x -\int_{0}^1 x \dot L^{\delta}(x){\rm{d}}x | \le \sum_{i=0}^{\frac{1}{\delta}-1}\int_{I_{\delta}^{(i)}} |x - x_i| \dot L(x){\rm{d}}x\le \delta$.
%\end{align*}
}
%[\textcolor{red}{A}: probably bring this footnote, and possibly others, into the body of the text]
\end{proof}
We now discretise the vertical direction as well. 
Let us choose $\eta\ll \delta$ (to be specified exactly later),  and  let $B_{\delta,\eta}= I_{\delta} \times I_{\eta}$. Given $L$ as before let us now set, 
$L_{\delta,\eta}$ to be the piecewise linear curve determined by the points  $(i\delta, \eta \lfloor \frac{L(i\delta)}{\eta} \rfloor)$.
%\begin{definition}\label{lin12}
%Let $L_{\delta,\eta}$ be the piecewise linear curve determined by the points  $(i\delta, \eta \lfloor \frac{L(i\delta)}{\eta} \rfloor).$
%\end{definition}
Clearly for all $x,$ $|\dot L_{\delta}-\dot L_{\delta,\eta}| \le \frac{2\eta}{\delta},$ and hence for $\eta\le \delta^2,$
\begin{equation}\label{area14}
\bigg\vert \,  \int_{0}^1[L(x)-L_{\delta,\eta}(x)]{\rm{d}}x \, \bigg\vert \le 4\delta. \end{equation}

%\textcolor{red}{this comment needs to be polished.}

Before proceeding further we make a few comments about notation. Recall that till now,  our underlying noise space has been a point process $\Pi$ on $\mathbb{R}^2$ and we have been concerned about geodesics in $[0,n]^2$. However in the last section while solving variational problems we switched to a normalized picture where every path lives in $[0,1]^2$. We will continue with this convention throughout this section. Equivalently the noise space can be thought of as a Poisson point process of intensity $n^{2}$ in $[0,1]^2$. In an abuse of notation,  we will define $L(u,v)$ for two points $u,v \in [0,1]^2$ to be the length of the geodesic $\gamma(u,v)$ between $u$ and $v$ in the same noise space. In light of  properties of Poisson process under scaling, 
%[\textcolor{red}{A}: it is not invariant] 
%is a scaled down  [\textcolor{red}{A}: what does this mean?] version of how we originally defined it in Definition \ref{geodesic1}.
%\textcolor{red}
{Moreover, all the estimates regarding $\gamma(u,v)$ stated in the previous sections continue to hold after the appropriate variable change. We omit further elaboration.}  
Next, we state a uniform version of  Theorem \ref{t:moddev} in the large deviation regime. 
%\textcolor{red}{mention LDP}
 
%\textcolor{red}{I think it does not matter what the gradient of the line is, but I think the exponent should be $n^{1/2}$ coming from that lemma. Check} 
 
%\textcolor{red}{should $I_{(\delta)}$ be $I_{\delta}$?} 
 
\begin{lemma}
\label{uni100} Fix $\delta,\eta$ as above.
There exists $c=c(\delta,\eta)$ such that, simultaneously for all $x\in I_{\delta}$ and $y_2> y_1 \in I_{\eta}$ for all large $n$:
\begin{enumerate}
\item With probability at least $1-e^{-cn},$ $$L((x,y_1),((x+\delta),y_2))- 2 n \sqrt{\delta(y_2-y_1)}\le \eta \delta n,$$ 
%\textcolor{red}{is $\eta$ a bad letter}
\item With probability at least $1-e^{-cn^2},$ $$L((x,y_1),((x+\delta),y_2))- 2 n \sqrt{\delta(y_2-y_1)}\ge - \eta \delta n.$$  
\end{enumerate}
\end{lemma}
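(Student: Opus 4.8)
The plan is to reduce both parts, via the scale invariance of the Poisson process, to a one-sided tail bound for the last passage time across a single rectangle of area $t:=n^2\delta(y_2-y_1)$ and bounded aspect ratio, followed by a union bound over the $O_{\delta,\eta}(1)$ admissible triples $(x,y_1,y_2)$ (there are at most $\delta^{-1}(\eta^{-1}+1)^2$ of them, a constant since $\delta,\eta$ are fixed). In the intensity-$n^2$ picture $L\big((x,y_1),(x+\delta,y_2)\big)$ has the law of the rate-one last passage time $L(R)$ across a rectangle $R$ of area $t$ and gradient $m:=(y_2-y_1)/\delta\in[\kappa^{-1},\kappa]$ with $\kappa:=\max(\delta^{-1},\delta\eta^{-1})$. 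Since $2n\sqrt{\delta(y_2-y_1)}=2\sqrt t$ is the first-order value of $L(R)$, and the permitted error $\eta\delta n$ equals $\e_0\cdot 2\sqrt t$ with $\e_0:=\eta\sqrt\delta/(2\sqrt{y_2-y_1})\ge \e_*:=\eta\sqrt\delta/2$, both statements are tail estimates at a \emph{constant} relative error; the asymmetry between the exponents ($e^{-cn}$ versus $e^{-cn^2}$) mirrors the asymmetry between the upper and lower tails of Poissonian LPP, whose large-deviation speeds are $\sqrt t$ and $t$.

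For part (1) I would quote Theorem~\ref{t:moddev} directly: choosing $s$ with $s\,t^{1/6}=\eta\delta n$ gives $s\ge \eta\delta^{5/6}n^{2/3}$ (as $t\le n^2\delta$), so $s>s_0$ and $t>t_0$ for all large $n$, and $\P\big(L(R)-2\sqrt t\ge \eta\delta n\big)\le Ce^{-cs^{3/2}}\le Ce^{-c'n}$ with $(\delta,\eta)$-dependent constants; summing over the $O_{\delta,\eta}(1)$ triples and absorbing the constant for $n$ large gives part (1).

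Part (2) is the substantive case, and here Theorem~\ref{t:moddev} is too weak --- at constant relative error it only yields $e^{-ct^{1/2}}=e^{-cn}$ --- so the two-dimensional geometry must be exploited. Fix a large constant $K=K(\delta,\eta)$ with $\E L(R')\ge(2-\e_*/10)\sqrt K$ for every area-$K$ rectangle $R'$; this is possible because, by the remark after Theorem~\ref{t:moddev}, $\E L(R')=2\sqrt K+O(K^{1/6})$, and by scale invariance this depends only on the area. Tile $R$ into an $N\times N$ grid of congruent sub-rectangles $R_{ij}$, $1\le i,j\le N$, with $N:=\lceil\sqrt{t/K}\rceil\asymp n$ and each $R_{ij}$ of area $\asymp K$; the variables $\{L(R_{ij})\}$ are mutually independent. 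For each offset $d\in\{0,1,\dots,\lceil\epsilon N\rceil\}$, where $\epsilon:=\e_*/4$, concatenating the geodesics inside the cells $R_{1,1+d},R_{2,2+d},\dots,R_{N-d,N}$ (together with two point-free edge segments of $R$) produces a directed path from $(x,y_1)$ to $(x+\delta,y_2)$, hence $L(R)\ge\sum_{i=1}^{N-d}L(R_{i,\,i+d})$. Therefore $\{L(R)\le 2\sqrt t(1-\e_0)\}$ is contained in the intersection over $d$ of the events $E_d:=\big\{\sum_{i=1}^{N-d}L(R_{i,i+d})\le 2\sqrt t(1-\e_0)\big\}$, and the $E_d$ are \emph{independent} because distinct super-diagonals use disjoint cells. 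Each $E_d$ demands that a sum of $N-d\ge(1-2\epsilon)N$ i.i.d.\ nonnegative variables, of mean at least $(1-2\epsilon)(2-\e_*/10)N\sqrt K$, fall below $2\sqrt t(1-\e_0)\le 2(1-\e_0)N\sqrt K$; by the choice of $\epsilon$ this is a shortfall of at least a fixed $(\delta,\eta)$-multiple of $N\sqrt K$, so Cram\'er's (Chernoff) inequality --- whose rate function is positive here because the summands are nonnegative with positive mean --- gives $\P(E_d)\le e^{-c'N}$ for a $(\delta,\eta)$-dependent $c'>0$. Multiplying over the $\ge\epsilon N$ independent events, $\P\big(L(R)\le 2\sqrt t(1-\e_0)\big)\le e^{-c'\epsilon N^2}\le e^{-c''n^2}$; a union bound over the $O_{\delta,\eta}(1)$ triples then finishes part (2). (Equivalently one could invoke the classical speed-$t$ lower-tail large-deviation bound $\P(L_t\le(2-\e)\sqrt t)\le e^{-c_{\e}t}$ for Poissonian LPP, using $t\ge n^2\delta\eta$.)

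The hard part is part (2): the integrable moderate-deviation input quoted in the paper controls the lower tail only down to $e^{-cn}$, so one genuinely needs either the speed-$t$ lower-tail large-deviation estimate or the self-contained device above, whose point is to pack $\Theta(n)$ \emph{disjoint} near-diagonal families of sub-rectangles so that ``$L(R)$ atypically small'' forces $\Theta(n)$ independent exponentially-unlikely events. The remaining ingredients --- the concatenation bound, Cram\'er's inequality for the block sums, and the two union bounds --- are routine; the only bookkeeping is to keep all constants uniform over the finitely many triples and over the bounded range of gradients, which is immediate because $\delta$ and $\eta$ are fixed.
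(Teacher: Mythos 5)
Your proposal is correct. The paper's own proof is essentially a two-line affair: it quotes the classical large-deviation estimates for Poissonian last passage times from \cite{sepLDP} --- upper tail at speed $\sqrt t\asymp n$, lower tail at speed $t\asymp n^2$ --- and then union-bounds over the $O_{\delta,\eta}(1)$ admissible triples, which is exactly the alternative you mention parenthetically at the end. Your part (1) instead recycles Theorem \ref{t:moddev} at $s\asymp n^{2/3}$, an equally valid source of the $e^{-cn}$ bound. The genuine divergence is part (2): where the paper outsources the speed-$n^2$ lower tail to the LDP literature, you derive it from scratch by tiling $R$ into a $\Theta(n)\times\Theta(n)$ grid of constant-area cells and observing that $\bigl\{L(R)\le(1-\e_0)2\sqrt t\bigr\}$ forces, for each of $\Theta(n)$ pairwise independent super-diagonals, a linear-cost Cram\'er event for the corresponding sum of i.i.d.\ cell passage times. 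The argument is sound: a directed path between opposite corners of a cell cannot exit that cell, so the concatenation is a bona fide path and $L(R)\ge\sum_i L(R_{i,i+d})$ holds; distinct offsets use disjoint cells, giving the independence you multiply over; and the lower-tail Chernoff bound has positive rate because the summands are nonnegative with mean bounded below. Your bookkeeping of the constants ($\e_0\ge\eta\sqrt\delta/2$, the choice of $K$ and $\epsilon$) also checks out. What your route buys is self-containedness --- only the expectation asymptotics $\E L_t=2\sqrt t+O(t^{1/6})$ are needed --- and a transparent explanation of why the lower tail has speed $n^2$ (an atypically short crossing must simultaneously defeat $\Theta(n)$ independent strategies, each costing $e^{-cn}$); the paper's route buys brevity at the price of an external citation.
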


\begin{proof} The proof follows from the large deviation probabilities for the  length of unconstrained geodesics in \cite{sepLDP}. %\textcolor{red}{ref issue ,they prove it for iid maybe timo?}
In particular the theorem states that for any $x,y_1,y_2$ as in the statement of the theorem,  
\begin{align*}
\P(L((x,y_1),((x+\delta),y_2))- 2 n \sqrt{\delta(y_2-y_1)}\ge \eta \delta n)&\le e^{-cn},\\
\P(L((x,y_1),((x+\delta),y_2))- 2 n \sqrt{\delta(y_2-y_1)} -\eta \delta n)&\le e^{-cn^2},
\end{align*}
for some constant $c=c(\delta,\eta)$.
A union bound over points $x,y_1,y_2$, the total number of which is $\frac{1}{\delta \eta^2}$, completes the proof.
\end{proof}

We are now ready to prove Theorem \ref{t:lln}. We start by showing the upper bound. Recall that $L_{\alpha}(n)$ denotes the length of the constrained geodesic.

\begin{proposition}
\label{p:llnub}
Fix $\varepsilon>0$. There exists $c=c(\alpha,\varepsilon)>0$ such that with probability at least $1-e^{-cn}$ we have 
$$L_{\alpha}(n) \leq 2(\mathrm{w}_{\alpha}+\varepsilon)n.$$ 
\end{proposition}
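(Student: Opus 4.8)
The plan is to work in the normalized picture (rate-$n^2$ Poisson process on $[0,1]^2$, competitors being directed paths $\gamma$ from $(0,0)$ to $(1,1)$ with $\int_0^1 L\ge\tfrac12+\alpha$, where $L\colon[0,1]\to[0,1]$ is the non-decreasing height function of $\gamma$), to fix a reciprocal-of-an-integer scale $\delta\in(0,\alpha/2)$ and a vertical scale $\eta\le\delta^2$ as in the definition of $B_{\delta,\eta}$, and to bound $|\gamma|$ uniformly over all competitors by chopping $\gamma$ into the $1/\delta$ vertical strips $\{i\delta\le x\le(i+1)\delta\}$. In the $i$-th strip $\gamma$ enters at height $a_i=L(i\delta)$ and leaves at height $b_i=L((i+1)\delta)$, with $a_i\le b_i\le a_{i+1}$; rounding outward to $I_\eta$ gives lattice points $u_i=(i\delta,\eta\lfloor a_i/\eta\rfloor)$ and $v_i=((i+1)\delta,\eta\lceil b_i/\eta\rceil)$ in $B_{\delta,\eta}$ with $u_i\preceq v_i$. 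Since the piece of $\gamma$ in that strip is a directed path from a point $\succeq u_i$ to a point $\preceq v_i$, monotonicity of last passage values gives $|\gamma\cap\{i\delta\le x\le(i+1)\delta\}|\le L(u_i,v_i)$, and summing (a.s.\ no Poisson point lies on a strip boundary) yields the deterministic bound $|\gamma|\le\sum_i L(u_i,v_i)$, valid simultaneously for every competitor $\gamma$.

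Next I would invoke Lemma~\ref{uni100}(1): on an event $\mathcal G$ of probability at least $1-e^{-c(\delta,\eta)n}$, every term obeys $L(u_i,v_i)\le 2n\sqrt{\delta(b_i-a_i+2\eta)}+\eta\delta n$. Summing over the $1/\delta$ strips and using $\sqrt{p+q}\le\sqrt p+\sqrt q$, the $\eta$-dependent corrections total $O\big(n(\sqrt{\eta/\delta}+\eta)\big)=O(\sqrt\delta\,n)$, while the main term is $2n\sum_i\sqrt{\delta(b_i-a_i)}=2nJ(L_\delta)$, since the piecewise affine path $L_\delta$ has slope $(b_i-a_i)/\delta$ on the $i$-th strip. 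This step — recognizing the strip sum of square roots as the variational functional $J$ applied to the discretized path — is the heart of the argument, and is precisely the rigorous version of the heuristic ``sum the unconstrained law of large numbers over local scales'' from Section~\ref{s:var}.

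Finally I would bring in the area constraint: $\int_0^1 L\ge\tfrac12+\alpha$ and $L_\delta$ agrees with $L$ on $I_\delta$, so Lemma~\ref{l:approx3} gives $\int_0^1 L_\delta\ge\tfrac12+\alpha-2\delta$, i.e.\ $L_\delta\in\mathcal B_{\alpha-2\delta}$, hence $J(L_\delta)\le J_{\alpha-2\delta}=\mathrm{w}_{\alpha-2\delta}$. Thus, on $\mathcal G$, $L_\alpha(n)\le 2n\,\mathrm{w}_{\alpha-2\delta}+O(\sqrt\delta\,n)$ for all competitors at once. Using continuity of $\alpha\mapsto\mathrm{w}_\alpha$, one picks $\delta$, and then $\eta=\delta^2$, depending only on $\alpha$ and $\varepsilon$, so small that $\mathrm{w}_{\alpha-2\delta}+O(\sqrt\delta)\le\mathrm{w}_\alpha+\varepsilon$; then $c(\delta,\eta)$ becomes the desired $c(\alpha,\varepsilon)$ and the proof is complete.

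I expect the only real difficulty to be bookkeeping rather than ideas: one must set up the strip decomposition and the outward rounding carefully enough that $|\gamma|\le\sum_i L(u_i,v_i)$ is genuinely uniform over the uncountable family of competitors (in particular, handling vertical segments of $\gamma$ sitting over strip boundaries, and the degenerate case where $u_i$ and $v_i$ have equal height, in which the last passage value is $0$ and the bound is trivial), and one must check that, once $\eta\le\delta^2$, all the accumulated discretisation errors are $o(n)$ uniformly in $\gamma$.
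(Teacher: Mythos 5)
Your proposal is correct and follows essentially the same route as the paper's proof: decompose the competitor into vertical strips of width $\delta$, bound each strip's contribution by an unconstrained passage time between $(\delta,\eta)$-lattice points controlled uniformly by Lemma~\ref{uni100}, recognize the resulting sum as $2nJ$ of the discretized path, control its trapped area via Lemma~\ref{l:approx3}, and conclude by continuity of $\mathrm{w}_\alpha$. Your use of $\sqrt{a+b}\le\sqrt a+\sqrt b$ in place of the paper's truncation argument (allowing $\eta\le\delta^2$ rather than $\eta<\delta^5$) is a harmless cosmetic variation.
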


\begin{proof}
Recall that for any increasing path $\gamma,$ we denote its length or the number of points of the Poisson process it passes through by $|\gamma|$.
Let us now notice that $$|\gamma|\le \sum_{i\in I_{\delta}}L\biggl(\bigl(i\delta, \gamma_{\delta,\eta}(i\delta)\bigr), \bigl((i+1)\delta, \gamma_{\delta,\eta}((i+1)\delta)+\eta\bigr)\biggr).$$ 
Thus by the previous lemma, with probability at least  $1-e^{-cn},$ we have for all increasing paths, $\gamma$ (from $(0,0)$ to $(1,1)$ in the rescaled space)
\begin{align*}
|\gamma|&\le \sum_{i\in I_{\delta}} 2\delta \sqrt{\frac{\gamma_{\delta,\eta}((i+1)\delta)+\eta)-\gamma_{\delta,\eta}(i\delta)}{\delta}}n +2\eta n, \\
&= \sum_{i\in I_{\delta}} 2n \delta \sqrt{\dot \gamma_{\delta,\eta}+\frac{\eta}{\delta}} +2\eta n.
\end{align*}
%[\textcolor{red}{A}: use of brackets here is unattractive. Larger brackets needed on the outside.]
An argument involving truncating at $\dot \gamma_{\delta,\eta}< \sqrt{\frac{\eta}{\delta}}$ shows that, with probability at least  $1-e^{-cn}$, for all increasing paths $\gamma,$
$$\sum_{i\in I_{\delta}} n\delta \sqrt{\dot \gamma_{\delta,\eta}+\frac{\eta}{\delta}}\le n\sum_{i} \delta \sqrt{\dot \gamma_{\delta,\eta}}  +{\left(\frac{\eta}{\delta}\right)}^{1/4}O(n).
$$

Thus, for $\eta < \delta^5$, with probability at least $1-e^{-cn},$ for all $\gamma,$ 
\begin{equation}\label{approx34}
|\gamma|\le n \int_{0}^1 2\sqrt{\dot \gamma_{\delta,\eta}}(x){\rm{d}}x +O(\delta n).
\end{equation} 

Now let $\gamma$ be an 
%[\textcolor{red}{A}: increasing?] 
increasing path that traps area at least $(1/2+\alpha)$ (which corresponds to area $(1/2+\alpha)n^2$ in the unscaled model).  By \eqref{area14},  it follows that $\int_0^1 \gamma_{\delta,\eta}(x){\rm{d}}x \ge \frac{1}{2}+\alpha- O(\delta)$ and hence, by continuity of $\mathrm{w}_{\alpha}$, we have $\int_{0}^1 \sqrt{\dot \gamma_{\delta,\eta}}(x){\rm{d}}x\le ({\rm w}_{\alpha}+\e/2)$ by choosing $\delta$ sufficiently small. By choosing $\delta$ suitably small, this implies that, with probability at least $1-e^{-cn}$, the bound $|\gamma|\le 2( w_{\alpha}+\e)n$ for such increasing paths $\gamma$. This completes the proof of the proposition.
\end{proof}

We now show the corresponding lower bound. 

\begin{proposition}
\label{p:llnlb}
Fix $\varepsilon>0$. There exists $c=c(\alpha,\varepsilon)>0$ such that, with probability at least $1-e^{-cn^2}$, we have 
$$L_{\alpha}(n) \geq 2(\mathrm{w}_{\alpha}-\varepsilon)n.$$ 
\end{proposition}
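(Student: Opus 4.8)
The plan is to produce, on an event of probability at least $1-e^{-cn^2}$, a single increasing path from $(0,0)$ to $(n,n)$ that traps area at least $(\tfrac12+\alpha)n^2$ and has length at least $2(\mathrm{w}_\alpha-\varepsilon)n$; since any such path is a competitor in \eqref{e:cop}, this forces $L_\alpha(n)\ge 2(\mathrm{w}_\alpha-\varepsilon)n$ on that event. We work in the rescaled picture, a rate-$n^2$ Poisson process on $[0,1]^2$. By continuity and monotonicity of $\alpha\mapsto\mathrm{w}_\alpha$ (the lemma preceding Lemma \ref{l:approx3}), first fix $\alpha'\in(\alpha,\tfrac12)$ with $\mathrm{w}_{\alpha'}>\mathrm{w}_\alpha-\varepsilon/2$, and take $\psi:=\psi_{\alpha'}(x)=\tfrac{(1+c_{\alpha'})x}{1+c_{\alpha'}x}$ from Proposition \ref{solution1}: this is a $C^\infty$ strictly increasing bijection of $[0,1]$ with $\dot\psi$ bounded between two positive constants $c_3<c_4$, with $\int_0^1\psi=\tfrac12+\alpha'$ (Proposition \ref{p:calpha}) and $J(\psi)=\mathrm{w}_{\alpha'}$. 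The role of passing to $\alpha'>\alpha$ is to create a genuine area surplus that will survive discretization.

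Next discretize. Choose $\delta$ (a reciprocal of an integer) small enough that $5\delta\le\alpha'-\alpha$ and that the oscillation of $\dot\psi$ over length-$\delta$ intervals is tiny, then $\eta\le\delta^5$ (also a reciprocal of an integer). Put $u_i:=(i\delta,v_i)$ for $i=0,\dots,1/\delta$ with $v_i:=\eta\lfloor\psi(i\delta)/\eta\rfloor$, so $u_0=(0,0)$, $u_{1/\delta}=(1,1)$, and $v_{i+1}-v_i>\psi((i+1)\delta)-\psi(i\delta)-\eta\ge c_3\delta-\eta>0$. Hence the concatenation $\Gamma$ of the geodesics $\gamma(u_i,u_{i+1})$ is a well-defined increasing path from $(0,0)$ to $(1,1)$ with $|\Gamma|=\sum_{i=0}^{1/\delta-1}L(u_i,u_{i+1})$ (the deterministic points $u_i$ being a.s. not Poisson points). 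Since each $\gamma(u_i,u_{i+1})$ stays in $[i\delta,(i+1)\delta]\times[v_i,v_{i+1}]$, the area trapped by $\Gamma$ differs from that of the piecewise-linear interpolant $\psi_{\delta,\eta}$ through the $u_i$ by at most $\sum_i\delta(v_{i+1}-v_i)\le\delta$; together with \eqref{area14} and $\int_0^1\psi=\tfrac12+\alpha'$ this shows $\Gamma$ traps area at least $\tfrac12+\alpha'-5\delta\ge\tfrac12+\alpha$, so $\Gamma$ is feasible.

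It remains to bound $|\Gamma|$ below. By part (2) of Lemma \ref{uni100}, on an event of probability at least $1-e^{-cn^2}$ we have $L(u_i,u_{i+1})\ge 2n\sqrt{\delta(v_{i+1}-v_i)}-\eta\delta n$ simultaneously for all $i$, so $|\Gamma|\ge 2n\sum_i\sqrt{\delta(v_{i+1}-v_i)}-\eta n=2n\int_0^1\sqrt{\dot\psi_{\delta,\eta}(x)}\,{\rm d}x-\eta n$, where $\dot\psi_{\delta,\eta}=(v_{i+1}-v_i)/\delta$ on $[i\delta,(i+1)\delta]$. A mean-value-theorem estimate gives $\|\dot\psi_{\delta,\eta}-\dot\psi\|_{\infty}\le 2\eta/\delta+\sup_{|x-y|\le\delta}|\dot\psi(x)-\dot\psi(y)|$, and since both functions lie in a fixed interval $\subset(0,\infty)$ on which $\sqrt{\cdot}$ is Lipschitz, $\big|\int_0^1\sqrt{\dot\psi_{\delta,\eta}}-\mathrm{w}_{\alpha'}\big|$ can be made smaller than $\varepsilon/4$ by shrinking first $\delta$, then $\eta/\delta$. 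Hence on this event $|\Gamma|\ge 2n(\mathrm{w}_{\alpha'}-\varepsilon/4)-\eta n\ge 2n(\mathrm{w}_\alpha-\varepsilon)$ for all large $n$, and therefore $L_\alpha(n)\ge 2(\mathrm{w}_\alpha-\varepsilon)n$.

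No step is genuinely deep: the sole probabilistic input is the lower-tail large-deviation bound of Lemma \ref{uni100}(2), whose $e^{-cn^2}$ rate — reflecting that pushing an LPP value below its mean costs probability $e^{-\Theta(n^2)}$ — is precisely what propagates into the statement of the proposition. The only point needing care is juggling the two independent discretization scales, horizontal $\delta$ and vertical $\eta$, in both the area and the length estimates; this is handled cleanly by fixing the quantifiers in the order $\varepsilon\to\alpha'\to\delta\to\eta\to n$.
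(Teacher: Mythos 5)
Your argument is correct and follows essentially the same route as the paper's proof: pass to $\psi_{\alpha+\theta}$ (your $\alpha'$) to build in an area surplus, discretize at scales $\delta$ and $\eta\le\delta^5$, concatenate the geodesics between the grid points $u_i$, control the trapped area via the box/interpolation estimate (the content of Lemma \ref{l:approx3} and \eqref{area14}), and invoke the lower-tail bound of Lemma \ref{uni100}(2) for the $e^{-cn^2}$ rate. The only cosmetic difference is that you justify the area comparison directly by confining each geodesic piece to its rectangle rather than citing Lemma \ref{l:approx3}, which amounts to the same estimate.
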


 \begin{proof}
The derivation of this lower bound is straightforward:  we will take a discretisation of $\psi_{\alpha}$ and then use the continuity of $\dot \psi$.
Fix $\e>0$. Choose $\theta>0$ and consider $\psi_*=\psi_{\alpha+\theta}$. Then, by definition, 
$
\int_{0}^1 \psi_*(x){\rm{d}}x\ge \frac{1}{2}+\alpha+\theta .
$
Choose $\theta$ small enough that 
$
{\rm w}_{\alpha+\theta}=\int_{0}^1 \sqrt{\dot \psi_*(x)}{\rm{d}}x\ge {\rm w}_{\alpha}-\frac{\e}{2}.
$

By using the continuity of $\psi$ and $\dot \psi$, we see that, for given $\e$, there exist sufficiently small choices of  $\delta,\eta,$ such that the piecewise affine function $\psi_{*,\delta, \eta}$ interpolating between the points $u_i:=(i\delta,\eta\lfloor\frac{\psi_*(i\delta)}{\eta}\rfloor)$
satisfies 
 \begin{align*}
 \int_{0}^1 \psi_{*,\delta,\eta}(x){\rm{d}}x\ge \frac{1}{2}+\alpha+\frac{\theta}{2},&
 \int_{0}^1 \sqrt{\dot \psi_{*,\delta,\eta}}(x){\rm{d}}x\ge {\rm w}_\alpha-\frac{3\e}{4}.
 \end{align*}
Now let $\gamma_{(i,i+1)}$ 
%\textcolor{red}{is the notation good} 
be the geodesic between the points $u_i$ and $u_{i+1}$.  Let $\gamma$ be the path obtained by concatenating the paths $\gamma_{(i,i+1)}$ for $i=0,\ldots,\frac{1}{\delta}-1$ 
%[\textcolor{red}{A}: to $\gamma(i,i+1)$?]
Using Lemma \ref{uni100}, it follows that, with probability at least $1-e^{-cn^2}$, 
$$
|\gamma|\ge \sum_{i\in I_{\delta}} 2n\delta \sqrt{\dot \psi_{*,\delta,\eta}} -2n\eta \geq 2(\mathrm{w}_{\alpha}-\e)n ,
$$
where we take $\eta$ and $\delta$ sufficiently small. Now  by
Lemma \ref{l:approx3}, we find that 
$$\left |\int_{0}^{1} \gamma(x){\rm{d}}x - \int_{0}^1 \psi_{*,\delta,\eta}(x){\rm{d}}x \right|\le 2\delta,$$ since $\gamma$ and $\psi_{*,\delta,\eta}$ agree  on $I_{\delta}$.
%Similarly by a uniform version of 
%Theorem \ref{t:tftail} whose proof is exactly how Lemma \ref{uni100} is derived from 
%Thoerem \ref{t:moddev} it follows that with probability at least $1-e^{-cn^{1/3}},$ 
%$$|\gamma(x)-\psi_{*,\delta,\eta}(x)|\le \theta/4$$ for all $x\in[0,1].$ 
Hence, for $\delta$ small enough,
$
\int_{0}^1 \gamma(x){\rm{d}}x \ge \frac{1}{2}+\alpha
$.
%so that $\gamma$ is seen to trap area at least $\frac{1}{2}+\alpha$. 
Thus, $L_{\alpha,n}$ is at least as large as $|\gamma|$. Hence, we are done.
\end{proof}

\begin{proof}[Proof of Theorem \ref{t:lln}]
Combining Proposition \ref{p:llnub} and Proposition \ref{p:llnlb}, we complete the proof of Theorem \ref{t:lln} by noting that an upper tail bound of the form $\P(L_{\alpha}(n)\geq k)\leq e^{-k}$ for all $k\geq 5n^2$ is easily obtained by bounding the upper tail of the Poisson-distributed  total number of points. 
\end{proof}

\subsection{Law of large numbers for the geodesic}

We now prove  Theorem \ref{lln2}. The proof is by contradiction:  if ${\rm dist}(\Gamma_{\alpha},\psi_{\alpha})\ge \Delta$, and the noise space is such that the events listed in Lemma \ref{uni100} %(\textcolor{red}{I think this should be Lemma \ref{uni100}}) 
occur, then we would be able to construct an absolutely continuous function $h$ that traps area at least  $1/2+\alpha$ and for which   $\int_0^1 \sqrt{\dot h}(x){\rm{d}}x>\int_0^1\sqrt{\dot \psi_{\alpha}}(x){\rm{d}}x $. This would contradict extremality  of $\psi_{\alpha}$.
To show the above inequality, we will employ a concavity argument. 
%[\textcolor{red}{A}: it is slightly unclear how this last sentence fits with the preceding summary. Does the concavity argument enable the preceding argument?]
For notational brevity, let $\gamma=\Gamma_{\alpha}$ and $\psi=\psi_{\alpha}$.  For $\delta,\eta>0$, recall the definition of $\gamma_{\delta,\eta}$ from the previous section. 
%\textcolor{red}{should we make it into a definition}  
We will consider the function $h=\frac{\gamma_{\delta,\eta}+\psi}{2}$.
Fixing parameters $\e,\delta$ and $\eta\le \delta^5,$
we will restrict attention to the event $\mathcal{A}$,
on which:
\begin{enumerate}
\item $|\gamma|\le 2n \int_{0}^1 \sqrt{\dot \gamma_{\delta,\eta}}(x){\rm{d}}x +O(\delta n),$ \\
\item $2({\rm w}_\alpha -\e)n \le |\gamma|\le 2({\rm w}_\alpha +\e)n$.
\end{enumerate}
By Propositions \ref{p:llnub} and \ref{p:llnlb}, and \eqref{approx34}, $\mathcal{A}$ occurs with probability at least $1-e^{-cn}$. 
Observe that Proposition \ref{solution1} implies that $\dot \psi$ is bounded away from zero and infinity on $[0,1]$.  Thus, $\dot \psi$ lies in  $[c_1,C_2]$ where $0<c_1<C_2 < \infty$.
Suppose that $\displaystyle{\sup_x|\gamma(x)-\psi(x)|\ge \Delta}$.  We now claim that  if $\delta,\eta$ are small enough compared to $\Delta$,  then  there exists $y\in[0,1]$ such that    
\begin{equation}\label{conseq203}
|\gamma_{\delta,\eta}(y)-\psi(y)|\ge \frac{\Delta}{2}.
\end{equation}
To see this, we choose $y\in[0,1]$ such that $|\gamma(y)-\psi(y)|\ge \Delta$. Let $y_1\in I_{\delta}$  be such that $y_1\le y< y_1+\delta$.
Now we consider two cases: 
\begin{enumerate}
\item 
If $\gamma(y)\ge \psi(y)+\Delta$, then
 \begin{align*}
\gamma_{\delta,\eta}(y_1+\delta) \ge \gamma(y)-\eta\ge \psi(y)+\Delta-\eta &\ge \psi(y_1+\delta)+\Delta-O(\delta), \text{ and hence,}\\
\gamma_{\delta,\eta}(y_1+\delta)& \ge  \psi(y_1+\delta)+\Delta-O(\delta).
\end{align*}
 These implications, excepting the last, follow by definition. The last implication is due to  $\psi$ having Lipschitz constant at most $C_2$.
\item If $\psi(y)\ge \gamma(y)+\Delta$, then, similarly,
 \begin{align*}
 \psi(y_1)\ge \psi(y)-O(\delta)&\ge \gamma(y)+\Delta-O(\delta)\ge \gamma_{\delta,\eta}(y_1)+\Delta-O(\delta) \text{ and hence,}\\
  \psi(y_1)&\ge \gamma_{\delta,\eta}(y_1)+\Delta-O(\delta).
\end{align*}
 \end{enumerate}
Thus, by \eqref{conseq203}, 
$$\int_0^1|\dot \gamma_{\delta,\eta}(x)-\dot \psi(x)|{\rm{d}}x\ge  \left|\int_{0}^y[\dot \gamma_{\delta,\eta}(x)-\dot \psi(x)]{\rm{d}}x \right| \ge \frac{\Delta}{2}.$$
Also, note that, since both $\psi$ and $\gamma_{\delta,\eta}$ start at $(0,0)$ and end at $(1,1)$,  
$\int_0^1\dot \gamma_{\delta,\eta}(x){\rm{d}}x =\int_0^1\dot \psi(x){\rm{d}}x$.
Hence, 
\begin{equation}\label{algebra102}
\int_0^1 [\dot \psi(x)- \dot \gamma_{\delta,\eta}(x)]\mathbf{1}(S){\rm{d}}x \ge \Delta/4,
\end{equation}
where $S$ denotes $\{x\in [0,1]: \dot \psi(x)\ge \dot \gamma_{\delta,\eta}\}$.   Moreover, on  $S$, 
\begin{equation}\label{bound303}
\dot \gamma_{\delta,\eta}\le \dot \psi \le C_2
\end{equation} 
for some $C_2 >1$.
%(assume $C_2>1$. [\textcolor{red}{A}: what does this last mean?])
Observing that $\dot h=\frac{\dot \gamma_{\delta,\eta}+{\dot \psi}}{2}$, 
 we see that the  concavity of  $x\to \sqrt x$ implies that $\sqrt{\dot h}-\frac{\sqrt{\dot \gamma_{\delta,\eta}}+\sqrt{\dot \psi}}{2},$ is  non-negative.  
Furthermore, simple algebra shows that, for all $x\in S$,
\begin{align}\label{algebra101}
\sqrt{\dot h}(x)-\frac{[\sqrt{\dot \psi}(x)+\sqrt{\dot \gamma_{\delta,\eta}}(x)]}{2} &= \frac{(\sqrt{\dot \psi}(x)-\sqrt{\dot \gamma_{\delta,\eta}}(x))^2}{4[\sqrt{\dot h}(x)+\frac{[\sqrt{\dot \psi}(x)+\sqrt{\dot \gamma_{\delta,\eta}}(x)]}{2}]}\stackrel{\eqref{bound303}}{\ge}  \frac{(\sqrt{\dot \psi}(x)-\sqrt{\dot \gamma_{\delta,\eta}}(x))^2}{8 C_2},\\
\nonumber
&\ge \frac{({\dot \psi}(x)-{\dot \gamma_{\delta,\eta}}(x))^2}{8C_2[\sqrt {\dot \psi}(x)+\sqrt{\dot \gamma_{\delta,\eta}}(x)]^2}\stackrel{\eqref{bound303}}{\ge} \frac{({\dot \psi}(x)-{\dot \gamma_{\delta,\eta}}(x))^2}{100 C_2^2}.
\end{align} 
%[\textcolor{red}{A}: there is a display problem with the first line above. Presumably it is only a showlabels problem.]

The proof is now completed by observing that a length gain has been realized:\begin{align*}
\int_0^1 \left[\sqrt{\dot h}(x)-\frac{[\sqrt{\dot \psi}(x)+\sqrt{\dot \gamma_{\delta,\eta}}(x)]}{2}\right]{\rm{d}}x& \ge \int_0^1 \left[\sqrt{\dot h}(x)-\frac{[\sqrt{\dot \psi}(x)+\sqrt{\dot \gamma_{\delta,\eta}}(x)]}{2}\right] \mathbf{1}(S){\rm{d}}x \\
&\stackrel{\eqref{algebra101}}{\ge} \int_0^1 \frac{({\dot \psi}(x)-{\dot \gamma_{\delta,\eta}}(x))^2}{100 C_2^2}\mathbf{1}(S){\rm{d}}x \\
&\ge  \frac{1}{100C_2^2} \left(\int |({\dot \psi}(x)-{\dot \gamma_{\delta,\eta}}(x)|\mathbf{1}(S){\rm{d}}x \right)^2\\
 &\stackrel{\eqref{algebra102}}{\ge} \frac{1}{2000C_2^2} \Delta^2. 
\end{align*}
On the event $\mathcal{A},$ we see, by means of this inequality and the definition of  $\mathcal{A}$, that if  $\displaystyle{\sup_x|\gamma(x)-\psi(x)|\ge \Delta} $ holds, 
then $$\int_0^1\sqrt{\dot h}(x){\rm{d}}x \ge [({\rm w}_\alpha-\e)+\frac{1}{C^2} \Delta^2]$$ for some constant $C$ that  depends only on $\alpha$.
Also, both  $\int_{0}^1\psi(x){\rm{d}}x $ and $\int_{0}^1\gamma(x){\rm{d}}x $ are at least $\frac{1}{2}+\alpha$ by definition,  and hence  by \eqref{area14}, we have 
$$
\int_0^1 h(x){\rm{d}}x =\frac{\int_0^1 \psi(x){\rm{d}}x +\int_0^1 \gamma_{\delta,\eta}(x){\rm{d}}x }{2}\ge \frac{1}{2}+\alpha-\e/2,
$$ where $\e$ can be made arbitrarily small by choosing $\delta$ and hence $\eta$ small enough. This inference contradicts the continuity of ${\rm w}_{\alpha}$ in $\alpha$.
Hence we are done.
\qed

%\textcolor{red}{read this whole proof carefully}

We now use similar arguments as those employed to prove the law of large numbers in order to establish a variant of Theorem \ref{flat0} that is also uniform in $\alpha$. This particular variant will be crucial in the proof of Theorem \ref{t:mflub}.

\begin{theorem}
\label{flat1} 
Fix $0< \alpha_1<\alpha_2 <\frac{1}{2}$. For any small enough $\delta>0$, there exist $\gamma>0$ and $c>0$ such that, with probability at least $1-e^{-cn}$, all $\delta$-interior facets of $\Gamma_{\alpha,n}$ for which $\alpha\in[\alpha_1,\alpha_2]$ make an angle with the $x$-axis that lies $(\omega,\pi/2-\omega)$.
\end{theorem}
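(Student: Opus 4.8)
The plan is to first upgrade Theorem~\ref{lln2} to a statement holding simultaneously for all $\alpha\in[\alpha_1,\alpha_2]$ on a single event of probability at least $1-e^{-cn}$, and then to rerun the argument of Theorem~\ref{flat0} with constants that are uniform over this compact range of $\alpha$.

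For the uniform law of large numbers, I would observe that the contradiction argument proving Theorem~\ref{lln2} uses only three facts about $\Gamma_{\alpha,n}$: that it satisfies the area constraint $\int_0^1\Gamma_{\alpha,n}\ge\tfrac12+\alpha$ (true by definition); that its length lies in the window $2({\rm w}_\alpha-\e)n\le|\Gamma_{\alpha,n}|\le 2({\rm w}_\alpha+\e)n$ (the event $\mathcal{A}$); and a deterministic interpolation through $\psi_\alpha$, whose only quantitative input is a two-sided bound on $\dot\psi_\alpha$ on $[0,1]$. Each of these can be made uniform. The estimate~\eqref{approx34} and the upper bound of Proposition~\ref{p:llnub} already hold for \emph{every} increasing path trapping the prescribed area, and the only $\alpha$-dependent step there --- passing from $\int_0^1\gamma_{\delta,\eta}\ge\tfrac12+\alpha-O(\delta)$ to $\int_0^1\sqrt{\dot\gamma_{\delta,\eta}}\le{\rm w}_\alpha+\e/2$ --- holds for all $\alpha\in[\alpha_1,\alpha_2]$ at once once $\delta$ is small, by uniform continuity of $\alpha\mapsto{\rm w}_\alpha$ on a compact subinterval of $(0,\tfrac12)$. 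For the matching lower bound we use that $\alpha\mapsto L_\alpha(n)$ is non-increasing (enlarging $\alpha$ shrinks the set of admissible paths): fixing a finite net $\alpha_1=\beta_0<\cdots<\beta_k=\alpha_2$ and applying Proposition~\ref{p:llnlb} with a union bound over the $O(1)$ net points gives, with probability $1-e^{-cn^2}$, that $L_\alpha(n)\ge L_{\beta_{j+1}}(n)\ge 2({\rm w}_{\beta_{j+1}}-\e)n\ge 2({\rm w}_\alpha-2\e)n$ for all $\alpha\in[\beta_j,\beta_{j+1}]$, again by uniform continuity of ${\rm w}$. Finally, since $\dot\psi_\alpha(x)=(1+c_\alpha)(1+c_\alpha x)^{-2}$ and $c_\alpha$ ranges over the compact set $[c_{\alpha_1},c_{\alpha_2}]\subset(0,\infty)$, one has $0<c_1\le\dot\psi_\alpha(x)\le C_2<\infty$ on $[0,1]$ with $\alpha$-free constants, so the constant $C$ in the length-gain estimate of Theorem~\ref{lln2} may be taken uniform. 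Intersecting these events, the variational contradiction of Theorem~\ref{lln2} runs verbatim for each $\alpha\in[\alpha_1,\alpha_2]$, so with probability at least $1-e^{-cn}$ we have ${\rm dist}(\Gamma_{\alpha,n},\psi_{\alpha,n})\le\Delta n$ for every $\alpha\in[\alpha_1,\alpha_2]$ simultaneously, with $\Delta>0$ arbitrarily small at a cost in $c$.

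Given this uniform law of large numbers, I would conclude exactly as in the proof of Theorem~\ref{flat0}. Fix a $\delta$-interior facet $(u_1,u_2)$ of $\Gamma_{\alpha,n}$ for some $\alpha\in[\alpha_1,\alpha_2]$, let $L_1,L_2$ be the segments from $(0,0)$ to $u_1$ and from $(n,n)$ to $u_2$, and let $v_1,v_2$ be their intersections with $\psi_{\alpha,n}$. The $\delta$-interior condition keeps $u_1,u_2$ --- and hence, using that $\Gamma_{\alpha,n}$ lies within $\Delta n$ of the strictly concave curve $\psi_{\alpha,n}$, also $v_1,v_2$ --- at a distance bounded below in terms of $\delta$ and $[\alpha_1,\alpha_2]$ from the corners $(0,0)$ and $(n,n)$, so that $\max(|u_1-v_1|,|u_2-v_2|)\le\e n$ simultaneously over all such facets, provided $\Delta$ was chosen small after $\delta$. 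By concavity of $\Gamma^{*}_{\alpha,n}$ the facet gradient lies between those of $L_1$ and $L_2$, hence within $O(\e)$ of the gradients of $\psi_{\alpha,n}$ at $v_1$ and at $v_2$; since $\dot\psi_\alpha$ is continuous, strictly decreasing and bounded away from $0$ and $\infty$ with bounds uniform over $\alpha\in[\alpha_1,\alpha_2]$, these gradients, and therefore (for $\e$ small) the facet gradient, lie in a fixed compact subinterval of $(0,\infty)$, yielding an angle with the $x$-axis in some $(\omega,\pi/2-\omega)$ with $\omega=\omega(\delta,\alpha_1,\alpha_2)>0$.

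I expect the main obstacle to be the uniformization of the law of large numbers, since one cannot union bound over a continuum of $\alpha$. What rescues the argument is that $\Gamma_{\alpha,n}$ need not be tracked pointwise as $\alpha$ varies: only its length must fall in the right window, and that quantity is monotone in $\alpha$, so a fixed finite net suffices for it; the concentration of the geodesic itself around $\psi_{\alpha,n}$ is then recovered $\alpha$ by $\alpha$ from the self-contained variational contradiction of Theorem~\ref{lln2}, whose constants are uniform precisely because the limit shapes $\psi_\alpha$ and their derivatives are uniformly controlled over $[\alpha_1,\alpha_2]$.
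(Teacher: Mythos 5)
Your proposal is correct and follows essentially the same route as the paper: the authors likewise reduce Theorem~\ref{flat1} to a uniform-in-$\alpha$ version of Theorem~\ref{lln2}, obtained by discretising $[\alpha_1,\alpha_2]$ into a finite net, exploiting the monotonicity of $\alpha\mapsto L_\alpha(n)$ to sandwich $|\Gamma_{\alpha,n}|$ between its values at neighbouring net points, invoking the continuity of ${\rm w}_\alpha$ and of $\psi_\alpha$ in $\alpha$, and then rerunning the concavity/variational contradiction; the geometric conclusion is then exactly the argument of Theorem~\ref{flat0} with constants made uniform via the uniform convexity and two-sided bounds on $\dot\psi_\alpha$ over the compact interval.
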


\begin{proof} 
Recall that the proof of  Theorem \ref{flat0}  
%[\textcolor{red}{A}: you mean this, or that the result appeared only shortly before?] 
used Theorem \ref{lln2} and the strict convexity of the function $\psi_\alpha$. Since $\psi_{\alpha}$ is uniformly convex for all $\alpha\in [\alpha_1,\alpha_2]$, this proof will  be complete, using the same arguments as in the proof of Theorem \ref{flat0}, once we prove the following uniform version of Theorem \ref{lln2}: for any $\Delta>0$, with probability at least $1-e^{-cn}$,  $$\sup_{\alpha\in[\alpha_1,\alpha_2]}\sup_x|\Gamma_{\alpha}(x)-\psi_{\alpha}(x)|\le \Delta,$$ where $c$ depends on $\Delta$ and the interval $[\alpha_1,\alpha_2]$.
We fix an $\e$ to be specified later and discretise the interval $[\alpha_1,\alpha_2]$ to obtain the set $\mathcal{B}=\{\alpha_1, \alpha_1+\e, \alpha_1+2\e,\ldots,\alpha_2\}$.
Fixing parameters $\e,\delta$ and $\eta\le \delta^5$,
again as before we restrict attention to the event $\mathcal{A}$ on which, for all $\alpha \in \mathcal{B}$,
\begin{enumerate}
\item $|\Gamma_{\alpha}|\le 2 n \int_{0}^1 \sqrt{\dot{ \Gamma}_{\alpha,\delta,\eta}}(x){\rm{d}}x +O(\delta n)$; 
\item and $2({\rm w}_\alpha -\e)n \le |\Gamma_{\alpha}|\le 2({\rm w}_\alpha +\e)n$.
\end{enumerate}
%As argued before [\textcolor{red}{A;} you mean similarly to a previous argument?],
Similarly to a previous argument, by Propositions \ref{p:llnub} and \ref{p:llnlb} and \eqref{approx34}, followed by a union bound, $P(\mathcal{A})\ge 1-e^{-cn}$. 
Now if $\delta$ and $\eta$ are chosen to be sufficiently small depending on $\Delta,$  by the previous result, Theorem \ref{lln2}, and  a simple union bound,  we obtain $$\sup_{\alpha\in \mathcal{B}}\sup_x|\Gamma_{\alpha}(x)-\psi_{\alpha}(x)|\le \Delta.$$
The proof will proceed along the same lines as the proof of Theorem \ref{lln2} did.  With the aim of  arriving at a contradiction, let $\alpha\in [\alpha_1+i\e,\alpha_1+(i+1)\e]$ be such that 
\begin{equation}\label{violate}
\sup_x|\Gamma_{\alpha}(x)-\psi_{\alpha}(x)|> \Delta.
\end{equation}
Clearly, $|\Gamma_{\alpha_1+(i+1)\e}|\le |\Gamma_{\alpha}|\le |\Gamma_{\alpha_1+i\e}|$ holds by definition.
Since  $\psi_{\beta}$ is a continuous function of  $\beta$ in the supremum norm, we see that, for small enough $\e$, $$\displaystyle{\sup_x|\Gamma_{\alpha}(x)-\psi_{\alpha_1+i\e}(x)|\ge \Delta/2}.$$
Now,  as we argued in the proof of  Theorem \ref{lln2}, this implies that 
\begin{align}\label{dev100}
\sup_x|\Gamma_{\alpha,\delta,\eta}(x)-\psi_{\alpha_1+i\e}(x)|\ge \Delta/2.
\end{align}
Note then that
$$
|\Gamma_{\alpha}|\ge |\Gamma_{\alpha_1+(i+1)\e}|\ge  2n({\rm{w}}_{\alpha_1+(i+1)\e}-\e) \ge 2n({\rm{w}}_{\alpha}-\e_1),
$$ 
where where $\e_1$ can be made small enough by choosing $\e$ small enough. The first inequality follows by definition, and the second by the occurrence of $\mathcal{A}$.

%\textcolor{red}{don't use $\kappa$ here}
Thus, using \eqref{approx34}, we find that  
$$\int_{0}^1 \sqrt{\dot \Gamma_{\alpha,\delta,\eta}}(x){\rm{d}}x\ge {\rm{w}}_{\alpha}-\e_1.$$ 
This along with \eqref{dev100} allows us to  apply the concavity argument that appears in the proof of Theorem \ref{lln2}:  by choosing $\e,\delta,\eta$ much smaller than $\Delta$, we thus contradict the continuity of ${\rm w }_{\beta}$ at $\beta=\alpha$.
% Note that the starting point and ending point of ${\rm{conv}}(\Gamma_{\alpha,n})$ are $(0,0)$ and $(1,1)$ respectively.  
%Consider any $\delta-$ interior facet $(u_1,u_2)$ Let $u_{1}=(x,y).$
%Now consider the straight lines $L_1,L_2$ joining the origin to $u_1$ and $u_2.$ Let $v_1$ and $v_2$ be the points of intersection of $L_!$ and $L_2$ with $\psi_{\alpha,n}.$ Thus for any $\e>0$ and by the LLN result proven above with probability at least $1-e^{-cn}$ uniformly for all $\delta-$ interior facets  $$\max (|u_1-v_1|,|u_2-v_2|\le \e n.$$
%Now by convexity of $\Gamma^{*}_{\alpha, n}$, the gradient of the facet $(u_1,u_2)$ is between the gradient of the lines joining $(n,0)$ to $u_1$ and $(0,n)$ to $u_2$ up to an error of $O(\e).$
%Thus we are done by choosing $\e$ to be small enough and using the strict convexity of $\psi_{\alpha,n}.$ 
\end{proof}
%\textcolor{red}{check the math here too}

\section{Lower Bound for Scaling Exponents}
\label{s:lb}
In this section we provide proofs of the lower bounds on ${\rm MFL}(\Gamma_n)$ and ${\rm MLR}(\Gamma_n)$, i.e., we prove Theorem \ref{t:mfllb} and Theorem \ref{t:mlrlb}. Most of the work goes into proving the ${\rm MFL}$ lower bound Theorem~\ref{t:mfllb}, since the lower bound for local roughness is a reasonably easy corollary of Theorem \ref{t:tftail}. Let $\alpha\in (0,\frac{1}{2})$ and $\varepsilon>0$ be fixed for the rest of the section. Let $\mathcal{A}_{\varepsilon}$ denote the event that ${\rm MFL}(\Gamma_n)\leq n^{3/4-\varepsilon}$. We shall show that the event $\mathcal{A}_{\varepsilon}$ is extremely unlikely. We start with an overview of the proof. We shall need a geometric definition.

\begin{definition}
\label{d:reg}
Let $C>1,\kappa>1$ be given constants. A sequence of points $u_0\preceq u_1 \preceq \cdots \preceq u_k$ is called a $(C,\kappa)$-\textbf{regular} sequence if the following conditions hold.
\begin{enumerate}
\item[(i)] The union of line segments joining $u_{i}$ to $u_{i+1}$ for $i=0,1,\ldots,k-1,$ is convex.
\item[(ii)] The gradient of the all the line segments joining $u_i$ to $u_{i+1}$ is $\in (\frac{1}{\kappa}, \kappa)$.
\item[(iii)]The distance between the first and last point in the sequence i.e., $|u_k-u_0|\in (\frac{1}{C}n^{3/4-\varepsilon/2}, Cn^{3/4-\varepsilon/2})$.
\item[(iv)] The distance between the consecutive points in the sequence is small: $|u_i-u_{i+1}|\leq n^{3/4-\e}$.
\item[(v)] Let $\theta_1$ and $\theta_2$ be the angles that the line segments $(u_0,u_1)$ and $(u_{k-1},u_{k})$ make with the positive $x$-axis (clearly $\theta_1\geq \theta_2$ by hypothesis). Then $\theta_1-\theta_2 \leq 100C^2 n^{-1/4-\e/2}$.
%
%\item[(v)] Let $R$ denote the parallelogram with two vertical sides and whose bottom side is the line segment joining $u_0$ and $u_k$, and such that the vertical sides have height $n^{1/2-\varepsilon/2}$, see Figure \ref{f:corner}. (\textcolor{red}{the optimal exponent here should be $n^{1/2-\varepsilon}$, I'm just being conservative and hence I do not need any constants}). Then the triangle $T$ formed by the straight lines joining $(u_0, u_1)$, $(u_k, u_{k-1})$ and $(u_0,u_k)$ is contained in the parallelogram $R$. 
\end{enumerate}
\end{definition}

See Figure \ref{f:corner} for an illustration of this definition. 

%\textcolor{red}{should $u_2$ be changed to $u_{k-1}$ on the figure? something wrong in the caption? $T$ is something different in the caption and in the statement of the lemma}
\begin{figure}[h] 
\centering
\begin{tabular}{cc}
\includegraphics[width=0.4\textwidth]{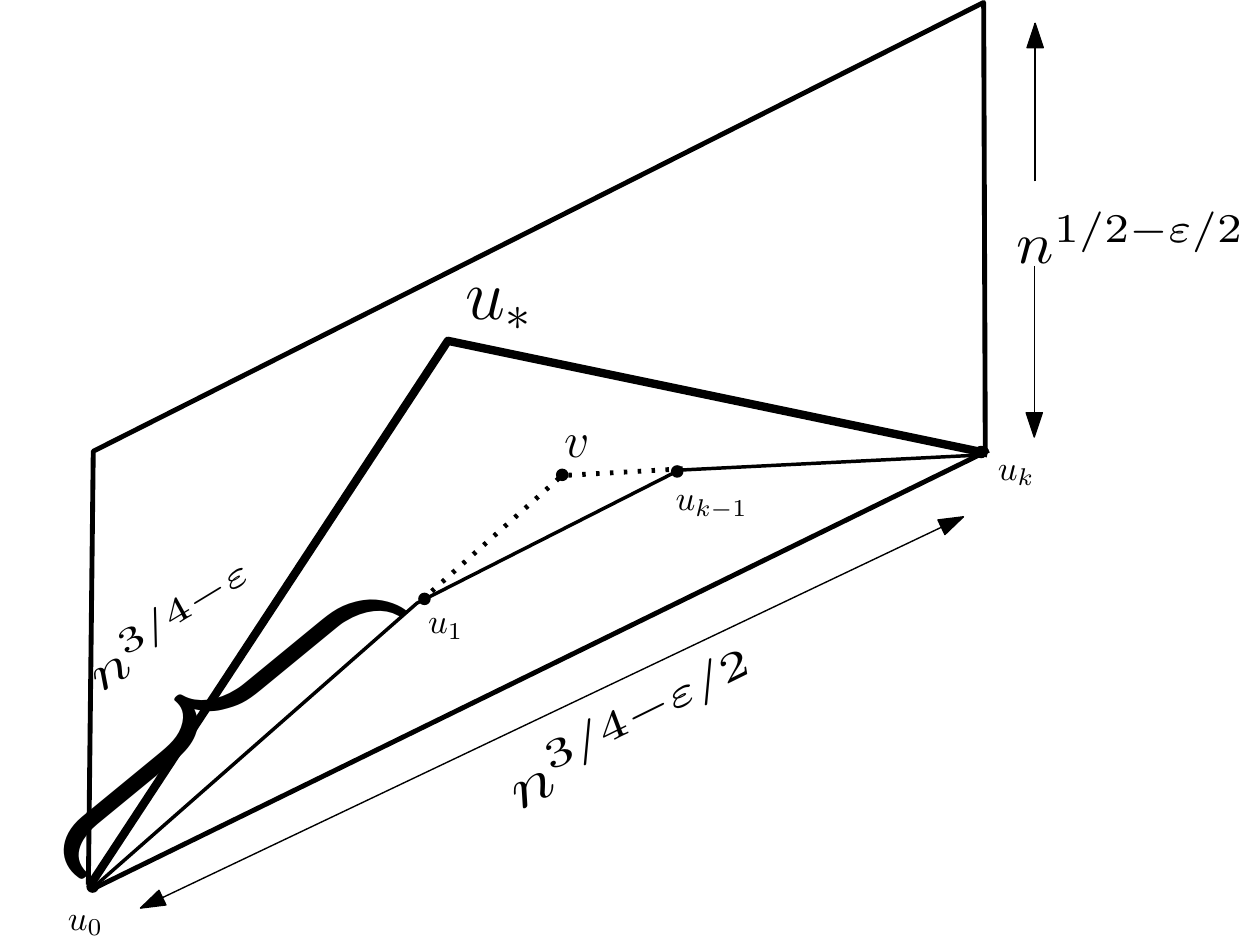} &\includegraphics[width=0.3\textwidth]{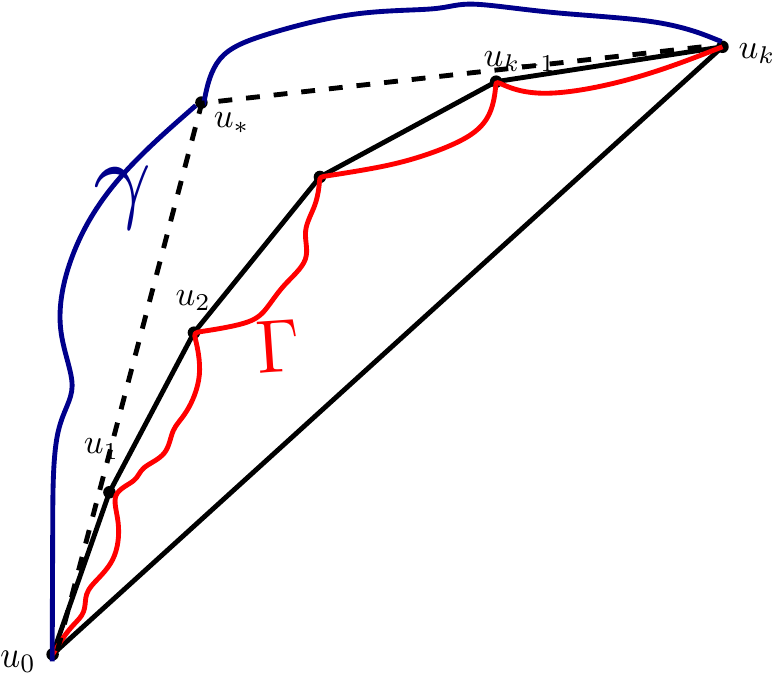} \\
(a) & (b)
\end{tabular}
\caption{ (a) A sequence of regular corners $u_0,u_1,\ldots, u_k$. The triangle $T$ is an isosceles triangle formed by vertices $u_0, u_k$ and $u_*$ such that the adjacent sides lie above the piecewise linear path passing through the points $u_0,u_1,\ldots, u_k$.  By definition of regularity, the triangle $T$ is contained in the parallelogram $R$ formed by the vertices $u_0, v_0,v_k$ and $u_k$ which has vertical height $n^{1/2-\varepsilon/2}$. Observe that the height of this parallelogram is much smaller than the transversal fluctuation of paths between $u_0$ and $u_k$. 
(b) On $\mathcal{A}_{\varepsilon}$, there is a regular sequence of corners $u_0, u_1,\ldots, u_{k}$ and $\Gamma$ is the best constrained path. We show that there is an alternative path $\gamma$ between $u_0$ and $u_1$ above the triangle $T$ that is extremely likely to be strictly larger in length than the restriction of $\Gamma$ between $u_0$ and $u_{k}$. Then the path obtained by replacing $\Gamma$ with $\gamma$ between $u_0$ and $u_k$ has a larger length and traps a larger area, thus leading to a contradiction.
}
\label{f:corner}
\end{figure}

%\begin{figure*}[h]
%\begin{center}
%\includegraphics[width=0.4\textwidth]{regcorner.pdf}
%\caption{(a) A sequence of regular corners $u_0,u_1,\ldots, u_k$. The triangle $T$ is formed by vertices $u_0, u_k$ and $v$ where $v$ is the intersection point of straightlines $u_0u_1$ and $u_ku_{k-1}$. By definition of regularity, the triangle $T$ is contained in the parallelogram $R$ formed by the vertices $u_0, v_0,v_k$ and $u_k$ which has vertical height $n^{1/2-\varepsilon/2}$. Observe that the height of this parallelogram is much smaller than the transversal fluctuation of paths between $u_0$ and $u_1$.}
%\label{f:corner}
%\end{center}
%\end{figure*}

Before proceeding let us try to motivate the item (v) of the above definition. Consider the least concave majorant of an increasing path from $(0,0)$ to $(n,n)$ as described in Definition \ref{d:mlrmfl}  
%[\textcolor{red}{A}: with two extra line segments, one running along the $x$-axis, added?] 
The total change of angle made by the  line segments constituting the concave majorant with the $x$-axis is roughly around $\pi/2$ over a length of around $n$. So over a distance of around $n^{3/4-\e/2}$, the change of angle should be around $n^{1/4-\e/2}$ on average. The item (v) of the above definition asserts that on a regular sequence the change of angle is not much more than this average. 

The following basic geometric consequence will be useful.

\begin{lemma}
\label{l:reggeo}
Let $S=\{u_0,u_1,\ldots, u_k\}$ be a $(C,\kappa)$-regular sequence for some $C,\kappa>1$. Let $L_{S}$ denote the union of line segments joining the consecutive points of $S$. Let $R$ denote the parallelogram with two vertical sides of length 
%[\textcolor{red}{A}: you mean length?] 
$n^{1/2-\varepsilon/2}$ whose bottom side is the line segment joining $u_0$ and $u_k$; see Figure \ref{f:corner}(a). Then there exists a point $u_*$ in $R$ such that the triangle $T$ formed by the points $u_0,u_k, u_*$ is an isosceles triangle (with the sides adjacent to $u_*$ being of equal length) contained in $R$ such that the two sides of $T$ adjacent to $u_*$ lie above the piecewise affine curve obtained by joining the consecutive points of $S$. 
\end{lemma}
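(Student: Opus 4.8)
The plan is to pass to the rotated coordinate frame in which the chord $u_0u_k$ is horizontal, bound how far the polygonal curve $L_S$ bulges off this chord using the bounded–gradient hypothesis (ii) and the small–turning hypothesis (v), and then take $u_*$ on the perpendicular bisector of $u_0u_k$ at a height chosen just large enough to clear this bulge and just small enough to remain inside $R$. Hypothesis (iv) will play no role, since the statement is purely geometric.

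Concretely, write $\ell=|u_k-u_0|$ and rotate and translate so that $u_0=(0,0)$ and $u_k=(\ell,0)$; let $P\colon[0,\ell]\to\R$ be the function whose graph is $L_S$. By (ii) this map is well defined (the segments have positive gradient), and by (i) together with (v) its graph is concave, so $P(0)=P(\ell)=0$ and $P\ge 0$. The chord angle $\phi$ lies between $\theta_2$ and $\theta_1$ (it is a $\Delta x$–weighted average of the segment angles), and by (ii) the angles $\theta_1\ge\theta_2$ lie in a compact subinterval of $(0,\pi/2)$ depending only on $\kappa$; hence, in the rotated frame, the slopes of the segments of $P$ all have absolute value at most $\rho:=\tan(\theta_1-\theta_2)$, which by (v) is $O\!\left(n^{-1/4-\varepsilon/2}\right)$ with implied constant depending only on $C$. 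Concavity (the tangent lines at the two endpoints lie above the graph) then gives, for every $x\in[0,\ell]$,
\begin{equation*}
0\ \le\ P(x)\ \le\ \rho\,x \qquad\text{and}\qquad 0\ \le\ P(x)\ \le\ \rho\,(\ell-x).
\end{equation*}

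Now pick a constant $C'=C'(C)$ and let $u_*$ be the point on the perpendicular bisector of $u_0u_k$, on the same side as $L_S$, at perpendicular distance $t:=C'n^{1/2-\varepsilon}$ from the chord; in the rotated frame $u_*=(\ell/2,t)$, so the triangle $T$ with vertices $u_0,u_k,u_*$ is isosceles with its two equal sides adjacent to $u_*$. Since $u_k$ lies strictly below the line through $u_0$ and $u_*$ (slope $2t/\ell$) and $u_0$ lies strictly below the line through $u_k$ and $u_*$, $T$ equals $\{(x,y):0\le y\le (2t/\ell)x \text{ and } y\le (2t/\ell)(\ell-x)\}$. Choosing $C'$ so that $2t/\ell\ge\rho$ for all large $n$ — which is possible because $\ell\le Cn^{3/4-\varepsilon/2}$ by (iii) and $\rho\le 200C^2 n^{-1/4-\varepsilon/2}$ — the two displayed bounds give $0\le P(x)\le (2t/\ell)x$ and $P(x)\le (2t/\ell)(\ell-x)$ for all $x\in[0,\ell]$, i.e.\ $L_S\subseteq T$, which is exactly the assertion that the two sides of $T$ adjacent to $u_*$ lie above $L_S$. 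It remains to check $T\subseteq R$; as $R$ is convex with bottom side $u_0u_k$, it suffices that $u_*\in R$. The chord $u_0u_k$ has gradient $\tan\phi$ with $\phi$ bounded away from $\pi/2$ by (ii), so the foot of the vertical line through $u_*$ lies on the segment $u_0u_k$ (because $t\ll\ell$), and the vertical distance from $u_0u_k$ up to $u_*$ equals $t/\cos\phi=O\!\left(n^{1/2-\varepsilon}\right)$, which is at most $n^{1/2-\varepsilon/2}$ once $n$ is large; hence $u_*\in R$ and therefore $T\subseteq R$.

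The one point requiring care is that $R$ is \emph{sheared}: its two non-bottom sides are vertical rather than perpendicular to $u_0u_k$, so the perpendicular height $t$ that makes $T$ isosceles must be compared with the defining vertical height $n^{1/2-\varepsilon/2}$ of $R$ through the bounded factor $1/\cos\phi$, which is where (ii) is used; everything else is elementary bookkeeping of constants depending only on $C$ and $\kappa$, with the threshold on $n$ depending additionally on $\varepsilon$.
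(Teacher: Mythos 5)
Your proof is correct and follows essentially the same route as the paper's: both arguments confine $L_S$ to the region under the two supporting lines at $u_0$ and $u_k$ (your bounds $P(x)\le\rho x$ and $P(x)\le\rho(\ell-x)$ are precisely the paper's triangle $(u_0,u_k,v)$ obtained by extending the first and last segments), enlarge to an isosceles triangle whose apex angle is controlled by condition (v), and observe that its height $O(\ell\cdot(\theta_1-\theta_2))=O(n^{1/2-\varepsilon})$ is negligible compared with the vertical height $n^{1/2-\varepsilon/2}$ of $R$. Your treatment is somewhat more careful than the paper's about the shear of $R$ (vertical versus perpendicular height), but this is only a bounded factor $1/\cos\phi$ controlled by condition (ii), exactly as you note.
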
 
\begin{proof}
Consider the angles $\omega_1$ and $\omega_2$ made by the line segments $(u_0,u_1)$ and $(u_{k-1},u_k)$ respectively with the line segment $(u_0,u_k)$.  Also, let $v$ be the point of intersection obtained by extending the line segments  $(u_0,u_1)$ and $(u_{k-1},u_k)$. By convexity, the piecewise affine curve obtained by joining the points of $S$ lies inside the triangle $(u_0,u_k,v)$. Moreover, it follows from elementary geometric arguments that $\theta_1-\theta_2=\omega_1+\omega_2$. Now let us consider the isosceles triangle $(u_0,u_k,u_*)$ where $(u_0,u_k)$ forms the base and $\theta_1-\theta_2$ is the value of the two equal angles. Since $\theta_1-\theta_2$ is at least as big as $\omega_1$ and $\omega_2$, it follows that the triangle $(u_0,u_k,v)$ is contained inside the triangle $(u_0,u_k,u_*)$.  Moreover, the latter is clearly contained in a parallelogram of height $O(n^{3/4-\e/2}n^{-1/4-\e/2})$ where the constant in the $O(\cdot)$ notation depends on $\kappa$. Since for all large enough $n$,  $n^{3/4-\e/2}n^{-1/4-\e/2}\ll n^{1/2-\e/2}$, we are done. 
\end{proof}

Let us now record the main steps of the proof of Theorem \ref{t:mfllb}. Let $\Gamma$ denote the constrained geodesic. Recall that $\Gamma^{*}_{\alpha,n}$ denotes least concave majorant of $\Gamma$ and is an union of the facets as described in Definition \ref{d:mlrmfl}.  
%([\textcolor{red}{A}: of $\Gamma$?]). 

\textbf{Step 1:} We shall fix a $(C,\kappa)$-regular sequence $S=\{u_0,u_1,\ldots, u_k\}$, and consider the best path~$\gamma_{S}$ from $u_0$ to $u_{k}$ that passes through all the points in the sequence. We shall show that with very high probability the length of this path is much smaller than the length of the best path between $u_0$ and $u_{k}$. The reason that this event is likely is the following: because the lengths of the segments $\{u_{i}, u_{i+1}\}$ are much smaller than that of the segment $\{u_{0}, u_{k}\}$, it will turn out that the path $\gamma$ will have a much smaller transversal fluctuation compared to the typical geodesic between $u_0$ and $u_{k}$. Thus, by Theorem~\ref{t:constrained},  it is extremely likely that this path has a much smaller length. 

\textbf{Step 2:} Next we will show that there exists a path between $u_0$ and $u_{k}$ that lies above the line segments joining the consecutive points of $S$ and whose length is comparable to that of the geodesic between $u_0$ and $u_{k}$. This will follow from the definition of regular sequence together with an application of Lemma \ref{l:reggeo} and Theorem \ref{baikrains}.  

\textbf{Step 3:} Finally, we will show that, on the event $\mathcal{A}_{\e}$, it is overwhelmingly likely that there exists a $(C,\kappa)$-regular sequence made out of consecutive corners of $\Gamma^{*}_{\alpha,n}$. Once we establish this, we will arrive at a contradiction: using the first two steps, we will construct a path that traps more area than does $\Gamma$ and that also has a greater length.

The next proposition treats the first step.

\begin{proposition}
\label{p:step1}
Let $S=\{u_0,u_1,\ldots, u_k\}$ be a $(C,\kappa)$-regular sequence. For $i=0,1,\ldots,k-1,$ let $\gamma_{i}$ be the geodesic %\textcolor{red}{shall we recall the convention behind using `the'} 
between $u_{i}$ and $u_{i+1}$. Let $\gamma_{S}$ denote the concatenation of the paths $\gamma_{i}$. Then there exists $c>0$ such that, with probability at least $1-e^{-n^{c}}$, we have 
$$|\gamma_{S}|\leq \E L(u_0,u_k)-n^{1/4-\e/12}.$$
\end{proposition}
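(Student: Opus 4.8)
The plan is to exhibit, with overwhelming probability, a thin parallelogram $U$ around the chord $u_0u_k$ that contains the whole concatenated path $\gamma_S$ and whose thickness is much smaller than the transversal‑fluctuation scale $r^{2/3}$ of a geodesic between $u_0$ and $u_k$, where $r$ denotes the $x$‑coordinate distance between $u_0$ and $u_k$. Once this confinement is in hand, $|\gamma_S|\le L(u_0,u_k;U)$ holds deterministically, and Theorem~\ref{t:constrained} forces a path confined to such a thin tube to be shorter than $\E L(u_0,u_k)$ by a polynomial amount, which I will arrange to be at least $n^{1/4-\e/12}$. (We may and do assume $\e$ is small throughout, since establishing Theorem~\ref{t:mfllb} for small $\e$ implies it for all $\e$.)

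First I would record the geometry. Let $m$ be the gradient of the chord $u_0u_k$; being a convex combination of the gradients of the segments $(u_i,u_{i+1})$, property~(ii) of Definition~\ref{d:reg} gives $m\in(\kappa^{-1},\kappa)$, while properties~(ii)--(iii) give $r\in(c_1 n^{3/4-\e/2},c_2 n^{3/4-\e/2})$ with $c_1,c_2$ depending only on $C,\kappa$. By (the proof of) Lemma~\ref{l:reggeo}, the polygonal path through $u_0,\dots,u_k$ lies within perpendicular distance $O(n^{1/2-\e})$ of the chord, since it sits inside the triangle $T$, itself contained in a parallelogram over the chord of height $O\!\left(n^{3/4-\e/2}\cdot n^{-1/4-\e/2}\right)=O(n^{1/2-\e})$. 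Next, to confine $\gamma_S$ itself, for each index $i$ with $|u_i-u_{i+1}|>n^{1/3}$ I would apply Theorem~\ref{t:tftail} to the geodesic $\gamma_i$ with $s=n^{\e/100}$: with probability at least $1-Ce^{-cn^{\e/100}}$, the path $\gamma_i$ stays within transversal distance $n^{\e/100}|u_i-u_{i+1}|^{2/3}\le n^{1/2-2\e/3+\e/100}$ of the segment $(u_i,u_{i+1})$, using property~(iv); for the remaining, short, segments the bound $\mathrm{TF}(\gamma_i)\le|u_i-u_{i+1}|\le n^{1/3}$ is automatic because $\gamma_i$ lies in the axis‑parallel rectangle with corners $u_i,u_{i+1}$. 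A union bound over $i$ then gives an event $\mathcal{G}_1$ of probability at least $1-e^{-n^{c'}}$ on which every $\gamma_i$, and hence $\gamma_S$, lies within perpendicular distance $O(n^{1/2-2\e/3+\e/100})$ of the chord $u_0u_k$ (property~(v) makes the segment directions differ from $m$ by $O(n^{-1/4-\e/2})$, which affects this distance negligibly). Taking $U=U(r,m,h)$ with vertical sides, slanted sides of gradient $m$, horizontal width $r$, and vertical height $h:=C_0 n^{1/2-2\e/3+\e/100}$ for a large $\kappa$‑dependent $C_0$, the points $u_0$ and $u_k$ are the midpoints of the vertical sides, and on $\mathcal{G}_1$ we get $\gamma_S\subset U$, so $|\gamma_S|\le L(u_0,u_k;U)$.

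Finally I would run the thin‑parallelogram estimate. A short computation with the exponents of Definition~\ref{d:reg} shows that, for $\e$ small, there is $\e_0=\e_0(\e)>0$ --- for example $\e_0=2\e/5$ --- such that $h\le r^{2/3-\e_0}$ while at the same time $r^{1/3+\e_0/3}\ge n^{1/4-\e/12}$ for all large $n$ (using $r=\Theta(n^{3/4-\e/2})$). Enlarging $U$ if necessary so that its vertical height is exactly $r^{2/3-\e_0}$, which only helps the inclusion $\gamma_S\subset U$, Theorem~\ref{t:constrained} yields an event $\mathcal{G}_2$ of probability at least $1-e^{-r^{c_2}}=1-e^{-n^{c_3}}$ on which $L(u_0,u_k;U)\le\E L(u_0,u_k)-r^{1/3+\e_0/3}$. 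On $\mathcal{G}_1\cap\mathcal{G}_2$, which has probability at least $1-e^{-n^{c}}$ for a suitable $c>0$, we conclude $|\gamma_S|\le\E L(u_0,u_k)-r^{1/3+\e_0/3}\le\E L(u_0,u_k)-n^{1/4-\e/12}$, as required.

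The part I expect to be the main obstacle is the confinement of the \emph{entire} concatenated path $\gamma_S$ to a tube of thickness $\ll r^{2/3}$: this requires controlling the transversal fluctuations of all the pieces $\gamma_i$ at once and combining them with the geometric bound of Lemma~\ref{l:reggeo}, and the ensuing exponent bookkeeping --- keeping the single thin‑tube loss $r^{1/3+\e_0/3}$ above $n^{1/4-\e/12}$ while still $h\le r^{2/3-\e_0}$ --- is exactly what dictates the precise exponents $3/4-\e/2$, $n^{3/4-\e}$ and $-1/4-\e/2$ appearing in the definition of a regular sequence.
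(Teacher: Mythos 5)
Your proof is correct and follows essentially the same route as the paper's: confine each $\gamma_i$ to a thin tube around its segment via the transversal-fluctuation tail bounds, use properties (iii)--(v) of regularity (through Lemma \ref{l:reggeo}) to place all these tubes inside a single parallelogram around the chord $u_0u_k$ of height much smaller than $r^{2/3}$, and then invoke Theorem \ref{t:constrained}. The only differences are cosmetic: the paper cites Theorem \ref{t:tftail1} for the confinement and leaves the final exponent bookkeeping implicit, whereas you use Theorem \ref{t:tftail}, treat very short segments separately, and carry out the arithmetic with an explicit choice of $\e_0$.
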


\begin{proof}
Let $R'$ be the parallelogram with sides parallel to the sides of $R$ such that $u_0$ and $u_{k}$ are the midpoints of the vertical sides of $R'$; and the height of the vertical sides is $4n^{1/2-\e/2}$.  For each $i$, let $R_i$ denote the parallelogram with the following properties:
\begin{enumerate}
\item $R_i$ has one pair of vertical sides and the other pair of sides is parallel to the line segment joining $u_{i}$ and $u_{i+1}$. 
\item The vertical sides have midpoints $u_{i}$ and $u_{i+1}$. 
\item The height of the vertical sides is $n^{1/2-3\e/5}$. 
\end{enumerate} 
It is straightforward to check from Definition \ref{d:reg} that all the rectangles $R_{i}$ are contained in the rectangle $R'$. Now it follows from Theorem \ref{t:tftail1} that, with probability at least $1-e^{-n^{c}}$, each geodesic $\gamma_i$ is contained in the parallelogram $R_i$. In particular, on this event, 
$$ |\gamma_{S}|\leq L(u_0,u_{k}; R').$$
The result now follows using Theorem \ref{t:constrained} and the assumed lower bound on $|u_0-u_{k}|$.
\end{proof}

%\begin{center}
%\textcolor{red}{what is the point of figure 4b? Correct the caption}
\begin{figure}[h] 
\centering
\begin{tabular}{ccc}
\includegraphics[width=0.4\textwidth]{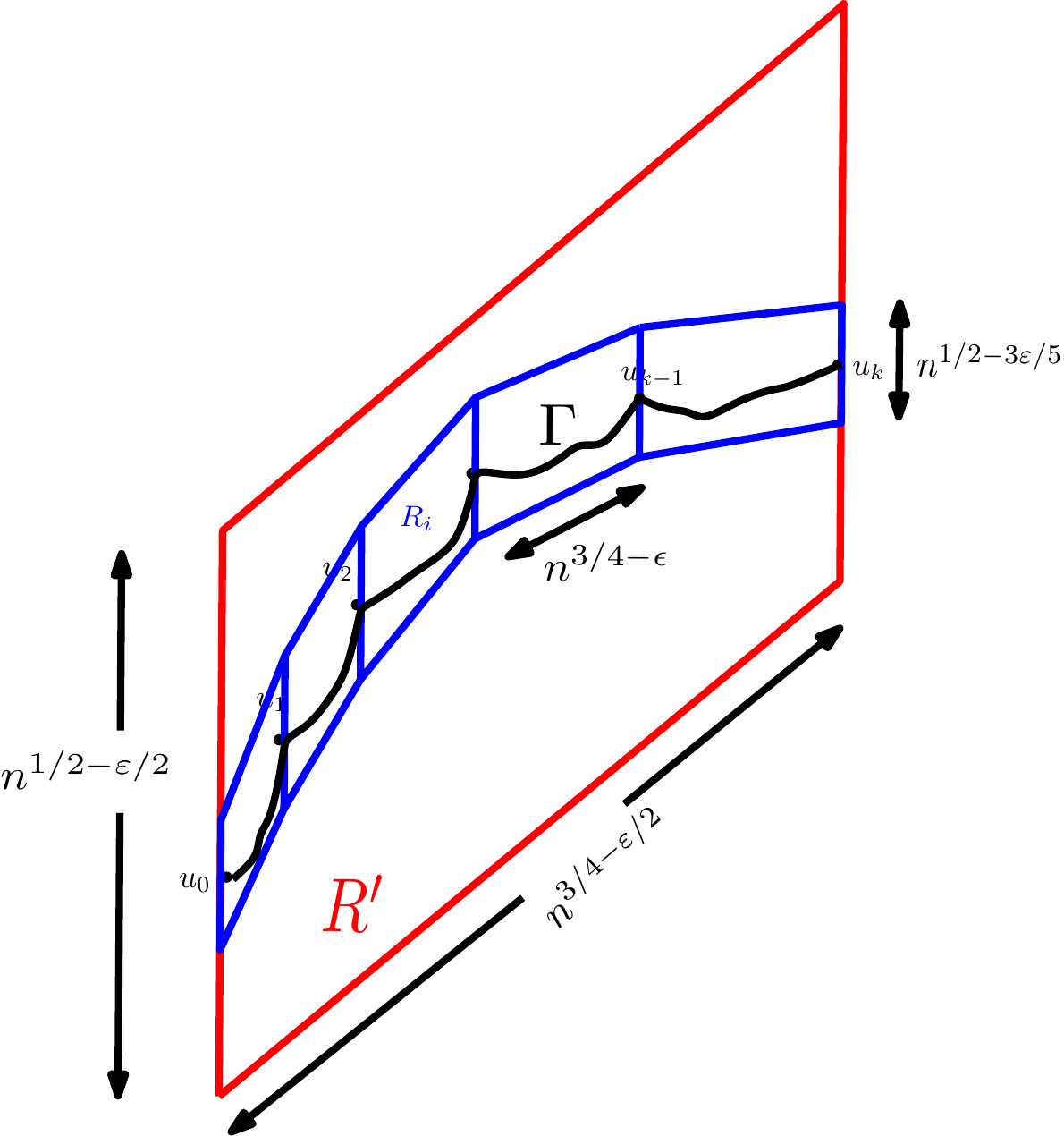} &\includegraphics[width=0.4\textwidth]{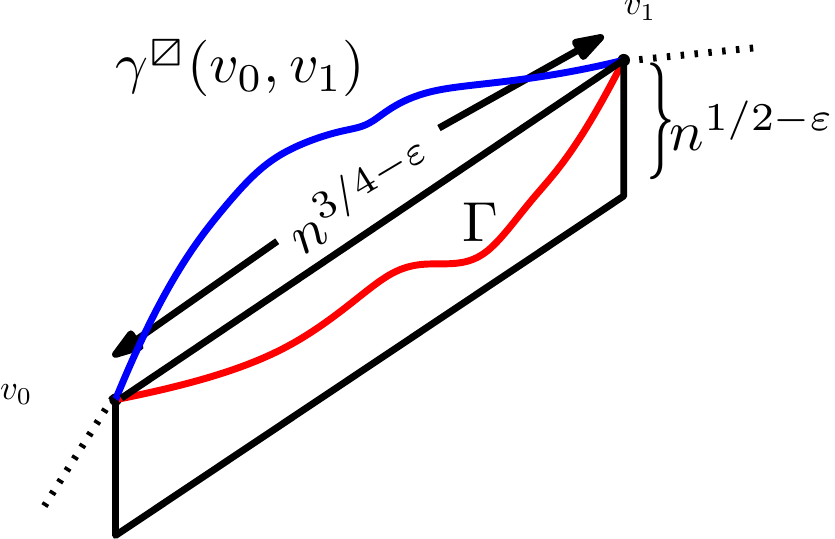} \\
(a) & (b) 
\end{tabular}

\caption{ (a) Illustrates the situation when all the facets are small. Using a priori bounds on the transversal fluctuations of the unconstrained geodesic, it follows that the segment of $\Gamma_{\alpha,n}$ between $u_0$ and $u_k$ lies in a thin parallelogram with width much smaller than the characteristic fluctuation scale. This then implies that the length of this segment is atypically low (see Theorem \ref{t:constrained}).
(b) Illustrates how in this situation one can replace the segment of $\Gamma_{\alpha,n}$ between $u_0$ and $u_k$ by a path that lies strictly above the parallelogram and fluctuates on a characteristic scale, and hence captures more area along with more length, which contradicts extremality of $\Gamma_{\alpha,n}$. This is used in the proof of Theorem \ref{t:mlrub} } 
\label{f:alternatepath20}
\end{figure}
Next we move onto Step 2 where we show the existence of a ``good" path between $u_0$ and $u_{k}$. We have the following proposition. 

\begin{proposition}
\label{p:step2}
Let $S=\{u_0,u_1,\ldots, u_k\}$ be a $(C,\kappa)$-regular sequence. Let $\mathbb{L}_{S}$ denote the union of line segments joining $u_{i}$ to $u_{i+1}$.Then there exists $c>0$ such that, with probability at least $1-e^{-n^{c}}$, there exists a path $\gamma_{*}$
from $u_0$ to $u_{k}$ lying above $\mathbb{L}_{S}$ such that
$$|\gamma_{*}| > \E L(u_0,u_k)-n^{1/4-\e/12}.$$
\end{proposition}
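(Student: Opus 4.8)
The plan is to construct $\gamma_*$ by concatenating one-sided geodesics, one for each edge of the regular sequence, each forced to stay above the line segment it subtends; the point of Lemma~\ref{l:reggeo} is precisely that this keeps the concatenation above $\mathbb{L}_S$ while only costing us a one-sided (rather than two-sided) last passage time on each piece. Concretely, for each $i$ let $\gamma_i^{\boxslash}$ denote the uppermost one-sided geodesic from $u_i$ to $u_{i+1}$ lying above the segment $\overline{u_iu_{i+1}}$, and set $\gamma_* = \gamma_0^{\boxslash} \cup \gamma_1^{\boxslash} \cup \cdots \cup \gamma_{k-1}^{\boxslash}$. By convexity of $\mathbb{L}_S$ (item (i) of Definition~\ref{d:reg}), the portion $\gamma_i^{\boxslash}$ lies above $\overline{u_iu_{i+1}}$ and hence, again by convexity, above the whole of $\mathbb{L}_S$; therefore $\gamma_*$ lies above $\mathbb{L}_S$ as required. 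So $|\gamma_*| = \sum_{i=0}^{k-1} L^{\boxslash}(u_i,u_{i+1})$.

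Next I would lower-bound this sum. Here the relevant scale is $r_i := $ ($x$-coordinate distance between $u_i$ and $u_{i+1}$) $\le n^{3/4-\e}$ by item (iv), and the gradients are $\kappa$-steep by item (ii), so the pairs $(u_i,u_{i+1})$ are $\kappa$-steep with the additional feature that $|u_i - u_{i+1}|$ may or may not exceed $n^\tau$. Apply the uniform one-sided estimate Corollary~\ref{uniform2}: on an event of probability at least $1 - e^{-n^c}$ we have, simultaneously over all $\kappa$-steep pairs, $L^{\boxslash}(u_i,u_{i+1}) \ge \E L(u_i,u_{i+1}) - S_\tau(u_i,u_{i+1})$. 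Summing and using that $\E L(u_i,u_{i+1}) = 2\sqrt{t_i} - \Theta(t_i^{1/6})$ (where $t_i$ is the area of the rectangle spanned by $u_i,u_{i+1}$) together with superadditivity of $t \mapsto 2\sqrt t$ along the convex sequence, one gets $\sum_i \E L(u_i,u_{i+1}) \ge \E L(u_0,u_k) - O(k\, t_{\max}^{1/6})$ for a suitable comparison; and $\sum_i S_\tau(u_i,u_{i+1}) = O(k\, n^{(3/4-\e)(1/3+\e/10)})$ in the worst case. Since the number of segments satisfies $k \le C n^{3/4-\e/2} / (\text{minimum segment length})$ — and here one has to be a little careful, so I would instead bound $k$ crudely by noting the segments have comparable gradients within $O(n^{-1/4-\e/2})$ (item (v)) and total $x$-extent $\Theta(n^{3/4-\e/2})$, which forces each to have $x$-extent of the correct polynomial order, giving $k = O(n^{\e/2})$ or so — the total error is $n^{1/4 - \e/4 + o(1)} \ll n^{1/4-\e/12}$. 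Thus $|\gamma_*| \ge \E L(u_0,u_k) - n^{1/4-\e/12}$ on the good event, as claimed.

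The step I expect to be the main obstacle is the bookkeeping of the two error terms against the target gap $n^{1/4-\e/12}$: one must verify that both the accumulated curvature corrections $\sum_i \Theta(t_i^{1/6})$ coming from $\E L = 2\sqrt t - \Theta(t^{1/6})$, and the accumulated fluctuation corrections $\sum_i S_\tau(u_i,u_{i+1})$, are each $o(n^{1/4-\e/12})$. This is where the precise exponent $3/4-\e$ on the segment lengths in item (iv) and the angle-control in item (v) of regularity are doing real work: they are exactly what is needed to keep $k$ polynomially small while keeping each $r_i$ large enough that the moderate-deviation estimates apply (or, for the short segments, that the $n^{2\tau/3}$ fallback in $S_\tau$ is still negligible). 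A secondary technical point is making the comparison $\sum_i \E L(u_i,u_{i+1})$ versus $\E L(u_0,u_k)$ honest — the natural inequality goes the "wrong" way ($L$ is superadditive, so $\sum_i L(u_i,u_{i+1}) \le L(u_0,u_k)$ pointwise after dropping the one-sided constraint), but the curvature of $2\sqrt{\cdot}$ means $\sum_i 2\sqrt{t_i}$ can differ from $2\sqrt{\sum_i \sqrt{t_i}\cdots}$; the clean way is to note $\E L(u_0,u_k) \le 2\sqrt{t_{0k}}$ while $\sum_i \E L(u_i,u_{i+1}) \ge \sum_i(2\sqrt{t_i} - Ct_i^{1/6})$ and then bound $2\sqrt{t_{0k}} - \sum_i 2\sqrt{t_i}$ geometrically using that $S$ is a $(C,\kappa)$-regular, hence nearly-straight, sequence so the $t_i$ are nearly proportional to the squared segment lengths and the deficit is $O(n^{1/4-\e/2})$. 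I would isolate this last geometric estimate as a short lemma.
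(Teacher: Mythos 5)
Your construction cannot work: the path you build is precisely the kind of path that Proposition~\ref{p:step1} shows to be \emph{uncompetitive}. Your $\gamma_*$ passes through every corner $u_0,\dots,u_k$ and on each edge is further constrained to stay above the chord, so $|\gamma_*|=\sum_i L^{\boxslash}(u_i,u_{i+1})\le \sum_i L(u_i,u_{i+1})=|\gamma_S|$, and Proposition~\ref{p:step1} gives $|\gamma_S|\le \E L(u_0,u_k)-n^{1/4-\e/12}$ with probability $1-e^{-n^c}$. So the inequality you are trying to prove \emph{fails} for your $\gamma_*$ with overwhelming probability. The quantitative source of the failure is the error term you flagged as the ``main obstacle'': the accumulated curvature deficits $\sum_i\Theta(t_i^{1/6})$ from $\E L(u_i,u_{i+1})=2\sqrt{t_i}-\Theta(t_i^{1/6})$ do not fit in the budget. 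Items (iii) and (iv) of Definition~\ref{d:reg} force $k\gtrsim n^{\e/2}$, and by concavity of $x\mapsto x^{1/3}$ one has $\sum_i t_i^{1/6}\gtrsim k^{2/3}|u_0-u_k|^{1/3}\gtrsim n^{\e/3}\cdot n^{1/4-\e/6}=n^{1/4+\e/6}$, which is polynomially \emph{larger} than $n^{1/4-\e/12}$ (your estimate ``$O(k\,t_{\max}^{1/6})=n^{1/4-\e/4+o(1)}$'' is an arithmetic slip; with $k=n^{\e/2}$ and $t_{\max}^{1/6}=n^{1/4-\e/3}$ the product is $n^{1/4+\e/6}$). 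The same blow-up affects $\sum_i S_\tau(u_i,u_{i+1})$. This is not a bookkeeping nuisance but the whole mechanism of the argument: short hops lose $\Theta(\ell^{1/3})$ per hop, and many short hops lose strictly more than one long hop.

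The correct construction, and the reason Lemma~\ref{l:reggeo} is stated the way it is, uses only \emph{two} long hops. Take the apex $u_*$ of the isosceles triangle $T$ from Lemma~\ref{l:reggeo}, whose two slanted sides lie above all of $\mathbb{L}_S$, and concatenate the one-sided geodesics realizing $L^{\boxslash}(u_0,u_*)$ and $L^{\boxslash}(u_*,u_k)$. Each hop has length $\Theta(n^{3/4-\e/2})$ and gradient in $(\tfrac{1}{2\kappa},2\kappa)$, so Theorem~\ref{baikrains} costs only $n^{1/4-\e/8}$ per hop, and since $u_*$ sits within height $n^{1/2-\e/2}=o(|u_0-u_k|^{2/3})$ of the chord $\overline{u_0u_k}$, the detour through $u_*$ costs $\E L(u_0,u_*)+\E L(u_*,u_k)-\E L(u_0,u_k)=o(n^{1/4-\e/12})$. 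Item (v) of regularity is exactly what guarantees that such a low apex exists above the whole chain; it is not used, as you suggest, to control the number of segments $k$.
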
 

\begin{proof}
We will use the notation from Lemma \ref{l:reggeo}. Let $u_*$ be the point in $R$ that satisfies the conclusion of that lemma.
Let $\gamma_{1}$ denote a path from $u_0$ to $u_{*}$ lying above the line segment joining $u_0$ and $u_{*}$ that achieves the length $L^{\boxslash}(u_0,u_{*})$. Similarly, let $\gamma_{2}$ denote a path from $u_{*}$ to $u_{1}$ lying above the line segment joining $u_*$ and $u_{1}$ that achieves the length $L^{\boxslash}(u_*,u_{1})$. See Figure \ref{f:corner} (b).

Let $\gamma_{*}$ denote the concatenation of the paths $\gamma_1$ and $\gamma_2$. Clearly $\gamma_{*}$ lies above $\mathbb{L}_{S}$. We shall show that, with overwhelming probability, $\gamma_*$ also satisfies the other condition in the statement of the proposition. It follows by definition and some elementary geometry that both of the equal sides of the isosceles triangle $T$ have gradient in $(\frac{1}{2\kappa}, 2\kappa)$, and also $|u_0-u_*|=|u_*-u_k|=\Theta (n^{3/4-\e/2})$. It follows using Theorem \ref{baikrains} that with probability at least $1-e^{-n^{c}}$ one has 
$$ L^{\boxslash}(u_0,u_*)\geq \E L(u_0,u_*)- n^{1/4-\e/8}; \qquad L^{\boxslash}(u_*,u_k)\geq \E L(u_*,u_k)- n^{1/4-\e/8}. $$
Also observe that $u_*$ is equidistant from $u_0$ and $u_k$ and that this point lies within the parallelogram $R$ whose height is $o(|u_0-u_k|^{2/3})$. Thus, by a standard calculation, 
$$ \E L(u_0,u_*)+ \E L(u_*,u_k) -\E L(u_0,u_k)= o(n^{1/4-\e/6}).$$
Combining these inferences, we find that, with probability at least  $1-e^{-n^{c}}$,  
$$|\gamma_{*}| > \E L(u_0,u_k)-n^{1/4-\e/8}.$$
This completes the proof of the proposition.
\end{proof}

In the final step, we establish that, on $\mathcal{A}_{\e}$, that is if ${\rm MFL}(\Gamma_{n})$ is smaller than $n^{3/4-\e}$, it is extremely likely that there exists a regular sequence whose points are consecutive corners of  $\Gamma^*_{\alpha, n}$ (the least concave majorant of $\Gamma_{\alpha, n}$). 
%[\textcolor{red}{A}: what is the relation of the last phrase to the preceding sentence? A semi-colon is not right here]. 
Let ${\rm Reg}(C,\kappa)$ denote the event that there exist consecutive corners $u_0,u_1,\ldots, u_{k}$ of $\Gamma^*_{\alpha, n}$ such that $\{u_0,u_1,\ldots,u_k\}$ is a $(C,\kappa)$-regular sequence. 

\begin{proposition}
\label{l:hullreg} There exist $C,\kappa$ such that, except for a sub-event of exponentially small probability, 
the event $\mathcal{A}_{\e}$ is contained in the event ${\rm Reg}(C,\kappa)$ i.e. $$\mathbb{P}(\mathcal{A}_{\e}\setminus  {\rm Reg}(C,\kappa)) \le e^{-cn},$$ for some universal constant $c>0$.
\end{proposition}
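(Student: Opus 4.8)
The plan is to produce the required $(C,\kappa)$-regular sequence as a block of consecutive corners of $\Gamma^*_{\alpha,n}$ lying in the bulk of the contour, where the limit-shape description of Theorem~\ref{lln2} pins down the coarse geometry, and to meet the delicate angle-variation condition (v) of Definition~\ref{d:reg} by a pigeonhole argument on the total turning. First I would fix a small $\delta>0$ and work on the event $\mathcal{G}$ obtained by intersecting the high-probability events of Theorem~\ref{flat0} (on which every $\delta$-interior facet makes an angle with the $x$-axis in $(\omega,\pi/2-\omega)$, hence has gradient in $(\kappa^{-1},\kappa)$ with $\kappa:=\cot\omega$) and of Theorem~\ref{lln2} applied with a small parameter $\Delta$ to be chosen (on which ${\rm dist}(\Gamma_{\alpha,n},\psi_{\alpha,n})\le\Delta n$); both fail with probability at most $e^{-cn}$. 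It then suffices to show that on $\mathcal{G}$ the inclusion $\mathcal{A}_\e\subseteq{\rm Reg}(C,\kappa)$ holds for $C$ chosen large enough, since then $\mathcal{A}_\e\setminus{\rm Reg}(C,\kappa)\subseteq\mathcal{G}^c$, giving the claimed bound.

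The geometric input I would establish is the following description of the interior arc of the concave majorant. On $\mathcal{G}$, the subpath of $\Gamma^*_{\alpha,n}$ between the extremities $A_0$ and $B_0$ of the union of $\delta$-interior facets is Hausdorff-close to a fixed macroscopic strictly-concave sub-arc of $\psi_{\alpha,n}$ (bounded away from both ends of $\psi_{\alpha,n}$, using that $\psi_\alpha'$ takes values in a compact subinterval of $(0,\infty)$). From this I would extract that the interior arc has Euclidean diameter at least $c_1 n$ for a constant $c_1=c_1(\alpha,\delta)>0$, and that its total turning $\Phi$ --- the sum of the successive decrements of the facet angles, equivalently the angle of its first facet minus that of its last --- obeys $0<\phi_{\min}\le\Phi\le\phi_{\max}<\pi/2$ with constants depending only on $\alpha,\delta$. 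The hard part of the argument is precisely this two-sided turning bound, since total turning is not continuous under Hausdorff convergence in general; one must exploit \emph{concavity}, namely that $C^0$-closeness of two concave curves on a macroscopic interval forces their one-sided slopes to be close at every point bounded away from the interval's endpoints, hence forces their total turnings to be close. This is the same mechanism already used in the proof of Theorem~\ref{flat0}, so I expect it to go through, but it is where the real work lies.

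Granting this, I would impose in addition $\mathcal{A}_\e$, so that every facet of $\Gamma^*_{\alpha,n}$ has length at most $n^{3/4-\e}$, and list the interior corners $w_0\preceq\cdots\preceq w_M$. Walking from $w_0$, I would greedily cut this chain into consecutive blocks, closing off a block as soon as its span from its first corner exceeds $C^{-1}n^{3/4-\e/2}$; since each individual link has length at most $n^{3/4-\e}$, negligible compared with the window width $(C-C^{-1})n^{3/4-\e/2}$, every completed block has span in $\big(C^{-1}n^{3/4-\e/2},\,Cn^{3/4-\e/2}\big)$, so (iii) and (iv) hold, while (i) is automatic for consecutive corners of the concave $\Gamma^*_{\alpha,n}$ and (ii) holds by the gradient bound on $\mathcal{G}$. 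The completed-block spans telescope to at least the interior diameter minus the span of the at most one incomplete block, hence to at least $\tfrac12 c_1 n$, so the number $N$ of completed blocks is at least $(c_1/2C)\,n^{1/4+\e/2}$; and since the block turnings are nonnegative and sum to $\Phi\le\pi/2$, at least one block has turning at most $\Phi/N\le(\pi C/c_1)\,n^{-1/4-\e/2}$, which is at most $100C^2 n^{-1/4-\e/2}$ once $C\ge\pi/(100c_1)$. That block is then a $(C,\kappa)$-regular sequence of consecutive corners of $\Gamma^*_{\alpha,n}$, so ${\rm Reg}(C,\kappa)$ occurs on $\mathcal{A}_\e\cap\mathcal{G}$, and $\mathbb{P}(\mathcal{A}_\e\setminus{\rm Reg}(C,\kappa))\le\mathbb{P}(\mathcal{G}^c)\le e^{-cn}$ follows.
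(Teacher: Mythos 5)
Your proof is correct and follows essentially the same route as the paper's: restrict to the $\delta$-interior corners (whose facets have controlled gradients by Theorem~\ref{flat0}/Theorem~\ref{flat1} and which span a macroscopic arc by Theorem~\ref{lln2}), cut them into $\Theta(n^{1/4+\e/2})$ consecutive blocks of Euclidean span $\Theta(n^{3/4-\e/2})$ --- the paper does this via the angle subtended at $(n,0)$ together with Lemma~\ref{crude1}, you do it greedily by span, which is equivalent --- and then pigeonhole on the total turning to find one block satisfying condition (v). One remark: the step you flag as the ``hard part'' (a two-sided bound on the total turning $\Phi$) is not actually needed, since your final pigeonhole uses only $\Phi\le\pi/2$, which is immediate because the facet angles lie in $[0,\pi/2]$ and decrease along the concave majorant; the lower bound on the number of blocks comes from the macroscopic diameter, not from a lower bound on $\Phi$.
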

%\textcolor{red}{Check if its stretch exponential..should not be from lln}

On $\mathcal{A}_{\e}$, we shall find a regular sequence among interior facets.  
%\textcolor{red}{should $\delta$-interior facet be changed to $\gamma$-interior facet, as $\delta$ is used a lot it the lln sections as an error parameter.}
Before proceeding we state an easy geometric lemma  that will be used. The proof is provided in Section \ref{es}.
\begin{figure}[hbt]
\centering
\includegraphics[scale=.4]{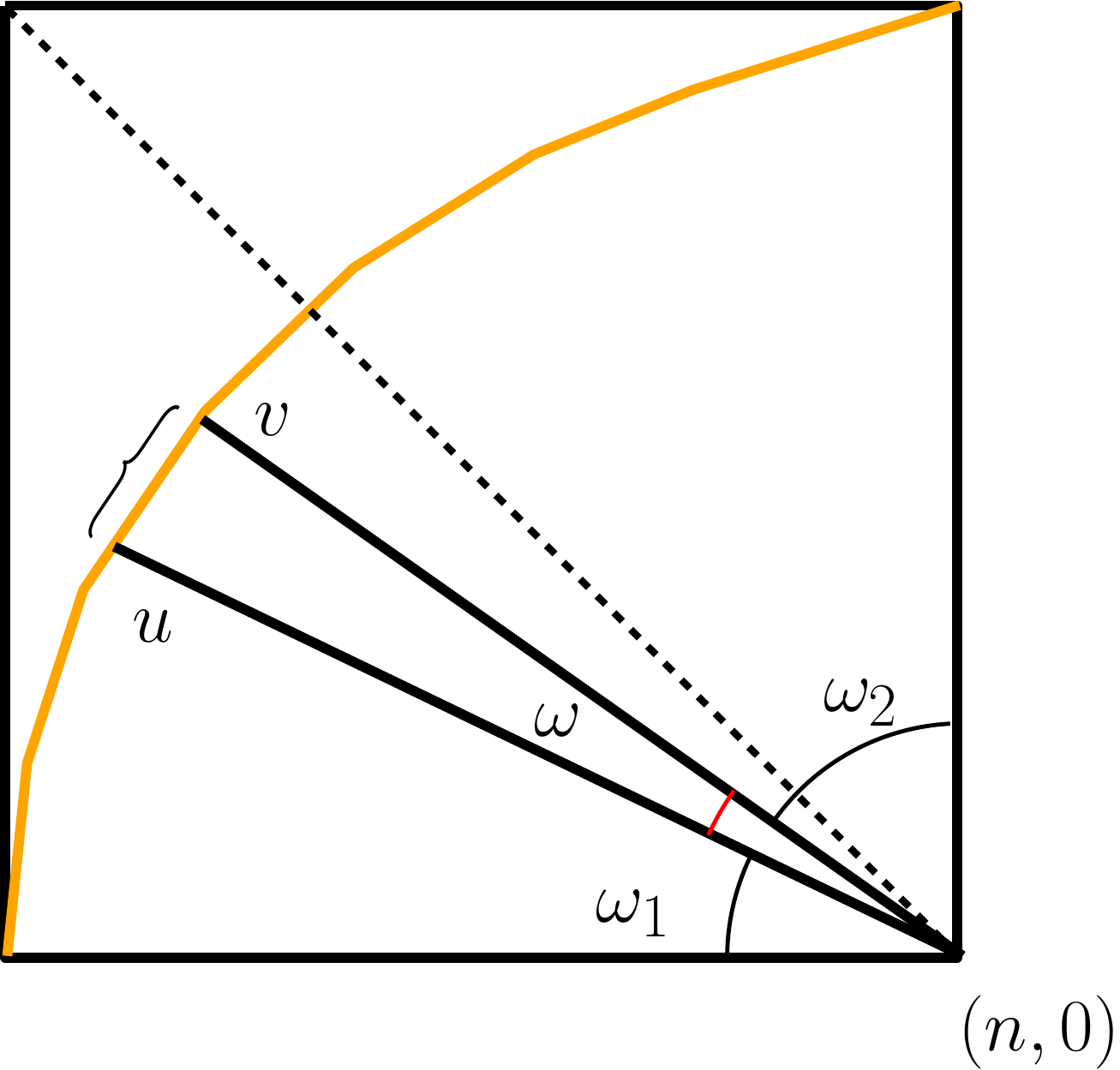}
\caption{Crude estimates on length of facets which subtend an angle $\theta$ at the origin. }
\label{fig1}
\end{figure}

%\textcolor{red}{the statement of this lemma needs to be rephrased}
For any two points $p_1,p_2 \in [0,n]^2$, denote by $\theta(p_1,p_2)$ the acute angle between the lines joining the points $p_1$ and $p_2$ to the point $(n,0)$, i.e., the bottom right corner of the square (see Figure \ref{fig1}).
\begin{lemma}\label{crude1}  There exists $\theta_0$ such that with probability at least $1-e^{-cn}$  simultaneously for all $u,v$ on $\Gamma^*_{\alpha,n}$ with $\theta(u,v)\le \theta_0$, then  $|u-v|= \Theta (\theta(u,v) n)$. 
\end{lemma}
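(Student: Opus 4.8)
The plan is to work in polar coordinates centred at the corner $O=(n,0)$. Because $\Gamma^*_{\alpha,n}$ is the boundary arc of the convex set ${\rm conv}(\Gamma_{\alpha,n})$ running from $(0,0)$ to $(n,n)$, and $O$ is a vertex of that set, each ray from $O$ into the set meets $\Gamma^*_{\alpha,n}$ in exactly one point; writing $r(z)=|z-O|$ and letting $\phi(z)\in[0,\pi/2]$ be the angle that the segment $Oz$ makes with the vertical, we may view $\Gamma^*_{\alpha,n}$ as a graph $r=r(\phi)$, $\phi\in[0,\pi/2]$, with $\theta(u,v)=|\phi(u)-\phi(v)|$. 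By the law of cosines,
\[
|u-v|^2=\bigl(r(u)-r(v)\bigr)^2+2\,r(u)\,r(v)\,\bigl(1-\cos\theta(u,v)\bigr),
\]
so, taking $\theta_0\le 1$ and using $\tfrac14\theta^2\le 1-\cos\theta\le\tfrac12\theta^2$ on $|\theta|\le 1$, the lemma reduces to two statements: (i) $n/\sqrt2\le r(z)\le n\sqrt2$ for all $z\in\Gamma^*_{\alpha,n}$; and (ii) with probability at least $1-e^{-cn}$, the map $\phi\mapsto r(\phi)$ is Lipschitz with constant $O_\alpha(n)$. Item (i) is deterministic: being a concave function on $[0,n]$ with boundary values $0$ and $n$, $\Gamma^*_{\alpha,n}$ lies weakly above the chord $y=x$, so every $z=(x,y)$ on it has $y\ge x$ and $r(z)^2=(n-x)^2+y^2\ge(n-x)^2+x^2\ge n^2/2$; the upper bound is trivial. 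In particular the lower bound $|u-v|\ge\tfrac12\,\theta(u,v)\,n$ already holds deterministically, for $\theta_0\le 1$.

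For item (ii), I would reduce it to: off an event of probability $e^{-cn}$, every facet of $\Gamma^*_{\alpha,n}$ lies on a line at distance at least $\varrho_\alpha n$ from $O$, for some $\varrho_\alpha>0$ depending only on $\alpha$. Indeed, parametrising a given facet by signed arclength $t$ from the foot of the perpendicular dropped from $O$ onto its supporting line $\ell$ (at distance $d\ge\varrho_\alpha n$), one has $r=\sqrt{d^2+t^2}$ and $d\phi/dt=d/r^2$, whence $|dr/d\phi|=|t|\,r/d\le r^2/d\le 2n/\varrho_\alpha$; since $r(\cdot)$ is continuous and is Lipschitz with this constant on each of the finitely many facet-intervals, it is globally Lipschitz with constant $2n/\varrho_\alpha$, as required.

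It then remains to control the facet lines, which is where the law of large numbers for the geodesic enters. By Theorem~\ref{lln2}, applied with a small $\Delta>0$ to be fixed at the end (depending on $\alpha$), together with the $(1+c_\alpha)$-Lipschitz continuity of $\psi_{\alpha,n}$, one gets $\sup_x|\Gamma_{\alpha,n}(x)-\psi_{\alpha,n}(x)|\le C\Delta n$ off an event of probability $e^{-c(\alpha,\Delta)n}$; and since $\psi_{\alpha,n}\pm C\Delta n$ are concave while $\Gamma^*_{\alpha,n}$ is the least concave majorant of $\Gamma_{\alpha,n}$, this upgrades to $\sup_x|\Gamma^*_{\alpha,n}(x)-\psi_{\alpha,n}(x)|\le C\Delta n$. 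A standard comparison of the one-sided difference quotients of these two concave functions over windows of width $h:=\sqrt\Delta\,n$, using that $\psi_{\alpha,n}'$ has modulus of continuity $O_\alpha(1/n)$, then gives $|(\Gamma^*_{\alpha,n})'(x)-\psi_{\alpha,n}'(x)|=O_\alpha(\sqrt\Delta)$ for $x\in[h,n-h]$, with derivatives read one-sidedly at corners. Hence every facet meeting $[h,n-h]$ has its supporting line within $O_\alpha(\sqrt\Delta\,n)$ of a tangent line of $\psi_{\alpha,n}$ over the relevant region; and a direct computation from $\psi_\alpha(x)=\frac{(1+c_\alpha)x}{1+c_\alpha x}$ shows that the tangent line at $x_0\in[0,1]$ meets $\{x=1\}$ at height $\frac{(1+c_\alpha)(1+c_\alpha x_0^2)}{(1+c_\alpha x_0)^2}=1+\frac{c_\alpha(1-x_0)^2}{(1+c_\alpha x_0)^2}\ge 1$, so no tangent line of $\psi_\alpha$ passes through the corner and each tangent line of $\psi_{\alpha,n}$ keeps distance $\ge\delta_\alpha n$ from $O$, where $\delta_\alpha:=(1+(1+c_\alpha)^2)^{-1/2}>0$. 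Choosing $\Delta$ small enough in terms of $\delta_\alpha$ then handles these facets; and the remaining facets, which are adjacent to $(0,0)$ or to $(n,n)$, have slope $\ge 1$ respectively $\le 1$ (by concavity together with the derivative estimate at $x=h$ and $x=n-h$) and lie within $O_\alpha(\sqrt\Delta\,n)$ of the respective corner, so their supporting lines are at distance bounded below by a positive absolute constant times $n$ by an elementary computation. This establishes the facet-line bound with some $\varrho_\alpha>0$, completing the proof; the only randomness used is that of Theorem~\ref{lln2}.

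The step I expect to be the main obstacle is the passage from Hausdorff proximity of $\Gamma_{\alpha,n}$ to $\psi_{\alpha,n}$ to proximity of the \emph{slopes} of $\Gamma^*_{\alpha,n}$ to those of $\psi_{\alpha,n}$. Since Theorem~\ref{lln2} only delivers closeness up to a small constant multiple of $n$, rather than $o(n)$, the induced $O_\alpha(\sqrt\Delta)$ error on slopes must be weighed against the fixed, $\alpha$-dependent constant $\delta_\alpha$ measuring how far the tangent lines of $\psi_\alpha$ keep away from $O$; and some care is needed at corners of $\Gamma^*_{\alpha,n}$ (via one-sided derivatives) and for the facets touching the endpoints $(0,0)$ and $(n,n)$, which are not covered by the bulk estimate.
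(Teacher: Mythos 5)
Your argument is correct and in substance the same as the paper's: the only probabilistic input is Theorem~\ref{lln2}, combined with the strict concavity of $\psi_{\alpha}$ and elementary planar geometry around the corner $(n,0)$ (the paper phrases the transversality via the angle the chord $uv$ makes in the triangle $((n,0),u,v)$, you phrase it via the distance from $(n,0)$ to the facets' supporting lines and the Lipschitz constant of the polar graph $r(\phi)$ — these are equivalent). Your write-up supplies the details the paper omits, in particular the passage from Hausdorff proximity to slope proximity for concave functions and the treatment of the facets near the endpoints, and it correctly observes that the lower bound $|u-v|\gtrsim\theta(u,v)\,n$ is in fact deterministic since $\Gamma^*_{\alpha,n}$ lies above the diagonal.
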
 

%We first show that there exists a segment where the curvature is not too high. 
%\textcolor{red}{read carefully}
\begin{proof}[Proof of Proposition \ref{l:hullreg}]
Observe that by definition, on $\mathcal{A}_{\e}$, any sequence $u_0\preceq u_1 \preceq \cdots \preceq u_k$ of consecutive corners of $\Gamma^*_{\alpha, n}$ satisfies condition (i) and (iv) of Definition \ref{d:reg}. Fix $\delta>0$.  By Theorem \ref{flat1}, with failure probability at most $e^{-cn}$ all the $\delta-$interior facets have gradient $s$ in the interval $(1/\kappa, \kappa)$ for some $\kappa(\delta)>1$.  For the remainder of the proof, we shall assume that this event occurs. 
Now we  partition all the $\delta$-interior vertices of $\Gamma^*_{\alpha, n}$ (it is easy to see they are non-empty on $\mathcal{A}_{\e}$) into consecutive segments 
\begin{equation}\label{partition1}
(v_{1},v_{2},\ldots, v_{i_1}), (v_{i_1+1},v_{i_1+2}, \ldots, v_{i_2}),\ldots,
\end{equation} (where $v_i$'s are the consecutive corners on $\Gamma^*_{\alpha,n}$)
 such that 
\begin{align*}
\theta(v_{i_j},v_{i_{j+1}-1}) &< n^{-1/4-\e/2} \le \theta(v_{i_j+1},v_{i_{j+1}})
\end{align*}
for all $j=0,1,\ldots,$ where $i_0=1$.

We denote the segment $(v_{i_j},v_{i_j+1}, \ldots, v_{i_{j+1}})$ by $S_j$ 
%[\textcolor{red}{A}: do you mean this?]. 
Thus if $S_1,S_2,\ldots, S_{m}$ are all such segments, then $m=\Theta(n^{1/4+\e/2})$. It follows from convexity and  the boundedness of the gradients (see Lemma \ref{crude1} and Figure \ref{fig1}) that  $|v_{i_j}-v_{i_{j+1}}|=\Theta (n^{3/4-\varepsilon/2})$. Let $\omega_{k}$ denote the angle made by the line segment $(v_{k}, v_{k+1})$ with the positive $x$-axis. For the sequence $S_j$, let $\theta_{j}=\omega_{i_j}-\omega_{i_{j+1}-1}$. Notice that, by convexity, $\theta_j$ is nonnegative for all $j$. Also observe that 
$$\sum_{j}  \theta_{j}\leq \frac{\pi}{2}.$$
This together with Markov's inequality and the fact that the number of $S_j$'s is $\Theta(n^{1/4+\e/2})$ implies that, for some constant $C$, there exists at least one $S_j$ such that $\theta_j \leq Cn^{-1/4-\e/2}$. 
Setting $S=(u_0,u_1,\ldots , u_{k})$ to be equal to such a sequence, we see that $S$ satisfies all conditions in the Definition \ref{d:reg} for some $C,\kappa>1$. This completes the proof of the proposition.
\end{proof}

With all this preparation, finally we are ready to prove Theorem \ref{t:mfllb}.

\begin{proof}[Proof of Theorem \ref{t:mfllb}] 
Let $\alpha\in (0,\frac{1}{2})$ and $\e>0$ be fixed as before. Let $(C,\kappa)$ be such that the conclusion of Proposition \ref{l:hullreg} holds. Let $\mathcal{S}$ denote the event that there exists a $(C,\kappa)$-regular sequence for which at least one of the conclusions in Proposition \ref{p:step1} and Proposition \ref{p:step2} does not hold.
%[\textcolor{red}{A}: all of them or at lease one of them?]. 
Using the proofs of the said propositions, along with  taking an union bound over all pairs of grid points obtained by coarse graining $[0,n]^2$ (exactly as Corollary \ref{uniform1} followed from Theorem \ref{t:moddev}), that $\P(\mathcal{S})\leq e^{-n^{c}}$ for some $c>0$.
By Proposition \ref{l:hullreg},  it suffices to show that  
$$\mathcal{A}_{\e}\cap \mathcal{S}^c \cap {\rm Reg}(C,\kappa)=\emptyset$$
By way of contradiction, let us assume that the set  is non-empty. Fix a configuration in the set $\mathcal{A}_{\e}\cap \mathcal{S}^c \cap {\rm Reg}(C,\kappa)$, and consider the least concave majorant $\Gamma^*_{\alpha, n}$ of the constrained geodesic. On $\mathcal{A}_{\e}\cap  {\rm Reg}(C,\kappa)$, there exists a $(C,\kappa)$-regular sequence $S=\{u_0,u_1,\ldots , u_{k}\}$ consisting of consecutive corners of $\Gamma^{*}_{\alpha,n}$. On $\mathcal{S}^c$, there exists a path $\gamma_*$ from $u_0$ to $u_{k}$ satisfying the conclusion of Proposition \ref{p:step2}. Let $\gamma_{0}$ denote the restriction of $\Gamma$ between $u_0$ and $u_{k}$. Clearly, 
$$ |\gamma_{0}|\leq \sum_{i} L(u_i,u_{i+1}).$$ It follows from Propositions \ref{p:step1} and \ref{p:step2} that, on $\mathcal{S}^c$,  $|\gamma_{0}|< |\gamma_{*}|$. Also observe that $\gamma_{0}$ lies below the line segments joining $u_{i}$ to $u_{i+1}$ whereas $\gamma_{*}$ lies above these segments. Hence the increasing path $\Gamma'$ from $(0,0)$ to $(n,n)$ that coincides with $\Gamma$ between $(0,0)$ and $u_0$ and between $u_{k}$ and $(n,n)$ and coincides with $\gamma_{*}$ between $u_0$ and $u_{k}$ traps at least as much area as does $\Gamma$. However, by the above discussion, $|\Gamma'|> |\Gamma|$. This contradicts the extremality of $\Gamma$ and completes the proof of the theorem. 
\end{proof}

\subsection{Proof of lower bound of ${\rm MLR}$}
In this subsection we prove Theorem \ref{t:mlrlb}. That is, we show that the maximum local roughness of the facets has scaling exponent at least $\frac{1}{2}$. The basic idea of the proof is to  use KPZ scaling to argue that, if there is a facet of length roughly $n^{3/4}$, then along that facet, the local roughness is likely to be at least $(n^{3/4})^{2/3}=n^{1/2}$. We now make this more precise.

Fix $\varepsilon>0$. We have just proved that, with high probability, there exists a facet of length at least $n^{3/4-\varepsilon}$. In fact, by the proof of Theorem \ref{t:mfllb}, more is true. Fix $\kappa\gg 1$ a large constant. Let $\mathcal{B}_{\varepsilon}=\mathcal{B}_{n,\e,\kappa}$ denote the event that $\Gamma^{*}_{\alpha,n}$ (see Definition \ref{d:mlrmfl}) has a facet of length at least $n^{3/4-\varepsilon}$ with endpoints $v_0$ and $v_{1}$ such that the straight line joining $v_0$ and $v_1$ has gradient  $\in (1/\kappa, \kappa)$. The following lemma is a consequence of the proof of Theorem \ref{t:mfllb}. 

\begin{lemma}
\label{l:be}
For all large enough $\kappa$ (depending on $\alpha$),
$\P(\mathcal{B}_{\varepsilon})\ge 1-e^{-n^c}$ for all $n$ sufficiently large, 
%[\textcolor{red}{A}: you mean for $n$ sufficiently high?] 
where $c$ is a constant depending only on $\e$ and $\alpha$.
\end{lemma}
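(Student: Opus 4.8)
The plan is to read \textbf{Lemma~\ref{l:be}} off the proof of Theorem~\ref{t:mfllb}, observing that that argument in fact produces a long facet in the \emph{interior} of the least concave majorant, whose gradient is then automatically controlled by Theorem~\ref{flat0}. First I would fix $\alpha$ and choose $\delta=\delta(\alpha)>0$ small enough that Theorem~\ref{flat0} applies; this furnishes an $\omega>0$, and I set $\kappa_0:=\cot\omega=\kappa_0(\alpha)$. Since enlarging $\kappa$ only relaxes the gradient requirement in the definition of $\mathcal{B}_\e$, it suffices to prove the statement for $\kappa=\kappa_0$.

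The core point is that nothing in the proof of Theorem~\ref{t:mfllb} used the shortness of the \emph{non}-interior facets. Concretely, let $\mathcal{A}'_\e$ be the (a priori larger than $\mathcal{A}_\e$) event that every $\delta$-interior facet of $\Gamma^*_{\alpha,n}$ has Euclidean length at most $n^{3/4-\e}$, and re-run that proof verbatim with $\mathcal{A}_\e$ replaced by $\mathcal{A}'_\e$. Indeed, the regular sequence $S=\{u_0,\dots,u_k\}$ of Proposition~\ref{l:hullreg} is built only out of consecutive $\delta$-interior corners of $\Gamma^*_{\alpha,n}$ (which form a convex polygonal subpath), so condition (iv) of Definition~\ref{d:reg} for $S$ requires only the $\delta$-interior facets to be short, i.e.\ only $\mathcal{A}'_\e$; conditions (i)--(ii) follow from convexity and Theorem~\ref{flat0}, and (iii), (v) from the same pigeonhole argument over $\Theta(n^{1/4+\e/2})$ angular blocks, using Lemma~\ref{crude1} and the fact that, by Theorem~\ref{lln2}, the $\delta$-interior subpath of $\Gamma^*_{\alpha,n}$ has $\Theta(n)$ diameter and is therefore long enough to contain such a sequence. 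Given $S$, Propositions~\ref{p:step1} and~\ref{p:step2}, together with their uniform-over-coarse-grained-grid versions (deduced as Corollary~\ref{uniform1} was from Theorem~\ref{t:moddev}), produce — off an event of probability $\le e^{-n^c}$ — a path $\gamma_*$ from $u_0$ to $u_k$ lying above the segments of $S$ with $|\gamma_*|>\E L(u_0,u_k)-n^{1/4-\e/12}\ge \sum_i L(u_i,u_{i+1})\ge |\gamma_0|$, where $\gamma_0$ is the stretch of $\Gamma$ between $u_0$ and $u_k$. Replacing $\gamma_0$ by $\gamma_*$ inside $\Gamma$ yields an increasing path from $(0,0)$ to $(n,n)$ trapping at least as much area (its new segment lies above $\gamma_0$) and of strictly larger length, contradicting extremality of $\Gamma$. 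Hence $\P(\mathcal{A}'_\e)\le e^{-n^c}$.

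To conclude, on $(\mathcal{A}'_\e)^c$ there is a $\delta$-interior facet of $\Gamma^*_{\alpha,n}$ of Euclidean length exceeding $n^{3/4-\e}$; intersecting with the event of Theorem~\ref{flat0} (failure probability $e^{-cn}$), this facet makes an angle in $(\omega,\pi/2-\omega)$ with the $x$-axis, so its gradient lies in $(1/\kappa_0,\kappa_0)\subseteq(1/\kappa,\kappa)$ for every $\kappa\ge\kappa_0$. Such a facet witnesses $\mathcal{B}_\e$, which gives $\P(\mathcal{B}_\e)\ge 1-e^{-n^c}$ for all $\kappa\ge\kappa_0(\alpha)$, with $c$ depending only on $\e$ and $\alpha$. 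The only genuine work is bookkeeping: verifying that the contradiction argument of Theorem~\ref{t:mfllb} is insensitive to the behaviour of $\Gamma$ near the coordinate axes. This is clear, since the regular sequence is drawn from $\delta$-interior corners, so both $S$ and the surgery path $\gamma_*$ stay comfortably inside $[0,n]^2$ and away from degenerate gradients; I expect this to be the main (but minor) obstacle to a fully rigorous write-up.
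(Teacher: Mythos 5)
Your proposal is correct and is essentially the paper's argument: the paper's proof of Lemma~\ref{l:be} simply observes that the proof of Theorem~\ref{t:mfllb} in fact produces a long $\delta$-interior facet (since the regular sequence of Proposition~\ref{l:hullreg} is assembled from $\delta$-interior corners), whose gradient is then controlled by Theorem~\ref{flat1}; your write-up makes this observation explicit by replacing $\mathcal{A}_\e$ with the larger event $\mathcal{A}'_\e$. The only cosmetic difference is that you invoke Theorem~\ref{flat0} where the paper cites its uniform version Theorem~\ref{flat1}, which is immaterial here since $\alpha$ is fixed.
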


\begin{proof}
Note that the proof of Theorem \ref{t:mfllb} in fact showed that there must be a $\delta$-interior facet of length at least $n^{3/4-\varepsilon}$ with sufficiently large probability. Since all the $\delta$-interior facets satisfy the gradient requirement (see Theorem \ref{flat1}), we are done. 
\end{proof}

\begin{proof}[Proof of Theorem \ref{t:mlrlb}]
Let $\mathcal{C}_{\varepsilon}$ denote the event that ${\rm MLR}(\Gamma_n) \leq n^{1/2-\varepsilon}$. Thus, to prove Theorem~\ref{t:mlrlb}, it suffices to show that  $$\P(\mathcal{C}_{\varepsilon}\cap \mathcal{B}_{\varepsilon})\le e^{-n^c}.$$ 

Let $\Gamma$ denote the constrained geodesic. On $\mathcal{C}_{\varepsilon}\cap \mathcal{B}_{\varepsilon}$, let $v_0$ and $v_1$ be corners of $\Gamma^*_{\alpha, n}$ satisfying the conditions in the definition of $\mathcal{B}_{\varepsilon}$. Consider the restriction $\Gamma(v_0,v_1)$ of $\Gamma$ between $v_0$ and $v_1$. As ${\rm MLR}(\Gamma) \leq n^{1/2-\varepsilon}$; it follows that there exists $C=C(\kappa)>0$ such that the restriction of $\Gamma$ (call it $\gamma$) between $v_0$ and $v_1$ is contained in the parallelogram $U$ that has a pair of vertical sides of height $Cn^{1/2-\varepsilon}$ and whose top side is the line segment joining $v_0$ and $v_1$. Now observe that the height of the parallelogram is much smaller than the on scale transversal fluctuation of geodesics between $v_0$ and $v_1$, these fluctuations being at least $$(n^{3/4-\e})^{2/3-\e/100}\ge n^{1/2-{3\e}/{4}} . $$ 
Thus, Corollary \ref{uniform0} implies that the bound 
$$\E(L(v_0, v_1))-|\gamma| \ge |v_0-v_1|^{1/3- 5\varepsilon/9}$$
fails with  probability at most $e^{-n^{c}}$. By Corollary \ref{uniform2}, 
$$
 L^{\boxslash}(v_0,v_1)-\E(L(v_0,v_1)) \ge  -|v_0-v_1|^{\frac{1}{3}+\e/100}
 $$
except, again, on a set of probability at most $e^{-n^{c}}$.
Consider now the path $\Gamma'$ that agrees with $\Gamma$ outside the segment $(v_0,v_1)$ and that between $v_0$ and $v_1$ equals $\gamma^{\boxslash}(v_0,v_1)$, which recall is a path from $v_0$ to $v_1$ lying above the line segment that joins these two points and achieves length $L^{\boxslash}(v_0,v_1)$.
We see then that $\Gamma'$  has greater length, and traps as much area as,~$\Gamma$. 
This contradicts the assumption of length maximality for $\Gamma$; (similarly to the proof of Theorem \ref{t:mfllb}. See Figure \ref{f:alternatepath20}(b).)
\end{proof}

\section{Upper Bound for Scaling Exponents}
\label{s:ub}
This section is devoted to the proof of Theorem \ref{t:mflub}. That is,  it will be shown here that, for a dense set of $\alpha$ that may depend on $n$, the maximum facet length of the constrained geodesic is very likely to be at most 
%[\textcolor{red}{A}: at most?] 
$n^{3/4+o(1)}$. Theorem \ref{t:mlrub} will then follow by the KPZ scaling enjoyed by transversal fluctuations.  

%
% 
%We first prove the following claim whose proof contains all the key ideas but is much simpler as we shall soon why.
%\begin{theorem}
%\label{t:mflub10}
%Fix $\varepsilon>0$, $\delta\in (0,\pi/4)$ and an interval $[\alpha_1, \alpha_2] \subset (0,\frac{1}{2})$. Let $I_{n, \alpha_1,\alpha_2}$ denote the set of all $\alpha \in [\alpha_1, \alpha_2]$ that are $(n,\varepsilon, \delta)$-good. Then with probability at least $1-e^{-n^c}$  we have $I_{n, \alpha_1,\alpha_2}\neq \emptyset$.
%\end{theorem} 

We explain the main ideas before giving proofs. Recall Definition \ref{good10}. The basic idea is to show, using Corollary \ref{uniform2}, 
%[\textcolor{red}{A}: what do you mean? Both results?] 
that if a certain value of $\alpha$ is not `good' (recall Definition \ref{good10}),
%[\textcolor{red}{A}: a word of explanation about the concept of `not good'?], 
we may  perform  a ``landgrab" operation. More formally let us assume that there is a facet $(v_0,v_1)$ which has length $n^{3/4+\e}$ and has a reasonable gradient bounded away from zero and infinity. Theorem \ref{baikrains} allows us to find a path $\gamma$ from $v_0$ to $v_1$ which lies above the facet $(v_0,v_1)$ at a characteristic height of $(n^{3/4+\e})^{2/3}$ and has characteristic fluctuations (we have room to allow some error) i.e. by Corollary \ref{uniform2} we can assume 
$$|{\gamma}|\ge \E(L(v_0,v_1)) -(n^{3/4+\e})^{1/3+\e/1000}\ge \E(L(v_0,v_1))-n^{1/4+2\e/5}.$$
Similarly by an Corollary \ref{uniform1} we can assume that $$|\Gamma(v_0,v_1)| \le \E(L(v_0,v_1))+ n^{1/4+2\e/5}$$ 
where $\Gamma(v_0,v_1)$ is the restriction of the constrained geodesic $\Gamma$ between  $v_0$ and $v_1$. 
Thus the path $\Gamma'$ which is obtained from $\Gamma$ by replacing $\Gamma(v_0,v_1)$ by $\gamma$ traps at least  $n^{5/4+{5\e}/{3}}$ more area than  does $\Gamma$ and loses at most $n^{1/4+2\e/5}$ in length. Repeating this operation about $n^{3/4-5\e/3}$ times (we can do that provided there are no good $\alpha$ in this interval), one gains an area that is $\Theta(1)$ while losing at most $O(n^{1-\e})$ in length with very high probability. This contradicts the fact that $\E L_{\alpha_1}(n) -\E L_{\alpha_2}(n)=\Theta(n)$ for $\alpha_1<\alpha_2$ (an easy consequence of Theorem \ref{t:lln} using ${\rm w}_{\alpha}$ is strictly decreasing) using Theorem \ref{concentration1}. Next we provide  the details needed to make the above argument precise. 

Fix $0<\alpha_1 <\alpha_2 <\frac{1}{2}$ and also $\e>0$ and $\delta\in (0,\pi/4)$. Let $\cg_{\alpha_1,\alpha_2}$ denote the event that no $\alpha\in [\alpha_1,\alpha_2]$ is  $(n,\varepsilon, \delta)$-good. Let $B_{n}=B_{n,\e,\kappa}$ be the set of all pairs $(x,y)\in [0,n]^2$ satisfying the following conditions. 
\begin{enumerate}
\item $x\preceq y$.
\item The straight line joining $x$ and $y$ has gradient $\in (\frac{1}{\kappa}, \kappa)$.
\item $|x-y|\geq n^{3/4}$. 
\end{enumerate} 
For $(x,y)\in B_n$, let $A_{x,y}$ denote the event that $|L^{\boxslash}(x,y)-\E L(x,y)|\leq |x-y|^{1/3+\varepsilon/1000}$ and $|L^{*}(x,y)-\E L(x,y)|\leq |x-y|^{1/3+\varepsilon/1000}$ where $L^*(x,y)$ denotes the best path between $x$ and $y$ that is constrained to stay below the straight line joining $x$ and $y$. Let $\mathcal{A}_{\e}$ denote the event that $A_{x,y}$ holds for all $(x,y)\in B_n$. Further let $S_{\delta}$ denote the event in the statement of Theorem \ref{flat1}, i.e., all the $\delta$ interior facets of  $\Gamma^*_{\alpha, n}$ has moderate gradients for all $\alpha\in [\alpha_1, \alpha_2]$. We have the following proposition.

\begin{figure}[h]
\center
\includegraphics[width=0.3\textwidth]{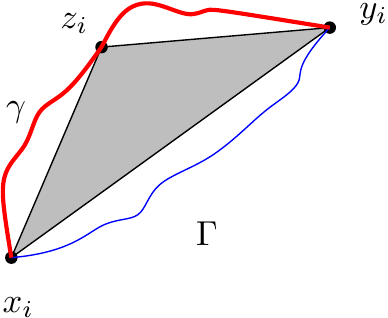}
\caption{The landgrab operation. By replacing the path $\Gamma$ by the path $\gamma$ one gains at least the amount of the area in the shaded region, whereas the event $A_{\varepsilon'}$ ensures that the length loss is not too much.}
\label{f:landgrab}
\end{figure}

\begin{proposition}
\label{p:landgrab}
There exists $\kappa=\kappa(\delta,\e)$ sufficiently small such that, on $\mathcal{A}_{\varepsilon}\cap S_{\delta}\cap \cg_{\alpha_1,\alpha_2}$, there exists $\alpha_* \geq \alpha_2$ for which 
$$L_{\alpha_1}(n)-L_{\alpha_*}(n)\leq (\alpha_{*}-\alpha_1) n^{1-6\varepsilon/5}.$$
\end{proposition}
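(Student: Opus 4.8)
The plan is to iterate the landgrab operation described in the informal discussion preceding the statement, carefully tracking the cumulative area gain and length loss, and then choosing $\alpha_*$ to be the value of the area constraint one reaches after enough iterations.

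First I would work on the event $\mathcal{A}_\e \cap S_\delta \cap \cg_{\alpha_1,\alpha_2}$. Since $\alpha_1$ is not $(n,\e,\delta)$-good, by Definition \ref{good10} there is a $\delta$-interior facet of $\Gamma^*_{\alpha_1,n}$ with length at least $n^{3/4+\e}$; on $S_\delta$ this facet has gradient in $(1/\kappa,\kappa)$ for a suitable $\kappa=\kappa(\delta)$, so its endpoints $(v_0,v_1)$ lie in $B_n$. I perform the replacement: let $\gamma = \gamma^{\boxslash}(v_0,v_1)$, the best path above the line segment $v_0 v_1$, and let $\Gamma'$ agree with $\Gamma_{\alpha_1,n}$ outside $(v_0,v_1)$ and equal $\gamma$ in between. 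Using the event $A_{v_0,v_1}$ (which bounds both $L^{\boxslash}$ and $L^*$ near $\E L$) together with the fact that $\Gamma_{\alpha_1,n}$ restricted to $(v_0,v_1)$ lies below the chord and hence has length at most $L^*(v_0,v_1)$, one gets $|\Gamma'| \geq |\Gamma_{\alpha_1,n}| - 2|v_0-v_1|^{1/3+\e/1000} \geq L_{\alpha_1}(n) - n^{1/4+2\e/5}$. Meanwhile, because $\gamma$ lies above the chord at characteristic height $\Theta(|v_0-v_1|^{2/3}) = \Theta(n^{1/2+2\e/3})$ over a span of length $\Theta(n^{3/4+\e})$ — here I would invoke Theorem \ref{t:tftail} (applied to $\gamma^{\boxslash}$ via Theorem \ref{baikrains}, or just a crude lower bound on the area swept by a one-sided geodesic) to guarantee the path genuinely bulges out and not merely hugs the chord — the area trapped increases by at least some amount $a_0 = \Theta(n^{5/4+5\e/3})$.

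Next I iterate. After the first replacement, $\Gamma'$ is a path trapping area at least $(\tfrac12+\alpha_1)n^2 + a_0$, so it witnesses $L_{\alpha_1 + a_0/n^2}(n) \geq L_{\alpha_1}(n) - n^{1/4+2\e/5}$. Since $\alpha_1 + a_0/n^2 \in [\alpha_1,\alpha_2]$ is still not good (as long as this value does not exceed $\alpha_2$), I repeat with the new $\delta$-interior long facet; note that each time the event $\cg_{\alpha_1,\alpha_2}$ guarantees the existence of a long $\delta$-interior facet, and the events $\mathcal{A}_\e$, $S_\delta$ are uniform over all relevant point pairs and all $\alpha\in[\alpha_1,\alpha_2]$, so no new randomness is needed. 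After $m$ steps the area has grown by at least $m a_0$ and the length has dropped by at most $m\, n^{1/4+2\e/5}$. I stop at the first $m$ for which $\alpha_1 + m a_0/n^2 \geq \alpha_2$; call the resulting area fraction $\alpha_* \geq \alpha_2$. Then $m = (\alpha_*-\alpha_1)n^2/a_0 + O(1) = (\alpha_*-\alpha_1)\,\Theta(n^{3/4 - 5\e/3})$, so the total length loss is at most
\[
m\, n^{1/4+2\e/5} \leq (\alpha_*-\alpha_1)\, n^{1 - 5\e/3 + 2\e/5 + o(1)} \leq (\alpha_*-\alpha_1)\, n^{1 - 6\e/5},
\]
which (after absorbing the $o(1)$ and adjusting constants, and noting $\alpha_*-\alpha_1$ is bounded) is exactly the claimed bound. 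This gives $L_{\alpha_1}(n) - L_{\alpha_*}(n) \leq (\alpha_*-\alpha_1)n^{1-6\e/5}$.

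The main obstacle I anticipate is the bookkeeping of the iteration, specifically ensuring the estimates remain valid uniformly across all $\Theta(n^{3/4-5\e/3})$ steps. Two points need care: (i) the facets produced at successive steps have endpoints in $B_n$ and gradients bounded away from $0$ and $\infty$ — this is where $S_\delta$ and the definition of $\delta$-interior (plus Lemma \ref{crude1}) are essential, and one must confirm that the replacement operation preserves the property that the current path's long facet is still $\delta$-interior with controlled gradient; and (ii) the area gain lower bound $a_0 = \Theta(n^{5/4+5\e/3})$ must hold deterministically on the good event, i.e. I need a genuine lower bound on the area swept by the one-sided geodesic $\gamma^{\boxslash}$, not just an upper bound on its transversal fluctuation. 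The cleanest route for (ii) is to build $\gamma$ not as the full one-sided geodesic but, as in the proof of Theorem \ref{p:step2}, by concatenating two one-sided geodesics through a deliberately chosen high midpoint $u_*$ at height $\Theta(n^{3/4+\e}\cdot\text{(angle)})$, which forces the swept area to be at least $\Theta(n^{3/4+\e}) \times \Theta(n^{1/2+2\e/3}) = \Theta(n^{5/4+5\e/3})$ while Theorem \ref{baikrains} still controls the length loss; the small discrepancy between $3/4+\e$ and the $3/4$ in the definition of $B_n$ is harmless.
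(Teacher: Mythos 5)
Your proposal follows essentially the same route as the paper: iterate the landgrab on the true constrained geodesic $\Gamma_{\beta_i,n}$ at each stage, replace the longest $\delta$-interior facet $(x_i,y_i)$ by the concatenation of two one-sided geodesics through a point at orthogonal distance $|x_i-y_i|^{2/3}$ above the chord midpoint (your point (ii) resolution is exactly the paper's construction), use $\mathcal{A}_\e$ and $S_\delta$ for uniformity, and stop at the first step where the accumulated area parameter reaches $\alpha_2$. The one place your bookkeeping is loose is the claim that after $m$ steps the length has dropped by at most $m\,n^{1/4+2\e/5}$: facet lengths vary from step to step, and a facet of length $\ell_i\gg n^{3/4+\e}$ costs $\ell_i^{1/3+\e/1000}\gg n^{1/4+2\e/5}$ in length. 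The repair is the weighted sum the paper uses: writing the total loss as $\sum_i \ell_i^{1/3+\e'}=\sum_i\bigl(c n^{-2}\ell_i^{5/3}\bigr)\cdot c^{-1}n^2\ell_i^{-4/3+\e'}\le c^{-1}(\alpha_*-\alpha_1)n^2\max_i\ell_i^{-4/3+\e'}$, and noting that the per-step ratio of length loss to area gain is decreasing in $\ell_i$, so the worst case is $\ell_i=n^{3/4+\e}$, yielding the stated exponent.
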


\begin{proof}
We perform the following recursive construction. Let $\beta_0=\alpha_1$. For $\beta_{i}\in [\alpha_1,\alpha_2]$ construct $\beta_{i+1}$ recursively as follows: find the longest $\delta$-interior facet in $\Gamma_{\beta_i,n}$. On $\cg_{\alpha_1,\alpha_2}$, the longest $\delta$-interior facet $(x_i,y_i)$ has length at least $n^{3/4+\varepsilon}$.  Note that on $S_{\delta}$, the endpoints $x_{i}$ and $y_{i}$ of this facet satisfy  $(x_{i},y_{i})\in B_n$ for some $\kappa=\kappa(\delta)$. Now pick a point $z_{i}$  at orthogonal distance $|x_{i}-y_{i}|^{2/3}$ from  the midpoint of the  line segment joining $x_i$ and $y_{i}$; (see Figure \ref{f:landgrab}). Thus the area of the triangle $(x_i,y_i,z_i)$ is at least $c|x_{i}-y_{i}|^{5/3}$ for some constant $c$ that does not depend on $i$. Set $\beta_{i+1}=\beta_{i}+ cn^{-2}|x_{i}-y_{i}|^{5/3}$. Now consider the path $\gamma$ that coincides with $\Gamma_{\beta_{i},n}$ outside the facet $(x_i,y_i)$, and is formed by concatenating $\gamma^{\boxslash}(x_i,z_i)$ and $\gamma^{\boxslash}(z_i,y_i)$ between $x_i$ and $y_{i}$. Clearly the area under the curve $\gamma$ is at least $(\frac{1}{2}+\beta_{i+1})n^2$. Also, on $A_{\varepsilon}$, 
$$|\Gamma_{\beta_{i},n}|-|\gamma|\leq |x_{i}-y_{i}|^{1/3+\varepsilon/500}.$$
It follows that  
$$L_{\beta_i}(n)-L_{\beta_{i+1}}(n) \leq |x_{i}-y_{i}|^{1/3+\varepsilon/500}.$$

Denote $\alpha_{*}=\beta_{i_0}$, where $i_0$ is the smallest index $i$ for which $\beta_{i}\geq \alpha_2$. It follows that 
$$L_{\alpha_1}(n)-L_{\alpha_*}(n)\leq \sum_{i=0}^{i_0-1} |x_{i}-y_{i}|^{1/3+\varepsilon/500}\leq c^{-1}(\alpha_*-\alpha_1)n^{2} \left(\max_{i\leq i_0} |x_i-y_i|^{-4/3+\varepsilon/500}\right).$$
Since $|x_i-y_{i}|\geq n^{3/4+\e}$ for all $i$, it follows that the final term is at most $n^{-1-4\varepsilon/3}$   and that completes the proof of the proposition.
%[\textcolor{red}{A}: the last sentence has a confusing structure]. 
\end{proof}

We may now  complete the proof of Theorem \ref{t:mflub}. 

\begin{proof}[Proof of Theorem \ref{t:mflub}]
Let $\kappa$ be  large enough that the conclusion of Proposition \ref{p:landgrab} holds. Using Theorem \ref{flat1} and Corollary \ref{uniform}, it follows that, for this choice of $\kappa$ (and for given  $\alpha_1,\alpha_2, \delta$ and $\e$), one has that, for some constant $c>0$,
$$\P[\mathcal{A}_{\e}^{c}\cup S_{\delta}^{c}] \leq e^{-n^c}.$$
Hence, using Proposition \ref{p:landgrab}, it suffices to show that 
\begin{equation}
\label{e:suffice}
\P\left (L_{\alpha_1}(n)-L_{\alpha_2}(n)\leq (1/2-\alpha_1)n^{1-6\e/5}\right )\leq e^{-n^c}
\end{equation}
for some $c>0$. Notice that in the above step we have used the trivial inequality $L_{\alpha_2}(n)\geq L_{\alpha_*}(n)$ for all $\alpha_{*}\in [\alpha_2,\frac{1}{2})$.
It follows now from Theorem \ref{t:lln} that $\E L_{\alpha_1}(n)-\E L_{\alpha_2}(n)\geq \frac{{\rm w}_{\alpha_1}-{\rm w}_{\alpha_2}}{2}n$ for $n$ sufficiently large. The claimed bound \eqref{e:suffice} now follows from Theorem \ref{concentration1} and the fact that ${\rm w}_{\alpha_1}-{\rm w}_{\alpha_2} >0$. This completes the proof.
\end{proof}

\subsection{Proof of Theorem \ref{t:mlrub}}
%\textcolor{red}{is a good $\alpha$ deterministic or random, can we fix a good $\alpha$, if not this proof has to be rephrased}
This derivation is similar to that by which Theorem \ref{t:mlrlb} follows from Theorem \ref{t:mfllb}. 
By Theorem \ref{flat1}, for all $\alpha \in [\alpha_1,\alpha_2]$ and all small enough $\delta$, there is a probability at least $1-e^{-cn}$ that all the $\delta$-interior facets of $\Gamma^*_{\alpha,n}$ are $\kappa-$ steep for some value of $1\le \kappa<\infty$.
Now for any good $\alpha\in [\alpha_1, \alpha_2]$ (note that the set of good $\alpha'$s is a function of the underlying point process and hence is random), 
%[\textcolor{red}{A}: the last sentence is meaningless.]
by definition, all the interior facets of $\Gamma^*_{\alpha,n}$ have length at most~$n^{3/4+\e}$. Moreover, by the uniform bounds  discussed above we may restrict to the case where all the facets are $\kappa$-steep. Suppose that the maximum local roughness restricted to the $\delta$-interior facets  is  at least $n^{1/2+\e}$.  Let $u$ and $v$ be the endpoints of the facet  that attains this maximum local roughness. Let $\gamma(u,v)$ be the segment of $\G_{\alpha,n}$ between $u$ and $v$. 
By using Corollary \ref{uniform0}, we see that, with probability at least $1-e^{-n^c}$, 
$$|\gamma(u,v)|< \E(L(u,v))- S(u,v).$$ On the other hand,
 by Corollary \ref{uniform2} again, we find that, except on an event of probability at most $e^{-n^c}$,
$$L^{\boxslash}(u,v)\ge \E(L(u,v)))- S(u,v).$$
Thus, the path that agrees with $\Gamma$ from $(0,0)$ to $u$ and from $v$ to $(n,n)$ 
% [\textcolor{red}{A}: the interval between?] 
and equals the path $\gamma^{\boxslash}(u,v)$ between the points $u$ and $v$ clearly captures at least as much area as does $\G_{\alpha,n}$ and also contains more points  than does $\G_{\alpha,n}$. This  contradicts the extremality of the latter path. 
 \qed

\subsection{Possible extensions and difficulties}\label{areafluc} 
%\textcolor{red}{read this section carefully}
 We end this section with a few remarks related to the first two open problems mentioned in Section \ref{oq}. A natural approach to  quantifying the  proof of Theorem \ref{t:mflub} is via Proposition \ref{p:landgrab}. Namely, one can hope to bound below the density of those $\alpha\in [\alpha_1,\alpha_2]$ that are $(n,\e)$ good (where here we ignore the parameter $\delta$ for the purpose of illustration.) This is because using similar arguments to those used in deriving these two results, one can hope to prove the following uniform bound: with high probability the noise space is such that for all $\alpha\in [\alpha_1,\alpha_2]$ we have $L_{\alpha}(n)-L_{\alpha_*}(n)\le \tilde O(\ell^{1/3})$ for $\alpha_*=\alpha+ \ell^{5/3}n^{-2},$ where $\ell=\max({\rm{MFL}}(\Gamma_{\alpha,n}),n^{3/4}),$ and where the $\tilde O$ notation hides poly-logarithmic multiplicative factors (which arise due to the exceptional nature of the facet endpoints that contributes a polynomial entropy factor).  Thus, the above implies that the length-to-area gradient in scaled coordinates (with length measured in units of $n$ and area measured in units of $n^2$) is $\tilde O(\frac{n}{\ell^{4/3}})=\tilde O(1)$ as by definition $\ell \ge n^{3/4}$. Also, we know that, for all $\alpha$ which are %(\textcolor{red}{rephrase})
  $(n,\e)$ bad, the gradient is polynomially small (since by the land-grab argument (see Figure \ref{f:landgrab}), the length loss is much smaller than the area gain). This coupled with the fact that $L_{\alpha_1}-L_{\alpha_2}=\Theta(n),$ should then allow to conclude that the density of good $\alpha$ is at least $\frac{1}{\tilde O(1)}$.

Notice that our approach for proving upper bounds relies on the joint coupling of the process for various values of $\alpha$. This is quite different in spirit from the known proofs of similar statements in other contexts such as phase separation \cite{AH1} where the upper bound was proven for a fixed value of $\alpha$. The key ingredient there was an understanding of the excess area fluctuation (see Open question (2) in Section \ref{oq}) which then allowed a resampling argument to work. However, since we do not have such bounds, we have to work across various values of $\alpha$ simultaneously. 

 A heuristic argument in our setting proving an upper bound of $n^{5/4+o(1)}$ on the excess area trapped by $\Gamma=\Gamma_{\alpha}$, assuming that $\alpha$ is $(n,\e/10)$ good (recall Definition \ref{good10}) can be made as follows: Suppose that the excess area is more than $n^{5/4+\e}$. Since, by hypothesis the longest facet length is at most $n^{3/4+\e/10}$ , we may find two points $x$ and $y$ on $\Gamma^*_{\alpha,n}$  such that $|x-y|\approx n^{3/4+\e/5}$ and segment of  $\Gamma^*_{\alpha,n}$ between $x$ and $y$ has average curvature (the average distance of the curve from the line segment joining $x$ and $y$ is what it should be on for a circle; see Figure \ref{def} (b)). Now consider shortcutting $\Gamma$ between $x$ and $y$ and hence replacing the subpath of $\Gamma_{\alpha,n}$ between $x$ and $y$ by the unconstrained geodesic between $x$ and $y$. This operation will create an area loss (approximately $n^{5/4+3\e/5}$) which is not enough to violate the area constraint, because the excess area before the shortcut was at least $n^{5/4+\e}$. Moreover, the shortcut increases the overall weight and hence contradicts extremality of $\Gamma_{\alpha,n}$.
%This in particular implies that for any $\alpha\in [\alpha_1,\alpha_2]$ we have the uniform bound $L_{\alpha}(n)-L_{\alpha+n^{-3/4}}(n)=\tilde O(1)$
%The above along with the fact that $\int_{\alpha_1}^{\alpha_2}[L_{\alpha}(n)-L_{\alpha_*}(n)]=\Omega(n^{1/4})$ implies that 

\section{Proofs of some of the earlier statements}\label{es}
It remains to provide proofs of Lemmas \ref{l:exist},\ref{l:ac}, \ref{decreasing1}, \ref{crude1} and Theorem \ref{concentration1} that were postponed.
%\textcolor{red}{mention which statements} 
%We start with the results about the variational formulation of the constrained geodesic problem.

\subsection{Proof of Lemma \ref{l:exist}} This subsection follows closely the arguments presented in \cite[Section 3]{DZ2}. Let $\phi_1,\phi_2,\ldots$ be a sequence of elements in $\mathcal{B}_{\alpha}$ such that $$\lim_{n}J(\phi_n)=J_{\alpha}.$$ 
%\textcolor{red}{what is the point of limsup here why not just limit?}
We equip the space of bounded signed measures on $[0,1]$ with the weak topology generated by $\mathcal{C}=C[0,1]$ (the space of continuous functions on $[0,1]$).  
%Let us call this space $\mathcal{M}.$ Then $\mathcal{M}^*=\mathcal{C}$ and by the Riesz representation theorem $\mathcal{C}^{*}=\mathcal{M}$ where $*$ denotes the topological dual. 
Since for all $n\ge1,$ $\phi_n$ correspond to sub-probability measures, using compactness, by passing to a subsequence (denoted again by $\{n\}$ for convenience) let $\phi_n$ converge to $\psi\in \mathcal{M},$  in the weak topology.  
This in particular implies $\phi_n(x)$ converges to $\psi(x)$ almost everywhere (since the convergence happens at all continuity points of $\psi$, see e.g., \cite[Section 3.2]{Dur} ). Since $\phi_n$'s are all bounded by $1$ and belong to $\mathcal{B}_{\alpha},$ by the bounded convergence theorem, it follows that $\psi \in \mathcal{B}_{\alpha}$. 
Next we will show that 
\begin{equation}\label{claim203}
\lim_{n}J(\phi_n)\le J(\psi).
\end{equation}
Clearly this implies $J(\psi)=J_{\alpha}$ and hence we would be done.
The claim in \eqref{claim203} follows from the upper semicontinuity of $J(\cdot)$ or equivalently the lower semicontinuity of $-J(\cdot)$.
To show the latter one represents $-J(\cdot)$ as an appropriate Legendre transform. 
To proceed we need some definitions: Clearly \eqref{variational1} extends naturally to all non-negative measures. We extend $J$ to all of $\mathcal{M}$ by setting it to be $\infty$ for all measures which are not non-negative. 
Moreover, for any $f\in \mathcal{C},$ define
 $$\Lambda(f)=\left \{ \begin{array}{cc}
\infty & \text {if } \int_0^1\frac{1}{|f(s)|}ds =\infty  \text{ or if } f\ge 0 \text{ on a set of positive Lebesgue measure}. \\
-\frac{1}{4} \int_{0}^1 \frac{1}{f(s)}ds  & \text{ otherwise}
\end{array}
\right.
$$
Now for any $\phi \in \mathcal{B},$ define $\Lambda^*(\phi)=\displaystyle{\sup_{f\in C[0,1]}  \left[\int_{0}^1 f d(\phi) -\Lambda(f)\right]}$. 
%[\textcolor{red}{A}: bracket problem.] 
This function, by definition, is lower semicontinuous, since for any $f\in C[0,1],$ and a sequence $\phi_n$ converging to $\phi,$ by definition of weak convergence, $$\lim_{n\to \infty}\int_0^1 f d(\phi_n) = \int_0^1 f d(\phi).$$ Thus the proof is complete by \cite[Lemma 5]{DZ2} which says $\Lambda^*(\phi)=-J(\phi)$. 
\qed

\subsection{Proof of Lemma \ref{l:ac}}
As already hinted right after the statement of Lemma \ref{l:ac}, the proof is by contradiction. Assuming that the singular part of $\psi,$ which we denote by $\psi_s$ (see \eqref{decomposition23}). has positive mass, we create a modification $\psi_1$ such that $J(\psi_1)> J(\psi)$ (see \eqref{variational1}). However in the process, it is possible that we violate the area constraint, i.e. $\int_0^1\psi_1(x){\rm{d}}x < \frac{1}{2}+\alpha$. Thus we make a second modification to construct a function $\psi_2$  with the property $J(\psi_2)> J(\psi),$ and moreover 
it satisfies the area constraint as well. This contradicts the extremality of $\psi$ which was a part of the hypothesis. The details follow.

Let us assume that $\psi_s$ has total mass $k>0$. Then clearly, without loss of generality, we can assume $\psi_s=k\delta_0$, i.e., there is an atom of mass $k$ at zero, since 
\begin{align*}
\int_{0}^1 (\psi_{ac}(x)+k) {\rm{d}}x \ge \int_{0}^1 (\psi_{ac}(x)+\psi_{s}(x)) {\rm{d}}x\ge \frac{1}{2}+\alpha. 
 \end{align*}
Also, because the absolutely continuous part stays the same,  $J(\psi_{ac}+k\delta_0)=J(\psi)$. 
  We will now make some local modifications to contradict extremality of $\psi$. 
\begin{figure}[hbt]
\centering
\includegraphics[scale=.7]{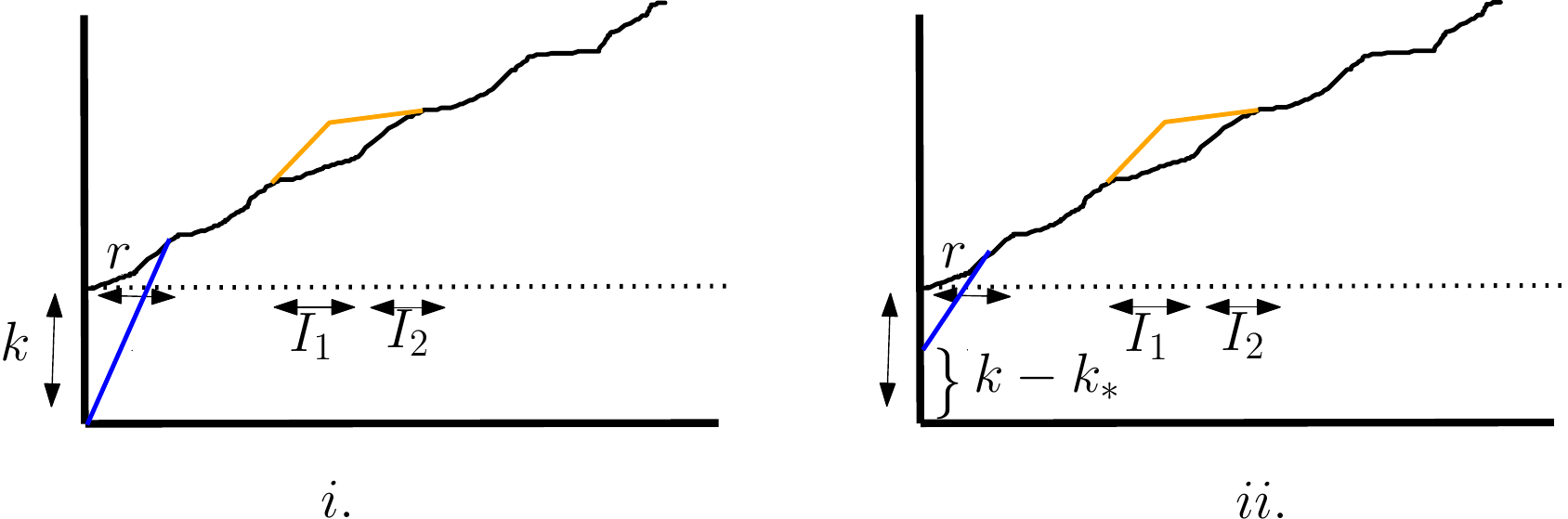}
\caption{Local improvement}
\label{fig9}
\end{figure}

%\textcolor{red}{the blue line in the figure should be modified}
Let $r>0$ be a small number to be chosen later.  Consider a new function $\psi_1$, as illustrated in Figure \ref{fig9}, which is a linear function interpolating $(0,0)$ and $(r,\psi(r))$  and which agrees with $\psi$ on $[r,1]$. Since $\psi(r)=\psi_{ac}(r)+k$,
it follows that
\begin{equation}\label{arealoss}
\int_{0}^1 \psi_1(x){\rm{d}}x \ge \int_{0}^1 \psi(x){\rm{d}}x -\frac{1}{2}kr-O(\psi_{ac}(r)r).
\end{equation}
On the other hand, for all small enough $r$,
\begin{align}\label{lengthgain}
\int_{0}^1 \sqrt{\dot \psi_1(x)} {\rm{d}}x &\ge \int_{0}^1 \sqrt{\dot\psi(x)} {\rm{d}}x  -\int_{0}^r \sqrt{\dot\psi(x)} {\rm{d}}x + \sqrt{kr}\\
\nonumber
&\ge  \int_{0}^1 \sqrt{\dot\psi(x)} {\rm{d}}x -\sqrt{\psi_{ac}(r) r}+ \sqrt{kr},\\
\nonumber
& \ge  \int_{0}^1 \sqrt{\dot\psi(x)} {\rm{d}}x + \frac{\sqrt{kr}}{2}.
\end{align}
The second last inequality uses the bound $\int_0^r \sqrt{\dot \psi(x)} {\rm{d}}x \le \sqrt{r \psi_{ac}(r)},$ which follows from the Cauchy-Schwarz inequality.  To see the last inequality,  note that $\psi_{ac}(r)$ goes to zero as $r$ goes to zero.  %(\textcolor{red}{explain more}) 
However, it is easy to see that  $$\int_{0}^1  \psi_1(x){\rm{d}}x < \int_0^1 \psi(x){\rm{d}}x .$$ Thus a priori $\psi_{1}$ need not be an element of $\mathcal{B}_{\alpha}$; (see \eqref{restr304}). To ensure that indeed $\int_0^1\psi_{1}(x){\rm{d}}x \ge \frac{1}{2}+\alpha$,  we have to make another modification.
Note that already in the proof of Lemma \ref{l:uni} we argued that a priori even if $\psi$ is not unique, $\dot \psi$ is unique. Thus there exist $c_{\alpha}>0$ and $0<c_1<c_2$  depending only on  $\alpha$ such that the set $(\dot \psi)^{-1}[c_1,c_2]$ has Lebesgue measure at least $c_{\alpha}$. (Note that even though the proof of Lemma \ref{l:uni} relied on Lemma \ref{l:ac}, the uniqueness of the density was argued independently and hence this is not a circular argument.) 

For brevity, let us call the set $(\dot \psi)^{-1}[c_1,c_2]$ by the name~$I$. We now choose arbitrary disjoint subsets of $I\cap [\e,1]$ for some $\e>0$, denoted by $I_1$ and $I_2$, both of measure $k$; (we take the intersection with  $[\e,1]$ to ensure that both the sets are away from $0$).  Note that this can always be done if $\mu(I)> 2k$. In case $\mu(I)\le 2k$, we can modify $\psi_{1}$ by taking it to be  a linear function interpolating $(0,k-k_*)$ and $(r,\psi(r))$ for some $k_*$ small enough so that $\mu(I)>2k_*$; (see Figure \ref{fig9} ii.) %\textcolor{red}{where is the figure?}
Nothing changes in any of the arguments that follow as well as the conclusions and hence we will pretend that $k=k_*$ throughout the rest of the argument.
Let us also choose $I_1$ and $I_2$ 
%[\textcolor{red}{A}: what?] 
such that $\sup(I_1)< \inf(I_2)$, 
and let $\psi_2(x):=\int_0^x \dot \psi_2(y){\rm{d}}y$ where 
\begin{equation}\label{modi24}
\dot \psi_2:=\dot \psi_1 + \frac{r}{k} \mathbf{1}(I_1) - \frac{r}{k}\mathbf{1}(I_2).
\end{equation}
We will also choose $r<\e$.
Note that since $\dot \psi_1=\dot \psi \ge c_1$ on $I_2$, for all small enough $r,$   $\dot \psi_2$ is non-negative. Also, since the measures of $I_1$ and $I_2$ are the same, $$\int_0^1 \dot \psi_2(x){\rm{d}}x =\int_0^1  \dot \psi_1(x){\rm{d}}x .$$

We now compute the area under the curve $\psi_2(\cdot)$ 
%[\textcolor{red}{A}: do you mean this?] 
and see how it differs from that of $\psi_1(\cdot)$.
For every $y\in [0,1],$ we have $$\psi_2(y)-\psi_1(y)=\frac{r}{k}[ \mu(I_1\cap [0,y])- \mu(I_2\cap [0,y])].$$
Thus, 
\begin{align}
\label{area123}
\int_0^1 [\psi_2(y)-\psi_1(y)]{\rm{d}}y &=\frac{r}{k} \int_0^1  [\mu(I_1\cap [0,y])-\mu(I_2\cap [0,y])]{\rm{d}}y\\
\nonumber
&= \frac{r}{k} \int_{0}^1 (1-y) [\mathbf{1}(I_1)-\mathbf{1}(I_2)]{\rm{d}}y\\
\nonumber
& \ge rk.
\end{align}
The last inequality follows since, as $\mu(I_1)=\mu(I_2)= k$ and $\sup(I_1)< \inf(I_2)$, the integral above is at least $k^2$.
Thus, the new function $\psi_2(x)$ traps at least as much area as does $\psi(x)$ since 
\begin{align*}
\int_{0}^{1}\psi_2(x){\rm{d}}x -\int_{0}^{1}\psi(x){\rm{d}}x &=[\int_{0}^{1}\psi_2(x){\rm{d}}x -\int_{0}^{1}\psi_1(x){\rm{d}}x ]+[\int_{0}^{1}\psi_1(x){\rm{d}}x -\int_{0}^{1}\psi(x){\rm{d}}x ],\\
&\ge rk-\frac{rk}{2}-O(\psi_{ac}(r)r)>0,
\end{align*}
 for all small enough $r$; (the last inequality uses \eqref{arealoss}).

Moreover,  by Taylor expansion (using \eqref{modi24} and  $\dot \psi_1\ge c_1$ on $I_1 \cup I_2$), 
\begin{align*}
\int_0^1 \sqrt{\dot \psi_2(x)}{\rm{d}}x -\sqrt{\dot \psi_1(x)}{\rm{d}}x  &\ge \int_{I_1} \sqrt{\dot \psi_1(x)}(\frac{r}{2k\dot \psi_1(x)}- C(\frac{r}{k\dot \psi_1(x)})^2){\rm{d}}x  \\
& +\int_{I_2} \sqrt{\dot \psi_1(x)}(-\frac{r}{2k\dot \psi_1(x)}- C(\frac{r}{k\dot \psi_1(x)})^2){\rm{d}}x \\
\nonumber
&\ge -O(\frac{r}{k})k=-O(r).
\end{align*}
%\textcolor{red}{make some comments here about the constants}
We use the bound  $\dot \psi_1> c_1$  and that $\mu(I_1)=\mu(I_2)=k$ crucially in the last inequality, and all the constants  depend only on $c_1$ through the Taylor expansion.
Thus,  by choosing $r\ll k$ we have $\sqrt{rk}\gg r$, and hence using \eqref{lengthgain}, 
%(\textcolor{red}{where are we using this?? not in the next equation, but perhaps in the one after that})
\begin{align*}
\int_0^1 \sqrt{\dot \psi_2(x)}{\rm{d}}x -\int_0^1 \sqrt{\dot \psi(x)}{\rm{d}}x &=\int_0^1 \sqrt{\dot \psi_2(x)}{\rm{d}}x -\int_0^1 \sqrt{\dot \psi_1(x)}{\rm{d}}x + \int_0^1 \sqrt{\dot \psi_2(x)}{\rm{d}}x -\int_0^1 \sqrt{\dot \psi_1(x)}{\rm{d}}x ,\\
&\ge -O(r) + \sqrt{kr} -O(\sqrt{\psi_{ac}(r) r}),\\
&\ge \sqrt{kr}/2,
\end{align*}
for all small enough $r$. Hence, we obtain a contradiction to the extremality of $\psi$.
\qed

%Let us now modify $g$ a little bit. Let us choose to sets of measure $r$ each i
%
%It would be convenient to work with a Radon-Nikodym derivative which is continuous and not just in $L_1.$ Let $g$ be the density function of $\psi_{a}(\cdot).$ Consider an approximation of $\sqrt(g)$ by a continuous function $\sqrt{g'}$ such that $||\sqrt{g}-\sqrt{g'}||_2\le  \e$ ($\e$ specified later) We can also ensure that 
%$\int g' \le \psi_a(1).$ Now fix an interval  $I$  such that $g(x)\in (c_1,c_2).$

\subsection{Proof of Lemma \ref{decreasing1}}
(i) The proof follows by taking the monotone rearrangement and using uniqueness. Let $\dot \psi_{mon}$  be the monotone rearrangement of $\dot \psi$. 
%\textcolor{red}{Refer to the definition or state it briefly} 
Notice that, by Fubini's theorem, 
\begin{align}
\label{fubini}
\frac{1}{2}+\alpha\le \int_0^1 \psi(x){\rm{d}}x =\int_0^1 \dot \psi(x) (1-x){\rm{d}}x  \le \int_0^1 (\dot \psi)_{mon}(x) (1-x){\rm{d}}x  = \int_0^1 \psi_1(x){\rm{d}}x  
\end{align} 
where $\dot \psi_1=(\dot \psi)_{mon}$. Only the second inequality above needs justification and it is a consequence of a standard rearrangement inequality. Thus $\psi_1$ satisfies the area constraint. 
Also clearly  $ \int_0^1 \sqrt {\dot\psi}(x){\rm{d}}x = \int_0^1 \sqrt {\dot\psi_1}(x){\rm{d}}x $ as rearrangement keeps integrals unchanged. 
This contradicts the uniqueness of $\psi$ unless $\psi=\psi_{1}$
%\textcolor{red}{what is $\psi_{1}$? Do we mean that $\psi_1$ satisfies the area constraint? may be recall what we are trying to prove here.}

(ii) Let $\dot\psi$ be zero on a set of positive measure. By the first part of the lemma, this implies that there exists $a$ such that $\psi(y)$ is a constant on the interval $[a,1]$. Note that it is easy to contradict extremality of $\psi$ if $\psi$ is not one on this interval.  In case it is one on this interval, consider the function $\psi_1(x)=1-\psi^{-1}(1-x)$ (where $\psi^{-1}(1)=a$).   We claim that 
\begin{align*}
\int_0^1 \psi_1(y){\rm{d}}y&=\int_0^1 \psi(x){\rm{d}}x ,\,\,\text{ and }
\int_0^1 \sqrt{\dot{\psi_1}(y)}{\rm{d}}y=\int_0^1 \sqrt{\dot\psi(x)}{\rm{d}}x .
\end{align*}
The first equality follows by Fubini's theorem. The second becomes clear after the change of variable $y=1-\phi(x)$ is made.
Now, by uniqueness, $ \psi_1=\psi$, and by Lemma \ref{l:ac}  $\psi$ 
%[\textcolor{red}{A}: what?] 
has no singular part. However, $\psi_1$ has an atom of mass $1-a$ at $0$ which implies $a=1$.
\qed

\subsection{Proof of Lemma \ref{crude1}} The reader may find it  useful to refer to Figure~\ref{fig1}. Let $\omega_1$ and $\omega_2$ be the acute angles that the line segments $((n,0),u)$ and $((n,0),v)$ make with the 
$x$-axis and the $y$-axis. Now, without loss of generality, we can assume $0\le \omega_1 \le \pi/4$; otherwise, we could work with $\omega_2$, since clearly one of the two quantities is at most $\pi/4$.
Now, as a simple consequence of Theorem \ref{lln2}, we see that, with probability at least $1-e^{-cn},$ $(n,0)$ is at a $\Theta(n)$ distance from $\Gamma^*_{\alpha,n}$. Also, since $\psi_{\alpha}$  is strictly concave, and $\omega_1 \le \pi/4$, the line segment $(u,v)$  makes at least an angle $c(\alpha)>0$ (depending only on $\alpha$ and not on $u$ and $v$) with the $x$-axis. 
The proof is now completed by  considering the triangle $((n,0),u,v)$   and simple geometric arguments. We omit the details. 
%\textcolor{red}{needs more explanations}
\qed
\subsection{Proof of Theorem \ref{concentration1}} The proof will follow by constructing a suitable martingale and appealing to well known concentration results for martingales. However things are slightly complicated by the possibility that the  martingale may not have bounded increments. A simple truncation argument will allow us to take the increments to be bounded.  
%[\textcolor{red}{A}: the martingale will be proved to have bounded increments? or the unbounded increments will be handled?]. 
Our proof strategy follows closely arguments in \cite{BB}.
The proof relies on some coarse graining. We start by introducing some notation. 
%We will continue to work with the normalized picture i.e. the noise space will be a Poisson process of intensity $n^2$  on the unit square $[0,1]^2$ instead of being intensity one on the square $[0,n]^2.$ Thus 
Let  $$A_{i}:=\{(x,y):i\le x+y < {i+1}\}.$$ 
Also let $B_{i,j}= [{i-1}, {i})\times [{j-1}, {j})$. Fix a number $k$ to  be specified soon.  Recall  the point process $\Pi$ and the path $\Gamma=\Gamma_{\alpha,n}$ from Section \ref{def10}. Let $\Gamma_{k}=\Gamma_{k,\alpha,n}$ be the  increasing path between $(0,0)$ and $(n,n)$ with the same constraints as $\Gamma$ along with the additional  constraint that the intersection with $A_i$ is less than $k$ for all $i \in [0,\ldots, 2n-1]$.  Let us denote  by $|\Gamma_{k}|$ the weight of $\Gamma_{k}$. Consider the Doob Martingale $\{\E\left[|\Gamma_k|\mid \mathcal{F}_i\right]\}_{0\le i\le 2n-1}$, 
%[\textcolor{red}{A}: is not a result ...]
along  the filtration $\mathcal{F}_i=\{\Pi \cap A_j : j\le i\}$.   Note that the martingale increments are deterministically bounded by $k$.
The following is a standard consequence of the Azuma-Hoeffding inequality. 
%[\textcolor{red}{A}: right?]
\begin{lemma} $\mathbb{P}(\bigl|  |\Gamma_{k}|-\mathbb{E}(|\Gamma_{k}|)|\ge t)\le e^{-\frac{t^{2}}{4k^2n}}$.
\end{lemma}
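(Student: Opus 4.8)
The plan is to treat the Doob martingale associated to revealing the Poisson points strip by strip and to check that its increments are bounded by $k$; the bound then follows from the Azuma--Hoeffding inequality, the point of the per-strip constraint in the definition of $\Gamma_k$ being precisely to make those increments bounded. Concretely, augment the martingale in the statement with $M_{-1}:=\E[\,|\Gamma_k|\,]$ (the conditional expectation given the trivial $\sigma$-field), so that $M_{-1},M_0,\dots,M_{2n-1}$ is a martingale along the filtration $\mathcal{F}_i=\{\Pi\cap A_j:j\le i\}$ with $M_{-1}=\E|\Gamma_k|$ deterministic and $M_{2n-1}=|\Gamma_k|$, and with $2n$ increments in total.

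The one step needing genuine care is the bounded-differences claim $|M_{i+1}-M_i|\le k$. By the usual argument (writing each conditional expectation as an integral over the not-yet-revealed strips against independent copies and resampling only the $A_{i+1}$-coordinate), this is dominated by the oscillation of the functional $\omega\mapsto |\Gamma_k|(\omega)$ under changes of $\Pi\cap A_{i+1}$ alone; so it suffices to show that replacing the points of $\Pi$ inside $A_{i+1}$ changes $|\Gamma_k|$ by at most $k$. Let $\gamma$ be the optimal constrained path for a configuration $\omega$. By the defining constraint of $\Gamma_k$, $\gamma$ meets fewer than $k$ points of $A_{i+1}$, and since $A_{i+1}$ is an anti-diagonal strip of unit width, the portion of $\gamma$ inside $A_{i+1}$ runs from a point $p$ on $\{x+y=i+1\}$ to a point $q$ on $\{x+y=i+2\}$ and lies in the axis-parallel unit box spanned by $p$ and $q$. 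Reroute that portion along the ``up-then-right'' path inside this box, collecting no Poisson points there. The new path from $(0,0)$ to $(n,n)$ is a valid directed path for the resampled configuration $\omega'$ (its remaining vertices lie outside $A_{i+1}$, where $\omega=\omega'$); it still meets fewer than $k$ points of every strip; and it traps at least as much area as $\gamma$, because inside the box the up-then-right route lies weakly above every monotone route with the same endpoints, so the area integral does not decrease. Hence it is an admissible competitor for $|\Gamma_k|(\omega')$ of weight at least $|\gamma|-k$, giving $|\Gamma_k|(\omega')\ge|\Gamma_k|(\omega)-k$; by symmetry the oscillation is at most $k$, so $|M_{i+1}-M_i|\le k$ for every $i$.

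Given the increment bound, the Azuma--Hoeffding inequality applied to $M_{-1},\dots,M_{2n-1}$ (which has $2n$ steps each of size at most $k$) yields
$$\mathbb{P}\bigl(\,\bigl||\Gamma_k|-\E|\Gamma_k|\bigr|\ge t\,\bigr)\le 2\exp\bigl(-t^2/(2\cdot 2n\cdot k^2)\bigr)=2\exp\bigl(-t^2/(4k^2n)\bigr),$$
which is the asserted estimate; the prefactor $2$ is harmless and plays no role in the use of this lemma in the proof of Theorem~\ref{concentration1}. Thus the only substantive point is the rerouting in the second paragraph, and the reason anti-diagonal strips of width one are used is exactly that they confine the part of any monotone path inside a single strip to a unit box, where rerouting costs at most $k$ in weight and loses no area; the rest follows the standard martingale-concentration scheme of \cite{BB}.
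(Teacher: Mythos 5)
Your proof is correct and takes essentially the same route as the paper, which simply introduces the Doob martingale over the antidiagonal strips, asserts that its increments are deterministically bounded by $k$, and invokes the Azuma--Hoeffding inequality; your rerouting argument supplies the one point the paper leaves entirely implicit, namely that resampling a single strip costs at most $k$ in weight while the up-then-right detour keeps the competitor admissible for both the per-strip and the area constraints. The only caveats are harmless: the two-sided Azuma bound carries a prefactor $2$ that the paper's statement drops, and the detoured curve passes through a non-Poisson corner, so one must read the model's notion of a directed path as allowing any monotone curve through the chosen chain of $\Pi$-points (the reading under which the area functional and the least-area selection of $\Gamma_{\alpha,n}$ make sense) rather than insisting on the polygonal interpolation of the points themselves.
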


In the remainder of the section, we obtain a bound on $\bigl| |\G|-|\G_{k}|\bigr|$ and thus also on $\bigl|\E|\G|-\E|\G_{k}|\bigr|$.
Let $\Pi_*$ be the point process obtained from $\Pi$ by removing points arbitrarily if necessary from  $\Pi \cap B_{i,j}$ to make sure that  $|\Pi_{*}\cap B_{ij}| \le k/3$ for all $1\le i,j\le n$.
% [\textcolor{red}{A}: for all $i,j$?] 
Let $\G_{*}:=\G_{\alpha,*}$ be the longest path with the same constraints as $\G_{\alpha}$ in the environment $\Pi_{*}$.

\begin{lemma} Deterministically as a consequence of the definitions it follows that $|\G_{k}| \ge |\G_{*}|$.
\end{lemma}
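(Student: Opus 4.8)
The plan is to show that $\G_*$ is itself an admissible path in the optimisation problem defining $\G_k$, so that the inequality follows immediately from monotonicity of the weight in the point configuration. Indeed, $\G_*$ is by its very definition an increasing path from $(0,0)$ to $(n,n)$ trapping area at least $(\tfrac12+\alpha)n^2$, so the only thing to verify is the extra diagonal-strip constraint: that for each $i$ the number of points of $\Pi$ on $\G_*$ lying in $A_i$ is less than $k$.

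The geometric heart of the argument is the claim that an increasing path meets each strip $A_i$ in a portion that touches at most two of the unit cells $B_{j,l}$. First I would observe that $x+y$ is non-decreasing along any increasing path, so $\G_*\cap A_i$ is a connected sub-path; let $p\preceq q$ denote its endpoints. Since $p,q\in A_i$ we have $p_1+p_2\ge i$ and $q_1+q_2<i+1$, and as $q\succeq p$ this forces $(q_1-p_1)+(q_2-p_2)<1$ with both summands non-negative. Hence this sub-path can cross at most one line of the integer grid: if it crossed both a vertical line $x=m_1$ and a horizontal line $y=m_2$ with $m_1,m_2\in\Z$, then $m_1\in(p_1,q_1)$ and $m_2\in(p_2,q_2)$, so $m_1+m_2>p_1+p_2\ge i$ while $m_1+m_2<q_1+q_2<i+1$, which is impossible for the integer $m_1+m_2$. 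A connected increasing path crossing at most one grid line lies in the union of at most two cells $B_{j,l}$, which proves the claim.

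Granting this, every point of $\Pi_*$ lying on $\G_*$ inside $A_i$ belongs to one of those at most two cells, and by construction each $B_{j,l}$ carries at most $k/3$ points of $\Pi_*$; thus $\G_*$ meets $\Pi_*\cap A_i$ in at most $2k/3<k$ points. Away from a null event no point of $\Pi$ lies in the relative interior of a segment of $\G_*$ (no three points of a planar Poisson process are collinear), so the points of $\Pi$ on $\G_*$ coincide with its $\Pi_*$-vertices, and the $\Pi$-count in each $A_i$ is likewise at most $2k/3<k$. Therefore $\G_*$ satisfies every constraint imposed on $\G_k$, and since $|\G_k|$ is the maximal $\Pi$-weight over such paths, $|\G_k|$ is at least the $\Pi$-weight of $\G_*$, which in turn is at least its $\Pi_*$-weight $|\G_*|$ because $\Pi_*\subseteq\Pi$.

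I expect the only mildly delicate point — and the one I would be most careful to state correctly — to be this bookkeeping of ``$\Pi$ versus $\Pi_*$ points on the path'' when matching the precise meaning of ``the intersection with $A_i$'' in the definition of $\G_k$; if that quantity is read as counting only the process points the path is built from, this last step is unnecessary and the conclusion is genuinely deterministic as asserted. The cell-count estimate and the resulting chain of inequalities are otherwise entirely elementary.
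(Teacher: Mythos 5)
Your proof is correct and follows essentially the same route as the paper's: show $\G_*$ is admissible for the $\G_k$ problem by noting that $\G_*\cap A_i$ spans a total $x$-plus-$y$ increment less than $1$ and hence lies in very few unit cells, each carrying at most $k/3$ points of $\Pi_*$. Your parity argument sharpens the cell count from the paper's three to two (and your aside about $\Pi$ versus $\Pi_*$ points on segments is a legitimate, if measure-zero, point the paper glosses over), but the underlying mechanism is identical.
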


\begin{proof} Clearly it suffices to show  that $\G_{*} $ intersects none of the $A_{i}$'s at more than $k$ points. The proof is by contradiction: assume that there exists $i$ such that 
$|\G_*\cap A_i|\ge k+1$. Since the points on $\G_*$ are totally ordered (recall the ordering introduced in Section \ref{def1012}), let $(x_1,y_1)$ and $(x_2,y_2)$ be the smallest and largest %(\textcolor{red}{???}) 
points in the set $\G_* \cap A_i$. By definition of $A_i$, 
\begin{equation}\label{last}
(x_2-x_1)+(y_2-y_1) \le 1,
\end{equation}  
so that $\G_*\cap A_i$ intersects at most three $B_{i,j}'s$,  since the number of different indices it can cross in each direction  is at most one if \eqref{last} is to be satisfied.
Thus $\G_*\cap A_i$ 
%[\textcolor{red}{A}: what?] 
has to intersect at least one of the boxes at more than $\frac{k+1}{3}$ points. This contradicts the definition of $\G_*$.
\end{proof}
Thus, it follows that $|\G|-|\G_{k}|\le |\G|-|\G_*| \le C$, where $C=\sum_{i,j} \max(|\Pi_{i,j}|-k/3,0)$ and $\Pi_{i,j}=\Pi \cap B_{i,j}$. Taking $k=6\frac{\log n}{\log\log n}$, the proof is now complete in view of the next result. 
\begin{lemma}The random variable $C$ satisfies the following:
\begin{enumerate} 
\item
$\P(C\ge \lambda n^{1/2}\frac{\log n}{\log \log n}) \le 2 \lambda ^2 e^{-\lambda^2}$
\item $\E(C)\le 1$.
\end{enumerate}
\end{lemma}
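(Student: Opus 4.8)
The plan is to read both bounds off the observation that
$C=\sum_{1\le i,j\le n}\bigl(|\Pi_{i,j}|-k/3\bigr)_{+}$
is a sum of $n^{2}$ independent copies of $(X-k/3)_{+}$, where $X$ has the $\mathrm{Poisson}(1)$ distribution (each box $B_{i,j}$ has unit area) and $k=6\log n/\log\log n$; after this reduction everything becomes a Poisson-tail and Stirling computation.

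I would first dispose of part (2). Since $(X-k/3)_{+}$ vanishes unless $X\ge\lceil k/3\rceil$,
\[
\E\,(X-k/3)_{+}\;\le\;\sum_{\ell\ge\lceil k/3\rceil}\ell\,\P(X=\ell)\;=\;e^{-1}\!\!\sum_{j\ge\lceil k/3\rceil-1}\frac{1}{j!}\;\le\;\frac{2e^{-1}}{(\lceil k/3\rceil-1)!},
\]
the last step because the series is dominated by twice its leading term. Hence $\E C\le 2e^{-1}n^{2}/(\lceil k/3\rceil-1)!$, and it suffices to check $(\lceil k/3\rceil-1)!\ge 2e^{-1}n^{2}$ for $n$ large. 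Writing $m:=k/3=2\log n/\log\log n$ and using $\log\bigl((m-1)!\bigr)=m\log m-m+O(\log m)$ together with $m\log m=2\log n\,(1+(\log2-\log\log\log n)/\log\log n)$, this is the one estimate requiring care, and I would carry it out keeping the lower-order corrections; it is precisely this computation that fixes the $\log n/\log\log n$ factor in the statement.

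For part (1) I would use (2) in two ways. For any fixed $\lambda$ the bound follows at once from Markov's inequality: $\P\bigl(C\ge\lambda n^{1/2}\log n/\log\log n\bigr)\le\E C\cdot(\log\log n)/(\lambda n^{1/2}\log n)$, which tends to $0$ and hence lies below $2\lambda^{2}e^{-\lambda^{2}}$ once $n$ is large. To obtain a bound valid over a wide range of $\lambda$, I would in addition argue as follows: a union bound over the $n^{2}$ boxes with $\P(|\Pi_{i,j}|>k)\le 1/k!\le n^{-5}$ (valid for large $n$ since $k\log k=6\log n\,(1+o(1))$) rules out, off an event of probability at most $n^{-3}$, that any box carries more than $k$ points; on that event every nonzero summand of $C$ is at most $k-k/3<k$, so $C\le kN$ with $N:=\#\{(i,j):|\Pi_{i,j}|>k/3\}$, a $\mathrm{Binomial}(n^{2},q)$ variable with $q=\P(X>k/3)\le 2/(\lceil k/3\rceil-1)!$ and therefore $\E N=O(1)$. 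Since $k=6\log n/\log\log n$, off the bad event $\{C\ge\lambda n^{1/2}\log n/\log\log n\}\subseteq\{N\ge\lambda n^{1/2}/6\}$, and a Chernoff bound for a binomial of bounded mean makes $\P(N\ge\lambda n^{1/2}/6)$ doubly exponentially small; adding back the $n^{-3}$ finishes (1).

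The main obstacle is the lone Stirling estimate $(\lceil k/3\rceil-1)!\gtrsim n^{2}$ used in (2): it is the tight step, and the placement of the $\log n/\log\log n$ scale in the lemma rests on tracking its correction terms accurately. Everything else — the Poisson tail bounds, the union bound over boxes, the Chernoff estimate for $N$, and the Markov argument for bounded $\lambda$ — is routine and introduces no new ideas.
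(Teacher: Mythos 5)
Your decomposition of $C$ as a sum of $n^2$ i.i.d.\ copies of $(X-k/3)_+$ with $X\sim\mathrm{Poisson}(1)$ is exactly the reduction the paper intends; its own "proof" is a single sentence invoking $\P(X>r)\le e^{-r\log r+r}$, so your write-up is a legitimate expansion of it. The problem is the step you yourself single out as the tight one: the inequality $(\lceil k/3\rceil-1)!\ge 2e^{-1}n^{2}$ is \emph{false} for $k=6\log n/\log\log n$. Writing $r=k/3=2\log n/\log\log n$, one has
$$r\log r-r=\frac{2\log n}{\log\log n}\bigl(\log 2+\log\log n-\log\log\log n\bigr)-\frac{2\log n}{\log\log n}=2\log n-\frac{2\log n\,(\log\log\log n+1-\log 2)}{\log\log n},$$
and the subtracted term is of order $\log n\log\log\log n/\log\log n\to\infty$. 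Hence $(r-1)!=n^{2}e^{-\Theta(\log n\log\log\log n/\log\log n)}=o(n^{2})$, and your estimate only yields $\E C\le n^{2}/(r-1)!=n^{o(1)}\to\infty$. In fact the matching lower bound $\E C\ge e^{-1}n^{2}/(\lceil r\rceil+1)!\to\infty$ shows that assertion (2) of the lemma is itself false for the stated $k$; the same issue infects your claim $\E N=O(1)$ in part (1). So the gap is not in your method, but it is a real gap: the deferred computation does not close.

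The repair is to change the truncation level: with $k=c\log n$ for any fixed $c>0$ one gets $r\log r-r=(c/3)\log n\cdot\log\log n\,(1+o(1))\gg 2\log n$, so $n^{2}\P(X\ge k/3)\to 0$ superpolynomially fast and both your factorial estimate for (2) and your union-bound/Chernoff argument for (1) go through with large margins. Note that $k=\Theta(\log n)$ is also the choice consistent with the exponent $t^{2}/(Cn\log^{2}n)$ in Theorem \ref{concentration1}, since Azuma with increments bounded by $k$ gives $e^{-t^{2}/4k^{2}n}$; and the weaker conclusion $\E C\le n^{o(1)}$, which your argument does establish for the stated $k$, already suffices for the way the lemma is used (the correction to $\E L_\alpha(n)$ must only be $o(\sqrt n)$). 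Modulo this, your treatment of part (1) — Markov for moderate $\lambda$, plus a union bound over boxes and a binomial Chernoff bound for large $\lambda$ (which tolerates a mean of size $n^{o(1)}$ just as well as $O(1)$) — is sound and is considerably more detail than the paper supplies.
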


This is a simple consequence of the observation that $|\Pi_{i,j}|$ are independent Poisson variables with mean one and the  following tail bound of a standard Poisson variable $X$:
$P(X>r)\le e^{-r\log r +r}$.
\qed

\bibliography{areatrap}
\bibliographystyle{plain}

\end{document}